\documentclass[11pt]{article}
\usepackage{amsmath, amssymb,  amsthm,color}
\usepackage{graphicx} 
\usepackage{enumitem}                                 
\usepackage[margin=1.0in]{geometry}

\AtEndDocument{\bigskip{\footnotesize%
\textsc{Department of Mathematics, the Pennsylvania State University, University
Park, PA 16802, USA} \par 
  \textit{E-mail address} \texttt{ aka5983@psu.edu
} \par
\textit{E-mail address} \texttt{ hertz@math.psu.edu
} \par 
\textit{E-mail address} \texttt{ zhirenw@psu.edu
}
}}




\newtheorem{theorem}{Theorem}[section]
\newtheorem{Remark} [theorem]{Remark}

\newtheorem{Counter-example}[theorem]{Counter example}
\newtheorem{Claim}[theorem]{Claim}

\newtheorem{Lemma}[theorem]{Lemma}
\newtheorem{Proposition}[theorem]{Proposition}
\newtheorem{Notation}[theorem]{Notation}
\newtheorem{Definition}[theorem]{Definition}
\newtheorem{Corollary}[theorem]{Corollary}

\newtheorem*{theorem*}{Theorem}

\newcommand{\hide}[1]{}


\DeclareMathOperator*{\sign}{sign}

\newcommand{\supp}{\text{supp}}
\newcommand{\diam}{\text{diam}}

\newcommand{\ttau}{{\tilde{\tau}}}
\newcommand{\tS}{{\widetilde{S}}}

\newcommand{\cA}{{\mathcal A}}

\newcommand{\tA}{{\widetilde A}}

\newcommand{\tZ}{{\widetilde Z}}

\newcommand{\bfp}{{\mathbf{p}}}

\newcommand{\bP}{{\mathbb{P}}}


\title{Pointwise  normality and Fourier decay for self-conformal measures}
\author{Amir Algom, Federico Rodriguez Hertz, and Zhiren Wang}
\date{}
\begin{document}
\maketitle
\begin{abstract}
Let $\Phi$ be a  $C^{1+\gamma}$ smooth IFS on  $\mathbb{R}$, where $\gamma>0$. We provide mild conditions on the derivative cocycle  that ensure that every self conformal measure  is supported on points $x$ that are absolutely normal. That is, for every integer $p\geq 2$ the sequence $\lbrace p^k x \rbrace_{k\in \mathbb{N}}$ equidistributes modulo $1$. We thus extend several state of the art results of Hochman and Shmerkin \cite{hochmanshmerkin2015} about the prevalence of normal numbers in fractals.   When $\Phi$ is self-similar we show that  the set of absolutely normal numbers has full Hausdorff dimension in its attractor, unless $\Phi$ has an explicit structure that is associated with some integer $n\geq 2$. These conditions  on the derivative cocycle are also shown to imply that every self conformal measure is a Rajchman measure, that is, its Fourier transform decays to $0$ at infinity.  When $\Phi$ is self similar and satisfies a certain Diophantine condition, we establish a logarithmic rate of decay.
\end{abstract}

\section{Introduction}
\subsection{Background} \label{Section results}
Let $p$ be an integer greater or equal to $2$. Let $T_p$ be the times-$p$ map, 
$$T_p (x) = p\cdot x \mod 1, \, x\in \mathbb{R}.$$
A number $x\in \mathbb{R}$ is called $p$-normal, or normal to base $p$, if its orbit $\lbrace T_p ^k (x) \rbrace_{k\in \mathbb{N}}$  equidistributes for the Lebesgue measure on $[0,1]$. We call $x$ \textit{absolutely normal} if it is $p$-normal  for all integers $p\geq 2$.  In 1909 Borel proved  that Lebesgue almost every $x$ is absolutely normal. It is believed that this phenomenon should continue to hold true for typical elements of well structured sets with respect to appropriate measures, in the absence of obvious obstructions. Thus, we will call a Borel probability measure $\nu$ on $\mathbb{R}$  \textit{pointwise absolutely normal} if $\nu$ almost every $x$ is absolutely normal. One of the main purposes of this paper is to specify a large and natural class of fractal measures that are pointwise absolutely normal. Additionally, we will indicate a large family of fractal sets that are typically Lebesgue null, such that set of absolutely normal numbers intersects them with full Hausdorff dimension. 

There are two known general techniques to study whether a given Borel probability measure $\mu$ on $\mathbb{R}$ is supported on numbers normal to a given base $p$: The  first method involves establishing sufficiently fast decay of the $L^2 (\mu)$ norms of certain trigonometric polynomials as in Weyl's criterion. This method was famously used by  Cassels and Schmidt \cite{Schmidt1960normal, Cassels1960normal} independently to show that if $\mu$ is the Cantor-Lebesgue measure on the middle$-\frac{1}{3}$ Cantor set, then $\mu$ almost every $x$ is $p$-normal whenever $p$ is independent of $3$, that is, $\frac{\log p}{\log 3} \notin \mathbb{Q}$. Henceforth, we will write $a \not \sim b$ to indicate that $a$ and $b$ are indepedent, and $a\sim b$ otherwise. A virtually optimal criteria for a measure  to be supported on numbers that are $p$-normal in terms of these $L^2$ norms was later formulated by Davenport-Erd\H{o}s-LeVeque \cite{Davenport1964Erdos}, and was used by several authors including  Brown, Pearce, Pollington, and Moran \cite{Pollington1988normal,  Brown1985moran, Brown1987moran}. An excellent exposition to this subject is given in Bugeaud's book \cite{Bugeaud2012book}.

The Davenport-Erd\H{o}s-LeVeque criteria is closely related to the decay rate of the Fourier transform of Borel probability measures on $\mathbb{R}$: Let $\nu$ be such a measure.  For every $q\in \mathbb{R}$   the Fourier transform of $\nu$ at $q$ is defined by 
\begin{equation*} 
\mathcal{F}_q (\nu) := \int \exp( 2\pi i q x) d\nu(x).
\end{equation*}
The measure $\nu$ is called a \textit{Rajchman measure} if $\lim_{|q|\rightarrow \infty} \mathcal{F}_q(\nu)=0$. By the Riemann-Lebesgue Lemma, if $\nu$ is absolutely continuous then it is Rajchman. On the other hand, by Wiener's Lemma  if $\nu$ has an atom then it is not  Rajchman. For measures that are both continuous (no atoms) and singular, determining whether or not $\nu$ is a Rajchman measure may be a challenging problem. The Rajchman property and, when available,  further information about   the rate of decay of $\mathcal{F}_q(\nu)$,  have various consequences on the geometry of $\nu$ \cite{Lyons1995survey}. Returning to the Davenport-Erd\H{o}s-LeVeque Theorem, it  ensures that if  e.g. there is some $\alpha=\alpha(\nu)>0$ such that
$$ \left|\mathcal{F}_q (\nu)\right| \leq O\left( \frac{1}{ \left| \log \log |q| \right| ^{1+\alpha}} \right), \text{ as } |q|\rightarrow \infty$$
then $\nu$ is  pointwise absolutely normal. We note, however, that such bounds are usually hard to obtain (if they are true at all) in concrete situations, even for naturally defined measures. The third objective of this paper, which arises in conjuncture with the number theoretic framework discussed above, is to establish the Rajchman property for a  class of dynamically defined measures.

In 2015 Hochman and Shmerkin \cite{hochmanshmerkin2015} introduced a new method, giving a   fractal geometric condition that is sufficient for a measure  to be supported on $p$-normal numbers. This condition applies to a wide class of measures that arise from some dynamical or arithmetic origin. One of the virtues of this method is that it can be used regardless of knowledge on the behaviour of the Fourier transform of the underlying measure. Instead, one needs to understand the so-called scenery of the measure at typical points \cite[Section 1.2]{hochmanshmerkin2015}. In order to compute the scenery in specific examples, one usually works with measures such that their small "pieces"  have mild (or no) overlaps, and this computation can become difficult in the presence of complicated overlaps.  Many of the results of Hochman-Shmerkin still remain the state of the art on the subject, and we will recall them as we compare them to our results.

In this paper we introduce a new dynamical condition for a self-conformal measure (defined below) to be both a Rajchman measure \textit{and} pointwise absolutely normal. A rate of decay is only established in some special cases, so in general we cannot invoke  Davenport-Erd\H{o}s-LeVeque to get absolute normality. Thus, we will prove pointwise absolute normality \textit{directly}, regardless of the decay rate of the Fourier transform.  This provides many new examples of both Rajchman measures  and of pointwise absolutely normal measures, and allows us to extend  results\footnote{Hochman and Shmerkin can work with more general $\beta$ transformation, that is, maps of the form $x\mapsto \beta x \mod 1$ for certain $\beta>1$. We will compare this to our method in Section \ref{Section proof sketch}.} of Hochman-Shmerkin, as detailed below. We then proceed to show that  self similar sets intersect the set of absolutely normal numbers with full Hausdorff dimension, unless an obvious obstruction is present.

Self-conformal measures are defined as follows: Let $\Phi= \lbrace f_1,...,f_n \rbrace$ be a finite set of strict contractions of a compact interval $I\subseteq \mathbb{R}$ (an \textit{IFS}), such that every $f_i$ is differentiable. We say that  $\Phi$ is $C^\alpha$ smooth if every $f_i$ is at least $C^\alpha$ smooth for some $\alpha\geq 1$.  It is well known that there exists a unique compact set $\emptyset \neq K=K_\Phi \subseteq I$ such that
$$ K = \bigcup_{i=1} ^n f_i (K).$$
The set $K$ is called a \textit{self-conformal set}, and the \textit{attractor} of the IFS $\lbrace f_1,...,f_n \rbrace$. In the special case where each $f_i$ is affine, i.e. $f_i(x)=r_i \cdot x+t_i$ and $0<|r_i|<1$, we call $K$ a \textit{self-similar set}. We always assume that there exist $i\neq j$ such that the fixed point of $f_i$ is not equal to the fixed point of $f_j$. This ensures that $K$ is infinite. We call $\Phi$  \textit{uniformly contracting} if 
$$0< \inf \lbrace |f '(x)|:\, f\in \Phi, x\in I \rbrace \leq \sup \lbrace |f '(x)| :\, f\in \Phi, x\in I  \rbrace <1.$$
Finally, following Hochman-Shmerkin \cite{hochmanshmerkin2015}, we say that $\Phi$ is \textit{regular} 
if it is uniformly contracting, and the intervals $f_i(I)$ are disjoint except possibly at their endpoints (so that, in particular, the so-called  \textit{open set condition} is satisfied).

 Next, let $\textbf{p}=(p_1,...,p_n)$ be a strictly positive probability vector, that is, $p_i >0$ for all $i$ and $\sum_i p_i =1$. It is well known that there exists a unique Borel probability  measure $\nu$ such that
$$\nu = \sum_{i=1} ^n p_i\cdot  f_i\nu,\quad \text{ where } f_i \nu \text{ is the push-forward of } \nu \text{ via } f_i.$$
The measure $\nu$ is called a \textit{self-conformal measure}, and is supported on $K$. If every $f_i$ is affine then $\nu$ is called a \textit{self-similar measure}.   Since $K$ is always assumed to infinite, the assumption that $p_i>0$ for every $i$ implies that $\nu$ is non-atomic. In particular, all self-conformal measures in this paper are  non-atomic.

\subsection{Pointwise  normality and Fourier decay for self conformal measures}
\subsubsection{Main technical Theorem} \label{Section sketch}
We first formulate a general condition that ensures a given self conformal measure is both Rajchman and pointwise absolutely normal. Let $\Phi=\lbrace f_1,...,f_n \rbrace$ be an IFS on an interval $I$ such that each $f_i$ is differentiable.  For every $\omega \in \lbrace 1,...,n\rbrace ^\mathbb{N}$ and $m\in \mathbb{N}$ let
$$f_{\omega|_m} = f_{\omega_1} \circ \circ \circ f_{\omega_m}.$$
Fix $x_0 \in I$.  Then we have a surjective coding map $\lbrace 1,...,n\rbrace ^\mathbb{N} \rightarrow K$ defined by
$$\omega \in \lbrace 1,...,n \rbrace^{\mathbb N} \mapsto x_\omega:= \lim_{m\rightarrow \infty}  f_{\omega|_m}  (x_0).$$
Assuming the IFS is uniformly contracting and $C^{1+\gamma}$ smooth, let $\rho:= \left( \sup_{f\in \Phi} ||f'||_\infty \right)^{\gamma} \in (0,1)$, and  define a metric on $\lbrace 1,...,n\rbrace^{\mathbb N}$ via 
$$d_\rho (\omega,\omega'):=\rho^{\min\{n:\ \omega_n\neq\omega'_n\}}.$$

Let $\textbf{p}$ be a strictly positive probability vector on $\lbrace 1,...,n\rbrace$, and let $\nu$ be the corresponding self conformal measure. Let $\mathbb{P}=\textbf{p}^\mathbb{N}$ be the product measure on $\lbrace 1,...,n\rbrace ^\mathbb{N}$. Then $\nu$ is the push-forward of $\mathbb{P}$ via $\omega\mapsto x_\omega$. Also, for every $1\leq a\leq n$ let $\iota_a: \lbrace 1,...,n\rbrace^{\mathbb N}\to \lbrace 1,...,n\rbrace ^{\mathbb N}$ be the map 
$$\iota_a(\omega_1,\omega_2,\cdots)=(a,\omega_1,\omega_2,\cdots).$$
Let  $G$ to be the free semigroup generated by the family $\{\iota_a: 1\leq a\leq n\}$.  We define the derivative cocycle $c:G\times \lbrace 1,...,n\rbrace^\mathbb{N}\rightarrow \mathbb{R}$ via
 $$c(a,\omega)=-\log|f'_a(x_\omega)|.$$
Choose some $\kappa \in (0,1]$ and let $H^{\kappa}$ denote the space of $\kappa$-H\"older continuous maps $\lbrace 1,....,n\rbrace^\mathbb{N}\rightarrow \mathbb{C}$, and define $\Lambda_c \subseteq \mathbb{R}$ via
\begin{eqnarray} \label{Eq Lambda c}
\Lambda_c &=& \lbrace \theta: \text{ There exists } \phi_\theta \in H^{\kappa} \text{ with } |\phi_\theta|=1 \text{ and } u_\theta \in S^1 \text{ such that }  \\
&&  \phi_\theta(\iota_a(\omega)) = u_\theta \cdot  e^{-i \theta \cdot c(a,\omega)}\cdot \phi_\theta (\omega),\quad \text{ for all } (a,\omega)\in \lbrace 1,...,n \rbrace\times \lbrace 1,...,n \rbrace^\mathbb{N} \rbrace. \nonumber
\end{eqnarray}
It is clear that $0\in \Lambda_c$. If $\Lambda_c = \lbrace 0 \rbrace$ then the cocycle $c$ is aperiodic in the sense of Benoist-Quint  \cite[Equation (15.8)]{Benoist2016Quint}, and this will be used in an essential way to prove the following Theorem:
\begin{theorem} \label{Theorem main tech}
Let $\Phi$ be a uniformly contracting $C^{1+\gamma}$ smooth IFS  for some $\gamma>0$, and let $\nu$ be a self conformal measure. If $\Lambda_c = \lbrace 0 \rbrace$ then:
\begin{enumerate}
\item  $\nu$ is a Rajchman measure, that is,  $ \lim_{|q|\rightarrow \infty} \mathcal{F}_q (\nu) =0$.

\item $\nu$ is pointwise absolutely normal.
\end{enumerate}
\end{theorem}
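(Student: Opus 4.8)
The plan is to reduce both statements to a single Fourier-analytic input: an effective equidistribution / decay estimate for the pushforwards of $\nu$ under the maps $x \mapsto p^k x$, obtained from the aperiodicity hypothesis $\Lambda_c = \{0\}$ via a transfer-operator (Ruelle--Perron--Frobenius) argument on the symbolic space $\{1,\dots,n\}^{\mathbb N}$ with the metric $d_\rho$. Concretely, one works with the family of twisted transfer operators $L_{\theta,\mathbf p}$ acting on $H^\kappa$ by
\begin{equation*}
(L_{\theta,\mathbf p}\phi)(\omega) = \sum_{a=1}^n p_a\, e^{-i\theta\, c(a,\omega)}\, \phi(\iota_a(\omega)),
\end{equation*}
so that $\mathcal F_q(\nu)$ (after suitably splitting the frequency $q$ at a scale comparable to the $m$-cylinder contraction) is governed by high iterates $L_{\theta,\mathbf p}^m$ evaluated at an essentially bounded Hölder function, with $\theta$ of size $\sim q$. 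The condition $\Lambda_c=\{0\}$ is exactly the statement that for $\theta\neq 0$ there is no Hölder solution $\phi_\theta$ with $|\phi_\theta|\equiv 1$ to the cohomological equation defining the peripheral spectrum, i.e. $c$ is aperiodic in the Benoist--Quint sense; the standard consequence (for a locally constant-in-the-first-coordinate, Hölder cocycle over a full shift) is that the spectral radius of $L_{\theta,\mathbf p}$ on $H^\kappa$ is strictly less than $1$ for every $\theta\neq 0$, and one gets \emph{uniform} decay $\|L_{\theta,\mathbf p}^m\|_{H^\kappa\to C^0}\le C(\theta)\varrho(\theta)^m$ on compact sets of $\theta$, plus a (non-uniform, possibly slow) decay as $|\theta|\to\infty$ coming from the oscillatory integral / non-degeneracy of $c$. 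Assembling these over the dyadic-type ranges of $q$ yields $\mathcal F_q(\nu)\to 0$, which is part (1).

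For part (2) I would \emph{not} try to quantify the rate (that is only possible in the Diophantine self-similar case), and hence cannot cite Davenport--Erd\H os--LeVeque directly; instead I would run the Hochman--Shmerkin scenery/CP-chain machinery. The key dynamical fact to extract from $\Lambda_c=\{0\}$ is that the magnification dynamics on the space of measures — i.e. the distribution of $\nu$ restricted and rescaled around a $\nu$-typical point, viewed at exponential scales — is, along the subsequence of scales matched to $\{p^k\}$ for each fixed base $p$, ergodic and non-atomic enough that the measure's ``scenery'' does not concentrate on any proper closed subgroup picture; equivalently, the base-$p$ digit process of a $\nu$-typical $x$ is, via the coding $\omega\mapsto x_\omega$ and the Gibbs property of $\mathbb P$ with respect to the potential $-\log|f'|$, asymptotically equidistributed by a Weyl-sum estimate $\frac1N\sum_{k<N} e^{2\pi i t\, p^k x}\to 0$ for $\nu$-a.e.\ $x$ and every integer $t\neq 0$. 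Here the aperiodicity feeds in through the Fourier/transfer-operator bound of part (1) applied at the relevant frequencies $t p^k$: the correlations $\int e^{2\pi i t p^k x} \overline{e^{2\pi i t p^{k'} x}}\, d\nu(x)$ are controlled by $L_{\theta,\mathbf p}^{|k-k'|}$ with $\theta \asymp t p^{k\wedge k'}$, their sum over $k,k'<N$ is $o(N^2)$, and a Gal--Koksma / maximal-inequality argument (exactly the DEL philosophy, but applied along the arithmetic progression of scales rather than requiring a log-log rate) upgrades the $L^2$ bound to an a.e.\ statement; doing this simultaneously for all integers $t$ and all bases $p$ (a countable family) gives absolute normality $\nu$-a.e.

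The order of steps is therefore: (i) set up $H^\kappa$, the transfer operators $L_{\theta,\mathbf p}$, and record the Lasota--Yorke / quasi-compactness estimate; (ii) show $\Lambda_c=\{0\}$ $\Longrightarrow$ spectral radius $<1$ for all $\theta\neq 0$, with the needed uniformity on compacta and a crude tail bound for large $|\theta|$ (this is where one invokes Benoist--Quint's aperiodicity framework and the Livšic-type argument ruling out $|{\cdot}|=1$ coboundaries); (iii) deduce $\mathcal F_q(\nu)\to 0$ by decomposing $q$ according to the cylinder scale and summing the operator bounds — this gives part (1); (iv) for a fixed base $p$ and integer $t\neq 0$, estimate the Weyl-sum second moments $\int |\frac1N\sum_{k<N} e^{2\pi i t p^k x}|^2 d\nu$ using step (ii) on the frequencies $tp^k$, getting $o(1)$; (v) pass to $\nu$-a.e.\ convergence via a maximal/variance (DEL-type) argument along a sparse subsequence $N_j$ and fill gaps by monotonicity, then take the countable intersection over $(p,t)$. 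The main obstacle, I expect, is step (ii): translating the \emph{existence} of the continuous eigenfunction condition in the definition of $\Lambda_c$ into a genuine spectral gap for $L_{\theta,\mathbf p}$ uniformly on compact $\theta$-sets, and — more delicately — getting any control at all as $|\theta|\to\infty$, since without a Diophantine/non-concentration hypothesis on the $-\log|f_i'|$ the peripheral spectrum could approach the unit circle along a subsequence of frequencies; handling that requires either a compactness argument on the normalized eigenfunctions (extracting a limiting $\phi$ contradicting $\Lambda_c=\{0\}$) or the renewal-theoretic substitute used by Benoist--Quint, and this is the technical heart of the paper.
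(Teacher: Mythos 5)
Your high-level mechanism is correct and matches the paper's: $\Lambda_c=\{0\}$ is Benoist--Quint aperiodicity of the derivative cocycle, $L_{\theta,\mathbf p}$ is their characteristic-function operator, and the probabilistic input is indeed their CLT and LLT for cocycles with target. But the two concrete steps you sketch both have gaps. In (1), $L_{\theta,\mathbf p}$ carries the phase $e^{-i\theta c(a,\omega)}$ and thus governs the law of the log-derivative walk $S_m$; it does \emph{not} carry the additive phases $e^{2\pi i q f_\omega(x)}$ that actually constitute $\mathcal F_q(\nu)$, so ``decomposing $q$ by cylinder scale and summing the operator bounds'' is not a step --- no identity of that form is exhibited, and none is obvious. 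The paper's bridge, which your sketch is missing, is to condition on a stopping time $\tau_k$, deduce from the LLT (Theorem \ref{Theorem equid}, which is where the aperiodicity is consumed) that the conditional law of $S_{\tau_k}$ is close to an absolutely continuous $\Gamma$ on $[k\chi,k\chi+D']$, and then feed that equidistribution into Hochman's oscillatory-integral lemma (Lemma \ref{Lemma 3.2 }), which together with non-atomicity (Lemma \ref{Lemma nu is continuous}) kills $|\mathcal F_q|$; the $C^{1+\gamma}$ linearization (Lemma \ref{Lemma lin}) is also needed to make the splitting legitimate and never appears in your outline. That averaging step is also why no large-$|\theta|$ spectral control of $L_\theta$ is ever needed: the LLT is only invoked on a scale that grows adaptively with $q$, at the cost of a merely qualitative error $o_k(1)$ --- which is exactly why the resulting theorem is qualitative.

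In (2), you rightly say Davenport--Erd\H{o}s--LeVeque is unavailable, but your substitute --- Weyl second moments controlled by Rajchman, then Gal--Koksma along a sparse subsequence --- still needs a rate: Rajchman alone gives $\int\bigl|\frac1N\sum_{k<N}e^{2\pi itp^kx}\bigr|^2 d\nu(x)=o(1)$ with no summable decay, so Borel--Cantelli along $N_j$ fails, and monotone gap-filling requires $N_{j+1}/N_j\to 1$, which contradicts the sparsity you need for summability. The CP-chain alternative you mention would reintroduce a separation hypothesis, which the paper pointedly avoids. The paper's actual route is a martingale lemma (Theorem \ref{Theorem martingale}) passing to weak-* limits $\nu_\infty$ of the empirical orbit measure, followed by the decisive observation that $\pi\nu_\infty$ is $\bar T_p$-\emph{invariant}: therefore it suffices to prove $|\mathcal F_q(\nu_\infty)|\le\epsilon$ for all $|q|\ge q^*(\epsilon)$ --- with no rate --- because invariance lets one replace $q$ by $qp^n$ and push any fixed nonzero integer frequency above the threshold. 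Getting that Fourier bound uniformly over the $T_p^n$-magnified pieces is literally the Rajchman property in the self-similar case (Theorem \ref{Prop key 1}), but in the general $C^{1+\gamma}$ case the magnified pieces are only approximate rescalings of $\nu$ and one needs the strengthened LLT (Theorem \ref{Theorem lin equid}); neither the invariance trick nor the second LLT appears in your sketch, and without them the argument does not close.
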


Note that the conditions of Theorem \ref{Theorem main tech} are invariant under conjugation by $C^{1+\gamma}$ maps with non-vanishing derivative, a useful feature in applications.  Before  turning to these applications, we say a few words about what goes into the proof: For part (1), the most important ingredient is Theorem \ref{Theorem equid}, a  conditional local limit Theorem  for  certain random variables resembeling stopping times that are related to a random walk driven by the derivative cocycle. That is, this local equidistrubtion property holds up to conditioning on "good" cells of  suitable partitions of the space $\lbrace 1,...,n\rbrace ^\mathbb{N}$. These good  cells are produced via a central limit theorem for cocycles, and the local equidistribution  follows from a local limit theorem for aperiodic cocycles, both proved by Benoist-Quint \cite[Theorem 12.1 and Theorem 16.15]{Benoist2016Quint}. This is the only part of the proof where the assumption $\Lambda_c = \lbrace 0 \rbrace$ is used. The rest of the proof consists of subtle linerization arguments and an adaptation of a Lemma of Hochman \cite{Hochman2020Host}, regarding oscillatory integrals that arise at the end of the proof. See Section \ref{Section proof 1} for more details.

For part (2), fixing a $\nu$ typical point $x$ and an integer base $p$, we first  employ a martingale argument in the spirit of Hochman-Shmerkin \cite[Theorem 2.1]{hochmanshmerkin2015} and a further linearization step. This reduces the problem to that of proving the Rajchman property  \textit{with the same rate} for a countable family of measure that arise as $T_p ^n$-magnifications of increasingly small pieces of the measure about the point $x$. The treatment of the Fourier transform of these measures relies on a similar scheme as in part (1), but the additional steps further complicate the already delicate analysis involved. This necessitates the introduction of several new ideas. The most important one is Theorem \ref{Theorem lin equid}, another conditional local limit Theorem  that is tailored to this situation.  See Section \ref{Section proof 2} for more details.

\subsubsection{Applications and related results} \label{Section app}
We proceed to describe some applications of Theorem \ref{Theorem main tech}. Before doing so, we introduce two new definitions: Let $\Phi$ be a self similar IFS  with corresponding contraction ratios $\lbrace r_1,...,r_n \rbrace$. We say that $\Phi$ is \textit{periodic} if there exists  some $r \in \mathbb{R}$ such that
$$\lbrace \log |r_1|,...,\log |r_n| \rbrace \subset   r\mathbb{Z}$$
otherwise, we say that $\Phi$ is \textit{aperiodic}. Note that $\Phi$ is aperiodic if and only if there are $i\neq j$ such that $\frac{\log |r_i|}{\log |r_j|} \notin \mathbb{Q}$.  We call $\Phi$  \textit{Diophantine} if there are $l , C>0$ such that 
\begin{equation} \label{Eq Dio condition}
\inf_{y\in \mathbb{R}} \max_{i\in \lbrace 1,...n\rbrace } d(\,\log |r_i|\cdot x+y,\, \mathbb{Z}) \geq \frac{C}{|x|^{l}}, \text{ for all } x\in \mathbb{R} \text{ large enough in absolute value.}
\end{equation}
Notice that if $\Phi$ is Diophantine then it is aperiodic, but the converse is false in general. This Diophantine condition is adopted from Breuillard's work \cite[Section 3.1]{Breuillard2005llt}. It is generic in the sense that it holds if $n\geq 3$ and we draw $\lbrace \log |r_1|,...,\log |r_n| \rbrace$  according to the Lebesgue measure on $\mathbb{R}^n$ \cite[Proposition 2.4]{Weak2017Cramer}, and is met if e.g.  $\log |r_1|,...,\log |r_n|$ are rationally independent algebraic numbers \cite[Proposition 2.7]{Weak2017Cramer}. See also  Section \ref{Section Moser} for a family of examples related to the work of Moser \cite{Moser1990Dio}. We are now ready to  summarize some corollaries of Theorem \ref{Theorem main tech}:

\begin{Corollary} \label{Main Corollary}
Let $\Phi$ be a uniformly contracting $C^{1+\gamma}$ smooth IFS for some $\gamma>0$, and let $\nu$ be a self conformal measure.
\begin{enumerate}
\item Suppose that for every $t,r \in \mathbb{R}$, the set 
$$\left\lbrace \log \left| f ' \left( y \right) \right| : \text{ where } f (y)=y,\quad f\in \Phi   \right\rbrace$$
is not included in the set $t+r \mathbb{Z}$. Then $\nu$ is both pointwise absolutely normal and Rajchman. 

\item Suppose that $\Phi$ is an aperiodic self similar IFS (so $\nu$ is a self similar measure), and let $g:I\rightarrow \mathbb{R}$ be a $C^{1+\gamma}$ smooth map, where $\gamma>0$. If  $g'$ does not vanish then  $g\nu$ is both pointwise absolutely normal and Rajchman.  \newline
If $\Phi$ is Diophantine then there exists some $\alpha=\alpha(\nu)>0$ such that
$$ \left|\mathcal{F}_q (\nu)\right| \leq O\left( \frac{1}{ \left| \log |q| \right| ^\alpha} \right),\, \text{ as } |q|\rightarrow \infty.$$

\item  Suppose that  $\Phi$ is $C^\omega$ smooth, or  that it is $C^2$ and  $K_\Phi$ is an interval. If $\nu$ is not pointwise absolutely normal or not Rajchman  then $\Phi$  is  $C^\omega$ or $C^{2}$ (depending on the smoothness of $\Phi$) conjugate to a periodic self similar IFS.
\end{enumerate}
\end{Corollary}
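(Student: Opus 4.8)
The plan is to deduce all three statements from Theorem~\ref{Theorem main tech}, so that everything comes down to verifying the aperiodicity condition $\Lambda_c=\{0\}$, either for $\Phi$ itself or for a conveniently chosen equivalent presentation. The engine is a single periodic-orbit computation. Suppose $\theta\in\Lambda_c\setminus\{0\}$ with witnesses $\phi_\theta\in H^\kappa$, $|\phi_\theta|=1$, and $u_\theta\in S^1$. Since $\iota_a(a^\infty)=a^\infty$, evaluating the defining relation of $\Lambda_c$ at $\omega=a^\infty$ and dividing by $\phi_\theta(a^\infty)\neq0$ gives $1=u_\theta\,e^{-i\theta\,c(a,a^\infty)}=u_\theta\,|f_a'(p_a)|^{i\theta}$, where $p_a$ is the fixed point of $f_a$; hence $u_\theta=|f_a'(p_a)|^{-i\theta}$ for every letter $a$. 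Comparing two letters, $\theta\bigl(\log|f_a'(p_a)|-\log|f_b'(p_b)|\bigr)\in2\pi\mathbb{Z}$, so the set $S:=\{\log|f'(y)|:f(y)=y,\ f\in\Phi\}$ lies in the coset $\log|f_b'(p_b)|+\tfrac{2\pi}{\theta}\mathbb{Z}$. In the setting of part~(1) this contradicts the hypothesis, so $\Lambda_c=\{0\}$ and Theorem~\ref{Theorem main tech} yields both conclusions. I record the contrapositive for later use: \emph{if $\nu$ is not Rajchman or not pointwise absolutely normal, then $S$ is contained in some coset $t+r\mathbb{Z}$.}

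For part~(2), let $\Phi$ be aperiodic self-similar with ratios $r_i$, put $a_i:=\log|r_i|$, and fix $i\neq j$ with $a_i/a_j\notin\mathbb{Q}$ (possible by aperiodicity). Since $\nu$ is also the self-similar measure, with the product weights, of the IFS obtained by refining $\Phi$ along any finite complete prefix code, I pass to the code $W_N:=\{(w,v):w\in W,\ v\in\{1,\dots,n\}^{N}\}$, where $W:=\{(\ell):\ell\neq i\}\cup\{(i,m):1\le m\le n\}$ is complete and prefix-free, and $N$ is taken large enough that $g\circ f_w\circ g^{-1}$ is uniformly contracting for all $w\in W_N$ (possible because $|f_w'|\le(\max_i|r_i|)^{N}$ while $g'$ is bounded away from $0$ and $\infty$ on $I$). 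Then $\widehat\Phi:=\{g\circ f_w\circ g^{-1}:w\in W_N\}$ is a uniformly contracting $C^{1+\gamma}$ IFS with attractor $g(K_\Phi)$ whose self-conformal measure is $g\nu$, and whose set of fixed-point log-derivatives coincides---multipliers being conjugacy invariant---with $\{a_w+a_v:w\in W,\ v\in\{1,\dots,n\}^{N}\}$ (with $a_w:=\sum_\ell a_{w_\ell}$, etc.); comparing the prefix words $(i,i)$, $(i,j)$ and $(j)$ shows the difference group of this set contains $\langle a_i,a_j\rangle$, which is dense and hence lies in no $r\mathbb{Z}$, so the set lies in no coset. Part~(1), applied to $\widehat\Phi$, now shows $g\nu$ is Rajchman and pointwise absolutely normal (take $g=\mathrm{id}$ for $\nu$ itself). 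Finally, if $\Phi$ is Diophantine then choosing $x\in\tfrac1r\mathbb{Z}$ of large modulus violates \eqref{Eq Dio condition} whenever $\{a_i\}\subseteq t+r\mathbb{Z}$, so $S=\{a_i\}$ is in no coset and $\Lambda_c=\{0\}$ for $\Phi$ itself; the stated $|\log|q||^{-\alpha}$ rate is then obtained by re-running the proof of Theorem~\ref{Theorem main tech}(1) with Breuillard's effective local limit theorem for Diophantine cocycles \cite{Breuillard2005llt} in place of the qualitative local limit theorem, propagating its polynomial error terms through the oscillatory-integral estimates.

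For part~(3), suppose $\nu$ is not Rajchman or not pointwise absolutely normal; by Theorem~\ref{Theorem main tech} there are $\theta\neq0$, $\phi_\theta\in H^\kappa$ with $|\phi_\theta|=1$, and $u_\theta\in S^1$ such that $\phi_\theta(\iota_a\omega)=u_\theta\,|f_a'(x_\omega)|^{i\theta}\phi_\theta(\omega)$. Writing $\phi_\theta=e^{i\beta}$, lifting $\beta$ and transferring it to the base---using, in the $C^2$ case, that $K_\Phi$ is an interval so the coding map is injective off a countable set, and in the $C^\omega$ case the analyticity of the objects involved to handle overlaps---one obtains a continuous $h$ on $K_\Phi$ solving the de~Rham-type equation $h(f_ax)-h(x)=\lambda_a+\theta\log|f_a'(x)|$ for constants $\lambda_a$; evaluating at $p_a$ forces $\lambda_a=-\theta\log|f_a'(p_a)|$. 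A Liv\v{s}ic-type regularity argument then upgrades $h$ to the regularity of $\Phi$ (this is the only place the $C^\omega$, resp.\ $C^2$, hypothesis is used), and $\Psi(x):=\int_{x_0}^{x}e^{-h(t)/\theta}\,dt$ is a $C^\omega$ (resp.\ $C^2$) diffeomorphism with $\Psi'>0$ for which $\Psi\circ f_a\circ\Psi^{-1}$ is affine, as one checks by differentiating $\Psi\circ f_a$ and using the functional equation. Thus $\Phi$ is $C^\omega$ (resp.\ $C^2$) conjugate to a self-similar IFS $\widehat\Phi$, whose self-similar measure is $\Psi\nu$ and whose multiplier moduli are the $|f_a'(p_a)|$. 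If $\widehat\Phi$ were aperiodic, part~(2) applied to $\widehat\Phi$ with $g=\Psi^{-1}$ would make $\nu=\Psi^{-1}(\Psi\nu)$ Rajchman and pointwise absolutely normal, contrary to assumption; hence $\widehat\Phi$ is periodic, which is the assertion. The main obstacle is exactly this rigidity step---transferring the symbolic identity $\Lambda_c\neq\{0\}$ to a functional equation on $K_\Phi$ (especially in the $C^\omega$ case, where $K_\Phi$ need not be an interval and overlaps must be controlled) and bootstrapping $h$ to the full smoothness of $\Phi$ so that $\Psi$ is a genuine conjugacy; the remaining ingredients are the periodic-orbit computation above together with routine bookkeeping.
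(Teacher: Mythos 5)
Your parts (1) and (2) essentially reproduce the paper's argument. Part (1) is the same periodic-orbit computation as Lemma~\ref{Lemma lattice}. In part (2), your complete prefix code $W$ is precisely the paper's $\Psi=\{f_1\circ f_i\}\cup\{f_i\}_{i\geq 2}$ from Lemma~\ref{Lemma aperiodic}, and the coset computation matches. Your additional passage to $W_N$ for large $N$ to guarantee that $g\circ f_w\circ g^{-1}$ is uniformly contracting is a genuine improvement: the paper's Lemma~\ref{Lemma conjugate} asserts without proof that $g\circ\Psi\circ g^{-1}$ is uniformly contracting, but conjugation by $g$ multiplies each derivative by a factor that can be as large as $\max|g'|/\min|g'|$, so an iteration step such as yours is really needed. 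The Diophantine rate is obtained the same way in both treatments, via an effective (Berry--Esseen plus Breuillard) replacement of the Benoist--Quint limit theorems.

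Part (3), however, has a genuine gap in the step where you transfer the symbolic datum to the base. Theorem~\ref{Theorem main tech} gives you (via contraposition) a H\"older $\phi_\theta$ on $\mathcal{A}^{\mathbb{N}}$, and you propose to push it to a function $h$ on $K_\Phi$ ``using, in the $C^2$ case, that $K_\Phi$ is an interval so the coding map is injective off a countable set.'' This is false without a separation hypothesis, and Corollary~\ref{Main Corollary}(3) imposes none: if the images $f_i(K_\Phi)$ overlap on a set of positive measure, the coding map is many-to-one on an uncountable set, and nothing forces $\phi_\theta$ to factor through it. (Try $f_1(x)=x/2$, $f_2(x)=x/3+1/4$, $f_3(x)=x/2+1/2$ on $[0,1]$: $K_\Phi=[0,1]$ and the codings are far from unique.) Your $C^\omega$ fallback---``analyticity of the objects involved to handle overlaps''---does not identify a mechanism; the symbolic $\phi_\theta$ is only H\"older, not analytic, so analyticity of $\Phi$ alone does not rescue the factorization. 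The paper sidesteps the transfer altogether: it defines a second cocycle $c'(i,x)=-\log|f_i'(x)|$ directly on $\mathcal{A}\times I$ with Lipschitz targets, observes that the $n$-step random walk driven by $c'$ from $x_0$ has the same law as the one driven by the symbolic $c$, so aperiodicity of $c'$ already suffices to run the local limit theorem and hence to prove $\nu$ is Rajchman and pointwise normal. The contrapositive then yields $\Lambda_{c'}\neq\{0\}$, i.e. a \emph{Lipschitz function on $I$} satisfying the cohomological equation on $K$, with no passage through the symbolic space at all. From there the interval/analyticity hypotheses are used exactly as you describe---to bootstrap regularity and integrality of the constants and to build the conjugacy $\Psi(x)=\int_{x_0}^x e^{-\varphi(t)/\theta}\,dt$---so the remainder of your outline is sound. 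To repair the proposal you would need to replace your symbolic-to-base transfer with this base-cocycle argument (or supply another device for descending $\phi_\theta$ in the presence of overlaps).
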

We emphasize that no separation condition is imposed on $\Phi$. The  deduction of Corollary \ref{Main Corollary} from Theorem \ref{Theorem main tech} relies on the study of the derivative cocycle under these assumptions, and this  analysis is carried out in  Section \ref{Section coro}.

Part (1) of Corollary \ref{Main Corollary} should be compared with a result  \cite[Theorem 1.4]{hochmanshmerkin2015} of Hochman-Shmerkin: For a given $p$, if 
$$\Phi \text{ is regular and some element of } \left\lbrace  f ' \left( y \right) : \text{ where } f (y)=y,\quad f\in \Phi   \right\rbrace \text{ is independent of } p$$
then $\nu$ almost every point is $p$-normal for every self conformal measure $\nu$. Notice that the assumption in part (1) implies this arithmetic condition holds for every $p$. So, in its setting,  part (1) extends the result of Hochman-Shmerkin by removing the  separation assumption from the IFS.  Furthermore, \cite[Theorem 1.4]{hochmanshmerkin2015} remains true when pushing the measure $\nu$ forward via any real diffeomorphism $g$. Corollary \ref{Main Corollary} part (1), in contrast, remains true when pushing the measure forward via a $C^{1+\gamma}$ diffeomorphism, but we don't know if  this is true for $C^1$ diffeomorphisms. Finally, we remark that \cite[Theorem 1.4]{hochmanshmerkin2015} holds for a more general class of transformations and measures, and we will discuss this in Section \ref{Section proof sketch}.

The normality assertion of part (2) should be compared with another result \cite[Theorem 1.7]{hochmanshmerkin2015} of Hochman-Shmerkin, where they prove that for every self similar measure with respect to a regular self similar IFS, its push-forward under a non-affine $C^\omega$ map is pointwise absolutely normal. So, \cite[Theorem 1.7]{hochmanshmerkin2015} does not require the IFS to be aperiodic, but does require a more restrictive  regularity (separation) assumption. In addition, the smoothness assumption on the perturbing map in  part (2) is less restrictive than \cite[Theorem 1.7]{hochmanshmerkin2015}.  For example, let $\nu$ be any self conformal measure with respect to the aperiodic self similar IFS
$$\lbrace \frac{x}{2}, \frac{x+1}{3}, \frac{x+1}{5} \rbrace.$$
Corollary \ref{Main Corollary} part (2) implies that  $f(x)$ is absolutely normal for $\nu$ almost every $x$ and any diffeomorphism $f\in C^{1+\gamma} (\mathbb{R})$. Notice that since the IFS is not regular and the  map $f$ might not be $C^\omega$  \cite[Theorem 1.7]{hochmanshmerkin2015} does not apply in this situation.  On the other hand, consider  the Cantor-Lebesgue measure $\mu$ on the middle$-\frac{1}{3}$ Cantor set. By \cite[Theorem 1.7]{hochmanshmerkin2015}   $x^2$ is  absolutely normal for $\mu$ almost every $x$. Notice that since the underlying regular IFS 
$$\lbrace \frac{x}{3}, \frac{x+2}{3} \rbrace$$
is not aperiodic, Corollary \ref{Main Corollary} part (2) does not apply here.

The Rajchman assertion of part (2)  should be compared with a recent Theorem of Li-Sahlsten \cite[Theorem 1.2]{li2019trigonometric} (see also \cite{bremont2019rajchman}): They  proved that for an orientation preseving aperiodic self similar IFS  any  self-similar measure is a Rajchman measure.  So,  Part (2) extends the Li-Sahlsten Theorem to all $C^{1+\gamma}$ smooth images.   Corollary \ref{Main Corollary} part (2) also complements a classical Theorem of Kaufman \cite{Kaufman1984ber} that was later extended by Mosquera-Shmerkin   \cite{Shmerkin2018mos} (see also \cite{chang2017fourier}) about polynomial Fourier decay  for $C^2$ images of homogeneous (i.e. $r_i=r_j$ for all $i,j$) self similar measures. We thus partially answer a folklore open question (see e.g. \cite{sahlsten2020fourier}) about the existence of a Kaufman Theorem in the non homogeneous setting.

 The quantitative  assertion of part (2) should be compared with another Theorem of Li-Sahlsten \cite[Theorem 1.3]{li2019trigonometric} where a similar logarithmic decay rate was obtained, but under a different Diophantine assumption. In Section \ref{Section Moser} we will give a family of Diophantine IFS's  that do not satisfy the conditions of \cite[Theorem 1.3]{li2019trigonometric}  since $\frac{\log |r_i|}{\log |r_j|}$ is either rational or a Liouville number for all $i,j$. Thus, via Corollary \ref{Main Corollary} part (2) we obtain many new examples of self similar measures with logarithmic Fourier decay.  Finally,  we will discuss the problem of getting an effective decay rate for non-linear IFS's in Section \ref{Section proof sketch}.

The Rajchman question for self similar measures is a classical problem that has received much attention over the years: Consider, for example, the family of Bernoulli convolutions $\lbrace \nu_r \rbrace_{r\in (0,1)} $: For every $0<r<1$ we define the self similar IFS $\lbrace r\cdot x -1, r \cdot x+1 \rbrace$ with the probability vector $\textbf{p}=(\frac{1}{2},\frac{1}{2})$. It is a fundemental problem to determine for which $r\in (\frac{1}{2},1)$ is $\nu_r$ is absolutely continuous.  A celebrated result of Erd\H{o}s \cite{Erdos1939ber} says that if $r^{-1}$ is a \textit{Pisot} number then $\nu_r$ is not a Rajchman measure and consequently is not absolutely continuous. Recall that a Pisot number is a real algebraic integer greater than one whose Galois conjugates all lie  inside the unit disc. Later, Salem \cite{Salem1943uniq}  completed the picture in terms of the Rajchman property, by showing that $\nu_r$ is not Rajchman only if $r^{-1}$ is Pisot (see also the related works of Piatetski-Shapiro \cite{Shapir1952uniq} and Salem and Zygmund \cite{Salem1955zyg}) . We remark that through some ground breaking recent papers (e.g. \cite{hochman2014self}, \cite{shmerkin2016furstenberg}, \cite{Bre2019Varju}, \cite{Varju2010dim} to name a few) the geometric properties of $\nu_r$ are now  far better understood. However, the question of absolute continuity remains open. More general self similar measures were studied by  Strichartz \cite{Stri1990self}, \cite{Stri1993self}: He proved that their Fourier transforms decay on average, with a recent large deviations estimate on this decay given by Tsujii \cite{Tsujii2015self} (see also \cite{Stri1993bself} for a related paper about self conformal measures). However, these papers do not establish the Rajchman property, since they exclude certain frequencies. 

Very recently, as  we have already mentioned, Li-Sahlsten \cite{li2019trigonometric} proved the Rajchman property in the presence of indepedent contractions.   In the complementary case, when all contractions are powers of some $r\in (0,1)$, Br\'{e}mont \cite{bremont2019rajchman} proved that  a  self-similar measure can fail to be Rajchman only if $r^{-1}$ is Pisot. In fact, Br\'{e}mont fully characterised the IFS up to affine conjugation, and this will play a crucial role later in this paper. Another proof of this fact was given by Varj\'{u}-Yu \cite{varju2020fourier}.   Finally, we note that Li and Sahlsten \cite{Li2020Sahl} also generalized their results to self affine measures in higher dimensions.

The problem of quantitative Fourier decay for self similar measures is also a classical one: For Bernoulli convolutions, it follows from the  works of Erd\H{o}s \cite{Erdos1940ber} and Kahane \cite{Kahane1971Ber} that $\nu_r$ has polynomial  decay outside a set of zero Hausdorff dimension (see also \cite{Dai2012ber}, \cite{Buf2014Sol} \cite{Dai2007Feng} for rates in some explicit examples of $r$). In the complementary case to the afformentioned effective result of  Li-Sahlsten \cite[Theorem 1.3]{li2019trigonometric}, when all contractions are powers of some $r\in (0,1)$,  Varj\'{u}-Yu  \cite{varju2020fourier} proved logarithmic decay as long as $r^{-1}$ is not a Pisot or a Salem number.  Finally, Solomyak \cite{solomyak2019fourier} has recently established polynomial decay for all  self-similar measures except for a zero Hausdorff dimensional exceptional set of contraction ratios.

Next, the normality assertion of Corollary \ref{Main Corollary} part (3)   is related to another Theorem of Hochman-Shmerkin \cite[Theorem 1.6]{hochmanshmerkin2015} , where a similar result is proved for regular  $C^\omega$ IFS's that contain non-affine maps. Apart from removing the separation assumption, the $C^{2}$ case and the classification of the conjugated IFS as in part (3) seem to be completely new.  

The Rajchman assertion of part (3)  gives many new examples of self conformal Rajchman measures. It also  provides a unified proof to several pre-existing results regarding the Rajchman property for $C^\omega$ IFS's that are not conjugate to a self similar IFS. These include those of   Sahlsten-Stevens \cite{sahlsten2020fourier} (for a class of regular $C^\omega$ self-conformal measures that are not conjugate to linear), and in some cases those of Li \cite{li2018fourier, Li2018decay} (Furstenberg measures for $SL(2,\mathbb{R})$ cocycles under mild assumption - see \cite{Yoccoz2004some, Avila2010jairo} for conditions  ensuring that such measures satsify the conditions of part (3)). Part (3) is also closley related to the work of Bourgain-Dyatlov \cite{Bour2017dya} (who study Patterson-Sullivan measures for convex cocompact Fuchsian groups, see also \cite{li2019kleinian}).   However, we do not recover the  polynomial decay rate proved in \cite{Bour2017dya, li2018fourier, sahlsten2020fourier}.  Finally, generlizing the work of Kaufman \cite{Kaufman1980normal} and later Queff\'{e}lec-Ramar\'{e} \cite{Queff2003Ramar}, Jordan-Sahlsten  \cite{Sahl2016Jor} and later Sahlsten-Stevens \cite{sahlsten2018fourier} proved polynomial  decay for certain Gibbs measures for the Gauss map $x\mapsto \frac{1}{x} \mod 1$ on the interval. These can be considered as self-conformal measures with respect to an IFS with countably many maps, see also \cite[Theorem 1.12]{hochmanshmerkin2015}.

\subsection{Dimension of absolutely normal numbers inside self similar sets}
Let us now specialize to self similar sets. Let $\lambda$ denote the Lebesgue measure on $\mathbb{R}$, and recall that Borel's normal number Theorem asserts that $\lambda$-a.e. $x$ is absolutely normal. However, Borel's Theorem gives no information about absolutely normal numbers inside sets that are Lebesgue null. The following rigidity result says that  absolutely normal numbers  have full Hausdorff dimension inside a given self similar set, unless the underlying IFS has a very specific structure. We use the standard notation $\dim X$ for the Hausdorff dimension of a set $X$, and 
$$\dim \mu = \inf \lbrace \dim X : \mu(X)>0 \rbrace$$
for the (lower) Hausdorff dimension of a Borel probability measure $\mu$.  Now, given an orientation preserving self similar IFS $\Phi$ let  $\Phi_1 := \Phi$, and for every integer $m\geq 2$,
$$\Phi_m := \lbrace g: g= \phi_1 \circ ... \circ \phi_\ell, \text{ such that } \phi_i \in \Phi, g'(0)<\frac{1}{m} \text{ and }  \left( \phi_1 \circ ... \circ \phi_{\ell-1} \right)' (0) \geq \frac{1}{m} \rbrace.$$
Observe that for every $m\in \mathbb{N}$, $K_{\Phi_m} = K_\Phi$, i.e. all these IFS's have the same attractor as $\Phi$.
\begin{theorem} \label{Theorem 1}
Let $\Phi$ be an orientation preserving self similar IFS with attractor $K$. If
\begin{equation} \label{Eq condition}
\dim K\cap \lbrace x:\, x \text{ is  absolutely normal } \rbrace < \dim K
\end{equation}
then there exists some $m\in \mathbb{N}$ such that the IFS $\Phi_m = \lbrace g_i (x) = r_i x + t_i \rbrace $ has the following structure:
\begin{enumerate} [label=(\roman*)]
\item There is an integer $n\geq 2$ such that every $r_i = n^{-k_i}$ and the $k_i \in \mathbb{N}$ are relatively prime.

\item Every $t_i$ is not $n$-normal. If there exist $r_k \neq r_j$ and $\Phi_m$ is regular then every $t_i \in \mathbb{Q}$. 
\end{enumerate}
If $\Phi$ admits a self similar measure $\mu$ with $\dim \mu = \dim K$ then this holds for $m=1$. That is, the original IFS $\Phi$ already has this structure.
\end{theorem}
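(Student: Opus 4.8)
The plan is to argue by contraposition. Assume $\Phi$ fails the stated conclusion for every $m$; we must show that the absolutely normal numbers have full Hausdorff dimension in $K$. The driving tool is Theorem~\ref{Theorem main tech}: since $K_{\Phi_m}=K$ for all $m$, and since $\dim K$ is approximated arbitrarily well by the Hausdorff dimensions of the self similar measures of the $\Phi_m$ (all of which share the attractor $K$), it is enough to produce, for each $\varepsilon>0$, some $m$ and some self similar measure $\nu$ of $\Phi_m$ with $\dim\nu>\dim K-\varepsilon$ whose derivative cocycle satisfies $\Lambda_c=\{0\}$; then $\nu$ is carried by absolutely normal numbers, so $\dim\bigl(K\cap\{x:\ x\text{ absolutely normal}\}\bigr)\ge\dim K-\varepsilon$. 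When a self similar measure of $\Phi$ already attains $\dim K$ this can be arranged with $m=1$, which is the last assertion. Because each $\Phi_m$ is self similar its derivative cocycle is locally constant, and a short cohomological computation (the one underlying Corollary~\ref{Main Corollary}(1)) shows that for a locally constant cocycle $\Lambda_c=\{0\}$ holds exactly when the set of logarithmic contraction ratios $\{\log|r_w|:w\in\Phi_m\}$ lies in no coset $t+\rho\mathbb{Z}$.

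\textbf{The aperiodic case.} Suppose $\log|r_{i_0}|/\log|r_{j_0}|\notin\mathbb{Q}$ for some $i_0\ne j_0$. For $m$ large the additive semigroup generated by $\{\log|r_i|\}$ is $\varepsilon$-dense below the level $-\log m$, so $\Phi_m$ contains a ``borderline'' first passage word $u$, with $|r_u|$ barely exceeding $1/m$, such that $ua\in\Phi_m$ for every symbol $a$; exploiting the irrationality above one can in fact choose two such borderline words $u,u'$ of \emph{different lengths}. The three elements $ua,ub,u'a\in\Phi_m$ (with $a\ne b$) then have logarithmic contraction ratios whose successive differences are $\log|r_a|-\log|r_b|$ and $\log|r_u|-\log|r_{u'}|$, and the ratio of these two reals is irrational precisely because $|u|\ne|u'|$. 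Hence $\{\log|r_w|:w\in\Phi_m\}$ lies in no coset, $\Lambda_c=\{0\}$ for $\Phi_m$, and Theorem~\ref{Theorem main tech} applies to all of its self similar measures. In particular \eqref{Eq condition} forces $\Phi$ to be periodic.

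\textbf{The periodic case.} Now $\log|r_i|=m_i\log\beta$ with $\beta\in(0,1)$, $\gcd(m_i)=1$, and each $\Phi_m$ is again periodic with the same $\beta$, so $\Lambda_c\ne\{0\}$ throughout and external input is needed. If $\beta^{-1}$ is not a root of any integer then no integer base resonates with $\beta$ (a relation $\log p/\log\beta\in\mathbb{Q}$ would make $\beta^{-1}$ a rational power, hence a root, of an integer); when moreover $\beta^{-1}$ is neither Pisot nor Salem, every self similar measure has logarithmic Fourier decay by Varj\'u--Yu, so the Davenport--Erd\H{o}s--LeVeque criterion yields pointwise absolute normality, and when $\beta^{-1}$ is Pisot or Salem one instead gets $p$-normality to each base by a Fourier/equidistribution argument along the sparse set of non-resonant frequencies (using Hochman--Shmerkin when $\Phi$ is regular). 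If $\beta^{-1}$ is a root of an integer but is not itself an integer then $\beta^{-1}$ is again neither Pisot nor Salem, and the same Varj\'u--Yu plus Davenport--Erd\H{o}s--LeVeque argument finishes. This leaves only $\beta=1/N$ for an integer $N\ge2$. Here the self similar measures of the $\Phi_m$ have contraction ratios that are integer powers of $N$ and are tightly linked to the $\times N$ (equivalently $\times c$, writing $N=c^{e}$ with $c$ not a proper power) dynamics; by Host's theorem a near-maximal-dimension such measure is $p$-normal for every $p$ multiplicatively independent of $N$, while for the resonant bases --- the powers of $c$ --- either the measure is again normal, or its $c$-adic digit structure degenerates, and Br\'emont's classification of the Pisot case matches this degeneration with the rigid form (i)--(ii) of a suitably chosen $\Phi_m$ (and supplies the rationality of the $t_i$ when $\Phi_m$ is regular and non-homogeneous).

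The step I expect to be the main obstacle is this last sub-case $\beta=1/N$: establishing normality to bases multiplicatively dependent on $N$, where none of the harmonic analysis behind Theorem~\ref{Theorem main tech} is available, forces one to run the argument through $\times N$-invariance and Host's theorem and to dovetail it with Br\'emont's explicit classification, all the while choosing $\Phi_m$ so as to both repair the arithmetic (make the exponents $k_i$ coprime) and keep the dimension close to $\dim K$. The subsidiary facts --- that ``some $\Phi_m$ has non-coset logarithmic ratios'' is equivalent to aperiodicity of $\Phi$, and that $\dim K$ is approximated through the $\Phi_m$ --- are routine but deserve care.
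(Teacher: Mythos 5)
Your proposal diverges substantially from the paper's proof and contains real gaps. The paper's central tool for this theorem is Theorem~\ref{Prop key 1} --- that a Rajchman self-similar measure is automatically pointwise absolutely normal, with \emph{no rate of decay required}. This single observation collapses most of the case analysis you are attempting: by Claim~\ref{Claim var princ} one finds some $\Phi_m$ carrying a non-pointwise-absolutely-normal self-similar measure $\mu$; by Theorem~\ref{Prop key 1}, $\mu$ is not Rajchman; by Li--Sahlsten \cite[Theorem~1.2]{li2019trigonometric}, $\Phi_m$ cannot be aperiodic; and then Br\'emont's classification \cite{bremont2019rajchman} of the non-Rajchman periodic case kicks in immediately. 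You never use Theorem~\ref{Prop key 1}, and as a result you have to fall back on the Davenport--Erd\H{o}s--LeVeque criterion, which needs a quantitative decay rate. That route leaves you stuck precisely where the problem is hardest.

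Concretely, in your ``periodic case'': (a) when $\beta^{-1}$ is a non-integer Pisot (or Salem) number and $\Phi$ is \emph{not} regular, you have no argument --- you write that one ``instead gets $p$-normality \dots using Hochman--Shmerkin when $\Phi$ is regular,'' but Hochman--Shmerkin requires regularity, and without it nothing is given. The paper handles this by Lemma~\ref{Approx Lemma}: extract a regular homogeneous sub-IFS with strong separation whose attractor is nearly full-dimensional, to which Hochman--Shmerkin applies, forcing $r^{-1}$ to be an integer. (b) Your final sub-case $\beta=1/N$ is a sketch, not a proof: you gesture at Host's theorem and Br\'emont but never actually derive the rigid form (i)--(ii) of a specific $\Phi_m$. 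In the paper this is Section~\ref{Section structure}: one shows the affine conjugating map $h(x)=cx+d$ must have $c\in\mathbb{Q}$ and $d$ not $n$-normal by appealing to Dayan--Ganguly--Weiss \cite{dayan2020random} (their Theorems~4 and~7), combined with Wall's theorem on rational affine images of normal numbers, Hochman--Shmerkin for bases independent of $n$, and Peres--Simon for upgrading positive Hausdorff measure to the open set condition. None of this appears in your proposal, so assertion (ii) of Theorem~\ref{Theorem 1} is entirely unaddressed. Finally, your aperiodic case is over-engineered: rather than your combinatorial construction of ``borderline words'' in $\Phi_m$, the paper simply invokes Li--Sahlsten to get Rajchman and then Theorem~\ref{Prop key 1}, sidestepping any need to verify $\Lambda_c=\{0\}$ for a specific $\Phi_m$.
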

For example, if $\Phi$ is regular and admits two different contraction ratios, then  \eqref{Eq condition} implies that all contraction ratios are powers of some integer $n\geq 2$ and all translations are rational. In general, if $\Phi$  does not have exact overlaps (i.e. the semi-group generated by its maps is free), and if the exact overlaps conjecture \cite{Simon1993overlaps} holds true, then one can always find a self similar measure $\mu$ with $\dim \mu = \dim K$. Hochman \cite[Theorem 1.1]{hochman2014self} verified this conjecture under very weak regularity conditions, that hold true if e.g. all parameters of $\Phi$ are algebraic \cite[Theorem 1.5]{hochman2014self} (Rapaport \cite{rapaport2020proof} recently showed that it suffices to assume only the contraction ratios are algebraic). The conjecture was also fully resolved for Bernoulli convolutions  by the combined efforts of Hochman \cite{hochman2014self}, Breuillard-Varj\'{u} \cite{Bre2019Varju}, and finally Varj\'{u} \cite{Varju2010dim} (see also \cite{rapaport2020self}). Thus, in all the cases listed above, if $\Phi$ does not satisfy (i) and (ii) as in Theorem \ref{Theorem 1}, then
$$\dim K\cap \lbrace x:\, x \text{ is  absolutely normal } \rbrace = \dim K .$$
It is   interesting to compare this with a  result \cite{Broderick2010Weiss} of Broderick et al.  (that extends a Theorem of Schmidt \cite{Schmidt1966normal}): The set of real numbers not normal to any
integer base  intersects \textit{any} infinite self-similar in a set of full Hausdorff dimension in the fractal. 

The proof of Theorem \ref{Theorem 1} relies on the following Theorem, which is a bi-product of the argument proving Theorem \ref{Theorem main tech}:
\begin{theorem} \label{Prop key 1}
Let $\mu$ be a self similar measure. If $\mu$ is a  Rajchman measure  then it is pointwise absolutely normal.
\end{theorem}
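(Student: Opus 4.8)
The plan is to reduce pointwise absolute normality of a Rajchman self-similar measure $\mu$ to a countable collection of Fourier-decay statements, following the martingale scheme of Hochman–Shmerkin \cite[Theorem 2.1]{hochmanshmerkin2015} referenced in the sketch of Theorem \ref{Theorem main tech}(2). Fix an integer base $p \geq 2$ and a $\mu$-typical point $x$. By Weyl's criterion it suffices to show, for each fixed nonzero integer $\ell$, that the Birkhoff averages $\frac{1}{N}\sum_{k=1}^{N} e^{2\pi i \ell p^k x}$ tend to $0$ for $\mu$-a.e. $x$. The Davenport–Erd\H{o}s–LeVeque criterion turns this into a bound on $\frac{1}{N^2}\sum_{j,k\le N}\int e^{2\pi i \ell (p^k - p^j) x}\, d\mu(x)$, i.e. into decay of $\mathcal F_q(\mu)$ along the lacunary-difference frequencies $q = \ell(p^k - p^j)$. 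The subtlety is that a single Rajchman hypothesis gives $\mathcal F_q(\mu)\to 0$ but with no rate, whereas the DELV sum a priori needs a summable rate; so a cruder direct argument is needed, and this is the heart of the matter.

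The way around the missing rate is the martingale/magnification argument: instead of feeding the global Fourier decay of $\mu$ into DELV, one disintegrates $\mu$ along the cylinder structure of the self-similar IFS, writing $\mu$ as an average of rescaled copies of itself supported on tiny cylinders $f_{\omega|_m}(K)$. Conditioning on the level-$m$ cylinder and magnifying back to unit scale by the affine map $f_{\omega|_m}^{-1}$, the conditional measure is again (a translate/dilate of) $\mu$, and the frequency $q$ transforms to $q\cdot r_{\omega|_m}$ where $r_{\omega|_m}=\prod r_{\omega_i}$ is the contraction ratio of the cylinder. The key point is that as $m$ ranges over $\mathbb N$ and $q=\ell(p^k-p^j)\to\infty$, one can choose $m=m(q)$ so that $q\cdot r_{\omega|_m}$ stays in a fixed compact annulus bounded away from $0$ and $\infty$; on that annulus $|\mathcal F_{(\cdot)}(\mu)|$ attains a maximum strictly less than $1$ (using that $\mu$ is non-atomic, hence $|\mathcal F_t(\mu)|<1$ for $t\neq 0$, together with continuity and the Rajchman property to control large $t$). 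Iterating this contraction down a dyadic-like scale hierarchy — formalized exactly as in the proof of Theorem \ref{Theorem main tech}(2) via the conditional local limit Theorem \ref{Theorem lin equid} that tracks how the magnified frequency equidistributes across cylinders — yields that the relevant $L^2(\mu)$ norms of the Weyl sums decay, without ever needing a quantitative global rate. In the self-similar case the linearization step is trivial (each $f_i$ is already affine), so this part of the argument is genuinely simpler than in Theorem \ref{Theorem main tech}.

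I would organize the write-up as: (1) reduce to Weyl sums for a fixed base $p$ and fixed frequency $\ell$, and invoke the Hochman–Shmerkin martingale lemma to reduce to a uniform-in-$m$ estimate on $T_p^m$-magnifications of $\mu$ along cylinders; (2) express each magnified piece as an affine image of $\mu$ and identify the transformed frequency $q\,r_{\omega|_m}$; (3) use $|\mathcal F_t(\mu)|<1$ for $t\ne 0$ plus the Rajchman property to get a uniform contraction factor $<1$ on a compact frequency annulus, and use a pigeonhole/scale-matching to always land in that annulus; (4) close the induction using Theorem \ref{Theorem lin equid} (or its specialization to self-similar IFS) to average the contraction over cylinders and conclude the Birkhoff averages vanish a.e. The main obstacle is step (3)–(4): ensuring the magnified frequencies equidistribute across the cylinder partition well enough that the per-scale contraction compounds — this is precisely where the conditional local limit theorem is indispensable, since for periodic contraction ratios the magnified frequency could in principle avoid the good annulus, and it is the aperiodicity hidden inside the Rajchman hypothesis (via Br\'emont's classification, or directly via $\Lambda_c=\{0\}$ being forced by $\mu$ being Rajchman in the self-similar setting) that rules this out.
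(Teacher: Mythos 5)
Your steps (1)–(2) — the Hochman–Shmerkin martingale reduction and the observation that in the self-similar case each magnified cylinder piece is an affine image of $\nu$ — are exactly right and match the paper's setup (Theorem \ref{Equi via martingale} and equation \eqref{Eq linear}). But you then go off course. The paper's proof is much shorter than your steps (3)–(4): it needs no conditional local limit theorem, no iteration of a contraction factor, no pigeonhole/scale-matching, no aperiodicity, no Br\'{e}mont, and no Davenport--Erd\H{o}s--LeVeque. You are solving a problem the martingale argument has already dissolved.

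Here is the observation you are missing. In the martingale decomposition, the depth $m$ is not a free parameter to be matched against $q$; it is pinned to the stopping time $\beta_n(\omega)=\min\{m:|f'_{\omega|_m}(0)|<p^{-n}\}$, which is chosen precisely so that $T_p^n$ undoes the contraction of $f_{\omega|_{\beta_n(\omega)}}$. Consequently the magnified slope $r(\omega,n):=p^n\, f'_{\omega|_{\beta_n(\omega)}}(0)$ lies in a fixed interval $[C_0,1]$ with $C_0>0$ a uniform constant (depending only on $\Phi$). Since $q\in\mathbb{Z}$ and the IFS is affine, the integer part in $T_p^n\circ f_{\omega|_{\beta_n(\omega)}}(x)=r(\omega,n)x+(\text{translate})-m_{x,n}$ is killed by $\mathcal F_q$, giving $\bigl|\mathcal{F}_q\bigl(T_p^n\circ f_{\omega|_{\beta_n(\omega)}}\nu\bigr)\bigr|=\bigl|\mathcal{F}_{q\,r(\omega,n)}(\nu)\bigr|$. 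Now $|q\,r(\omega,n)|\ge C_0|q|\to\infty$ uniformly in $n$ and $\omega$, so the Rajchman property applies \emph{directly}: for $|q|$ large this is $<\epsilon$, uniformly. That single step is the whole content of the paper's Proposition \ref{Prop key}; combined with the martingale theorem and the lacunary reduction to large integer frequencies (Theorem \ref{Lemma sufficient}), the proof closes immediately.

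So the concrete gap is in your step (3): you propose to ``land in a fixed compact annulus'' and extract a contraction $<1$ there, and in step (4) to iterate via Theorem \ref{Theorem lin equid}. Neither is needed, and both would introduce spurious hypotheses — in particular your final sentence, claiming aperiodicity ``hidden inside the Rajchman hypothesis'' is essential to rule out the magnified frequency avoiding a good region, is false for this theorem. The magnified frequency cannot avoid the good region $\{|t|\ge C_0|q|\}$ because $r(\omega,n)$ is bounded below \emph{by construction of the stopping time}, not by any arithmetic property of the contraction ratios. Theorem \ref{Prop key 1} holds for every Rajchman self-similar measure, periodic or not; aperiodicity, Br\'{e}mont's classification, and the local limit theorems enter the paper elsewhere (in Theorem \ref{Theorem main tech} and Theorem \ref{Theorem 1}), not here.
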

This is a significant improvement to the  Davenport-Erd\H{o}s-LeVeque criterion, since no decay rate is required. Once Theorem \ref{Prop key 1} is established, we proceed to prove Theorem \ref{Theorem 1} by first showing that, under its assumptions, there is some $m$ and a self similar  measure on $\Phi_m$ that is not Rajchman. We then combine the results of Li-Sahlsten \cite{li2019trigonometric} and those of  Br\'{e}mont \cite{bremont2019rajchman} to prove that $\Phi_m$ is affinely  conjugated to an IFS in so-called Pisot form \cite[Definition 2.2]{bremont2019rajchman}. We then show that the underlying Pisot number is in fact an integer, and  characterize  the affine conjugating map  by appealing to the recent results of Dayan-Ganguly-Weiss \cite{dayan2020random}. See Section \ref{Section proof thm 1} for more details.

\subsection{Some  further remarks} \label{Section proof sketch}
It is natural to ask for a condition similar to \eqref{Eq Dio condition} that would yield a quantitative decay rate for the Fourier transform of self conformal measures with respect to non-linear IFS's. To this end we require effective versions of the central and local limit Theorem for cocycles from \cite{Benoist2016Quint}, that are used to prove the Rajchman property. Now, in the self-similar case the random walk driven by the derivative cocycle is a classical random walk on the line. Thus, here we can substitute the central limit Theorem \cite[Theorem 12.1]{Benoist2016Quint}  for the   Berry-Esseen inequality \cite{Berry1942Esseen}, and the local limit Theorem  \cite[Theorem 16.15]{Benoist2016Quint} for Breuillard's effective local limit Theorem \cite[Th\'{e}or\`eme 4.2]{Breuillard2005llt}, which is why we require the Diophantine condition. Since the arguments of Breuillard \cite{Breuillard2005llt} and of Benoist-Quint \cite[Chapter 16]{Benoist2016Quint} are closely related, it might be possible to prove an effective local limit Theorem for the derivative cocycle under a suitable Diophantine condition. We plan to study this problem in the near future.

Also, the  method of Hochman-Shmerkin \cite{hochmanshmerkin2015} for pointwise normality   works for a broader class of   $\beta$ transformations of the form $T_\beta (x)= \beta x \mod 1$, when $\beta>1$  is a  Pisot number.    In its current form, our method seems less suitable to treat $T_\beta$ for non-integer $\beta$. The main issue is that the identity $T_p ^n = T_{p^n}$, which is used  several times in our proof (e.g. in Claim \ref{First linearization}) and is trivial for integers $p\geq 2$, is not true for $T_\beta$ when $\beta$ is not an integer. We remark that it might be possible to substantially refine our proof to get around this issue, and  leave this to future research.

The method of Hochman-Shmerkin also works for a broader class of measures. This class of measures is what they refer to as quasi-product measures, which is a more general class than Gibbs measures for H{\"o}lder potentials. We  expect that our results  extend to a broader class of measures as well, but  do not pursue this goal in the present paper.

Finally, the referee has suggested that by incorporating ideas from renewal theory into the proof of the conditional local limit Theorem \ref{Theorem equid}, one might be able to relax the aperiodicity assumption made in Theorem \ref{Theorem main tech} into a more arithmetic assumption. We plan to explore this possibility in  a future project.

\noindent{ \textbf{Organization}} After some preliminaries in Section \ref{Section pre},  the  local limit Theorems previously alluded to are formulated and proved in Section \ref{Section LLT}. The subsequent Sections \ref{Section proof 1} and \ref{Section proof 2} contain the proof of Theorem \ref{Theorem main tech} parts (1) and (2), respectively. After that, in Section \ref{Section coro} we derive Corollary \ref{Main Corollary} from Theorem \ref{Theorem main tech} and provide some examples of Diophantine IFS's. The final Section \ref{Section proof thm 1} contains the proof of Theorem \ref{Theorem 1}, and related constructions.

\noindent{ \textbf{Acknowledgements}}  The inspiration for this work came from Hochman's recent proof \cite{Hochman2020Host} of Host's equidistribution Theorem \cite{Host1995normal}.  We  thank Mike Hochman for providing us with a preprint of his work, and for some illuminating discussions about it. We also thank Tuomas Sahlsten and Connor Stevens  for interesting  discussions regarding the Rajchman property. We are grateful to P\'{e}ter Varj\'{u} and the anonymous referee for their helpful remarks, that in particular allowed us to weaken our previous definition of aperiodicity for self similar IFS's to its current form, and thus strengthen Corollary \ref{Main Corollary}. Finally, we thank the referee for suggesting an alternative proof of Theorem \ref{Theorem main tech} part (2) using the tools of this paper that is outlined in Remark \ref{Rmk refree}, and for pointing out a certain possible simplification of our Diophantine condition, discussed in Remark \ref{Remark Dio}.

\section{Preliminaries} \label{Section pre}
Throughout this section we work with an IFS as in Theorem \ref{Theorem main tech}, and follow the notation introduced in Section \ref{Section sketch}. We also use the notation $\mathcal{A}= \lbrace 1,...,n\rbrace$. By uniform contraction  there exists $D,D'\in \mathbb{R}$  such that
\begin{equation} \label{Eq C and C prime}
0<D:= \min \lbrace -\log |f' (x)| : f\in \Phi, x\in I \rbrace, \quad D':= \max \lbrace -\log |f' (x)| : f\in \Phi, x\in I \rbrace < \infty.
\end{equation}
Equivalently, for every $f\in \Phi$ and $x\in I$,
$$0< e^{-D'} \leq |f'(x)| \leq e^{-D} <1.$$
We also define, for $\mathbb{P}$ a.e. $\omega$ and any fixed $x_0 \in I$,
$$\chi:= \lim_{n} \frac{-\log |f_{\omega|_n}' (x_0) |}{n}>0$$
the corresponding Lyapunov exponent. 
\subsection{Basic geometry of self-conformal measures}
Here, we recall two useful and well known results. The first is the bounded distortion property, which holds in our situation since every $f_i$ is at least $C^{1+\gamma}$ smooth and strictly contracting. We refer e.g. to  \cite[The discussion about equation (1.3)]{Solomyak2000Peres} for more details.
\begin{theorem}  \label{Bounded distortiion theorem}
There exists some $L=L(\Phi)>1$ such that: for any $k\in \mathbb{N}$ and for any word $\eta \in \mathcal{A}^k$ 
$$ L^{-1} \leq \frac{\left|f_\eta ' (x) \right| }{ \left| f_\eta ' (y) \right| } \leq L,\quad \text{ for any } x,y\in I.$$
\end{theorem}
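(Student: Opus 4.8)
The plan is to reduce the multiplicative estimate to an additive one by taking logarithms and exploiting the chain rule, and then to control each resulting term using the $\gamma$-H\"older regularity of the functions $\log|f_i'|$ together with uniform contraction, which forces the relevant arguments to be exponentially close.

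First I would write $f_\eta = f_{\eta_1} \circ \cdots \circ f_{\eta_k}$ and, for $x,y \in I$, introduce the partial backward images $x_j := f_{\eta_{j+1}} \circ \cdots \circ f_{\eta_k}(x)$ and $y_j := f_{\eta_{j+1}} \circ \cdots \circ f_{\eta_k}(y)$ for $1 \le j \le k$ (with $x_k = x$, $y_k = y$), all of which lie in $I$ since every $f_i$ maps $I$ into itself. The chain rule gives $f_\eta'(z) = \prod_{j=1}^k f_{\eta_j}'(z_j)$ for $z \in \{x,y\}$, hence
$$\left| \log \frac{|f_\eta'(x)|}{|f_\eta'(y)|} \right| \le \sum_{j=1}^k \bigl| \log|f_{\eta_j}'(x_j)| - \log|f_{\eta_j}'(y_j)| \bigr|.$$

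Next I would observe that, since $\Phi$ is $C^{1+\gamma}$ and uniformly contracting, each $|f_i'|$ is $\gamma$-H\"older on $I$ and bounded away from $0$ and $\infty$ by \eqref{Eq C and C prime}, so each $\log|f_i'|$ is $\gamma$-H\"older on $I$; by finiteness of $\Phi$ and compactness of $I$ there is a common H\"older constant $C_0$ for $i \in \mathcal{A}$. Setting $s := \sup\{|f'(x)| : f \in \Phi,\ x \in I\} = e^{-D} \in (0,1)$, an iteration of the contraction bound gives $|x_j - y_j| \le s^{k-j}|x-y| \le s^{k-j}\,\diam I$. Combining the two displays,
$$\left| \log \frac{|f_\eta'(x)|}{|f_\eta'(y)|} \right| \le C_0 (\diam I)^\gamma \sum_{j=1}^k s^{\gamma(k-j)} \le \frac{C_0 (\diam I)^\gamma}{1 - s^\gamma} =: \log L,$$
a bound independent of $k$ and of $\eta$. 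Exponentiating yields both inequalities of the statement with $L = \exp\!\bigl(C_0 (\diam I)^\gamma/(1-s^\gamma)\bigr) > 1$.

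There is no genuine obstacle here; the only points that warrant care are the passage from $C^{1+\gamma}$ smoothness to a \emph{uniform} $\gamma$-H\"older bound on the finite family $\{\log|f_i'|\}_{i \in \mathcal{A}}$ (using compactness of $I$ and the lower bound $|f_i'| \ge e^{-D'} > 0$), and the convergence of the geometric series in the last display, which holds precisely because $s < 1$. Everything else is the standard telescoping via the chain rule, and the constant $L$ visibly depends only on $\Phi$ — through $\gamma$, the H\"older norms of the $f_i'$, the contraction bound $D$, and $\diam I$ — as asserted.
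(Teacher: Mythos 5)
Your proof is correct, and the approach — telescope the logarithm of the derivative via the chain rule, use $\gamma$-H\"older continuity of $\log|f_i'|$ (which follows from $C^{1+\gamma}$ smoothness and the uniform lower bound $|f_i'|\ge e^{-D'}$), and sum a geometric series whose ratio $s^\gamma<1$ comes from uniform contraction — is the standard one. The paper itself does not supply a proof of this theorem; it simply cites the discussion around equation (1.3) of Peres--Solomyak \cite{Solomyak2000Peres}, where exactly this argument appears. Worth noting: the paper's own Lemma~\ref{Lemma C1 lin}, stated and proved just afterwards, establishes the additive estimate $\big|\log|g'(x)|-\log|g'(y)|\big|\lesssim_\Phi |x-y|^\gamma$ by the same telescoping device, and Theorem~\ref{Bounded distortiion theorem} follows at once from Lemma~\ref{Lemma C1 lin} by bounding $|x-y|^\gamma\le(\diam I)^\gamma$ and exponentiating. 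So your argument in effect reproves Lemma~\ref{Lemma C1 lin} and then deduces the theorem, which is entirely sound; the only cosmetic difference is that you carry the decay factor $s^{\gamma(k-j)}$ explicitly rather than first recording the H\"older modulus and then specializing $|x-y|$ to $\diam I$.
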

The second is another standard result, about the non-atomicity of self-conformal measures. It follows from e.g. \cite[Proposition 2.2]{feng2009Lau}:
\begin{Lemma} \label{Lemma nu is continuous}
Let $\nu$ be a self conformal measure as in Theorem \ref{Theorem main tech}. Then $\nu$ is not atomic. That is, for every $\epsilon>0$ there is a $\delta>0$ such that for any $y\in I$,
$$\nu (B_\delta (y))<\epsilon$$
where $B_\delta (y)$ is the open ball about $y$ of radius $\delta>0$.
\end{Lemma}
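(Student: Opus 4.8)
The plan is to first prove that $\nu$ has no atoms at all, and then to deduce the uniform statement of the Lemma from this by a compactness argument; this follows the approach of \cite[Proposition 2.2]{feng2009Lau}, which I would adapt to our (possibly overlapping) IFS.

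For non-atomicity, I would iterate the self-conformality relation to obtain $\nu=\sum_{\eta\in\mathcal{A}^k}p_\eta\, f_\eta\nu$ for each $k$, where $p_\eta:=p_{\eta_1}\cdots p_{\eta_k}>0$ and $f_\eta:=f_{\eta_1}\circ\cdots\circ f_{\eta_k}$. Assume toward a contradiction that $\nu$ has an atom; since $\nu$ is finite, its atoms are countable with summable masses, so $c:=\sup_{y}\nu(\{y\})>0$, only finitely many atoms have mass exceeding $c/2$, and hence $c$ is attained, say at $x^\ast$. Because the IFS is uniformly contracting, each $f_i'$ is continuous and nonvanishing on $I$, so every $f_\eta$ is strictly monotone, and therefore $f_\eta^{-1}\{x^\ast\}$ is either empty or a single point of $\nu$-mass at most $c$. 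Evaluating the iterated relation at $\{x^\ast\}$ then gives
$$c=\sum_{\eta\in\mathcal{A}^k}p_\eta\,\nu\big(f_\eta^{-1}\{x^\ast\}\big)\le c\sum_{\eta:\,x^\ast\in f_\eta(I)}p_\eta\le c\sum_{\eta\in\mathcal{A}^k}p_\eta=c,$$
which (using that all $p_\eta>0$) forces $x^\ast\in f_\eta(I)$ for every $\eta\in\mathcal{A}^k$, and this for every $k$. Specializing $\eta$ to the length-$k$ constant word on a single letter $i$ and letting $k\to\infty$, the diameters $\diam\big(f_i^{\circ k}(I)\big)\le e^{-Dk}\diam(I)$ shrink to $0$ while the fixed point of $f_i$ always stays inside, so $x^\ast$ must equal the fixed point of $f_i$ — and this for every $i$. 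This contradicts the standing assumption that two of the maps have distinct fixed points, so $\nu$ is non-atomic.

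To conclude, I would argue by contradiction: if the assertion of the Lemma fails, then there are $\epsilon>0$ and points $y_k\in I$ with $\nu(B_{1/k}(y_k))\ge\epsilon$; by compactness of $I$, after passing to a subsequence $y_k\to y^\ast\in I$, and then for any fixed $\delta>0$ the inclusion $B_{1/k}(y_k)\subseteq B_\delta(y^\ast)$ for all large $k$ gives $\nu(B_\delta(y^\ast))\ge\epsilon$; letting $\delta\downarrow 0$ and using continuity of $\nu$ from above yields $\nu(\{y^\ast\})\ge\epsilon$, contradicting the non-atomicity just proved.

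I do not expect a serious obstacle here — the statement is soft. The one point that is not completely automatic is that, since no separation condition is imposed on $\Phi$, one cannot estimate $\nu(B_\delta(y))$ by counting the cylinders $f_\eta(I)$ that meet the ball; this is why the argument is organized around atoms, where the distinct-fixed-points hypothesis does the real work.
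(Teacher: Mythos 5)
Your argument is correct. The paper itself gives no proof here — it only cites \cite[Proposition 2.2]{feng2009Lau} — so yours is necessarily a ``different route'' in form, but it is the standard argument that underlies such citations, and every step checks out. The key points you need do all hold in the setting of Theorem~\ref{Theorem main tech}: each $f_i$ is $C^{1+\gamma}$ with non-vanishing derivative, so it (and every composition $f_\eta$) is strictly monotone, making $f_\eta^{-1}\{x^\ast\}$ a singleton or empty; the atoms of the finite measure $\nu$ are countable with summable masses, so $c=\sup_y\nu(\{y\})$ is attained whenever it is positive; the iterated self-conformality identity $\nu=\sum_{\eta\in\mathcal A^k}p_\eta f_\eta\nu$ is exact; and the equality case forces $x^\ast\in f_\eta(I)$ for all $\eta$, which, applied to constant words of a single letter and letting $k\to\infty$, pins $x^\ast$ to the fixed point of each $f_i$ simultaneously, contradicting the standing assumption that two maps in $\Phi$ have distinct fixed points. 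The final compactness step to pass from non-atomicity to the uniform small-ball bound is also correct. You also correctly identified the one genuinely non-automatic point: with no separation hypothesis, one cannot control $\nu(B_\delta(y))$ by counting cylinders, which is exactly why the argument is organized around the extremal atom instead. One could note for completeness that the non-atomicity step is where the hypothesis ``$K_\Phi$ is infinite'' (equivalently, two distinct fixed points) and ``$\mathbf p$ strictly positive'' are used, just as the paper flags in the remark following the Lemma; your proof makes that dependence explicit, which is a small expository advantage over the bare citation.
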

Notice that here we using our standing assumptions that $K_\Phi$ is infinite and that $\textbf{p}$ is a strictly positive probability vector on $\mathcal{A}=\lbrace 1,...,n\rbrace$.

\subsection{Linearization lemmas}
Recall that $\Phi$ is a family of differentiable contractions  $ I\to I$ satisfying
\begin{equation*}
 0<  e^{-D'}\leq\|f'\|_{C^0}\leq e^{-D}<1, \quad \|f\|_{C^{1+\gamma}}\leq C,\quad \forall f\in\Phi
\end{equation*} for some $D,D'>0, \gamma\in(0,1)$ and $C>0$. Define $$\Phi^{*n}:=\{\phi_1\circ\cdots\circ\phi_n:\quad\phi_1,\cdots,\phi_n\in\Phi\}.$$ 

We shall require the following $C^0$ linearization lemma:

\begin{Lemma}  \label{Lemma lin} For every $\beta\in(0,\gamma)$ there exists $\epsilon\in(0,1)$ such that for all $n\geq 1$, $g\in\Phi^{*n}$ and $x,y\in I$ satisfying $|x-y|<\epsilon$, $$\big|g(x)-g(y)-g'(y) (x-y)\big|\leq|g'(y)|\cdot|x-y|^{1+\beta}.$$\end{Lemma}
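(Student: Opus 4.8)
The plan is to exploit the bounded distortion property (Theorem 2.7) together with the chain rule to compare $g'$ at nearby points along the orbit, and then to telescope the error over the $n$ compositions. Write $g=\phi_1\circ\cdots\circ\phi_n$ with each $\phi_j\in\Phi$, and set $g_{\geq k}:=\phi_k\circ\cdots\circ\phi_n$ (so $g_{\geq 1}=g$ and $g_{\geq n+1}=\mathrm{id}$). Fix $x,y\in I$ with $|x-y|<\epsilon$, put $y_k:=g_{\geq k}(y)$ and $x_k:=g_{\geq k}(x)$, and note $x_k,y_k\in I$ with $|x_k-y_k|\leq e^{-(k-1)D}|x-y|$ by uniform contraction — and more importantly, this quantity is exponentially small. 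The mean value theorem gives $x_k-y_k=\phi_k'(\xi_k)(x_{k+1}-y_{k+1})$ for some $\xi_k$ between $x_{k+1}$ and $y_{k+1}$, while the target quantity is $g(x)-g(y)-g'(y)(x-y)$, which I would estimate by induction on the number of maps peeled off from the left.

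For the inductive step the key computation is: for a single map $\phi=\phi_1$ and points $u=g_{\geq 2}(x)$, $v=g_{\geq 2}(y)$, we have $\phi(u)-\phi(v)-\phi'(v)(u-v)=\int_0^1(\phi'(v+t(u-v))-\phi'(v))\,dt\,(u-v)$, whose absolute value is at most $\|\phi'\|_{C^\gamma}|u-v|^{1+\gamma}\leq C|u-v|^{1+\gamma}$ using $C^{1+\gamma}$ regularity; combined with $|u-v|\leq e^{-(n-1)D}|x-y|$ this contributes an error $\leq C e^{-(n-1)D\gamma}|x-y|^\gamma\cdot|u-v|$. Telescoping $g(x)-g(y)-g'(y)(x-y)=\sum_{k=1}^{n}g'_{\geq 1}\!\restriction\!(\text{stuff})$ — more precisely, writing $g'(y)=\prod_{j=1}^n\phi_j'(y_{j+1})$ and expanding via the chain rule — one gets a sum of $n$ terms, the $k$-th of which carries the single-map error at scale $|x_{k+1}-y_{k+1}|\leq e^{-kD}|x-y|$ multiplied by the product of derivatives $\prod_{j<k}$ of the outer maps, which by bounded distortion is comparable to $|g'(y)|/\prod_{j\geq k}|\phi_j'(\cdot)|$. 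Summing the resulting geometric-type series $\sum_k C L \, e^{-kD\gamma}|x-y|^\gamma$ yields a bound $\leq CLe^{-D\gamma}(1-e^{-D\gamma})^{-1}|g'(y)||x-y|^{1+\gamma}$ up to the distortion constant; choosing $\beta<\gamma$ and then $\epsilon$ small enough that $|x-y|^{\gamma-\beta}$ absorbs the constant $CL e^{-D\gamma}(1-e^{-D\gamma})^{-1}$ gives the claim uniformly in $n$.

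The main obstacle, and the point that needs care, is tracking the product of derivatives correctly so that the error at each level comes out proportional to $|g'(y)|$ rather than to something larger: the naive triangle-inequality bound would replace the derivative product by $1$ and lose the factor $|g'(y)|$, which is essential for the statement. This is exactly where bounded distortion (Theorem 2.7) enters — it lets me replace $g'$ evaluated at the various intermediate points $x_{k+1}$ versus $y_{k+1}$ by comparable quantities with a uniform constant $L$, so that the "head" $\prod_{j<k}\phi_j'$ and "tail" $\prod_{j\geq k}\phi_j'$ recombine (up to $L^{\pm1}$) into $g'(y)$. The other routine-but-necessary check is uniformity in $n$: because the per-level errors decay geometrically in $k$ with ratio $e^{-D\gamma}<1$, the sum is bounded independently of $n$, so a single $\epsilon$ works for all $n\geq1$. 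No new ideas beyond these are needed; everything else is the elementary estimate on $\phi(u)-\phi(v)-\phi'(v)(u-v)$ for a $C^{1+\gamma}$ map quoted above.
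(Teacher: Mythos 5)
Your telescoping argument is correct and gives a genuinely different, arguably cleaner, proof than the paper's. The paper's proof is an induction from the inside out (peeling off the outermost map $\phi$, writing $g=\phi\circ\tilde g$) with a carefully crafted decreasing exponent sequence $\beta_0=\gamma>\beta_1>\cdots\to\beta$, $\beta_n=\beta_{n-1}-b\kappa^{(n-1)\gamma}$, that absorbs the per-step error without ever invoking bounded distortion; uniform contraction alone supplies the geometric decay through $|\tilde g'(y)|^\gamma\leq\kappa^{(n-1)\gamma}$. You instead telescope
$g(x)-g(y)-g'(y)(x-y)=\sum_{k=1}^{n}\Bigl(\prod_{j<k}\phi_j'(y_{j+1})\Bigr)\bigl[\phi_k(x_{k+1})-\phi_k(y_{k+1})-\phi_k'(y_{k+1})(x_{k+1}-y_{k+1})\bigr]$, bound each bracket by $C|x_{k+1}-y_{k+1}|^{1+\gamma}$, and sum a geometric series. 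Your approach trades the delicate exponent bookkeeping for an appeal to bounded distortion (Theorem~2.7), yielding exponent $1+\gamma$ with an explicit constant that $\epsilon^{\gamma-\beta}$ then absorbs; the paper's approach is self-contained with respect to Theorem~2.7 and gives a constant-free inequality at exponent $1+\beta_n$ directly. Two small points worth tidying. First, your indexing is reversed: with $g_{\geq k}=\phi_k\circ\cdots\circ\phi_n$ one has $|x_k-y_k|\leq e^{-(n-k+1)D}|x-y|$ (small for \emph{small} $k$), not $e^{-(k-1)D}|x-y|$; this is harmless since the resulting sum $\sum_k e^{-(n-k)D\gamma}$ is still uniformly bounded, but as written the exponent decays in the wrong direction. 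Second, the place where bounded distortion is actually needed is not, as you say, to recombine the head $\prod_{j<k}\phi_j'(y_{j+1})$ with the tail $\prod_{j\geq k}\phi_j'(y_{j+1})$ into $g'(y)$ — that recombination is an exact identity since every factor is evaluated at the same base point $y_{j+1}$. Rather, bounded distortion is what lets you write $|x_{k+1}-y_{k+1}|\leq L\,|g_{\geq k+1}'(y)|\,|x-y|$ (the mean value theorem puts the derivative at an unknown intermediate point, and $L$ moves it to $y$). Without that step, bounding $|x_{k+1}-y_{k+1}|$ by $e^{-(n-k)D}|x-y|$ alone would leave you with a factor $e^{(n-k)(D'-(1+\gamma)D)}$ that need not be summable. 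With the correction made, your estimate closes and the resulting constant is of the form $CL^{1+\gamma}e^{D'}(1-e^{-D\gamma})^{-1}$; choosing $\epsilon$ so that this times $\epsilon^{\gamma-\beta}$ is at most $1$ finishes the proof, uniformly in $n$, exactly as you describe.
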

What Lemma \ref{Lemma lin} means is that for every $y\in B_\epsilon (x)$ the function $g$ may be approximated exponentially fast on $B_\epsilon(x)$ by an affine map with similarity ratio $g'(y)$.

\begin{proof}
For the purpose of this proof only, it will be convenient to use the notation 
\begin{equation*} 0< \kappa' :=e^{-D'} \leq\|\phi'\|_{C^0}\leq \kappa:= e^D <1, \quad  \forall \phi\in\Phi.\end{equation*}
Now, for all $x, y\in I$ and $\phi\in\Phi$, there is an intermediate value $z$ between $x$ and $y$ such that
\begin{equation}\label{EqLinearization1}\begin{aligned}\big|\phi(x)-\phi(y)-\phi'(y)  (x-y)\big|
=&\big|\phi'(z) (x-y)-\phi'(y) (x-y)\big|\\
\leq &  |\phi'(z) -\phi'(y) |  |x-y|
\leq C|z-y|^\gamma  |x-y|\\
\leq &C|x-y|^{1+\gamma}.\end{aligned}\end{equation}

Define a sequence $\beta_0>\beta_1>\beta_2>\cdots$ by $\beta_0=\gamma$, $\beta_n=\beta_{n-1}-b\kappa^{(n-1)\gamma}$, where the constant $b:=(1-\kappa^\gamma)(\gamma-\beta)>0$ is chosen such that $\lim\beta_n=\beta$.  Choose $\epsilon\in(0,\frac1e)$ sufficiently small, such that
\begin{equation}\label{EqLinearizationRestraint0}\frac{4C}{\kappa'}\epsilon^{\gamma-\beta_1}<\min(1-\frac1e,\frac \beta2).\end{equation}

We will  prove inductively that:

{\it If $n\geq 0$, $|x-y|<\epsilon$ and $g\in\Phi^n$, then
\begin{equation}\label{EqLinearizationInductive}\big|g(x)-g(y)-g'(y)(x-y)\big|\leq |g'(y)|\cdot|x-y|^{1+\beta_n}.\end{equation}}

For the $n=0$ case, assume $|x-y|<\epsilon$ and $g\in\Phi^{*0}=\{\mathrm{Id}\}$, then  $|g(x)-g(y)-g'(y)(x-y)\big|=|(x-y)-(x-y)|=0$ and \eqref{EqLinearizationInductive} holds.

\hide{
For the $n=1$ case, assume $|x-y|<\epsilon$ and $\phi\in\Phi=\Phi^{*1}$, then by \eqref{EqLinearizationRestraint0} and \eqref{EqLinearization1}, \begin{equation}\begin{aligned}&\big|\phi(x)-\phi(y)-(D_{y}\phi) (x-y)\big|\\
\leq &C|x-y|^{1+\gamma}=C|x-y|^{\gamma-\beta_1}\cdot |x-y|^{1+\gamma_1}\\
\leq& C\epsilon^{\gamma-\beta_1}\cdot |x-y|^{1+\beta_1}
\leq \kappa_1 |x-y|^{1+\beta_1}\\
\leq& |\phi'(y)|\cdot|x-y|^{1+\beta_1}.\end{aligned}\end{equation}}

Assume $n\geq 1$ and the lemma holds for $n-1$. Suppose $|x-y|=\delta<\epsilon$ and $g\in\Phi^{*n}$. Then $g=\phi\circ \tilde g$ where $\phi\in\Phi$ and $\tilde g\in\Phi^{*(n-1)}$, and \begin{equation}\label{EqLinearization2}\big|\tilde g(x)-\tilde g(y)-\tilde g'(y) (x-y)\big|\leq|\tilde g'(y)| \delta^{1+ \beta_{n-1}}.\end{equation}
 In particular, it follows that
\begin{equation}\label{EqLinearization3} |\tilde g(x)-\tilde g(y)|\leq |\tilde g'(y)| \delta(1+\delta^{\beta_{n-1}}).\end{equation}

Combining \eqref{EqLinearization1} and \eqref{EqLinearization3}, we get
\begin{equation}\label{EqLinearization4}\begin{aligned}&\big| g(x)-  g(y)-g'(y) (x-y)\big|\\
=&\big|\phi(\tilde g(x))-\phi(\tilde g(y))-\phi'(\tilde g(y))\tilde g'(y)(x-y)\big|\\
\leq&\big|\phi(\tilde g(x))-\phi(\tilde g(y))-\phi'(\tilde g(y))(\tilde g(x)-\tilde g(y))\big|\\
&\quad + |\phi'(\tilde g(y))|\cdot\big|\tilde g(x)-\tilde g(y)-\tilde g'(y)(x-y)\big|\\
\leq&C|\tilde g(x)-\tilde g(y)|^{1+\gamma}+|\phi'(\tilde g(y))|\cdot |\tilde g'(y)| \delta^{1+ \beta_{n-1}}\\
\leq&C\big(|\tilde g'(y)| \delta(1+\delta^{\beta_{n-1}})\big)^{1+\gamma}+|g'(y)| \delta^{1+ \beta_{n-1}}.\end{aligned}\end{equation}

Because $0<\delta<\epsilon<1$, we know \eqref{EqLinearization4} is bounded by:\begin{equation}\label{EqLinearization5}\begin{aligned}&\big| g(x)-  g(y)-(D_{y}g) (x-y)\big|\\
\leq &4C|\tilde g'(y)|\cdot|\tilde g'(y)|^\gamma \delta^{1+\gamma}+|g'(y)| \delta^{1+ \beta_{n-1}}\\
=&\Big(4C|\phi'(\tilde g(y))|^{-1}|\tilde g'(y)|^\gamma \delta^{\gamma-\beta_n}+\delta^{\beta_{n-1}-\beta_n}\Big)|g'(y)| \delta^{1+ \beta_n}\\
\leq&\Big(\frac{4C}{\kappa'}\kappa^{(n-1)\gamma}\delta^{\gamma-\beta_n}+\delta^{\beta_{n-1}-\beta_n}\Big)|g'(y)| \delta^{1+ \beta_n}.\end{aligned}\end{equation}

To complete the induction, it suffices to prove 
\begin{equation}\label{EqLinearization6}\frac{4C}{\kappa'}\kappa^{(n-1)\gamma}\delta^{\gamma-\beta_n}+\delta^{\beta_{n-1}-\beta_n}\leq 1.\end{equation}

We distinguish between the cases $\log\frac1\delta\geq\frac1{\beta_{n-1}-\beta_n}$ and  $\log\frac1\delta<\frac1{\beta_{n-1}-\beta_n}$. 

If $\log\frac1\delta\geq\frac1{\beta_{n-1}-\beta_n}$, then $\delta^{\beta_{n-1}-\beta_n}\leq\frac1e$.  Moreover, by \eqref{EqLinearizationRestraint0}, $\frac{4C}{\kappa'}\kappa^{(n-1)\gamma}\delta^{\gamma-\beta_n}\leq\frac{4C}{\kappa'}\epsilon^{\gamma-\beta_1}<1-\frac1e$. Thus \eqref{EqLinearization6} holds in this case.

Assume now $\log\frac1\delta<\frac1{\beta_{n-1}-\beta_n}$. Since $e^{-t}\leq 1-\frac12t$ on $[0,1]$, we know $$\begin{aligned}\delta^{\beta_{n-1}-\beta_n}=&e^{-(\log\frac1\delta) (\beta_{n-1}-\beta_n)}\\
\leq&1-\frac12(\log\frac1\delta) (\beta_{n-1}-\beta_n)\leq 1-\frac12(\beta_{n-1}-\beta_n)
=1-\frac 12b\kappa^{(n-1)\gamma}\\
\leq & 1-\frac{4C}{\kappa'}\kappa^{(n-1)\gamma}\epsilon^{\gamma-\beta_1}
\leq 1-\frac{4C}{\kappa'}\kappa^{(n-1)\gamma}\delta^{\gamma-\beta_n}.  \end{aligned}$$
Here we used the facts that $0<\delta<\epsilon<\frac1e$, $\beta_n\leq \beta_1<\gamma$ and assumption \eqref{EqLinearizationRestraint0}. Hence \eqref{EqLinearization6} holds in this case as well. 

We have established the inductive statement \eqref{EqLinearizationInductive}. As $\beta<\beta_n$ and $|x-y|<\epsilon<1$, the lemma then follows.\end{proof}

An important ingredient in the proof of the second local limit Theorem \ref{Theorem lin equid}, discussed in Section \ref{Section LLT}, is the following (much easier) $C^1$ counterpart:

\begin{Lemma} \label{Lemma C1 lin} For all $n\geq 1$, $g\in\Phi^{*n}$ and $x,y\in I$, $$\big|\log|g'(x)|-\log|g'(y)|\big|\lesssim_\Phi |x-y|^ \gamma.$$\end{Lemma}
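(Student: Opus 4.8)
The plan is to exploit the bounded distortion property (Theorem \ref{Bounded distortiion theorem}) together with a telescoping decomposition of $g$ into its one-step factors, and then use the $C^{1+\gamma}$ smoothness of the individual maps in $\Phi$. Write $g=\phi_1\circ\cdots\circ\phi_n$ with each $\phi_j\in\Phi$, and set $g_k:=\phi_{k+1}\circ\cdots\circ\phi_n$ for $0\leq k\leq n$ (so $g_0=g$ and $g_n=\mathrm{Id}$). By the chain rule, $\log|g'(x)|=\sum_{k=1}^n\log|\phi_k'(g_k(x))|$, and similarly for $y$, so that
\begin{equation*}\big|\log|g'(x)|-\log|g'(y)|\big|\leq\sum_{k=1}^n\big|\log|\phi_k'(g_k(x))|-\log|\phi_k'(g_k(y))|\big|.\end{equation*}

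Next I would bound each summand. Since $\phi_k\in\Phi$ is $C^{1+\gamma}$ with $|\phi_k'|\geq e^{-D'}$ bounded away from zero, the map $t\mapsto\log|t|$ is Lipschitz on the range of $\phi_k'$, and $\phi_k'$ is $\gamma$-H\"older with constant $\lesssim_\Phi 1$; hence $\big|\log|\phi_k'(u)|-\log|\phi_k'(v)|\big|\lesssim_\Phi|u-v|^\gamma$ for all $u,v\in I$. Applying this with $u=g_k(x)$, $v=g_k(y)$ reduces the problem to controlling $|g_k(x)-g_k(y)|$. Here $g_k\in\Phi^{*(n-k)}$ is a contraction, and by uniform contraction together with bounded distortion, $|g_k(x)-g_k(y)|\leq\|g_k'\|_{C^0}|x-y|\leq L\,e^{-(n-k)D}|x-y|$; in particular $|g_k(x)-g_k(y)|^\gamma\lesssim_\Phi e^{-(n-k)D\gamma}|x-y|^\gamma$.

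Summing the geometric series $\sum_{k=1}^n e^{-(n-k)D\gamma}\leq\frac{1}{1-e^{-D\gamma}}$, which is a constant depending only on $\Phi$, yields
\begin{equation*}\big|\log|g'(x)|-\log|g'(y)|\big|\lesssim_\Phi\Big(\sum_{k=1}^n e^{-(n-k)D\gamma}\Big)|x-y|^\gamma\lesssim_\Phi|x-y|^\gamma,\end{equation*}
uniformly in $n$, which is the assertion. I do not expect a genuine obstacle here: the only mild point is making sure the implied constants are independent of $n$, and that is exactly what the summability of the geometric series and the $n$-independence of the bounded distortion constant $L$ guarantee. One could even avoid invoking Theorem \ref{Bounded distortiion theorem} by using the cruder bound $\|g_k'\|_{C^0}\leq e^{-(n-k)D}$ directly, since only the exponential decay of the contraction, not the precise distortion, is needed for the geometric summation.
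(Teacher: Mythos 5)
Your proof is correct and follows essentially the same route as the paper: telescope $\log|g'|$ via the chain rule into one-step terms, use that $\log$ is Lipschitz on $[e^{-D'},e^{-D}]$ together with the $\gamma$-H\"older continuity of each $\phi'$, bound the separation of the intermediate orbit points by uniform contraction, and sum the resulting geometric series. Your closing remark is exactly right: the paper's proof does not invoke the bounded distortion theorem at all and simply uses $\|g_k'\|_{C^0}\leq e^{-(n-k)D}$, so the detour through $L$ in your main argument is superfluous (though harmless).
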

\begin{Remark}
By the notation $A \lesssim_\Phi B$ we mean that the number $A$ is smaller than $C\cdot B$, where the multiplicative constant $C=C(\Phi)$ depends only on $\Phi$. Similar notation is used throughout the paper.
\end{Remark}
\begin{proof}
Fix $n\in \mathbb{N}$, and let $g\in \Phi^{*n}$. Then there is some $\eta \in \mathcal{A}^*$ such that $g=f_\eta$. By our assumptions on the maps in $\Phi$, we have
\begin{eqnarray*}
\big|\log|g'(x)|-\log|g'(y)|\big| &\leq  & \sum_{m=1} ^n \left| \log|f_{\eta_m} ' (f_{\eta|_{m-1}} (x))| - \log|f_{\eta_m} ' (f_{\eta|_{m-1}} (y))| \right| \\
& \leq & \sum_{m=1} ^n \frac{1}{\min_{z\in I} |f_{\eta_m} ' (z)|} \left| |f_{\eta_m} ' (f_{\eta|_{m-1}} (x))| - |f_{\eta_m} ' (f_{\eta|_{m-1}} (y))| \right| \\
& \leq & \sum_{m=1} ^n e^{D'} \left| f_{\eta_m} ' (f_{\eta|_{m-1}} (x)) - f_{\eta_m} ' (f_{\eta|_{m-1}} (y)) \right| \\
& \leq & \sum_{m=1} ^n C\cdot e^{D'} \left| f_{\eta|_{m-1}} (x) - f_{\eta|_{m-1}} (y) \right|^\gamma \\
&  \leq& C\cdot e^{D'} \sum_{m=1} ^n e^{-D\cdot m \cdot \gamma}  \left| x - y \right|^\gamma \leq C_0 \left| x - y \right|^\gamma  \\
\end{eqnarray*}
for some global constant $C_0$ that is independent of $n$, completing the proof.
\end{proof}

\subsection{The Fourier transform of scaled measures} \label{Section Hochman Lemma}
The following Lemma is adapted from a recent paper  of Hochman \cite{Hochman2020Host}. For any $s,x\in \mathbb{R}$ let $M_s(x)=s\cdot x$ denote the multiplication map, and for any metric space $X$ let $\mathcal{P}(X)$ denote the space of Borel probability measures on $X$.
\begin{Lemma} \cite[Lemma 3.2]{Hochman2020Host} \label{Lemma 3.2 }
Let $\theta \in \mathcal{P}(\mathbb{R})$, $k >0$ and $\chi, D$ as in the previous Sections. Then for any $r>0$ and $q\neq 0$,

$$\int_{k\cdot \chi} ^{k \cdot  \chi +D'} |\mathcal{F}_q( M_{e^{-t}} \theta)|^2 dt \leq D' \cdot \left( \frac{e^2}{r \cdot |q|} +  \int \theta(B_{e^{\chi k} \cdot r} (y)) d\theta (y) \right).$$
\end{Lemma}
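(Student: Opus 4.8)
The plan is to expand $|\mathcal{F}_q(M_{e^{-t}}\theta)|^2$ as a double integral over $\theta\times\theta$ and then average in $t$ over the window $[k\chi, k\chi + D']$, exploiting the oscillatory integral in $t$ to gain decay in $|q|$ when the two points are well separated, and simply bounding by the measure of a small ball when they are close. Concretely, write
$$|\mathcal{F}_q(M_{e^{-t}}\theta)|^2 = \int\!\!\int \exp\bigl(2\pi i q e^{-t}(x-y)\bigr)\, d\theta(x)\, d\theta(y).$$
Integrating in $t$ and applying Fubini, I would obtain
$$\int_{k\chi}^{k\chi+D'} |\mathcal{F}_q(M_{e^{-t}}\theta)|^2\, dt = \int\!\!\int \left(\int_{k\chi}^{k\chi+D'} \exp\bigl(2\pi i q e^{-t}(x-y)\bigr)\, dt\right) d\theta(x)\, d\theta(y).$$

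Next I would estimate the inner one-dimensional integral $J(x,y) := \int_{k\chi}^{k\chi+D'} \exp(2\pi i q e^{-t}(x-y))\, dt$ in two regimes. The trivial bound is $|J(x,y)| \le D'$, which I would use on the set where $|x-y|$ is small, i.e. where $y \in B_{e^{\chi k} r}(x)$ (equivalently $x \in B_{e^{\chi k} r}(y)$). On the complementary set, I would change variables $u = e^{-t}$, so $du = -e^{-t}\,dt$ and $dt = -du/u$, turning $J$ into $\int \exp(2\pi i q (x-y) u)\, \frac{du}{u}$ over an interval $[e^{-k\chi - D'}, e^{-k\chi}] \subseteq (0, e^{-k\chi}]$; integrating by parts (or invoking the standard van der Corput / non-stationary phase bound for $\int e^{i\lambda u}\psi(u)\,du$ with $\psi(u) = 1/u$ monotone) gives $|J(x,y)| \lesssim \frac{1}{|q|\,|x-y|}\cdot \frac{1}{\min u} = \frac{e^{k\chi + D'}}{|q|\,|x-y|}$, and since on this set $|x-y| \ge e^{\chi k} r$ one gets $|J(x,y)| \le \frac{e^{D'}}{|q|\, r} \le \frac{e^2}{|q| r}$ provided $D' \le 2$ — more robustly one absorbs the constant and the factor $D'$ to land exactly on the stated right-hand side $D'\bigl(\frac{e^2}{r|q|} + \int \theta(B_{e^{\chi k} r}(y))\,d\theta(y)\bigr)$. (The precise bookkeeping of the constant $e^2$ versus $D'$ is exactly the kind of routine constant-chasing I would check against Hochman's Lemma 3.2 but not belabor here.)

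Finally I would assemble the two pieces: split $\theta\times\theta$ into the near-diagonal set $\{(x,y): |x-y| < e^{\chi k} r\}$ and its complement, bound $|J|\le D'$ on the former to get the contribution $\le D' \int \theta(B_{e^{\chi k} r}(y))\, d\theta(y)$, and bound $|J| \le \frac{e^2}{r|q|}$ on the latter (its total $\theta\times\theta$-mass being $\le 1$) to get $\le D' \cdot \frac{e^2}{r|q|}$ after noting the outer integral range has length $D'$; summing yields the claim. The main obstacle, such as it is, is the oscillatory-integral estimate for $J(x,y)$: making the integration-by-parts argument clean on the interval $[e^{-k\chi-D'}, e^{-k\chi}]$ and correctly tracking how the lower endpoint $e^{-k\chi-D'}$ (the smallest value of $u$, where $1/u$ is largest) enters the bound, so that the factor $e^{\chi k}$ — and not something worse — appears in the ball radius. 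Everything else is a bounded, constant-level computation.
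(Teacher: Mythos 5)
Your proposal is a correct, self-contained argument, but it takes a genuinely different route from the paper. The paper does not prove this estimate at all: it simply quotes Hochman's Lemma 3.2 from \cite{Hochman2020Host} in the form $\int_0^1 |\mathcal{F}_m(M_{p^t}\theta)|^2\,dt \le \frac{2}{r|m|\log p} + \int\theta(B_r(y))\,d\theta(y)$ and performs the change of variable $t \mapsto t - k\chi$ (equivalently, replaces $\theta$ by $M_{e^{-k\chi}}\theta$), so the entire ``proof'' is two lines of rescaling and a citation. You instead reprove the underlying bound from scratch: Fubini to reduce to the one-dimensional oscillatory integral $J(x,y) = \int_{k\chi}^{k\chi+D'} e^{2\pi i q e^{-t}(x-y)}\,dt$, the near-diagonal/far-diagonal split at $|x-y|=e^{\chi k}r$, the trivial bound $|J|\le D'$ on the near set giving $D'\int\theta(B_{e^{\chi k}r}(y))\,d\theta(y)$, and integration by parts after $u=e^{-t}$ on the far set. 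That is exactly the standard mechanism behind Hochman's lemma, and you correctly identify the key bookkeeping: the worst value of the amplitude $1/u$ occurs at the lower endpoint $u = e^{-k\chi-D'}$, and dividing by $|x-y|\ge e^{\chi k}r$ cancels the $e^{k\chi}$, leaving a $k$-independent contribution.

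One caveat on the constant-chasing you defer. Integration by parts on $\int_{e^{-k\chi-D'}}^{e^{-k\chi}} e^{i\lambda u}\,du/u$ with $\lambda = 2\pi q(x-y)$ gives $|J| \le \frac{e^{k\chi+D'}}{\pi|q||x-y|}$, hence $|J|\le \frac{e^{D'}}{\pi|q|r}$ on the far set, so the far contribution is $\le \frac{e^{D'}}{\pi r|q|}$. For this to be $\le \frac{D'e^2}{r|q|}$ as the lemma asserts you need $e^{D'}\le \pi e^2 D'$, which holds only for $D'$ in a bounded range (roughly $0.05\lesssim D'\lesssim 4.7$); outside it your argument does not deliver the literal constant $D'e^2$, only a bound of the same shape with a different constant. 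This is harmless in the paper — $D'$ is a fixed constant of the IFS and the precise multiplicative constant is never used — but it means your proof establishes the lemma ``up to a constant'' rather than with the exact prefactor $D'e^2$, whereas the paper sidesteps the issue entirely by inheriting Hochman's constant after the rescaling. Other than that, your decomposition and estimates are sound.
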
 

In fact, Hochman's Lemma states that for any $\theta \in \mathcal{P}(\mathbb{R})$, any $r>0$, an any $m\neq 0$,

$$ \int_0 ^1 |\mathcal{F}_m( M_{p^t} \theta)|^2 dt \leq  \frac{2}{r \cdot |m|\cdot \log p} + \int \theta(B_r (y)) d\theta (y) $$
where here $p>1$.

In the context of Lemma \ref{Lemma 3.2 }, we  apply this result for $p=e^{-1}$ and the measure $M_{e^{-k\chi}} \theta$ between the scales $0$ and $D'$. Then the same proof yields
\begin{eqnarray*}
\int_{k\cdot \chi} ^{k \cdot  \chi +D'} |\mathcal{F}_m( M_{e^{-t}} \theta)|^2 dt &=& \int_{0} ^{D'} |\mathcal{F}_m( M_{e^{-t}} \left( M_{e^{-k\chi}} \theta \right) )|^2 dt \\
&\leq &D' \cdot \left( \frac{e^2}{r \cdot |m|} +  \int M_{e^{-k\chi}} \theta \left( B_r (y) \right) dM_{e^{-k\chi}} \theta(y) \right) \\
&= & D' \cdot \left(  \frac{e^2}{r \cdot |m|} +  \int \theta(B_{e^{\chi k} \cdot r} (y)) d\theta (y) \right) \\
\end{eqnarray*}
which is Lemma \ref{Lemma 3.2 }.

\section{Two local limit theorems} \label{Section LLT}
We continue to work with an IFS $\Phi$ as in Theorem \ref{Theorem main tech}, and follow the notation introduced in Sections \ref{Section sketch} and \ref{Section pre}. The purpose of this Section is to establish two local limit Theorems: Theorem \ref{Theorem equid}, which will be used to prove the Rajchman part of Theorem \ref{Theorem main tech}, and Theorem \ref{Theorem lin equid}  which will be used to prove the normality part of Theorem \ref{Theorem main tech}. Our analysis relies on the central and local limit Theorems for cocycles with target proved by Benoist-Quint \cite{Benoist2016Quint}.

\subsection{Statements of the local limit Theorems} \label{Section statement}
We define the following functions and stopping times on $\mathcal{A}^\mathbb{N}$:
$$S_n(\omega)=-\log|f'_{\omega|_n}(x_{\sigma^n(\omega)})|;\ \tau_k(\omega)=\min\{n: S_n(\omega)\geq k\chi\};$$
$$\tS_n(\omega)=-\log\max_{x\in I}|f'_{\omega|_n}(x)|;\ \ttau_k(\omega)=\min\{n: \tS_n(\omega)\geq k\chi\}.$$
In the definitions above $n$ is a positive integer but $k$ is allowed to take positive non-integer values.  $S_n$ will be shown to arise from a random walk driven by the derivative cocycle. Also, note that $\ttau_k$ is a stopping time. Both $S_n$ and $\tS_n$ are strictly increasing in $n$.

\begin{Definition} \label{Def R.V.} Let $X_1: \mathcal{A}^\mathbb{N}\rightarrow \mathbb{R}$ be the random variable
$$X_1(\omega):= c(\omega_1,\sigma(\omega))=-\log |f'_{\omega_1}(x_{\sigma(\omega)})|.$$
For every integer $n>1$ we define 
$$X_n(\omega) =  - \log |f_{\omega_n} ' \left( x_{\sigma^n (x_\omega)} \right) | = X_1 \circ \sigma^{n-1}.$$
\end{Definition}

 Let $\theta$ be the law of the random variable $X_1$. Recall the definition of $D$ and $D'$ from \eqref{Eq C and C prime}.  Then for every $n$, $X_n \sim \theta$. Moreover, $\theta \in \mathcal{P}([D,D'])$. In particular, the support of $\theta$ is bounded away from $0$.
These are immediate from Definition \ref{Def R.V.}  and equation \eqref{Eq C and C prime}. The following Lemma is a direct consequence of the chain rule:
\begin{Lemma} \label{Lemma relation between R.V and stopping time}
For every $k>0$ and $\omega\in A^\mathbb{N}$ we have
$$S_n(\omega) = \sum_{i=1} ^n X_i (\omega)$$
and  
$$-\log |f_{\omega|_{\tau_k(\omega)}}'(x_{\sigma^{\tau_k(\omega)}(\omega)})|= S_{\tau_k(\omega)} (\omega) \in [k\chi, k\chi+D']. $$
\end{Lemma}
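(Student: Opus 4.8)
The statement to prove is Lemma~\ref{Lemma relation between R.V and stopping time}, which the paper itself flags as ``a direct consequence of the chain rule.'' So the plan is essentially to unwind the definitions and apply the chain rule carefully, then read off the stopping-time estimate.

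\medskip

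\textbf{Plan.} First I would prove the additivity $S_n(\omega)=\sum_{i=1}^n X_i(\omega)$ by induction on $n$, using the chain rule for the composition $f_{\omega|_n}=f_{\omega_1}\circ f_{\omega|_{n}^{(2)}}$, where $f_{\omega|_n}^{(2)}:=f_{\omega_2}\circ\cdots\circ f_{\omega_n}$. Concretely, $f_{\omega|_n}'(x)=f_{\omega_1}'\big(f_{\omega|_n}^{(2)}(x)\big)\cdot (f_{\omega|_n}^{(2)})'(x)$, and evaluating at $x=x_{\sigma^n(\omega)}$ one has $f_{\omega|_n}^{(2)}(x_{\sigma^n(\omega)})=f_{(\sigma\omega)|_{n-1}}(x_{\sigma^n(\omega)})=x_{\sigma(\omega)}$ by the defining property of the coding map (since $x_\eta=\lim_m f_{\eta|_m}(x_0)$ is the unique point with $f_{\eta_1}(x_{\sigma\eta})=x_\eta$, equivalently $f_{\eta|_m}(x_{\sigma^m\eta})=x_\eta$ for all $m$). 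Taking $-\log|\cdot|$ turns the product into a sum: $S_n(\omega)=-\log|f_{\omega_1}'(x_{\sigma(\omega)})|+S_{n-1}(\sigma(\omega))$. Since $X_1(\omega)=-\log|f_{\omega_1}'(x_{\sigma(\omega)})|$ and $X_i(\omega)=X_1(\sigma^{i-1}\omega)$ for $i\geq 2$ by Definition~\ref{Def R.V.}, the induction hypothesis $S_{n-1}(\sigma\omega)=\sum_{i=1}^{n-1}X_i(\sigma\omega)=\sum_{i=2}^{n}X_i(\omega)$ closes the argument; the base case $n=1$ is immediate.

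\medskip

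\textbf{The stopping-time estimate.} With additivity in hand, fix $k>0$ and write $N:=\tau_k(\omega)=\min\{n:S_n(\omega)\geq k\chi\}$. By Lemma~\ref{Lemma relation between R.V and stopping time}'s first assertion, $-\log|f_{\omega|_N}'(x_{\sigma^N(\omega)})|=S_N(\omega)$, so I only need the two-sided bound $S_N(\omega)\in[k\chi,k\chi+D']$. The lower bound $S_N(\omega)\geq k\chi$ is the definition of $N=\tau_k(\omega)$. For the upper bound: since $\tau_k$ is the \emph{first} time $S_n$ reaches $k\chi$ and $S_n$ is strictly increasing in $n$ (each increment $X_i$ lies in $[D,D']$, hence is positive), we have $S_{N-1}(\omega)<k\chi$ when $N\geq 2$; therefore $S_N(\omega)=S_{N-1}(\omega)+X_N(\omega)<k\chi+D'$ because $X_N(\omega)\leq D'$ by $\theta\in\mathcal{P}([D,D'])$. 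When $N=1$ we have $S_1(\omega)=X_1(\omega)\leq D'\leq k\chi+D'$ directly (using $k\chi>0$). This gives $S_N(\omega)\le k\chi+D'$ in all cases.

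\medskip

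\textbf{Main obstacle.} There is no serious obstacle here; the only point needing a little care is the bookkeeping identity $f_{\omega|_n}^{(2)}(x_{\sigma^n(\omega)})=x_{\sigma(\omega)}$, i.e.\ making sure the coding map intertwines the shift correctly so that the ``tail'' point $x_{\sigma^n(\omega)}$ gets mapped forward to $x_{\sigma(\omega)}$ by the inner block of contractions. This is just the defining relation of the attractor's coding applied to the sequence $\sigma(\omega)$ over its first $n-1$ symbols, but it should be stated explicitly rather than left implicit. Everything else is the chain rule, the logarithm identity, and the elementary observation that a strictly increasing integer-indexed sequence overshoots any threshold by at most one step-size, which here is bounded by $D'$.
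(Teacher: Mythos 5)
Your proof is correct and is exactly the argument the paper leaves implicit when it states the lemma "is a direct consequence of the chain rule" without further proof: the inductive chain-rule computation for $S_n$, using $f_{(\sigma\omega)|_{n-1}}(x_{\sigma^n(\omega)}) = x_{\sigma(\omega)}$, and the elementary first-passage overshoot bound $S_{\tau_k} - S_{\tau_k - 1} \le D'$. Nothing further is needed.
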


Next, we introduce some partitions of the space $\mathcal{A}^\mathbb{N}$:

\begin{Definition} \label{Def R.V. D} 
 Given a finite word $\eta=(\eta_1,\cdots, \eta_\ell) \in \cA^\ell$:
\begin{enumerate}
\item Denote by $A_\eta\subseteq \cA^{\mathbb N}$ the set of infinite words that begin with $\eta$,
$$A_\eta:= \lbrace \omega: \quad (\omega_1,...,\omega_\ell)= \eta\}.$$
\item    For $h\in[0,\infty)$, let $\cA^h$ be the partition of  $\cA^{\mathbb N}$ according to the function 
$$\iota^h:\omega\to (\omega_1,\cdots,\omega_{\ttau_h(\omega)}).$$
This is a finite partition, and each set in it is a cylinder set of the form $A_\eta$.

\item Let  $\eta'$ be another finite word.  We define the event
$$A_{k,\eta,\eta'}:= \{\omega\in A_\eta:\quad \sigma^{\tau_k(\omega)-1}(\omega)\in A_{\eta'}\}.$$


\item Given $k,h,h'\geq 0$ we denote by $\cA_k^{h,h'}$ the finite partition of $\cA^{\mathbb N}$ according to the map 
$$\iota_k^{h,h'}=(\iota^h(\omega),\iota^{h'}(\sigma^{\tau_k(\omega)-1}(\omega)).$$
 Note that every cell of the partition $\cA_k^{h,h'}$ has the form $A_{k,\eta,\eta'}$.
\end{enumerate} 
\end{Definition}

Given $k,h,h'\geq 0$ and $\omega \in \cA^{\mathbb{N}}$ we write $\cA_k^{h,h'} (\omega)$ for the unique $\cA_k^{h,h'}$ cell that contains $\omega$. For $\mathbb P$-a.e. $\omega$, we denote the conditional measure of $\mathbb{P}$ on the corresponding  cell by $\mathbb{P}_{\cA_k^{h,h'}(\omega)}$. Similarly, $\mathbb{P}_{\cA^h(\omega)}$ denotes the conditional measure of $\mathbb{P}$ on the cylinder in the partition $\cA^h$ that contains $\omega$. Recall that $\lambda$ is the Lebesgue measure on $\mathbb{R}$.

\begin{Definition} \label{Def Gammak}
Let $k\in \mathbb{N}$ and let $\eta$, $\eta'$ be finite words. Assuming $\mathbb{P}(A_{k,\eta,\eta'})>0$, we define a  probability measure $\Gamma_{A_{k,\eta,\eta'}} $ on $[k\chi, k\chi+D']$  by
$$ \Gamma_{A_{k,\eta,\eta'}} := \frac{\int_{A_{\eta'}}{\lambda|_{[k\chi,k\chi+X_1(\omega')]}} d \mathbb P(\omega')}{\int_{A_{\eta'}}{ X_1(\omega')} d \mathbb P(\omega')}.$$.
\end{Definition}
Note that there is actually no dependence on $\eta$; We use this notation for later convenience. 

\begin{Lemma} \label{Lemma Abs. contin of gamma}
If $\mathbb{P}(A_{k,\eta,\eta'})>0$ then  $\Gamma_{A_{k,\eta,\eta'}}  \ll \lambda_{[k \chi,k\chi+D']}$ with a density  that depends only on $\mathbf{p}$, such that its norm is bounded above by $\frac{1}{D}$ independently of all  parameters, including $k, \eta,\eta'$.
\end{Lemma}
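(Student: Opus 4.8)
The plan is to unwind the definition of $\Gamma_{A_{k,\eta,\eta'}}$ and exhibit its density against $\lambda_{[k\chi,k\chi+D']}$ explicitly. First I would observe that the numerator $\int_{A_{\eta'}} \lambda|_{[k\chi,k\chi+X_1(\omega')]}\, d\mathbb{P}(\omega')$ is, by Fubini, absolutely continuous with respect to Lebesgue measure on $[k\chi,k\chi+D']$: for a point $t$ in this interval, its Radon--Nikodym derivative is $\int_{A_{\eta'}} \mathbf{1}_{[k\chi,\,k\chi+X_1(\omega')]}(t)\, d\mathbb{P}(\omega') = \mathbb{P}\bigl(A_{\eta'}\cap\{X_1 \geq t - k\chi\}\bigr)$, where I have used that $X_1 \in [D,D']$ almost surely so the indicator genuinely restricts to $[k\chi,k\chi+D']$. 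Dividing by the (constant in $t$) normalizing factor $\int_{A_{\eta'}} X_1\, d\mathbb{P}$, I get that $\Gamma_{A_{k,\eta,\eta'}}$ has density
$$
g(t) \;=\; \frac{\mathbb{P}\bigl(A_{\eta'}\cap\{X_1 \geq t - k\chi\}\bigr)}{\int_{A_{\eta'}} X_1\, d\mathbb{P}},\qquad t\in[k\chi,k\chi+D'].
$$

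Next I would check the two claimed properties of $g$. For the bound, note that the numerator satisfies $\mathbb{P}\bigl(A_{\eta'}\cap\{X_1\geq t-k\chi\}\bigr) \leq \mathbb{P}(A_{\eta'})$, while the denominator satisfies $\int_{A_{\eta'}} X_1\, d\mathbb{P} \geq D\cdot \mathbb{P}(A_{\eta'})$ since $X_1 \geq D$ everywhere by \eqref{Eq C and C prime}. Hence $g(t) \leq \mathbb{P}(A_{\eta'})/(D\cdot\mathbb{P}(A_{\eta'})) = 1/D$, independently of $k$, $\eta$, $\eta'$, which is the asserted $L^\infty$ bound. (As a sanity check, $g$ indeed integrates to $1$: $\int_{k\chi}^{k\chi+D'} \mathbb{P}(A_{\eta'}\cap\{X_1\geq t-k\chi\})\,dt = \int_{A_{\eta'}} X_1\, d\mathbb{P}$ by Fubini again, matching the normalizer.)

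For the dependence-on-$\mathbf{p}$-only claim I would argue as follows. Because $X_1(\omega') = c(\omega'_1,\sigma(\omega')) = -\log|f'_{\omega'_1}(x_{\sigma(\omega')})|$ depends on $\omega'$ only through the coordinates $\omega'_1,\omega'_2,\dots$ in a way that, after translating by $-k\chi$, is a fixed function of $\omega'$ not involving $k$; and because the conditional law of $\sigma(\omega')$ given $\omega'\in A_{\eta'}$ under $\mathbb{P}=\mathbf{p}^{\mathbb{N}}$ is governed entirely by the probability vector $\mathbf{p}$ (the product structure makes $\mathbb{P}(A_{\eta'}\cap\{X_1\geq s\})$ a function of $\mathbf{p}$, $\eta'$, and $s$ only — the geometry enters through $X_1$, but that geometry is determined by $\Phi$ which is fixed throughout), after dividing numerator and denominator by $\mathbb{P}(A_{\eta'})$ one sees that $g$ depends on the data only through $\mathbf{p}$ (with $\Phi$ fixed). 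I expect the main subtlety here is purely bookkeeping: making precise the sense in which "depends only on $\mathbf{p}$" is meant, since $\Phi$ is a fixed ambient object throughout Section \ref{Section pre}, and confirming via the product structure of $\mathbb{P}$ that no further dependence on $k$ survives the translation $t\mapsto t-k\chi$. None of the steps is genuinely hard; the content is just the Fubini computation of the density and the two elementary estimates $\mathbb{P}(A_{\eta'}\cap\{X_1\geq t-k\chi\})\leq\mathbb{P}(A_{\eta'})$ and $\int_{A_{\eta'}}X_1\,d\mathbb{P}\geq D\,\mathbb{P}(A_{\eta'})$.
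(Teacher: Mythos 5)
Your proof is correct and takes essentially the same route as the paper: both reduce to bounding the Radon--Nikodym density of $\Gamma_{A_{k,\eta,\eta'}}$ by elementary estimates, with the bound $1/D$ coming from $X_1 \geq D$. The only cosmetic difference is that you compute the density explicitly as $g(t)=\mathbb{P}\bigl(A_{\eta'}\cap\{X_1\geq t-k\chi\}\bigr)/\int_{A_{\eta'}}X_1\,d\mathbb{P}$ via Fubini, whereas the paper bounds $\Gamma(B(x,r))/(2r)$ directly — equivalent arguments. One small caution: your final paragraph asserts that dividing numerator and denominator by $\mathbb{P}(A_{\eta'})$ shows $g$ depends on the data ``only through $\mathbf{p}$,'' but the resulting ratio $\mathbb{P}_{A_{\eta'}}(X_1\geq s)/\mathbb{E}_{A_{\eta'}}[X_1]$ still genuinely depends on $\eta'$ through the conditioning; the ``depends only on $\mathbf{p}$'' clause of the lemma statement is loose (the paper's own proof only establishes the uniform $1/D$ bound, which is all that is used later), and you were right to flag it as bookkeeping rather than try to make it literal.
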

\begin{proof}
We write $\Gamma$ instead of $\Gamma_{A_{k,\eta,\eta'}}$.  It is clear that  $\Gamma \ll \lambda_{[k \chi,k\chi+D']}$.
Next, assuming $x\in \supp (\Gamma)$, we need to bound
$$\lim_{r\rightarrow 0} \frac{\Gamma(B(x,r))}{\lambda_{[k \chi,k\chi+D']}(B(x,r)} = \lim_{r\rightarrow 0} \frac{\Gamma(B(x,r))}{2r}$$ 
assuming $x$ is not an endpoint of $[k \chi,k\chi+D']$ (which we may assume). Then for every $\omega'\in  A_{k,\eta,\eta'}$
$$\lambda \left( [k\chi,k\chi+X_1(\omega')] \bigcap B(x,r) \right) \leq  2r$$
so
$$ \frac{\Gamma(B(x,r))}{2r} = \frac{\int_{A_{\eta'}} \lambda \left( [k\chi,k\chi+X_1(\omega')] \bigcap B(x,r) \right) d\mathbb{P}(\omega)  }{2r \cdot \mathbb{E}_{A_{k,\eta,\eta'}} (X_1)} \leq \frac{1}{\mathbb{E}_{A_{k,\eta,\eta'}} (X_1)}.$$
Finally, by equation \eqref{Eq C and C prime} we know that $X_1(\omega) \geq D$ for every $\omega$. We conclude that the density of $\Gamma$ is bounded by $\frac{1}{D}$, independently of all parameters. 
\end{proof}

 \begin{Notation} We will use superscripts such as $o^{k\to \infty}(\cdot)$ in $O(\cdot)$ and $o(\cdot)$ to such bounds take place as which variables are being varied. The variables on which the implied constants depend on will be written in subscripts. The implied constant is absolute when no subscript is present.\end{Notation}
 
 The following local limit Theorem is one of the main keys to the proof of Theorem \ref{Theorem main tech}. It is the only place  where the assumption $\Lambda_c = \lbrace 0 \rbrace$ from Theorem \ref{Theorem main tech} is used.

\begin{theorem}  \label{Theorem equid}
Fix $h_0\geq 0$. For all $k, h'>0$, $0\leq  h\leq h_0$, and $A_\eta \in \cA^h$, there exists a subset $\tA_{k,\eta}^{h,h'}\subseteq A_\eta$ such that:\begin{enumerate} [label=(\roman*)]
\item $\mathbb P(\tA_{k,\eta}^{h,h'})\geq \mathbb P(A_\eta)\cdot (1-o_{h_0,\bfp}^{k\to\infty}(1))$.
\item for all $\xi\in \tA_{k,\eta}^{h,h'}$, $\mathbb{P}(\cA_k^{h,h'}(\xi))>0$.
\item  for all $\xi\in \tA_{k,\eta}^{h,h'}$ and for any sub-interval $J\subseteq  [k\chi, k\chi+D']$, 
$$
 \mathbb{P}_{\cA_k^{h,h'}(\xi)}(S_{\tau_k}\in J)= \Gamma_{\cA_k^{h,h'}(\xi)} (J)  +o^{k\to\infty}_{h_0,\bfp}(1).
$$
\end{enumerate}
\end{theorem}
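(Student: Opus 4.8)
\textbf{Proof plan for Theorem \ref{Theorem equid}.}

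The plan is to realize $S_n$ as a Birkhoff-type sum for a random walk driven by the derivative cocycle, and then to feed this into the central and local limit theorems of Benoist--Quint \cite[Theorems 12.1, 16.15]{Benoist2016Quint}. By Lemma \ref{Lemma relation between R.V and stopping time} we have $S_n = \sum_{i=1}^n X_i$ where $X_i = X_1\circ\sigma^{i-1}$ and $X_1(\omega) = c(\omega_1,\sigma(\omega))$; since $\mathbb{P} = \mathbf{p}^{\mathbb N}$ is $\sigma$-invariant and the $X_i$ have common law $\theta\in\mathcal P([D,D'])$, this is exactly the setting of a cocycle over the one-sided shift with H\"older data, whose aperiodicity is equivalent to $\Lambda_c=\{0\}$ by the identification with \cite[Equation (15.8)]{Benoist2016Quint}. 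First I would set up the bookkeeping: fix $h_0$, $h\le h_0$, $k$, $h'$, and a cylinder $A_\eta\in\cA^h$. Conditioning on $A_\eta$ only prescribes the first few coordinates (a bounded number, since $h\le h_0$), so $\mathbb{P}_{A_\eta}$ is again a H\"older-regular measure on a shift space and the CLT/LLT apply uniformly over the finitely many such $\eta$; the $o_{h_0,\mathbf p}^{k\to\infty}(1)$ error is what absorbs this uniformity.

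The core of the argument is a two-scale decomposition. The stopping time $\tau_k$ is essentially the first passage time of the random walk $S_n$ above level $k\chi$; by the renewal structure, $\tau_k\approx k\chi/\chi = k$ with Gaussian fluctuations of order $\sqrt k$ coming from the CLT. I would first use the CLT \cite[Theorem 12.1]{Benoist2016Quint} to define the good set $\tA_{k,\eta}^{h,h'}$: discard the $\omega$ for which $\tau_k(\omega)$, or the overshoot $S_{\tau_k}(\omega)-k\chi$, or the value $\iota^{h'}(\sigma^{\tau_k(\omega)-1}\omega)$, behaves atypically — more precisely, for which the conditional measure $\mathbb P_{\cA_k^{h,h'}(\omega)}$ fails to be supported on a "typical" cell where the local limit estimate can be run. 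This is where $\mathbb P(\tA_{k,\eta}^{h,h'})\ge \mathbb P(A_\eta)(1-o(1))$ comes from and where property (ii) is arranged by definition. Then, on a good cell $A_{k,\eta,\eta'}=\cA_k^{h,h'}(\xi)$, the event $\{S_{\tau_k}\in J\}$ for $J\subseteq[k\chi,k\chi+D']$ is, after conditioning, governed by: (a) the distribution of the location $S_{\tau_k - 1}$ just before crossing, which by the local limit theorem \cite[Theorem 16.15]{Benoist2016Quint} is asymptotically uniform at scale $O(1)$ on a window of length $\asymp\chi$ below $k\chi$ (this is where aperiodicity $\Lambda_c=\{0\}$ is essential — without it $S_{\tau_k-1}$ would concentrate on a coset); combined with (b) the conditional law of the last increment $X_{\tau_k}$, which given $\sigma^{\tau_k-1}\omega\in A_{\eta'}$ has law $X_1|_{A_{\eta'}}$. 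Writing the crossing as $S_{\tau_k} = S_{\tau_k-1} + X_{\tau_k}$ and integrating the (asymptotically flat) pre-crossing density against the last-increment law reproduces exactly the size-biased formula $\Gamma_{A_{k,\eta,\eta'}} = \big(\int_{A_{\eta'}}\lambda|_{[k\chi,\,k\chi+X_1(\omega')]}\,d\mathbb P\big)\big/\big(\int_{A_{\eta'}}X_1\,d\mathbb P\big)$ from Definition \ref{Def Gammak}; Lemma \ref{Lemma Abs. contin of gamma} then guarantees the target density is bounded, so the approximation at scale $O(1)$ translates into the stated $o^{k\to\infty}_{h_0,\mathbf p}(1)$ error on all intervals $J$ simultaneously.

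The main obstacle I expect is making the passage from the local limit theorem for the \emph{unstopped} sums $S_n$ (which controls $\mathbb P(S_n\in I)$ for each fixed $n$, with Gaussian scaling in $n$) to a statement about the \emph{stopped} value $S_{\tau_k}$ uniformly over conditioning on arbitrary cells $\cA_k^{h,h'}(\xi)$. One must sum the local limit estimates over the range of values of $\tau_k$ (width $\asymp\sqrt k$), control the overshoot distribution uniformly, and ensure the errors — which in the Benoist--Quint LLT come with a $1/\sqrt n$ normalization and a remainder that is $o(1/\sqrt n)$ uniformly on compacts — add up to genuine $o(1)$ after the size-biasing normalization, rather than merely $O(1)$. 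This is precisely a renewal-theoretic argument layered on top of the cocycle LLT, and keeping the conditioning on $A_\eta$ and on the terminal cell $A_{\eta'}$ from destroying the uniformity (using that both involve only boundedly many coordinates, determined by $h_0$ and $h'$) is the delicate bookkeeping that the error notation $o^{k\to\infty}_{h_0,\mathbf p}(1)$ is designed to track.
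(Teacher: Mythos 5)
Your proposal takes essentially the same approach as the paper: split the event $\{S_{\tau_k}\in J\}$ according to $m=\tau_k-1$, use the Benoist--Quint CLT to show $\tau_k-1$ lies in a window of width $\asymp\sqrt{k\log k}$ around $k$ with high probability and define the good set $\tA_{k,\eta}^{h,h'}$ by a Chebyshev/Markov cutoff on the conditional probability of the complementary event, then apply the Benoist--Quint LLT with target $\varphi=\mathbf 1_{A_\eta}$ and fiber $\sigma^{-m}(\{\omega'\})$ to each $m$ in that window, writing $\{\tau_k=m+1, S_{m+1}\in J\}$ as $\{S_m\in J^{\omega'}\}$ for the shifted interval $J^{\omega'}$, and finally summing the Gaussian densities $G_{\sqrt kr}((m-k+\beta)\chi)$ over $m$ to recover the factor $1/\chi$ which cancels in the ratio and produces the size-biased measure $\Gamma$. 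The minor imprecision in your plan — defining the good set via overshoot and terminal-cell atypicality rather than purely via the conditional tail of $\tau_k$ — is a cosmetic difference; the paper's choice is the cleaner one, conditioning only on $\mathbb P_{\cA_k^{h,h'}(\xi)}(\tau_k-1\notin I_k)$ being small, and handles the overshoot and the $\eta'$-dependence uniformly inside the LLT step itself.
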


We emphasize that all bounds  are uniform in $h'$. We remark that to prove Theorem  \ref{Theorem main tech} part (1) we only need the case\footnote{The referee has pointed out to us that this special case is  related to the work of Kesten \cite[Theorem 1]{Kesten1974renewal}. Indeed, when $h_0=0$ one may deduce a weaker version of Part (iii) of Theorem \ref{Theorem equid} from Kesten's result.} $h_0 =0$ which means that $A_\eta$ is the full symbolic space.  However, this more general  version is needed to obtain the following upgraded version of Theorem \ref{Theorem equid}, which is what we require for Theorem \ref{Theorem main tech} part (2):

\begin{theorem} \label{Theorem lin equid} 
For all $k, h', h>0$ and $A_\eta \in \cA^h$, there exists a subset $\overline A_{k,\eta}^{h,h'}\subseteq A_\eta$ such that:\begin{enumerate} [label=(\roman*)]
\item\label{lin equid 1} $\mathbb P(\overline A_{k,\eta}^{h,h'})\geq \mathbb P(A_\eta)\cdot (1-o_{\bfp}^{\min(h,k-h)\to\infty}(1))$.
\item\label{lin equid 2} for all $\xi\in\overline A_{k,\eta}^{h,h'}$, $\mathbb{P}(\cA_k^{h,h'}(\xi))>0$.
\item\label{lin equid 3}  for all $\xi\in\overline A_{k,\eta}^{h,h'}$ and for any sub-interval $J\subseteq  [k\chi, k\chi+D']$, 
$$
 \mathbb{P}_{\cA_k^{h,h'}(\xi)}(S_{\tau_k}\in J)= \Gamma_{\cA_k^{h,h'}(\xi)} (J)  +o^{\min(h,k-h)\to\infty}_{\bfp}(1).
$$
\end{enumerate}
\end{theorem}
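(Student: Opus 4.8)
The plan is to deduce Theorem \ref{Theorem lin equid} from Theorem \ref{Theorem equid} by a self-improvement (bootstrapping) argument that exploits the uniformity in $h'$ already built into Theorem \ref{Theorem equid}. The point is that Theorem \ref{Theorem equid} gives a conditional local limit theorem whose error $o_{h_0,\bfp}^{k\to\infty}(1)$ is uniform over $0\le h\le h_0$ but whose implied constant may blow up as $h_0\to\infty$; what Theorem \ref{Theorem lin equid} asks for is an error that is small whenever \emph{both} $h$ and $k-h$ are large, with no a priori ceiling on $h$. So the first step is to fix a threshold: given the target error $\varepsilon>0$, choose $h_0=h_0(\varepsilon)$ and then $k_0=k_0(\varepsilon,h_0)$ so that the $o_{h_0,\bfp}^{k\to\infty}(1)$ term in Theorem \ref{Theorem equid} is $<\varepsilon$ for all $k\ge k_0$ and all $h\le h_0$. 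This already disposes of the regime $h\le h_0$ directly, with $\overline A_{k,\eta}^{h,h'}:=\tA_{k,\eta}^{h,h'}$.

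The substance is the regime $h>h_0$. Here I would use the Markov / renewal structure of the stopping time $\tau_k$ relative to a \emph{late} cut. Concretely, condition first on the partition element $\cA^h(\xi)=A_\eta$, which fixes the prefix $(\omega_1,\dots,\omega_{\ttau_h(\omega)})$; because $\mathbb P=\bfp^{\mathbb N}$ is a product measure, conditionally on $A_\eta$ the tail $\sigma^{|\eta|}\omega$ is again distributed as $\mathbb P$, and $S_{\tau_k}(\omega)-\tS_{|\eta|}(\omega)$ is, up to the bounded distortion error controlled by Theorem \ref{Bounded distortiion theorem} and the $C^1$ linearization Lemma \ref{Lemma C1 lin}, the first passage of the \emph{shifted} walk $S'_m(\sigma^{|\eta|}\omega)$ above level $k\chi-\tS_{|\eta|}(\omega)$. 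The residual level $k\chi-\tS_{|\eta|}(\omega)$ is, for $\mathbb P$-typical $\omega$ and $h$ in the right window, comparable to $(k-h)\chi$; more precisely one applies the central limit theorem for the cocycle (Benoist--Quint \cite[Theorem 12.1]{Benoist2016Quint}, already invoked in the proof of Theorem \ref{Theorem equid}) to $\tS_{|\eta|}$ to show that outside a set of $\mathbb P$-measure $o^{h\to\infty}(1)$ we have $\tS_{|\eta|}(\omega)=h\chi+O(\sqrt h)$, hence the remaining climb is of order $(k-h)\chi+O(\sqrt h)$. Then I would apply Theorem \ref{Theorem equid} itself to this shifted walk, but now playing the role of its parameter "$h$" is the constant $0$ (full symbolic space for the shifted walk, so $h_0=0$ and no blow-up), the role of "$k$" is played by the residual height $\approx(k-h)$, and the role of the terminal parameter "$h'$" is played by the original $h'$ — this is exactly where the uniformity of Theorem \ref{Theorem equid} in $h'$ is indispensable. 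The output is that $\mathbb P_{\cA_k^{h,h'}(\xi)}(S_{\tau_k}\in J)=\Gamma_{\cA_k^{h,h'}(\xi)}(J)+o^{k-h\to\infty}(1)$ on a conditionally large set, and combining the two measure bounds ($o^{h\to\infty}(1)$ from the CLT for the prefix, $o^{k-h\to\infty}(1)$ from Theorem \ref{Theorem equid} applied to the tail) gives $\mathbb P(\overline A_{k,\eta}^{h,h'})\ge\mathbb P(A_\eta)(1-o_\bfp^{\min(h,k-h)\to\infty}(1))$, which is item \ref{lin equid 1}; items \ref{lin equid 2} and \ref{lin equid 3} come along automatically from the corresponding items of Theorem \ref{Theorem equid} for the tail walk.

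Two bookkeeping points need care. First, the target measure $\Gamma_{\cA_k^{h,h'}(\xi)}$ is defined (Definition \ref{Def Gammak}) in terms of the cell $A_{k,\eta,\eta'}=\cA_k^{h,h'}(\xi)$ and depends only on $\eta'$ (the terminal word) and on $k$ through the base level $k\chi$; I must check that the target measure produced for the \emph{shifted} walk — namely $\Gamma$ built from $\lambda|_{[(k-h)\chi + \tS_{|\eta|}(\omega)- h\chi,\ \cdot\,]}$ — matches $\Gamma_{\cA_k^{h,h'}(\xi)}$ after translating back up by $\tS_{|\eta|}(\omega)$, which it does because the construction is translation-equivariant in the base level and the density of $\Gamma$ (bounded by $1/D$ by Lemma \ref{Lemma Abs. contin of gamma}) is the same function of $X_1$ regardless of the base. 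Second, one must absorb the discrepancy between the prefix cut being defined by $\ttau_h$ (the stopping time for $\tS$) and the passage time $\tau_k$ being defined by $S_n$; the two differ by bounded-distortion factors, so $|\tS_{|\eta|}-S_{|\eta|}|\le \log L$ uniformly, and this $O(1)$ shift of the residual height feeds only into the $o(1)$ error. The main obstacle, as I see it, is precisely this gluing step: making rigorous that conditioning on a large-$h$ cell and then running Theorem \ref{Theorem equid} on the tail walk produces an error governed by $\min(h,k-h)$ rather than by $k$ alone, i.e. keeping the implied constants from depending on $h$ once $h>h_0$ — this is where one genuinely uses that Theorem \ref{Theorem equid}'s tail application has $h_0=0$, and one has to verify that the CLT control on $\tS_{|\eta|}$ is also uniform over the finitely many relevant cells $A_\eta\in\cA^h$ (which it is, again by the product structure of $\mathbb P$ and Benoist--Quint). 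The rest is routine combination of error terms.
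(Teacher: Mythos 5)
Your high-level plan is in the right spirit — peel off most of the prefix and then apply Theorem \ref{Theorem equid} to a residual first-passage problem — and the paper does indeed proceed by such a reduction. But the specific way you implement it has a genuine gap, concentrated in the sentence asserting that ``this $O(1)$ shift of the residual height feeds only into the $o(1)$ error.'' It does not, and the paper's construction is organized precisely to avoid having to make such a claim.

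Here is the problem concretely. After you condition on the full prefix $\eta$ (so on $A_\eta$), the passage time for the tail walk $S'_m(\sigma^{|\eta|}\omega)$ is over the level $k\chi - S_{|\eta|}(\omega)$, and $S_{|\eta|}(\omega)=-\log|f'_\eta(x_{\sigma^{|\eta|}\omega})|$ still depends on the tail through $x_{\sigma^{|\eta|}\omega}$. By bounded distortion this quantity fluctuates in a band of width $\log L$, i.e.\ $O(1)$, as the tail varies; the $C^1$ linearization Lemma \ref{Lemma C1 lin} only improves this to $O(|x-y|^\gamma)$, which is $O(1)$ when $x,y$ range over all of $I$. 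Replacing the random threshold by the deterministic one $k\chi-\tS_{|\eta|}$ therefore shifts the level by a random $O(1)$ amount. The target quantity $S_{\tau_k}-k\chi$ lives in an interval of length $D'=O(1)$ and the limiting law $\Gamma$ has density bounded by $1/D$ (Lemma \ref{Lemma Abs. contin of gamma}), so a random $O(1)$ shift in the level moves mass by $\Theta(1)$, not $o(1)$. Said differently: the event that the first passage over $k\chi-\tS_{|\eta|}$ and the first passage over the true (random) level disagree has probability proportional to the band width, i.e.\ proportional to $\log L$, which is not small. So applying Theorem \ref{Theorem equid} to the tail walk with $h_0=0$ and then shrugging off the threshold discrepancy does not close the argument. (A secondary slip: you do not need, and cannot meaningfully use, a CLT to control $\tS_{|\eta|}$ — by the very definition of $\ttau_h$ one has $\tS_{|\eta|}(\omega)\in[h\chi,h\chi+D')$ deterministically.)

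What the paper does instead is leave a \emph{buffer} between the fixed prefix and the tail walk. It decomposes $\eta=\eta^\#\eta^*$ with $A_{\eta^\#}\in\cA^{h-\ell}$ and $|\eta^*|\approx\ell$ where $\ell=\ell(\epsilon,\bfp)$ is chosen so that $e^{-\ell\chi}<\epsilon^{2/\gamma}$. It then applies Theorem \ref{Theorem equid} with the \emph{buffer word} $\eta^*$ playing the role of the new prefix, i.e.\ with $h_0\approx\ell$ \emph{bounded but strictly positive}, and with $k^*$ a deterministic perturbation of $k-h+\ell$. The whole point of the bounded-but-nonzero $h_0$ is Lemma \ref{Lemma Defect Lin}: conditioning on $A_{\eta^*}$ pins $x_{\xi^*}$ into the interval $f_{\eta^*}(I)$ of diameter $\lesssim e^{-\ell\chi}<\epsilon^{2/\gamma}$, so Lemma \ref{Lemma C1 lin} now yields a fluctuation of $-\log|f'_{\eta^\#}(x_{\xi^*})|$ of size $O_\bfp(\epsilon^2)$, not $O(1)$. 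That controlled $O(\epsilon^2)$ discrepancy is exactly what feeds into the exceptional set $Z_{k,\eta}^{h,h'}$ and, via the bounded density of $\Gamma$, into a final error $O_\bfp(\epsilon)+o^{k-h\to\infty}_{\epsilon,\bfp}(1)$. The requirement $\min(h,k-h)>\ell(\epsilon)$ is exactly what makes the buffer split available and $k^*$ large, which is where the $o^{\min(h,k-h)\to\infty}_\bfp(1)$ rate comes from. So the essential missing idea in your proposal is that you must not take $h_0=0$ for the residual application of Theorem \ref{Theorem equid}: you have to keep a buffer of length $\approx\ell(\epsilon)$ precisely to make the threshold discrepancy small rather than merely bounded.
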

The difference between Theorem \ref{Theorem lin equid} and Theorem \ref{Theorem equid} lies in the role of $h$: In Theorem \ref{Theorem equid} it is assumed to be bounded by some uniform $h_0$. There is no such restriction in Theorem \ref{Theorem lin equid}, but the "price" is that the error  is now $o_{\bfp}^{\min(h,k-h)\to\infty}(1)$ instead of $o^{k\to\infty}_{h_0,\bfp}(1)$. So, to make  this error small, we need both $h$ and $k-h$ to go to $\infty$ simultaneously.

We proceed to prove Theorem \ref{Theorem equid}. Afterwards, we prove Theorem \ref{Theorem lin equid} using  the result of Theorem \ref{Theorem equid} as a black box and the $C^1$ linerization Lemma \ref{Lemma C1 lin}.

\subsection{Proof of Theorem \ref{Theorem equid}} \label{Section proof of LLT}
\subsubsection{Benoist and Quint's central and local limit Theorems for cocycles with target}
The proof of Theorem \ref{Theorem equid} relies on two limit  Theorems due to Benoist-Quint \cite{Benoist2016Quint}. Before stating them, we need some preliminaries. First, notice that  the indicator function $\mathbf 1_{A_{\eta}}$ is a locally constant function on $\mathcal{A}^\mathbb{N}$. For the following Claim, recall the definition of the maps $\iota_a$ from Section \ref{Section sketch} and the choice of our metric $d:=d_\rho$ on $\mathcal{A}^\mathbb{N}$:
\begin{Claim} \label{Properties of cocycle}
For every $a\in \lbrace 1,...,n\rbrace$ the following statements hold true:
\begin{enumerate}
\item The map $\iota_a$ is uniformly contracting: 
$$d(\iota_a(\omega),\iota_a(\eta))=\rho d(\omega,\eta).$$

\item The cocycle $c(a,\omega)$ is  uniformly bounded, Lipschitz in $\omega$, with a uniformly bounded Lipschitz constant as $a \in \lbrace 1,...,n\rbrace$ varies.
\end{enumerate}
\end{Claim}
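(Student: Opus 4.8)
This Claim is essentially a routine verification, so my proof plan is just to unpack the definitions and invoke the standing hypotheses on $\Phi$.

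For part (1): The map $\iota_a$ prepends the symbol $a$ to a sequence. So if $\omega,\eta \in \mathcal{A}^{\mathbb{N}}$ with $\min\{n:\omega_n \neq \eta_n\} = m$, then $\iota_a(\omega)$ and $\iota_a(\eta)$ agree on the first coordinate (both equal $a$) and then agree up to position $m$ (since $\omega,\eta$ agree up to position $m-1$), first differing at position $m+1$. Hence $\min\{n:\iota_a(\omega)_n \neq \iota_a(\eta)_n\} = m+1$, giving $d_\rho(\iota_a(\omega),\iota_a(\eta)) = \rho^{m+1} = \rho\cdot\rho^m = \rho\, d_\rho(\omega,\eta)$. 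One should also note the edge case $\omega = \eta$, where both sides are $0$. That's the whole argument for (1).

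For part (2): Uniform boundedness of $c(a,\omega) = -\log|f_a'(x_\omega)|$ is immediate from \eqref{Eq C and C prime}, which gives $D \le c(a,\omega) \le D'$ for all $a$ and all $\omega$. For the Lipschitz estimate in $\omega$, fix $a$ and $\omega,\eta$. Then
$$|c(a,\omega) - c(a,\eta)| = \big|\log|f_a'(x_\omega)| - \log|f_a'(x_\eta)|\big| \le \frac{1}{\min_{z\in I}|f_a'(z)|}\,\big||f_a'(x_\omega)| - |f_a'(x_\eta)|\big| \le e^{D'}\cdot C\,|x_\omega - x_\eta|^\gamma,$$
using the mean value theorem on $\log$, the bound $|f_a'|\ge e^{-D'}$, and the $C^{1+\gamma}$ bound $\|f_a'\|_{C^\gamma}\le C$. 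It then remains to show $|x_\omega - x_\eta| \lesssim_\Phi d_\rho(\omega,\eta)^{1/\gamma}$ (so that $|x_\omega-x_\eta|^\gamma \lesssim_\Phi d_\rho(\omega,\eta)$, which is exactly Lipschitz-ness for the metric $d_\rho$): if $\omega,\eta$ agree on the first $m-1$ coordinates, then $x_\omega, x_\eta \in f_{\omega|_{m-1}}(I)$, an interval of diameter at most $\|f_{\omega|_{m-1}}'\|_\infty\cdot|I| \le e^{-D(m-1)}|I| = e^{D}|I|\cdot e^{-Dm}$; and by definition $\rho = (\sup_f\|f'\|_\infty)^\gamma$, so $e^{-Dm}$ is comparable to (in fact at most) $\rho^{m/\gamma} = d_\rho(\omega,\eta)^{1/\gamma}$ up to a $\Phi$-constant — more precisely $\rho^{1/\gamma} = \sup_f\|f'\|_\infty \ge e^{-D'}$, so one gets $|x_\omega-x_\eta| \le e^D|I|\cdot (e^{-D'})^{m}$ and $d_\rho(\omega,\eta) = \rho^m$, and since $\rho \ge (e^{-D'})^{\gamma}$... one must be slightly careful about which direction the inequality goes, but the upshot is a clean bound $|x_\omega - x_\eta|^\gamma \lesssim_\Phi \rho^m = d_\rho(\omega,\eta)$ (this is exactly the reason $\rho$ was defined with the exponent $\gamma$). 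Combining, $|c(a,\omega)-c(a,\eta)| \lesssim_\Phi d_\rho(\omega,\eta)$ with a constant independent of $a$, which is the claim.

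The only mildly delicate point — and it is not really an obstacle, just bookkeeping — is tracking the relationship between the contraction rate of the $f_i$ on $I$ and the exponent defining $\rho$, to confirm the Hölder-to-Lipschitz conversion works with the metric $d_\rho$ and that the constant is uniform in $a$. Everything else is a direct substitution into \eqref{Eq C and C prime} and the $C^{1+\gamma}$ hypothesis. I would also remark that this Claim is what lets us feed the pair $(\iota_a, c)$ into the Benoist--Quint machinery cited immediately afterward.
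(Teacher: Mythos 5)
Your proof is correct and fills in what the paper dismisses with the single sentence ``This is standard, since all the maps in $\Phi$ are $C^{1+\gamma}$ smooth and by equation \eqref{Eq C and C prime}''; there is no detailed argument in the paper to compare against, and what you wrote is the argument one is supposed to supply.

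One small cleanup on the ``mildly delicate point'' you flag in part (2): there is no delicacy, because the relevant relation is an equality, not an inequality. From \eqref{Eq C and C prime}, $\sup_{f\in\Phi}\|f'\|_\infty = e^{-D}$, and the paper sets $\rho := (\sup_{f\in\Phi}\|f'\|_\infty)^\gamma = e^{-D\gamma}$. Hence $\rho^{1/\gamma} = e^{-D}$ exactly, and $e^{-Dm} = \rho^{m/\gamma}$ with no slack. Your parenthetical ``$\rho^{1/\gamma} = \sup_f\|f'\|_\infty \ge e^{-D'}$, so one gets $|x_\omega - x_\eta| \le e^{D}|I|\cdot (e^{-D'})^m$'' is where you drift: replacing $e^{-D}$ by the \emph{smaller} quantity $e^{-D'}$ would only weaken, not yield, an upper bound, so $D'$ has no role here. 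Simply write
$$|x_\omega - x_\eta| \le \|f'_{\omega|_{m-1}}\|_\infty\,|I| \le e^{-D(m-1)}|I| = e^{D}|I|\cdot\rho^{m/\gamma},$$
hence $|x_\omega - x_\eta|^\gamma \le (e^{D}|I|)^\gamma\,\rho^m = (e^{D}|I|)^\gamma\,d_\rho(\omega,\eta)$, and combine with your Hölder estimate on $\log|f_a'|$ to get $|c(a,\omega)-c(a,\eta)| \lesssim_\Phi d_\rho(\omega,\eta)$ with a constant independent of $a$. You correctly identified that the exponent $\gamma$ in the definition of $\rho$ is engineered precisely to make this Hölder-to-Lipschitz conversion exact; the digression about ``which direction the inequality goes'' is unnecessary once one notices the equality $\rho = e^{-D\gamma}$.
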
 
This is standard, since all the maps in $\Phi$ are $C^{1+\gamma}$ smooth and by equation \eqref{Eq C and C prime}.  We are now ready to state a consequence of the central limit Theorem for cocycles with target, proved by Benoist-Quint:
\begin{theorem} \cite[Theorem 12.1. part (i)]{Benoist2016Quint}  \label{B-Q CLT}
Let $A_\eta$ be  as in Theorem \ref{Theorem equid}. There exists a variance $r=r(\textbf{p})>0$ such that:

For every $R\in \mathbb{R}$ the function $\psi = 1_{A_\eta} \times 1_{[R,\infty)}$ satisfies that
$$\mathbb{P} \left(\omega\in A_\eta: \, \frac{\left|  S_n (\omega)  -  n\cdot \chi \right|}{\sqrt{n}}\geq R \right) 
= \mathbb P(A_\eta) ( N(0, r^2) > R)(1+ o_{\psi}^{n\to\infty}(1))$$
where $( N(0, r^2) > R)$ stands for the probability that a Gaussian random variable with mean $0$ and variance $r^2$ is larger than $R$.
\end{theorem}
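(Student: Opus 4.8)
The plan is to obtain the statement directly from Benoist--Quint's central limit theorem for cocycles with target \cite[Theorem 12.1(i)]{Benoist2016Quint}; the only real work is (a) fitting our objects into their framework and (b) checking that the limiting Gaussian is non-degenerate, i.e. $r>0$.

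First I would fix the dictionary. Benoist--Quint's CLT applies to a contracting random dynamical system carrying a H\"older real cocycle: here the system is $(\mathcal A^{\mathbb N},d_\rho)$ with the maps $\{\iota_a\}_{a\in\mathcal A}$ weighted by $\mathbf p$, whose unique stationary measure is $\mathbb P=\mathbf p^{\mathbb N}$ (it satisfies $\mathbb P=\sum_a p_a\,(\iota_a)_*\mathbb P$, and contraction forces uniqueness), and the cocycle is $c(a,\omega)=-\log|f'_a(x_\omega)|$. Their hypotheses are met verbatim: $\iota_a$ contracts $d_\rho$ by the factor $\rho<1$, and $c$ is bounded and uniformly Lipschitz in $\omega$ -- this is precisely Claim~\ref{Properties of cocycle} together with \eqref{Eq C and C prime}. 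By the chain rule (Lemma~\ref{Lemma relation between R.V and stopping time}) one has $S_n(\omega)=\sum_{i=1}^n c(\omega_i,\sigma^i\omega)=c(\iota_{\omega_1}\cdots\iota_{\omega_n},\sigma^n\omega)$, so under $\mathbb P$ the pair $\big(\omega,\,S_n(\omega)\big)$ has the same law as the pair (position after $n$ steps, Birkhoff sum of $c$) for their random walk issued from the stationary measure; in particular the cocycle drift equals $\lim_n S_n/n=\chi$, the Lyapunov exponent. Note that $\mathbf 1_{A_\eta}$ is continuous on $\mathcal A^{\mathbb N}$, being the indicator of a clopen cylinder, hence a legitimate target function.

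Next I would verify $r>0$, which is where the hypothesis $\Lambda_c=\{0\}$ enters. Benoist--Quint's covariance vanishes only if $c$ is cohomologous to a constant, say $c(a,\omega)=\chi+b(\iota_a\omega)-b(\omega)$ with $b$ H\"older; but then, setting $\phi_\theta(\omega)=e^{-i\theta b(\omega)}$ and $u_\theta=e^{-i\theta\chi}$ with $\theta=\pi/\chi$, a one-line computation gives $\phi_\theta(\iota_a(\omega))=u_\theta\,e^{-i\theta c(a,\omega)}\,\phi_\theta(\omega)$ with $|\phi_\theta|\equiv1$ and $\phi_\theta\in H^\kappa$, so $\pi/\chi\in\Lambda_c\setminus\{0\}$ -- a contradiction. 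Hence $r^2=r(\mathbf p)^2>0$, which also guarantees that $N(0,r^2)$ charges no single point, so every half-line $[R,\infty)$ is a continuity set of it.

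Finally I would apply \cite[Theorem 12.1(i)]{Benoist2016Quint} with the test function $\psi=\mathbf 1_{A_\eta}\otimes\mathbf 1_{[R,\infty)}$ of the statement: since $[R,\infty)$ is a continuity set of $N(0,r^2)$, their conclusion reads
$$\mathbb P\Big(\omega\in A_\eta:\ \tfrac{S_n(\omega)-n\chi}{\sqrt n}\geq R\Big)\ \longrightarrow\ \mathbb P(A_\eta)\cdot\big(N(0,r^2)>R\big)\qquad(n\to\infty),$$
and dividing by $\mathbb P(A_\eta)\cdot\big(N(0,r^2)>R\big)>0$ puts this in the multiplicative-error form $(1+o^{n\to\infty}_\psi(1))$ claimed (the two-sided version with $|S_n-n\chi|$ follows identically). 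I expect the main obstacle to be not depth but bookkeeping: making the passage to Benoist--Quint's walk formalism fully precise -- in particular matching the orientation of the composition $\iota_{\omega_1}\cdots\iota_{\omega_n}$ and checking that their CLT is stated with the base point averaged over (or weakly converging to) the stationary measure -- together with the non-degeneracy input, which is the sole place $\Lambda_c=\{0\}$ is used.
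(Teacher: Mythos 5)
Your overall strategy coincides with the paper's: recognize the symbolic system with the contractions $\{\iota_a\}$ and the cocycle $c(a,\omega)$ as an instance of Benoist--Quint's framework, verify the Lipschitz and bounded--moment hypotheses via Claim~\ref{Properties of cocycle} and \eqref{Eq C and C prime}, note that $\mathbf 1_{A_\eta}$ is a legitimate (clopen) target, and read off the CLT. The one place you genuinely diverge from the paper is the non-degeneracy $r>0$. The paper's remark after the statement attributes this to the cocycle being ``not constant'' and explicitly disclaims any use of $\Lambda_c=\{0\}$ for the CLT; you instead argue that zero variance would force $c$ to be H\"older-cohomologous to the constant $\chi$, which in turn would place every $\theta\in\mathbb R$ (via $\phi_\theta=e^{-i\theta b}$) in $\Lambda_c$, contradicting $\Lambda_c=\{0\}$. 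Your route is actually the more robust one: in the general $C^{1+\gamma}$ setting ``not constant'' by itself does not preclude $c$ from being cohomologous to a constant (think of an IFS $C^{1+\gamma}$-conjugate to a homogeneous self-similar one), whereas non-cohomology to a constant is precisely Benoist--Quint's non-degeneracy criterion, and your $\Lambda_c=\{0\}\Rightarrow$ non-cohomology implication is clean. (For the self-similar case the two conditions coincide because $c$ there depends only on the first letter, so one may evaluate at a fixed point to kill the coboundary term; that is likely what the paper has in mind, but your argument works in full generality.) So the difference is a conceptual one about where $r>0$ really comes from, and you resolve it more carefully, at the cost of using the standing hypothesis $\Lambda_c=\{0\}$ that the paper claims is dispensable here. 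One small point worth flagging: the displayed equality has the two-sided event $|S_n-n\chi|/\sqrt n\geq R$ on the left but only the one-sided Gaussian tail on the right, which for $R>0$ differ by a factor of $2$; your phrase ``the two-sided version follows identically'' elides this, though it is harmless because the paper only ever uses the statement as an upper bound with $\epsilon$ arbitrary.
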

We remark that \cite[Theorem 12.1. part (i)]{Benoist2016Quint} applies here since by  Claim \ref{Properties of cocycle}, the cocycle $c(\cdot ,\cdot)$ satisfies the bounded moment conditions  \cite[(11.14),(11.15)]{Benoist2016Quint} and is not constant. We also remark that for Theorem \ref{B-Q CLT} we do not need the assumption that $\Lambda_c=\lbrace 0 \rbrace$ made in Theorem \ref{Theorem main tech}. However, this assumption is crucial for the local limit Theorem for cocycles with target, also proved by Benoist-Quint:

\begin{theorem} \label{B-Q LLT general} \cite[Theorem 16.15]{Benoist2016Quint} 
Let $A_\eta$ be  as in Theorem \ref{Theorem equid}. Then for every $\omega' \in \mathcal{A}^\mathbb{N}$, $\epsilon>0$, $m\in \mathbb{N}$ and $w\in \mathbb{R}_+$ such that $|w-m\chi| \leq 4 \chi \sqrt{m \log m}$,
$$ \mathbb{P}_{\sigma^{-m}(\{\omega'\})}\big(\omega\in A_\eta, \, \, S_m (\omega) \in [w, w+ \epsilon\chi]\big)=G_{\sqrt m r}(w-m\chi)\cdot \mathbb{P}(A_\eta) \cdot \epsilon\chi\cdot (1+o^{m\to\infty}_{\epsilon,h_0,\textbf{p}} (1))$$
where $G_s(\cdot )$ stands for the density of the Gaussian law $N(0,s^2)$, and  $r=r(\mathbf{p})$ is as in Theorem \ref{B-Q CLT}. The decay rate in $o^{m\to\infty}_{\epsilon,h_0,\textbf{p}} (1)$  depends only on $\epsilon\chi$, $h_0$ and $\textbf{p}$, and is uniform in $\omega',w$.
\end{theorem}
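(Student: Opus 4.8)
The plan is to deduce the statement directly from the local limit theorem for cocycles with target, \cite[Theorem 16.15]{Benoist2016Quint}, by recognizing our objects as a special instance of Benoist--Quint's framework and verifying the hypotheses. The driving data are: the probability $\mathbf{p}$ on the finite generating set $\{\iota_1,\dots,\iota_n\}$ of the free semigroup $G$; the compact metric space $(\mathcal{A}^{\mathbb{N}},d_\rho)$ on which $G$ acts; the derivative cocycle $c:G\times\mathcal{A}^{\mathbb{N}}\to\mathbb{R}$, $c(\iota_a,\omega)=-\log|f'_a(x_\omega)|$, playing the role of the $\mathbb{R}$-valued target cocycle; and the locally constant function $\mathbf{1}_{A_\eta}$, playing the role of the bounded Hölder observable on the base. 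With $g_m=\iota_{b_m}\circ\cdots\circ\iota_{b_1}$ the $m$-step random walk started at a point $\omega'$, one has $g_m\omega'=(b_m,\dots,b_1,\omega')$ and, by the chain rule (Lemma \ref{Lemma relation between R.V and stopping time}), $c(g_m,\omega')=\sum_{j=1}^m c(\iota_{b_j},(b_{j-1},\dots,b_1,\omega'))$. Reindexing $i=m+1-j$, this sum is exactly $\sum_{i=1}^m X_i(\omega)=S_m(\omega)$ for $\omega=(\omega_1,\dots,\omega_m,\omega')$ with $\omega_i=b_{m+1-i}$; since reversing an i.i.d.\ $\mathbf{p}$-string is again i.i.d.\ $\mathbf{p}$, the conditional measure $\mathbb{P}_{\sigma^{-m}(\{\omega'\})}$ is precisely the law of the $m$-step walk from $\omega'$. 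Moreover, for $m\geq\ell$ the event $\{\omega\in A_\eta\}$ reads "the endpoint $g_m\omega'$ of the walk lies in the cylinder $A_\eta$", i.e.\ $\mathbf{1}_{A_\eta}(g_m\omega')=1$, which is exactly the target observable evaluated at the endpoint. Thus the left-hand side of the theorem is verbatim the joint quantity controlled by \cite[Theorem 16.15]{Benoist2016Quint}.

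Next I would check the hypotheses. By Claim \ref{Properties of cocycle} the cocycle $c$ is uniformly bounded and uniformly Lipschitz in the space variable, hence satisfies the moment conditions \cite[(11.14),(11.15)]{Benoist2016Quint}; by \eqref{Eq C and C prime} it is bounded below by $D>0$, so in particular it is not cohomologous to a constant, and its associated asymptotic variance is the number $r=r(\mathbf{p})>0$ from Theorem \ref{B-Q CLT}. The aperiodicity hypothesis required by the \emph{local} (as opposed to central) limit theorem, namely \cite[(15.8)]{Benoist2016Quint}, is exactly the assumption $\Lambda_c=\{0\}$ from Theorem \ref{Theorem main tech}: the condition \eqref{Eq Lambda c} defining $\Lambda_c$ is precisely the statement that the only $\theta$ admitting a Hölder "twist" $\phi_\theta$ with $|\phi_\theta|=1$ and a phase $u_\theta\in S^1$ solving the cohomological equation is $\theta=0$. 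Finally, $\mathbf{1}_{A_\eta}$ is locally constant, hence $\kappa$-Hölder on $(\mathcal{A}^{\mathbb{N}},d_\rho)$, so it is an admissible observable. These are all the inputs \cite[Theorem 16.15]{Benoist2016Quint} needs.

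With the hypotheses in place, I would read off the conclusion. In the moderate deviation range $|w-m\chi|\leq 4\chi\sqrt{m\log m}$ — well inside Benoist--Quint's $\sqrt{m\log m}$-window — their local limit theorem with target gives
\[
\mathbb{P}_{\sigma^{-m}(\{\omega'\})}\big(\omega\in A_\eta,\ S_m(\omega)\in[w,w+\epsilon\chi]\big)=G_{\sqrt m\,r}(w-m\chi)\cdot\Big(\int \mathbf{1}_{A_\eta}\,d\mathbb{P}\Big)\cdot\epsilon\chi\cdot\big(1+o^{m\to\infty}_{\epsilon,h_0,\mathbf{p}}(1)\big),
\]
where the drift $m\chi$ is the Lyapunov exponent (the almost sure growth rate of $S_m$, by the law of large numbers for the cocycle), and the stationary-measure factor $\int\mathbf{1}_{A_\eta}\,d\mathbb{P}$ equals $\mathbb{P}(A_\eta)$ because for the Bernoulli driving measure the $\mathbf{p}$-stationary measure on $\mathcal{A}^{\mathbb{N}}$ is $\mathbb{P}=\mathbf{p}^{\mathbb{N}}$ itself. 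This is exactly the asserted formula. The uniformity claims — that the error depends only on $\epsilon\chi$, $h_0$ and $\mathbf{p}$ and is uniform in $\omega'$ and $w$ — are part of the output of \cite[Theorem 16.15]{Benoist2016Quint} once the data $(\mathbf{p},c,\mathbf{1}_{A_\eta})$ are fixed; the dependence on $h_0$ enters only through the finitely many cylinders $A_\eta\in\mathcal{A}^h$ with $h\leq h_0$, whose Hölder norms and measures are controlled by $h_0$.

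The main obstacle I anticipate is purely one of bookkeeping: carefully matching orientations and normalizations between Benoist--Quint's forward-walk formulation (statements about $\sigma(g_m,x)-m\sigma_\mu$ with the observable evaluated at $g_m x$) and our concrete objects ($S_m$ evaluated at the \emph{future} point $x_{\sigma^m(\omega)}$, under the conditional measure on the fiber $\sigma^{-m}(\{\omega'\})$). One must verify that the reversal $\omega_i\leftrightarrow b_{m+1-i}$ is innocuous, that the interval $[w,w+\epsilon\chi]$ genuinely lies in the regime their theorem covers with the stated uniformity, and that the $o(1)$ depends on $\epsilon$ only through $\epsilon\chi$; beyond that, no new ideas are needed, since the substantive analysis is entirely contained in \cite{Benoist2016Quint}.
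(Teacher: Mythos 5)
Your proposal is correct and follows essentially the same route as the paper: the paper also obtains this statement by directly invoking \cite[Theorem 16.15]{Benoist2016Quint}, checking the moment conditions via Claim \ref{Properties of cocycle}, using $\Lambda_c=\{0\}$ for aperiodicity (whence the modified cocycle $\tilde c$ of \cite[(16.9)]{Benoist2016Quint} coincides with $c$), taking $\varphi=\mathbf 1_{A_\eta}$, the convex set $[0,\epsilon\chi]$, $v_m=w-m\chi$, $x=\omega'$, and getting uniformity in $h_0$ from the finitely many cylinders $A_\eta$. Your extra bookkeeping identifying $\mathbb{P}_{\sigma^{-m}(\{\omega'\})}$ with the law of the $m$-step walk and $S_m$ with the cocycle sum is exactly the implicit translation the paper leaves to the reader.
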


To be precise, a-priori the decay rate   depends only on $\epsilon\chi$, $A_\eta$ and $\textbf{p}$. Hence it only depends on $\epsilon$, $h_0$, and $\textbf{p}$  as there are only finitely many possible choices for $A_\eta$. In this case, as in Theorem \ref{B-Q CLT}, Claim \ref{Properties of cocycle} implies that our cocycle $c(\cdot ,\cdot)$ satisfies the bounded moment conditions \cite[(11.14),(11.15)]{Benoist2016Quint}. It is aperiodic in the sense of \cite[Equation (15.8)]{Benoist2016Quint}  by the assumption that $\Lambda_c = \lbrace 0 \rbrace$. By aperiodicity, the cocycle $\tilde c$ defined by \cite[Equation (16.9)]{Benoist2016Quint}  is equal to $c$. So \cite[Theorem 16.15]{Benoist2016Quint} applies  with, in the notations therein, $X= \mathcal{A} ^{\mathbb N}$, $\varphi=\mathbf 1_{A_\eta}$, the convex set $C$ being $[0,\epsilon\chi]$, the translation $v_m = w - m\chi$ and $v_\mu =0$, and finally $x=\omega'$.

\subsubsection{Proof of Theorem \ref{Theorem equid}}
For every $r\in \mathbb{R}$ let $U_r (x)=x+r$ be the translation map.  In addition, for every $k$ we define the interval 
\begin{equation} \label{Eq Def of Ik}
I_k = [k- \sqrt{k\log k}, k+\sqrt{k\log k}].
\end{equation}
To begin the proof of Theorem \ref{Theorem equid}, we decompose the left hand side in (iii) as 
\begin{equation}\label{Eq Theorem equid}
\mathbb{P}_{\cA_k^{h,h'}(\xi)}(S_{\tau_k}\in J)=\sum_{m \not \in I_k} \mathbb{P}_{\cA_k^{h,h'}(\xi)} (\tau_k = m+1, S_{\tau_k}\in J)+\sum_{m \in I_k} \mathbb{P}_{\cA_k^{h,h'}(\xi)} (\tau_k = m+1, S_{\tau_k}\in J).\end{equation}

The two terms are respectively treated by Proposition \ref{Prop equid local} and Proposition \ref{Prop equid central} below, and the Theorem follows.

\begin{Proposition} \label{Prop equid central} In the setting of Theorem \ref{Theorem equid},  there exists a set $\widetilde A_\eta$ such that claims (i) and (ii) hold and for all $\xi\in\widetilde A_\eta$, 
$$\mathbb{P}_{\cA_k^{h,h'}(\xi)}( \tau_k -1 \notin I_k )= o^{k\to\infty}_{h_0,\textbf{p}}(1).$$
\end{Proposition}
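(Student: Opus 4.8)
\textbf{Proof plan for Proposition \ref{Prop equid central}.}

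The plan is to show that the stopping time $\tau_k-1$ is highly concentrated in the window $I_k=[k-\sqrt{k\log k},\,k+\sqrt{k\log k}]$, conditionally on most cells $\cA_k^{h,h'}(\xi)$, using the central limit Theorem \ref{B-Q CLT} of Benoist--Quint together with the elementary observation that $S_n$ is strictly increasing in $n$ with increments bounded between $D$ and $D'$. First I would note the key equivalence: since $S_{\tau_k-1}<k\chi\leq S_{\tau_k}$ and $S_n=\sum_{i\le n}X_i$ with $X_i\in[D,D']$, the event $\{\tau_k-1\le m\}$ is (for integer $m$) comparable to $\{S_m\ge k\chi\}$ up to an additive $O(1)$ shift in the index, and similarly $\{\tau_k-1\ge m\}$ is comparable to $\{S_m< k\chi\}$. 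Concretely, if $m\le k-\sqrt{k\log k}$ then $k\chi - m\chi\ge \chi\sqrt{k\log k}$, which is $\gg\sqrt{m}$, so by Theorem \ref{B-Q CLT} applied to $A_\eta$ (here with $h\le h_0$, so there are only finitely many $A_\eta$, hence uniform error) the probability that $S_m\ge k\chi$ — equivalently $(S_m-m\chi)/\sqrt m \ge \chi\sqrt{k\log k}/\sqrt m$ — is at most $\mathbb P(A_\eta)\cdot (N(0,r^2)>c\sqrt{\log k})(1+o(1))$, which is $o^{k\to\infty}_{h_0,\bfp}(1)\cdot \mathbb P(A_\eta)$ using the Gaussian tail bound. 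Symmetrically for $m\ge k+\sqrt{k\log k}$, one bounds $\mathbb P(\omega\in A_\eta:\ S_m<k\chi)$ by the lower Gaussian tail, again $o(1)\cdot\mathbb P(A_\eta)$. Choosing $m$ near the two endpoints of $I_k$ and combining, this gives
$$\mathbb P\big(\omega\in A_\eta:\ \tau_k-1\notin I_k\big)= o^{k\to\infty}_{h_0,\bfp}(1)\cdot \mathbb P(A_\eta).$$

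The second step is to pass from this unconditional (within $A_\eta$) bound to a bound that holds conditionally on $\cA_k^{h,h'}(\xi)$ for most $\xi$. The point is that $\{\tau_k-1\notin I_k\}$, restricted to $A_\eta$, is a union of cells of the partition $\cA_k^{h,h'}$ refined over $A_\eta$ only in an $o(1)$-measure set — more precisely, I would define $\widetilde A_\eta$ to be the union of those cells $\cA_k^{h,h'}(\xi)\subseteq A_\eta$ with $\mathbb P(\cA_k^{h,h'}(\xi))>0$ on which the conditional probability $\mathbb P_{\cA_k^{h,h'}(\xi)}(\tau_k-1\notin I_k)$ exceeds $\sqrt{\epsilon_k}$, where $\epsilon_k:=o^{k\to\infty}_{h_0,\bfp}(1)$ is the bound from Step 1. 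By Markov's inequality applied to the conditional expectation (i.e. averaging the conditional probabilities against $\mathbb P$ restricted to $A_\eta$), the complement of $\widetilde A_\eta$ inside $A_\eta$ has $\mathbb P$-measure at most $\sqrt{\epsilon_k}\cdot\mathbb P(A_\eta)$, which is again $o^{k\to\infty}_{h_0,\bfp}(1)\cdot\mathbb P(A_\eta)$; this yields claims (i) and (ii), and on $\widetilde A_\eta$ the conditional probability is at most $\sqrt{\epsilon_k}=o^{k\to\infty}_{h_0,\bfp}(1)$, which is the assertion. One should check that $\widetilde A_\eta$ is a union of partition cells so that claim (ii) — positivity of $\mathbb P(\cA_k^{h,h'}(\xi))$ for $\xi\in\widetilde A_\eta$ — is automatic by construction.

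The main obstacle I anticipate is uniformity of the error term: Theorem \ref{B-Q CLT} gives an error $o^{n\to\infty}_\psi(1)$ depending on the test function $\psi=1_{A_\eta}\times 1_{[R,\infty)}$, and here $R=R_k$ grows like $\sqrt{\log k}$ while simultaneously $n=m$ ranges over a $k$-dependent set near the endpoints of $I_k$. So I cannot simply invoke the CLT with a fixed $\psi$. The resolution is to use the quantitative Gaussian comparison more carefully: one needs a Berry--Esseen-type or at least a uniform-over-$R$ version of the statement, or to split the tail estimate into (a) the genuinely Gaussian regime $|S_m-m\chi|\le C\sqrt{m\log m}$ where the CLT error is uniform because only finitely many $A_\eta$ occur and the relevant half-lines can be handled by a single application to the indicator of $A_\eta$ times a bounded family, and (b) a crude large-deviations bound for the far tail $S_m\ge k\chi$ with $m$ well below $k$, which follows from boundedness of the increments $X_i\in[D,D']$ (indeed if $m< k\chi/D'$ then $S_m<k\chi$ deterministically, eliminating the far-left tail entirely, and symmetrically $S_m\ge k\chi$ deterministically once $m\ge k\chi/D$). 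In fact this boundedness of increments is what makes the whole argument clean: $\tau_k-1$ deterministically lies in $[k\chi/(D'\cdot\text{something}),\ k\chi/(D\cdot\text{something})]$ — an interval of length $O(k)$ — and one only needs the CLT to upgrade this to the much narrower $\sqrt{k\log k}$ window, which it does because on that intermediate range the deviation $R_k\sim\sqrt{\log k}$ still makes the Gaussian tail $o(1)$ while $m\asymp k$ so the CLT applies with uniform error (finitely many $A_\eta$, and $R_k$ can be treated by monotonicity and a diagonal argument, or simply by noting $(N(0,r^2)>c\sqrt{\log k})\to 0$ faster than any fixed CLT error can be made to matter).
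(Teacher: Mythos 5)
Your proposal follows essentially the same route as the paper: first establish the unconditional estimate $\mathbb{P}_{A_\eta}(\tau_k-1\notin I_k)=o^{k\to\infty}_{h_0,\bfp}(1)$ by evaluating $S_m$ at the two endpoint indices $m=\lfloor k+b\rfloor$ and $m=\lceil k-b\rceil$ of $I_k$ via Theorem \ref{B-Q CLT}, then define $\widetilde A_\eta$ via the $\sqrt{\epsilon_k}$ threshold and conclude by Markov. On the uniformity worry you raise, your ``monotonicity and a diagonal argument'' is exactly what the paper does --- fix $\epsilon$, pick $R=R(r,\epsilon)$ so that $(N(0,r^2)>R)=\epsilon$, use that $b\chi/\sqrt{\lfloor k+b\rfloor}\geq R$ for $k$ large to invoke the CLT for the fixed test function $\psi=1_{A_\eta}\times 1_{[R,\infty)}$, and then send $\epsilon\to 0$; your other suggested shortcut (``$N(0,r^2)>c\sqrt{\log k}$ decays faster than any fixed CLT error'') is not by itself rigorous since the $o_\psi$ rate is not asserted to be uniform in $R$, so you do need the $\epsilon$-diagonalization.
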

Notice that we are using the abbreviated notation $\widetilde A_\eta$ instead of $\tA_{k,\eta}^{h,h'}$.

\begin{proof} We first prove the following claim: \begin{equation}\label{EqTotalCLT}\mathbb{P}_{A_\eta}( \tau_k-1\notin I_k)= o^{k\to\infty}_{h_0,\textbf{p}}(1), \text{ for every }\eta.\end{equation}


For the function $b=b(k)= \sqrt{ k \log k}-1$,  suppose that $|\tau_k (\omega) - k| >b=b(k)$. We also fix a small $\epsilon>0$. Without   loss of generality,  suppose first that $\tau_k (\omega) - k > b$. This implies that 
$$S_{\lfloor  k+b \rfloor} (\omega) < k \chi $$
and therefore
\begin{equation*}
| S_{\lfloor k+b\rfloor} (\omega)  - \chi\cdot\lfloor k+b\rfloor| \geq |S_{\lfloor k+b\rfloor} (\omega) -  k \chi - b\chi   - \chi | \geq b\chi.
\end{equation*}
Let $r>0$ be as in Theorem \ref{B-Q CLT}, and let $R=R(r,\epsilon)>0$ be such that $(N(0, r^2) > R)=\epsilon$. Then by Theorem \ref{B-Q CLT} applied for the corresponding $\psi$, we get
$$ \begin{aligned}&\mathbb{P} \left(\omega\in A_\eta:  \left|  S_{\lfloor  k+b \rfloor} (\omega)  -  \lfloor  k+b \rfloor \chi \right| \geq b\chi \right) \\
=&\mathbb{P} \left( \omega\in A_\eta:  \frac{\left|  S_{\lfloor  k+b \rfloor} (\omega)  -  \lfloor  k+b \rfloor \chi \right|}{\sqrt{\lfloor  k+b \rfloor}}\geq \frac{b\chi}{\sqrt {\lfloor  k+b \rfloor}}  \right) \\
\leq &\mathbb{P} \left(\omega\in A_\eta:  \frac{\left|  S_{\lfloor  k+b \rfloor} (\omega)  -  \lfloor  k+b \rfloor \chi \right|}{\sqrt{\lfloor  k+b \rfloor}}\geq R \right) \\
= &\mathbb P(A_\eta) ( N(0, r^2) > R)(1+ o_{\psi}^{k\to\infty}(1))\\=&\epsilon+o_{\epsilon,h_0,\textbf{p}}^{k\to\infty}(1).\end{aligned}$$  Here we used that $ \frac{b\chi}{\sqrt {\lfloor  k+b \rfloor}}\to \infty$ as $k\to\infty$.  Since $\epsilon$ is arbitrary, it follows that
\begin{equation*}\mathbb{P}(\tau_k (\omega) - k > b)\leq \mathbb{P} \left(  \left|  S_{\lfloor  k+b \rfloor} (\omega)  -  \lfloor  k+b \rfloor \chi \right| \geq b\chi \right) =o_{h_0,\textbf{p}}^{k\to\infty}(1).\end{equation*}
A similar argument shows \begin{equation*}\mathbb{P}\left(\tau_k (\omega) - k <-b\right)\leq \mathbb{P} \left(  \left|  S_{\lceil  k-b \rceil} (\omega)  -  \lceil  k-b \rceil\chi \right| \geq b\chi \right) = o_{h_0,\textbf{p}} ^{k\to\infty}(1).\end{equation*} The claim \eqref{EqTotalCLT} then follows by combining the two inequalities above.

The deduction of the proposition from  \eqref{EqTotalCLT} is standard. Indeed, it suffices to set $$\widetilde A_\eta=\left\{\xi\in A_\eta:\quad  \mathbb P({\cA_k^{h,h'}(\xi)})>0, \quad \mathbb{P}_{\cA_k^{h,h'}(\xi)}(\tau_k-1\notin I_k)\leq \sqrt{\mathbb{P}_{A_\eta}( \tau_k-1\notin I_k)}\right\}.$$ Then $\mathbb P_{A_\eta}(A_\eta\backslash \widetilde A_\eta)\leq\sqrt{\mathbb{P}_{A_\eta}( \tau_k-1\notin I_k)}=o_{h_0,\textbf{p}} ^{k\to\infty}(1)$.
\end{proof}

\hide{
To pass from Lemma \ref{Lem equid central} to Proposition \ref{Prop equid central} we need the bounded distortion property for non-linear IFS's. 

\begin{Lemma}\label{LemBoundedDistortion}\cite{Peres2000Solomyak}*{inequality (1.3)}There exists $L>1$, such that for all $m\in\mathbb N$ and $\eta\in\mathcal A^m$,  and $x,y\in[0,1]$, 
$$\frac{|f'_\eta(x)|}{|f'_\eta(y|}\in[\frac1L,L].$$  \end{Lemma}

\begin{Corollary}\label{CorCLTDistortion}If two finite words $\eta'$, $\zeta'$ satisfy  $\mathbb{P}(A_{k,\eta,\eta'}^{k'})>0$ and $\mathbb{P}(A_{k,\eta,\zeta'}^{k'})>0$. Then for $a=\frac{2\log L}{D}$,  $$\mathbb{P}_{A_{k,\eta,\eta'}^{k'}}( \tau_k -1 \notin I_k )\leq\mathbb{P}_{A_{k,\eta,\zeta'}^{k'}}( \tau_k -1  \notin  [k- \sqrt{k\log k}+ a, k+\sqrt{k\log k}- a]).$$\end{Corollary}
\begin{proof}For $\omega'\in\mathcal A^{\mathbb N}$ and define the event $$A_{k,\eta,\omega'}=\{\omega\in A_\eta, \sigma^{\tau_k(\omega)-1}(\omega)=\omega'\}.$$ Since $A_{k,\eta,\eta'}^{k'}=\bigcup_{\omega'\in A_{\eta'}^{k'}}A_{k,\eta,\omega'}$, it suffices to show for $\mathbb P$-a.e.  $\omega'\in A_{\eta'}^{k'}$, \begin{equation}\label{EqProp equi central 1}\mathbb{P}_{A_{k,\eta,\omega}^{k'}}( \tau_k -1 \notin I_k )= o^{k\to\infty}_{p}(1).\end{equation}

Suppose $\omega',\xi'\in\mathcal A^{\mathbb N}$ and $\mathbb{P}_{A_{k,\eta,\omega'}^{k'}}( \tau_k -1 \notin I_k )=p>0$.  Define a map $\pi_{\xi'}:A_{k,\eta,\omega'}\to\mathcal A^{\mathbb N}$ by $$\pi_{\xi'}(\omega)=(\omega_1,\cdots,\omega_{\tau_k(\omega)-1},\xi').$$\end{proof}
}

We now take care of the second term in \eqref{Eq Theorem equid}:

\begin{Proposition}\label{Prop equid local} In the setting of Theorem \ref{Theorem equid}, for all $\xi$ in the set $\widetilde A_\eta$ from Proposition \ref{Prop equid central}, $$\sum_{m \in I_k} \mathbb{P}_{\cA_k^{h,h'}(\xi)}(\tau_k = m+1, S_{\tau_k}\in J)=\Gamma_{\cA_k^{h,h'}(\xi)}(J)  +o^{k\to\infty}_{h_0,\textbf{p}}(1).$$\end{Proposition}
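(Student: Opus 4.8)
\noindent\textit{Proof outline.} The plan is to condition on everything that happens after the stopping time and feed each fiber into the Benoist--Quint local limit Theorem \ref{B-Q LLT general}. Write $A_{k,\eta,\eta'}:=\cA_k^{h,h'}(\xi)$ for the cell containing $\xi$. The events $\{\tau_k=m+1\}$ with $m\in I_k$ are pairwise disjoint with union $\{\tau_k-1\in I_k\}$, so the sum in the statement equals $\mathbb{P}_{A_{k,\eta,\eta'}}(\tau_k-1\in I_k,\ S_{\tau_k}\in J)$; it therefore suffices to estimate the numerator $N_J:=\mathbb{P}\bigl(A_\eta\cap\{\tau_k-1\in I_k,\ S_{\tau_k}\in J,\ \sigma^{\tau_k-1}\omega\in A_{\eta'}\}\bigr)$ and the denominator $\mathbb{P}(A_{k,\eta,\eta'})$ separately.

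First I would rewrite $N_J$ fiberwise. Since $S_n$ is strictly increasing and $S_{m+1}=S_m+X_1\circ\sigma^m$ by Lemma \ref{Lemma relation between R.V and stopping time}, on the fiber $\sigma^{-m}\{\omega'\}$ — on which $S_m$ is a function of $\omega|_m$ alone — the event $\{\tau_k=m+1,\ S_{\tau_k}\in J\}$ becomes $\{S_m\in(J-X_1(\omega'))\cap(-\infty,k\chi)\}$; here the constraint $S_{m+1}\geq k\chi$ is automatic because $J\subseteq[k\chi,k\chi+D']$. A direct computation gives $\lambda\bigl((J-X_1(\omega'))\cap(-\infty,k\chi)\bigr)=\lambda|_{[k\chi,\,k\chi+X_1(\omega')]}(J)$, which is exactly the integrand in the numerator of $\Gamma_{A_{k,\eta,\eta'}}$ from Definition \ref{Def Gammak}. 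Disintegrating $\mathbb{P}=\int\mathbb{P}_{\sigma^{-m}\{\omega'\}}\,d\mathbb{P}(\omega')$ and summing over $m\in I_k$,
\[
N_J=\sum_{m\in I_k}\int_{A_{\eta'}}\mathbb{P}_{\sigma^{-m}\{\omega'\}}\Bigl(A_\eta\cap\bigl\{S_m\in(J-X_1(\omega'))\cap(-\infty,k\chi)\bigr\}\Bigr)\,d\mathbb{P}(\omega').
\]

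Next, fix $\epsilon>0$ and apply Theorem \ref{B-Q LLT general} on each fiber. The window $(J-X_1(\omega'))\cap(-\infty,k\chi)$ lies inside $[k\chi-D',k\chi)$; tiling it by intervals of length $\epsilon\chi$, using that only finitely many words $\eta$ occur (as $|\eta|\leq\ttau_{h_0}$ is bounded by $h_0\chi/D+1$), that the hypothesis $|w-m\chi|\leq4\chi\sqrt{m\log m}$ holds for all $m\in I_k$ once $k$ is large, and that $G_{\sqrt m r}(\,\cdot-m\chi)$ is constant up to a factor $1+o^{k\to\infty}(1)$ over any width-$D'$ window centred near $k\chi$ (its standard deviation is $\asymp\sqrt k$ while $|k\chi-m\chi|\leq\chi\sqrt{k\log k}$ on $I_k$), one obtains, uniformly in $\omega'$, $J$ and $m\in I_k$, the fiberwise estimate with main term $G_{\sqrt m r}((k-m)\chi)\,\mathbb{P}(A_\eta)\,\lambda|_{[k\chi,\,k\chi+X_1(\omega')]}(J)$ and error $G_{\sqrt m r}((k-m)\chi)\,\mathbb{P}(A_\eta)\bigl(O(\epsilon\chi)+o^{k\to\infty}_{\epsilon,h_0,\bfp}(1)\bigr)$, the $O(\epsilon\chi)$ coming from the at most two boundary tiles. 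Integrating over $\omega'\in A_{\eta'}$, invoking Definition \ref{Def Gammak} and $\int_{A_{\eta'}}X_1\,d\mathbb{P}\geq D\,\mathbb{P}(A_{\eta'})$ to rewrite the boundary term as $O(\epsilon)\int_{A_{\eta'}}X_1\,d\mathbb{P}$, and summing over $m\in I_k$,
\[
N_J=\Bigl(\sum_{m\in I_k}G_{\sqrt m r}\bigl((k-m)\chi\bigr)\Bigr)\,\mathbb{P}(A_\eta)\,\Gamma_{A_{k,\eta,\eta'}}(J)\Bigl(\int_{A_{\eta'}}X_1\,d\mathbb{P}\Bigr)\bigl(1+o(1)+O(\epsilon)\bigr).
\]
Rather than evaluate the Gaussian Riemann sum, I would cancel it: the same identity with $J=[k\chi,k\chi+D']$ has $\Gamma_{A_{k,\eta,\eta'}}(J)=1$, while Proposition \ref{Prop equid central}, applicable because $\xi\in\widetilde A_\eta$, gives $N_{[k\chi,k\chi+D']}=\mathbb{P}(A_{k,\eta,\eta'})(1-o(1))$; hence $\mathbb{P}(A_{k,\eta,\eta'})=\bigl(\sum_{m\in I_k}G_{\sqrt m r}((k-m)\chi)\bigr)\mathbb{P}(A_\eta)\bigl(\int_{A_{\eta'}}X_1\,d\mathbb{P}\bigr)(1+o(1)+O(\epsilon))$. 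Dividing the two identities removes the Gaussian sum and all $\eta,\eta'$-dependent factors, leaving $\mathbb{P}_{A_{k,\eta,\eta'}}(\tau_k-1\in I_k,\ S_{\tau_k}\in J)=\Gamma_{A_{k,\eta,\eta'}}(J)(1+o(1)+O(\epsilon))=\Gamma_{A_{k,\eta,\eta'}}(J)+o^{k\to\infty}_{\epsilon,h_0,\bfp}(1)+O(\epsilon)$, since $\Gamma_{A_{k,\eta,\eta'}}(J)\in[0,1]$; a standard argument sending $\epsilon\to0$ slowly as $k\to\infty$ eliminates the $\epsilon$ and finishes the proof.

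The work here is entirely organizational, and I expect two points to require the most care. The first is setting up the disintegration over the fibers of $\omega\mapsto\sigma^{\tau_k(\omega)-1}(\omega)$: this is precisely what converts the ``stopping-time'' event $\{\tau_k=m+1\}$ — which is \emph{not} measurable with respect to $\omega_1,\dots,\omega_m$, because $S_m$ depends on the whole sequence — into the random-walk-with-target statement Theorem \ref{B-Q LLT general} delivers. The second is securing uniformity in $\eta'$ and $J$ (and the complete irrelevance of $h'$); this is achieved by never estimating the Gaussian Riemann sum $\sum_{m\in I_k}G_{\sqrt m r}((k-m)\chi)$ in isolation, but eliminating it through the ratio with the $J=[k\chi,k\chi+D']$ case and Proposition \ref{Prop equid central}.
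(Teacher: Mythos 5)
Your proposal is correct and follows essentially the same path as the paper: disintegrate over the fibers of $\omega\mapsto\sigma^{\tau_k(\omega)-1}\omega$, tile the window $J^{\omega'}$ by $\epsilon\chi$-intervals and apply Theorem \ref{B-Q LLT general} fiberwise, integrate over $\omega'\in A_{\eta'}$ to recognize the numerator of $\Gamma$ from Definition \ref{Def Gammak}, sum over $m\in I_k$, and take the ratio with the $J=[k\chi,k\chi+D']$ case together with Proposition \ref{Prop equid central} to cancel the Gaussian Riemann sum. The one cosmetic difference is that you never evaluate $\sum_{m\in I_k}G_{\sqrt m r}((k-m)\chi)$ (the paper converts it to a Gaussian integral via Lemma \ref{LemGaussian} on the way to \eqref{EqStopLLT1}, getting $1/\chi-o(1)$, but this factor then cancels in \eqref{Eq equid local 3} anyway), so your flatness claim about $G_{\sqrt m r}(\cdot-m\chi)$ over a width-$D'$ window is a slightly weaker substitute for Lemma \ref{LemGaussian}, and is indeed all you need.
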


\begin{proof} Let $\eta'$ be the finite word such that ${\cA_k^{h,h'}(\xi)}=A_{k,\eta,\eta'}$. We first notice that 
\begin{equation}\label{Eq equid local 1}\begin{aligned}&\sum_{m \in I_k} \mathbb{P}_{A_{k,\eta,\eta'}}  (\tau_k = m+1, S_{\tau_k}\in J)\\
=&\frac{\sum_{m \in I_k}\mathbb{P}\big(\omega\in A_{k,\eta,\eta'}, \, \tau_k =m+1, \, S_{m+1} \in J\big)}{\mathbb{P}(A_{k,\eta,\eta'})}.\\
\end{aligned}\end{equation}

Each summand in the numerator can be written as 
\begin{equation}\label{Eq equid local 2}\begin{aligned}&\mathbb{P}\big(\omega\in A_{k,\eta,\eta'},\quad \tau_k =m+1,\quad S_{m+1} \in J\big)\\
=&\mathbb{P}(\omega\in A_\eta\cap\sigma^{-m}(A_{\eta'}),\quad S_m<k\chi,\quad S_{m+1}\in J)\\
=&\int_{A_{\eta'}}\mathbb{P}_{\sigma^{-m}(\{\omega'\})}\big(\omega\in A_\eta,\quad S_m<k\chi,\quad  S_{m+1} \in J\big)d\mathbb P (\sigma^{-m}(\{\omega'\}))\\
=&\int_{A_{\eta'}^{k'}}\mathbb{P}_{\sigma^{-m}(\{\omega'\})}\big(\omega\in A_\eta,\quad S_m<k\chi,\quad S_m+X_1(\omega') \in J\big)d\mathbb P (\omega')\\
=&\int_{A_{\eta'}^{k'}}\mathbb{P}_{\sigma^{-m}(\{\omega'\})}\big(\omega\in A_\eta,\quad S_m \in J^{\omega'}\big)d\mathbb P (\omega')\\\end{aligned}\end{equation}
where the interval $J^{\omega'}$ is defined by 
\begin{equation}\label{EqIntervalShift}J^{\omega'}:=[k\chi-X_1(\omega'),k\chi)\cap U_{-X_1(\omega')}J.\end{equation}

Fix an arbitrarily small $\epsilon>0$.  Since $m\in I_k$ then by Theorem \ref{B-Q LLT general} for all translates $W\subseteq J^{\omega'}$ of the form $[w,w+\epsilon\chi)$, 
\begin{equation}\label{EqLLT}\mathbb{P}_{\sigma^{-m}(\{\omega'\})}\big(\omega\in A_\eta, \, S_m \in W\big)=G_{\sqrt m r}(w-m\chi)\cdot \mathbb{P}(A_\eta)\cdot   \epsilon\chi\cdot (1+o^{m\to\infty}_{\epsilon,h_0,\textbf{p}} (1))\end{equation} 
where we recall that $G_s(\cdot )$ stands for the density of $N(0,s^2)$, $r>0$ is as in Theorem \ref{B-Q CLT}, and $o^{k\to\infty}_{\epsilon,h_0,\textbf{p}} (1)$ in \eqref{EqLLT} depends only on $\epsilon$, $h_0$, and $\textbf{p}$, and is uniform in $\omega'$. Because $m\in I_k$, $m\to\infty$ if and only if $k\to\infty$, so we know by Lemma \ref{LemGaussian} below that 
$$\mathbb{P}_{\sigma^{-m}(\omega')}(\omega\in A_\eta,\quad S_m \in W)=G_{\sqrt k r}((m-k+\beta)\chi) \cdot  \mathbb{P}(A_\eta) \cdot  \epsilon\chi\cdot (1+o^{k\to\infty}_{\epsilon,h_0,\textbf{p}}  (1))$$ for all $\omega'\in A_{\eta'}$, $m\in I_k$ and $\beta\in[0,1)$ as $k\to\infty$.

Now, since the interval $J^{\omega'}$ contains $\lfloor\frac{\lambda(J^{\omega'})}{\epsilon\chi}\rfloor$ disjoint intervals of the form $[w,w+\epsilon\chi)$ and is covered by $\lceil\frac{\lambda(J^{\omega'})}{\epsilon\chi}\rceil$ such intervals, we know that for all  $m\in I_k$ and $\beta\in[0,1)$,
\begin{equation}\label{EqLLTComponent}\begin{aligned}&\mathbb{P}_{\sigma^{-m}(\omega')}(\omega\in A_\eta,\quad S_m \in  J^{\omega'} )\\
=&G_{\sqrt k r}((m-k+\beta)\chi) \cdot  \mathbb{P}(A_\eta)\cdot\Big(\frac{\lambda(J^{\omega'})}{\epsilon\chi}+O(1)\Big)\cdot\epsilon\chi\cdot (1+o^{k\to\infty}_{\epsilon,h_0,\textbf{p}}  (1))\\
=&G_{\sqrt k r}((m-k+\beta)\chi) \cdot  \mathbb{P}(A_\eta)\cdot\big(\lambda( J^{\omega'})+O(\epsilon\chi)+o^{k\to\infty}_{\epsilon,h_0,\textbf{p}} (1)\big),\end{aligned}\end{equation} where the implied constant in the $O(\epsilon\chi)$ is $1$: the term represented by $O(\epsilon\chi)$ is of absolute value bounded by $\epsilon\chi$. The error term $o^{k\to\infty}_{\epsilon,h_0,p} (1)$ is uniform in $J$, $\omega'$ and $\beta$.

Integrating \eqref{EqLLTComponent} inside \eqref{Eq equid local 2} leads to
$$\begin{aligned}&\mathbb{P}(\omega\in A_{k,\eta,\eta'},\quad \tau_k=m+1, \quad S_{m+1}\in J)\\
=&\int_{A_{\eta'}}G_{\sqrt k r}((m-k+\beta)\chi) \cdot  \mathbb{P}(A_\eta) \cdot  \big(\lambda( J^{\omega'})+O(\epsilon\chi)+o^{k\to\infty}_{\epsilon,h_0,\textbf{p}} (1)\big)d\mathbb{P}(\omega')\\
=&G_{\sqrt k r}((m-k+\beta)\chi) \cdot  \mathbb{P}(A_\eta) \cdot  \mathbb{P}(A_{\eta'})\cdot\big(\mathbb E_{\omega'\in A_\eta'}(\lambda( J^{\omega'}))+O(\epsilon\chi)+o^{k\to\infty}_{\epsilon,h_0,\textbf{p}} (1)\big).\end{aligned}$$

By summing over $m\in I_k$ and integrating over $\beta\in[0,1)$, we obtain
\begin{equation}\label{EqStopLLT1}\begin{aligned}&\sum_{m  \in I_k}\mathbb{P}(\omega\in A_{k,\eta,\eta'},\quad \tau_k=m+1,\quad S_{m+1}\in J)\\
=&\left(\int_{k-\sqrt {k\log k}}^{k+\sqrt {k\log k}+1}G_{\sqrt k r}((t-k)\chi) d t\right)\\
&\quad\cdot\mathbb{P}(A_\eta)\mathbb{P}(A_{\eta'})\cdot\big(\mathbb E_{\omega'\in A_\eta'}(\lambda( J^{\omega'}))+O(\epsilon\chi)+o^{k\to\infty}_{\epsilon,h_0,\textbf{p}} (1)\big)\\
=&\left(\int_{-\sqrt {k\log k}}^{\sqrt {k\log k}+1}G_{\sqrt k r}(t\chi) d t\right)\cdot\mathbb{P}(A_\eta)\mathbb{P}(A_{\eta'})\cdot\big(\mathbb E_{\omega'\in A_\eta'}(\lambda( J^{\omega'}))+O(\epsilon\chi)+o^{k\to\infty}_{\epsilon,h_0,\textbf{p}} (1)\big)\\
=&\left(\int_{-\sqrt {\log k}}^{\sqrt {\log k}+\frac 1{\sqrt k}}G_{r}(t\chi) d t\right)\cdot\mathbb{P}(A_\eta)\mathbb{P}(A_{\eta'})\cdot\big(\mathbb E_{\omega'\in A_\eta'}(\lambda( J^{\omega'}))+O(\epsilon\chi)+o^{k\to\infty}_{\epsilon,h_0,\textbf{p}} (1)\big)\\
=&(\frac1\chi-o^{k\to\infty}_p (1))\cdot\mathbb{P}(A_\eta)\mathbb{P}(A_{\eta'})\cdot\big(\mathbb E_{\omega'\in A_\eta'}(\lambda( J^{\omega'}))+O(\epsilon\chi)+o^{k\to\infty}_{\epsilon,h_0,\textbf{p}} (1)\big)\\
=&\mathbb{P}(A_\eta)\mathbb{P}(A_{\eta'})\cdot\left(\frac1\chi\mathbb E_{\omega'\in A_\eta'}(\lambda( J^{\omega'}))+O(\epsilon)+o^{k\to\infty}_{\epsilon,h_0,\textbf{p}} (1)\right)\end{aligned}\end{equation} as $k\to\infty$. The term represented by $O(\epsilon)$ is uniformly  bounded by $\epsilon$ in absolute value.

As $\epsilon>0$ is arbitrary, this implies that for all intervals $J\subset[k\chi,k\chi+D')$, 
\begin{equation}\label{EqLLTNumerator}\begin{aligned}&\sum_{m  \in I_k}\mathbb{P}(\omega\in A_{k,\eta,\eta'},\quad \tau_k=m+1,\quad S_{m+1}\in J)\\
=&\mathbb{P}(A_\eta)\mathbb{P}(A_{\eta'})\cdot\left(\frac1\chi\mathbb E_{\omega'\in A_{\eta'}}(\lambda( J^{\omega'}))+o^{k\to\infty}_{h_0,\textbf{p}} (1)\right),\end{aligned}\end{equation} where $J^{\omega'}$ is defined by \eqref{EqIntervalShift} and the error term $o^{k\to\infty}_{h_0,\textbf{p}} (1)$ is uniform in $J$, $\omega'$ and $\beta$.

Consider the special case of $J=[k\chi,k\chi+D')$, where $J^{\omega'}=[k\chi-X_1(\omega'),k\chi)$.  Because the event $\{\tau_k=m+1, S_{\tau_k}\in [k\chi,k\chi+D')\}$ coincides with $\{\tau_k=m+1\}$, we obtain 
\begin{equation}\label{EqLLTDenominator}\begin{aligned}&\sum_{m  \in I_k}\mathbb{P}(\omega\in A_{k,\eta,\eta'}, \tau_k=m+1)\\
=&\mathbb{P}(A_\eta)\mathbb{P}(A_{\eta'})\cdot\left(\frac1\chi\mathbb E_{\omega'\in A_{\eta'}}\big(\lambda([k\chi-X_1(\omega'),k\chi))\big)+o^{k\to\infty}_{h_0,\textbf{p}} (1)\right)\\
=&\mathbb{P}(A_\eta)\mathbb{P}(A_{\eta'})\cdot\left(\frac1\chi\mathbb E_{\omega'\in A_{\eta'}} X_1(\omega')+o^{k\to\infty}_{h_0,\textbf{p}} (1)\right).\end{aligned}\end{equation}

Therefore, by \eqref{Eq equid local 1}, \eqref{EqLLTNumerator} and \eqref{EqLLTDenominator}, \begin{equation}\label{Eq equid local 3}\begin{aligned}&\sum_{m \in I_k} \mathbb{P}_{A_{k,\eta,\eta'}}  (\tau_k = m+1, S_{\tau_k}\in J)\\
=&\frac{\sum_{m \in I_k}\mathbb{P}\big(\omega\in A_{k,\eta,\eta'}, \tau_k =m+1, S_{m+1} \in J\big)}{\sum_{m \in I_k}\mathbb{P}\big(\omega\in A_{k,\eta,\eta'}, \tau_k =m+1\big)}\cdot \frac{\sum_{m \in I_k}\mathbb{P}\big(\omega\in A_{k,\eta,\eta'}, \tau_k =m+1\big)}{\mathbb{P}(A_{k,\eta,\eta'})}\\
=&\frac{\frac1\chi\mathbb E_{\omega'\in A_{\eta'}}(\lambda( J^{\omega'}))+o^{k\to\infty}_{h_0,p} (1)}{\frac1\chi\mathbb E_{\omega'\in A_{\eta'}} X_1(\omega')+o^{k\to\infty}_{h_0,\textbf{p}} (1)}\cdot \mathbb P_{A_{k,\eta,\eta'}}(\tau_k-1\in I_k)\\
=&\frac{\frac1\chi\mathbb E_{\omega'\in A_{\eta'}}(\lambda( J^{\omega'}))+o^{k\to\infty}_{h_0,p} (1)}{\frac1\chi\mathbb E_{\omega'\in A_{\eta'}} X_1(\omega')+o^{k\to\infty}_{h_0,\textbf{p}} (1)}\cdot\big(1-o^{k\to\infty}_{h_0,p} (1)\big)\hskip8mm\text{(since $\xi\in\widetilde A_\eta$ and $A_{k,\eta,\eta'}=\cA_k^{h,h'}(\xi)$\ )}\\
=&\left(\frac{\mathbb E_{\omega'\in A_{\eta'}}\lambda( J^{\omega'})}{\mathbb E_{\omega'\in A_{\eta'}} X_1(\omega')}+o_{h_0,p}^{k\to\infty}(1)\right)\cdot\big(1-o^{k\to\infty}_{h_0,\textbf{p}} (1)\big)\hskip8mm\text{(since $0<D\leq\mathbb E_{\omega'\in A_{\eta'}} X_1(\omega')\leq D'$)}\\
=&\frac{\mathbb E_{\omega'\in A_\eta'}\lambda( J^{\omega'})}{\mathbb E_{\omega'\in A_{\eta'}} X_1(\omega')}+o_{h_0,\textbf{p}}^{k\to\infty}(1).
\end{aligned}\end{equation}
To conclude, it suffices to notice that $\frac{\mathbb E_{\omega'\in A_{\eta'}}\lambda( J^{\omega'})}{\mathbb E_{\omega'\in A_{\eta'}} X_1(\omega')}$ is exactly $\Gamma_{A_{k,\eta,\eta'}}(J)$.
\end{proof}

\begin{Lemma}\label{LemGaussian}For $m\in I_k$, $w\in [k\chi-D',k\chi]$ and $\beta\in[0,1)$, $$\left|\frac{G_{\sqrt m r}(w-m\chi)}{G_{\sqrt kr}((m-k+\beta)\chi)}-1\right|\lesssim_{\mathbf{p}} k^{-\frac12}(\log k)^{\frac 32}\text{ as }k\to\infty,$$
\end{Lemma}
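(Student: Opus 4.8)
The plan is to compute the ratio explicitly from the closed form of the Gaussian density and bound the resulting prefactor and exponential factor separately. Writing $G_s(x)=\frac{1}{s\sqrt{2\pi}}e^{-x^2/2s^2}$, one has
\[
\frac{G_{\sqrt m r}(w-m\chi)}{G_{\sqrt kr}\big((m-k+\beta)\chi\big)}=\sqrt{k/m}\cdot\exp(E),\qquad E:=\frac{1}{2r^2}\left(\frac{\big((m-k+\beta)\chi\big)^2}{k}-\frac{(w-m\chi)^2}{m}\right).
\]
First I would record that for $m\in I_k$ we have $|m-k|\le\sqrt{k\log k}$, hence $m=k\big(1+O(\sqrt{\log k/k})\big)$; in particular $m\asymp k$ (say $m\ge k/2$ for $k$ large), and therefore $\sqrt{k/m}=1+O\big(k^{-1/2}(\log k)^{1/2}\big)$.

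The substance is the estimate of $E$. I would set $a:=k-m$, so $|a|\le\sqrt{k\log k}$, and $\delta:=k\chi-w\in[0,D']$, so that $w-m\chi=a\chi-\delta$ and $(m-k+\beta)\chi=(\beta-a)\chi$. Expanding both squares gives
\[
2r^2E=a^2\chi^2\Big(\tfrac1k-\tfrac1m\Big)+\frac{2a\chi\delta-\delta^2}{m}+\frac{-2a\beta\chi^2+\beta^2\chi^2}{k}.
\]
The first term equals $-a^3\chi^2/(km)$; since $|a|^3\le(k\log k)^{3/2}$ and $km\asymp k^2$, it is $O_{\mathbf{p}}\big(k^{-1/2}(\log k)^{3/2}\big)$, and this is the dominant contribution. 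The remaining terms are bounded using $|a|\le\sqrt{k\log k}$, $0\le\delta\le D'$, $0\le\beta<1$, $m\asymp k$: the terms $2a\chi\delta/m$ and $2a\beta\chi^2/k$ are $O_{\mathbf{p}}\big(k^{-1/2}(\log k)^{1/2}\big)$, while $\delta^2/m$ and $\beta^2\chi^2/k$ are $O_{\mathbf{p}}(k^{-1})$. Hence $E=O_{\mathbf{p}}\big(k^{-1/2}(\log k)^{3/2}\big)$, the implied constant depending only on $\chi,D',r$, all of which are determined by $\mathbf{p}$ once $\Phi$ is fixed.

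Finally, since $E\to0$ as $k\to\infty$ we have $\exp(E)=1+O(E)$, so multiplying by the prefactor estimate yields
\[
\frac{G_{\sqrt m r}(w-m\chi)}{G_{\sqrt kr}\big((m-k+\beta)\chi\big)}=\big(1+O(k^{-1/2}(\log k)^{1/2})\big)\big(1+O_{\mathbf{p}}(k^{-1/2}(\log k)^{3/2})\big)=1+O_{\mathbf{p}}\big(k^{-1/2}(\log k)^{3/2}\big),
\]
which is the claim. There is no genuine obstacle here — the argument is bookkeeping — but the one point worth isolating is that the $(\log k)^{3/2}$ (rather than $(\log k)^{1/2}$) in the final bound comes precisely from the cubic-in-$a$ term $a^2\chi^2(\tfrac1k-\tfrac1m)=-a^3\chi^2/(km)$, all other error terms being of strictly smaller order. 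One should also check that $m\asymp k$ holds uniformly over $m\in I_k$ and that the constraints $\delta\le D'$, $\beta<1$ enter only through their uniform ranges, so that the implied constants are indeed independent of $m$, $w$, $\beta$ (and of $J$).
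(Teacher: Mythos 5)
Your proof is correct and takes essentially the same approach as the paper's: both are direct computations with the closed-form Gaussian density, expanding to first order in the small quantities $a/k$, $\delta$, $\beta$. The paper organizes the algebra by taking logarithms and telescoping through the intermediate term $\log G_{\sqrt{k}r}(w-m\chi)$, while you factor the ratio as $\sqrt{k/m}\,e^{E}$ and expand $E$ directly; the dominant cubic term $-a^3\chi^2/(km)$ that you isolate is the same contribution that produces the $(\log k)^{3/2}$ in the paper's first telescoping piece.
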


\begin{proof}Recall $G_s(x)=\frac 1{\sqrt{2\pi}s}\exp(-\frac{x^2}{2s^2})$ and $\log G_s(x)=-\log\sqrt {2\pi}-\log s-\frac{x^2}{2s^2}$. 

So as $k\to\infty$, $$\begin{aligned}&\big|\log G_{\sqrt m r}(w-m\chi)-\log G_{\sqrt k r}(w-m\chi)\big|\\
\leq &\big|\log(\sqrt m r)-\log(\sqrt k r)|+\frac{(w-m\chi)^2}{2r^2}\big|\frac 1m-\frac1k\big|
\\
\leq& \frac12\big|\log\frac mk\big|+\frac{(|k-m|\chi+D')^2}{2mr^2}\big|\frac mk-1\big|\\
\lesssim&\big|\frac mk-1\big|+\frac{(\sqrt{k\log k}+1)^2}{k}\big|\frac mk-1\big|\lesssim(\log k)\big|\frac mk-1\big|\\
\lesssim&(\log k)\frac{\sqrt{\log k}}{\sqrt k}=k^{-\frac12}(\log k)^{\frac 32},\end{aligned}$$
where the implied constant depends only on $\chi$, $D'$ and $r$, and hence only on $p$.

Moreover, $$\begin{aligned}&\big|\log G_{\sqrt k r}(w-m\chi)-\log G_{\sqrt kr}((m-k+\beta)\chi)\big|\\
= &\frac1{2kr^2}\big|(w-m\chi)^2-(k\chi- m\chi+\beta \chi)^2\big|\\
= &\frac1{2kr^2}\cdot |w-k\chi-\beta\chi|\cdot |2(k-m-\beta)\chi+(w-k\chi-\beta\chi)|\\
\leq &\frac1{2kr^2}\cdot (D'+\beta\chi)\cdot\big(2(\sqrt{k\log k}+1)\chi+(D'+\beta\chi)\big)\\
\lesssim&k^{-\frac12}(\log k)^{\frac 12},\end{aligned}$$
where the implied constant similarly depends only on $p$.

Combining the two inequalities above shows  $$\big|\log G_{\sqrt m r}(w-m\chi)-\log G_{\sqrt kr}((m-k+\beta)\chi)\big|\lesssim k^{-\frac12}(\log k)^{\frac 32},$$ which in turn implies the lemma. \end{proof}

\subsection{Proof of Theorem \ref{Theorem lin equid}}
\subsubsection{Fixing parameters and preliminary steps} \label{Section para}

Fix $\epsilon>0$ and choose $\ell=\ell(\epsilon,\bfp)$ such that $e^{-\ell\chi}<\epsilon^{\frac 2\gamma}$. In this proof, we will view $\epsilon$, $\bfp$ and $\ell$ as fixed inputs, while $k$ and $h$ are varying.

 Suppose $$\min(h,k-h)>\ell$$ and fix $A_\eta\in\cA^h$. Decompose $\eta=\eta^\#\eta^*$ where $A_{\eta^\#}\in\cA^{h-\ell}$. Then 
 $$-\log\max_{x\in I}|f'_{\eta^*}(x)|\in[\ell\chi-O_\bfp(1),\ell\chi+O_\bfp(1)]$$
 by bounded distortion (Theorem \ref{Bounded distortiion theorem}). Define real values $h^*=h^*(\epsilon,\bfp,\eta,h)$ and $k^*=k^*(\epsilon,\bfp,\eta,h,k)$ by $$h^*=\frac{-\log\max_{x\in I}|f'_{\eta^*}(x)|}\chi\in[\ell-O_\bfp(1),\ell+O_\bfp(1)],$$ 
$$k^*=k+\frac{\log\max_{x\in I}|f'_{\eta^\#}(f_{\eta^*}(x))|-C \epsilon^2}\chi\in [k-h+\ell-O_\bfp(1),k-h+\ell+O_\bfp(1)],$$ where $C=C(\bfp)$ stands for the implied constant in Lemma \ref{Lemma Defect Lin} below.

It is important that $h^*$ is uniformly bounded even though it depends on $\eta$ and $h$. It follows that $k^*-(k-h)=O_{\epsilon,\bfp}(1)$, and that $k^*<k$ if $h>O_{\epsilon,\bfp}(1)$.

We will keep the value $h'$.

Note that $A_\eta=\eta^\#\cdot A_{\eta^*}$, where we use the notation $$\theta \cdot Y=\{\theta\theta^*:\theta^*\in Y\}.$$ Moreover, by the choice of $h^*$, $A_{\eta^*}\in \cA^{h^*}$.

\begin{Lemma}\label{Lemma Defect Lin}For all $\xi^*\in A_{\eta^*}$ 
$$0\leq\log\max_{x\in I}|f'_{\eta^\#}(f_{\eta^*}(x))|-\log|f'_{\eta^\#}(x_{\xi^*})|\lesssim_\bfp\epsilon^2.$$
We denote the implied constant by $C=C(\bfp)$.\end{Lemma}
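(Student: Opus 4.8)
The key observation is that $\eta^\#$ is a word of symbolic length $\ell_\# := |\eta^\#|$ with $-\log\max_x|f'_{\eta^\#}(x)|$ of size roughly $(h-\ell)\chi$, while $x_{\xi^*}$ and any point of the form $f_{\eta^*}(x)$ for $x\in I$ both lie in the cylinder interval $f_{\eta^*}(I)$, which has diameter $\lesssim_\bfp e^{-\ell\chi}$. So I would first note that $\xi^*\in A_{\eta^*}$ forces $x_{\xi^*}\in f_{\eta^*}(I)$, hence for any $x\in I$ we have $|x_{\xi^*}-f_{\eta^*}(x)|\leq\diam f_{\eta^*}(I)\lesssim_\bfp e^{-\ell\chi}$. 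By the choice $e^{-\ell\chi}<\epsilon^{2/\gamma}$ this is $\lesssim_\bfp \epsilon^{2/\gamma}$, so $|x_{\xi^*}-f_{\eta^*}(x)|^\gamma\lesssim_\bfp\epsilon^2$.

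Next I would apply the $C^1$ linearization Lemma \ref{Lemma C1 lin} to the map $g=f_{\eta^\#}\in\Phi^{*\ell_\#}$ at the two points $y=x_{\xi^*}$ and at a point $x=f_{\eta^*}(x')$ for whichever $x'\in I$ realizes the maximum $\max_{x\in I}|f'_{\eta^\#}(f_{\eta^*}(x))|$ (such a maximizer exists by compactness of $I$ and continuity of $f'_{\eta^\#}$). Lemma \ref{Lemma C1 lin} gives
$$\big|\log|f'_{\eta^\#}(f_{\eta^*}(x'))|-\log|f'_{\eta^\#}(x_{\xi^*})|\big|\lesssim_\Phi |f_{\eta^*}(x')-x_{\xi^*}|^\gamma\lesssim_\bfp\epsilon^2,$$
where the implied constant is independent of the symbolic length $\ell_\#$, which is exactly the content we need (recall that $\Phi$, and hence $\gamma$ and the distortion data, are determined by $\bfp$ in the ambient setup, so $\lesssim_\Phi$ and $\lesssim_\bfp$ coincide here). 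Since $\log|f'_{\eta^\#}(f_{\eta^*}(x'))|=\log\max_{x\in I}|f'_{\eta^\#}(f_{\eta^*}(x))|\geq\log|f'_{\eta^\#}(x_{\xi^*})|$ (the latter because $x_{\xi^*}=f_{\eta^*}(z)$ for some $z\in I$, so it is one of the values over which the max is taken — here I use that $f_{\eta^*}(I)\supseteq\{x_{\xi^*}\}$ and that the max on the left is over $f_{\eta^*}(I)$), the quantity in the statement is nonnegative, and the chain of inequalities above bounds it by $C(\bfp)\epsilon^2$.

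The only mild subtlety — and the step I would be most careful about — is the sign/nonnegativity claim: one must check that $x_{\xi^*}$ genuinely lies in the set $f_{\eta^*}(I)$ over which the maximum defining $\log\max_{x\in I}|f'_{\eta^\#}(f_{\eta^*}(x))|$ ranges, so that $\max_{x\in I}|f'_{\eta^\#}(f_{\eta^*}(x))| = \max_{y\in f_{\eta^*}(I)}|f'_{\eta^\#}(y)|\geq |f'_{\eta^\#}(x_{\xi^*})|$. This follows since $\xi^*\in A_{\eta^*}$ means $x_{\xi^*}=\lim_m f_{\eta^*}f_{(\sigma^{|\eta^*|}\xi^*)|_m}(x_0)\in f_{\eta^*}(I)$ because $I$ is $\Phi$-invariant (each $f_i(I)\subseteq I$ and $f_{\eta^*}(I)$ is closed). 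Everything else is a one-line application of Lemma \ref{Lemma C1 lin} together with the diameter estimate $\diam f_{\eta^*}(I)\lesssim_\bfp e^{-\ell\chi}$, which itself is immediate from the uniform contraction bound $\|f'\|_{C^0}\leq e^{-D}$ and the definition of $h^*$ (equivalently, bounded distortion, Theorem \ref{Bounded distortiion theorem}).
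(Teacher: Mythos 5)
Your proposal is correct and is essentially the same argument as the paper's: observe that $x_{\xi^*}=f_{\eta^*}(x_{\sigma^{|\eta^*|}\xi^*})$ and $f_{\eta^*}(x)$ both lie in $f_{\eta^*}(I)$, use bounded distortion to bound $\diam f_{\eta^*}(I)\lesssim_\bfp e^{-\ell\chi}<\epsilon^{2/\gamma}$, then apply the $C^1$ linearization Lemma~\ref{Lemma C1 lin} to $g=f_{\eta^\#}$. The paper's version is terser (it states nonnegativity as ``obvious because of the max'' and compresses the diameter estimate into one line), while you spell out the compactness/attainment of the maximum and why $x_{\xi^*}$ belongs to the set over which the maximum is taken; these elaborations are fine and match the paper's intent.
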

\begin{proof} The inequality $0 \leq$ is obvious because of the $\max$. For the second inequality, since $x_{\xi^*}=f_{\eta^*}(x_{\sigma^{|\eta^*|}\xi^*})$, by bounded distortion (Theorem \ref{Bounded distortiion theorem}), 
$$|x_{\xi^*}-f_{\eta^*}(x)|\leq  |f_{\eta^*}(I)| \lesssim_\bfp e^{-\ell\chi}<\epsilon^{\frac 2\gamma}.$$
The  Lemma follows by an application of Lemma \ref{Lemma C1 lin}.\end{proof}

\subsubsection{Specifying the exceptional set $\overline A_{k,\eta}^{h,h'}$}
Let $\tA_{k^*,\eta^*}^{h^*,h'}$ be given by Theorem \ref{Theorem equid}, for the corresponding parameters as in Section \ref{Section para} (here it is important that $h^*$ is uniformly bounded). For clarity, we repeat the properties satisfied by this set:
\begin{enumerate} [label=(\roman*$^*$)]
\item\label{lin equid 1*} $\bP(\tA_{k^*,\eta^*}^{h^*,h'})\geq \bP(A_{\eta^*})\cdot (1-o_{\ell+O_\bfp(1),\bfp}^{k^*\to\infty}(1))=\bP(A_{\eta^*})\cdot (1-o_{\epsilon,\bfp}^{k-h\to\infty}(1))$.
\item\label{lin equid 2*} For all $\omega^*\in \tA_{k^*,\eta^*}^{h^*,h'}$, $\mathbb{P}(\cA_{k^*}^{h^*,h'}(\omega^*))>0$.
\item\label{lin equid 3*}  For all $\omega^*\in \tA_{k^*,\eta^*}^{h^*,h'}$ and for any sub-interval $J\subseteq  [k\chi, k\chi+D']$,
$$
 \mathbb{P}_{\cA_{k^*}^{h^*, h' }(\omega^*)}(S_{\tau_{k^*}}\in J)= \Gamma_{\cA_{k^*}^{h^*,h'}(\omega^*)} (J)  + o_{\epsilon,\bfp}^{k-h\to\infty}(1).
$$
\end{enumerate}
Define an exceptional subset by 

$$Z_{k,\eta}^{h,h'}=\{\xi^*\in A_{\eta^*}:\quad \tau_k(\eta^\#\xi^*)\neq |\eta^\#|+\tau_{k^*}(\xi^*)\}.$$

\begin{Lemma} $\bP(Z_{k,\eta}^{h,h'})\leq \bP(A_{\eta^*})\cdot (O_\bfp(\epsilon^2)+o_{\epsilon,\bfp}^{k-h\to\infty}(1))$.\end{Lemma}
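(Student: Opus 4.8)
The plan is to show that, once $k-h$ is large, membership of $\xi^*\in A_{\eta^*}$ in $Z_{k,\eta}^{h,h'}$ forces the overshoot $S_{\tau_{k^*}}(\xi^*)-k^*\chi$ to be atypically small, of order at most $C\epsilon^2$, and then to bound the probability of this small-overshoot event using the conditional local limit Theorem \ref{Theorem equid} together with the absolute continuity Lemma \ref{Lemma Abs. contin of gamma}.

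\textbf{Step 1 (reduction to a small-overshoot event).} Write $\omega=\eta^\#\xi^*$ and $a=a(\xi^*):=-\log|f'_{\eta^\#}(x_{\xi^*})|$. Using $x_{\xi^*}=f_{\eta^*}(x_{\sigma^{|\eta^*|}\xi^*})$ and the chain rule, one obtains the exact identity $S_{|\eta^\#|+m}(\omega)=a+S_m(\xi^*)$ for every $m\geq 0$; in particular $S_{|\eta^\#|}(\omega)=a$. By bounded distortion (Theorem \ref{Bounded distortiion theorem}) and $A_{\eta^\#}\in\cA^{h-\ell}$ we have $a\leq(h-\ell)\chi+O_\bfp(1)$, which is $<k\chi$ as soon as $k-h$ is large (depending only on $\bfp$). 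For such $k-h$ and every $\xi^*\in A_{\eta^*}$ this gives $\tau_k(\omega)>|\eta^\#|$, whence $\tau_k(\omega)=|\eta^\#|+\tau_{k'}(\xi^*)$ with $k':=k-a/\chi$. The crucial point — and the reason $k^*$ was defined as it was — is that Lemma \ref{Lemma Defect Lin} gives $-\log\max_x|f'_{\eta^\#}(f_{\eta^*}(x))|\leq a\leq-\log\max_x|f'_{\eta^\#}(f_{\eta^*}(x))|+C\epsilon^2$, which translates into $k^*\leq k'\leq k^*+C\epsilon^2/\chi$. Since the map level $\mapsto\tau_\bullet(\xi^*)$ is non-decreasing and $S_{\tau_{k^*}}(\xi^*)\in[k^*\chi,k^*\chi+D']$ by Lemma \ref{Lemma relation between R.V and stopping time}, we have $\tau_{k'}(\xi^*)=\tau_{k^*}(\xi^*)$, equivalently $\xi^*\notin Z_{k,\eta}^{h,h'}$, whenever $S_{\tau_{k^*}}(\xi^*)\geq k^*\chi+C\epsilon^2$. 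Hence, for $k-h$ large, $Z_{k,\eta}^{h,h'}\subseteq\{\xi^*\in A_{\eta^*}:\ S_{\tau_{k^*}}(\xi^*)\in[k^*\chi,k^*\chi+C\epsilon^2)\}$.

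\textbf{Step 2 (bounding the small-overshoot event).} We may assume $\epsilon$ is small enough that $C\epsilon^2<D'$, so that $J:=[k^*\chi,k^*\chi+C\epsilon^2)$ is a sub-interval of $[k^*\chi,k^*\chi+D']$ (otherwise the asserted bound is vacuous). Apply Theorem \ref{Theorem equid} with $(k,h,h_0)$ replaced by $(k^*,h^*,\ell+O_\bfp(1))$ — legitimate because $h^*$ is uniformly bounded — to obtain $\tA_{k^*,\eta^*}^{h^*,h'}$ satisfying properties (i$^*$), (ii$^*$), (iii$^*$). The $\bP$-mass of $A_{\eta^*}\setminus\tA_{k^*,\eta^*}^{h^*,h'}$ is $\bP(A_{\eta^*})\cdot o^{k-h\to\infty}_{\epsilon,\bfp}(1)$ by (i$^*$). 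Inside $\tA_{k^*,\eta^*}^{h^*,h'}$, decompose $A_{\eta^*}$ into the cells $Q$ of $\cA_{k^*}^{h^*,h'}$: each cell meeting $\tA_{k^*,\eta^*}^{h^*,h'}$ has $\bP(Q)>0$ by (ii$^*$), and by (iii$^*$) applied to $J$, together with Lemma \ref{Lemma Abs. contin of gamma} (the density of $\Gamma_Q$ is bounded by $1/D$), one gets $\bP_Q(S_{\tau_{k^*}}\in J)\leq\frac{C}{D}\epsilon^2+o^{k-h\to\infty}_{\epsilon,\bfp}(1)$. Summing $\bP(Q)\,\bP_Q(S_{\tau_{k^*}}\in J)$ over those cells, which partition $A_{\eta^*}$, bounds this contribution by $\bP(A_{\eta^*})\big(\frac{C}{D}\epsilon^2+o^{k-h\to\infty}_{\epsilon,\bfp}(1)\big)$. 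Adding the two contributions and combining with Step 1 yields $\bP(Z_{k,\eta}^{h,h'})\leq\bP(A_{\eta^*})\big(O_\bfp(\epsilon^2)+o^{k-h\to\infty}_{\epsilon,\bfp}(1)\big)$, as claimed.

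The only genuinely delicate part is Step 1: keeping the thresholds $k$, $k'$, $k^*$ and the shift $a/\chi$ aligned and verifying the sandwich $k^*\leq k'\leq k^*+C\epsilon^2/\chi$, together with the auxiliary check that "$k-h$ large" makes $\tau_k(\omega)>|\eta^\#|$ automatic, so that the exceptional set from Step 1 is genuinely empty there rather than merely small. Once this is in place, Step 2 is a routine application of the conditional local limit Theorem \ref{Theorem equid} and the absolute continuity Lemma \ref{Lemma Abs. contin of gamma}, and I do not expect a real obstacle there.
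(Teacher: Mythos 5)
Your proof is correct and takes essentially the same approach as the paper: both reduce membership in $Z_{k,\eta}^{h,h'}$ to the small-overshoot event $S_{\tau_{k^*}}(\xi^*)\in[k^*\chi,k^*\chi+C\epsilon^2)$ via Lemma \ref{Lemma Defect Lin}, then bound that event cell-by-cell using Theorem \ref{Theorem equid}(iii) together with the uniform density bound from Lemma \ref{Lemma Abs. contin of gamma}, paying $o^{k-h\to\infty}_{\epsilon,\bfp}(1)$ for the complement of $\tA_{k^*,\eta^*}^{h^*,h'}$. Your Step~1, which isolates the identity $\tau_k(\eta^\#\xi^*)=|\eta^\#|+\tau_{k'}(\xi^*)$ and the sandwich $k^*\leq k'\leq k^*+C\epsilon^2/\chi$, is a slightly cleaner packaging of the paper's two-case analysis (Case~1 ruled out, Case~2 forcing the small overshoot), but the substance is identical.
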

\begin{proof} We need to show $\bP_{A_{\eta^*}}(Z_{k,\eta}^{h,h'})\leq O_\bfp(\epsilon^2)+o_{\epsilon,\bfp}^{k-h\to\infty}(1)$. Since $\bP_{A_{\eta^*}}(\tA_{k^*,\eta^*}^{h^*,h'})\geq 1-o_{\epsilon,\bfp}^{k-h\to\infty}(1)$, it suffices to show $\bP_{A_{\eta^*}}(\tA_{k^*,\eta^*}^{h^*,h'}\cap Z_{k,\eta}^{h,h'})\leq O_\bfp(\epsilon^2)+o_{\epsilon,\bfp}^{k-h\to\infty}(1).$

Now, as $A_{\eta^*}$ is $\cA_{k^*}^{h^*,h'}$-measurable, it suffices to show that for every $\omega^*\in \tA_{k^*,\eta^*}^{h^*,h'}$, 
$$\bP_{\cA_{k^*}^{h^*,h'}(\omega^*)}(\tA_{k^*,\eta^*}^{h^*,h'}\cap Z_{k,\eta}^{h,h'})\leq O_\bfp(\epsilon^2)+o_{\epsilon,\bfp}^{k-h\to\infty}(1).$$

Suppose $\xi^*\in \tA_{k^*,\eta^*}^{h^*,h'}\cap Z_{k,\eta}^{h,h'}$. For simplicity write $a=|\eta^\#|$ and $b=\tau_{k^*}(\xi^*)$, then $S_{b-1}(\xi^*)<k^*\leq S_b(\xi^*)$. Since $\tau_k(\eta^\#\xi^*)\neq a+b$
then one of the following holds: 

\noindent{\it Case 1:} $S_{a+b-1}(\eta^\#\xi^*)\geq k$, which implies $$\begin{aligned}\frac{-\log|f'_{\eta^\#}(x_{\xi^*})|}\chi=&S_a(\eta^\#\xi^*)=S_{a+b-1}(\eta^\#\xi^*)-S_{b-1}(\xi^*)\\
>&k-k^*=\frac{-\log\max_{x\in I}|f'_{\eta^\#}(f_{\eta^*}(x))|+C\epsilon^2}\chi.\end{aligned}$$ Comparing with Lemma \ref{Lemma Defect Lin}, we know this case cannot happen.

\noindent{\it Case 2:} $S_{a+b}(\eta^\#\xi^*)<k$, which implies  $$\begin{aligned}\frac{-\log|f'_{\eta^\#}(x_{\xi^*})|}\chi=&S_a(\eta^\#\xi^*)=S_{a+b}(\eta^\#\xi^*)-S_b(\xi^*)\\
<& k-k^*
=\frac{-\log\max_{x\in I}|f'_{\eta^\#}(f_{\eta^*}(x))|+C\epsilon^2}\chi.\end{aligned}$$
 Comparing with Lemma \ref{Lemma Defect Lin}, we know $S_b(\xi^*)\in [k^*,k^*+C\epsilon^2)$. 
 Since $\Gamma_{A_{k^*,\eta^*,\eta'}}$ is absolutely continuous with a uniformly bounded density (Lemma \ref{Lemma Abs. contin of gamma}),
 $$\Gamma_{A_{k^*,\eta^*,\eta'}}([k^*,k^*+C\epsilon^2))\lesssim_\bfp\epsilon^2$$
uniformly. The lemma follows from property \ref{lin equid 3*} of the set $\tA_{k^*,\eta^*}^{h^*,h'}$.
\end{proof}

Next, set 
$$\begin{aligned}\tZ_{k,\eta}^{h,h'}=&\{\omega\in A_ \eta:\quad \bP_{\cA_k^{h,h'}(\omega)}( \eta^\# \cdot Z_{k,\eta}^{h,h'})\geq \bP_{\cA_{\eta^*}}(Z_{k,\eta}^{h,h'})^{\frac12}\}\\
&\quad \cup\eta^\#\cdot\{\omega^*\in A_{\eta^*}:\quad \bP_{\cA_{k^*}^{h^*,h'}(\omega^*)}(Z_{k,\eta}^{h,h'})\geq \bP_{\cA_{\eta^*}}(Z_{k,\eta}^{h,h'})^{\frac12}\}.\end{aligned}$$
Because $  \bP_{ A_ \eta}(\eta^\# \cdot Z_{k,\eta}^{h,h'})= \bP_{ A_{\eta^*}}(Z_{k,\eta}^{h,h'})$ and $\eta^\#\cdot$ sends $\bP_{\eta^*}$ to $\bP_\eta$, we have 
$$\bP_{A_\eta}(\tZ_{k,\eta}^{h,h'})\leq \bP_{A_{\eta^*}}(Z_{k,\eta}^{h,h'})^{\frac12}= O_\bfp(\epsilon)+o_{\epsilon,\bfp}^{k-h\to\infty}(1),$$ since this bound holds for both components in $\tZ_{k,\eta}^{h,h'}$.

The set $\overline A_{k,\eta}^{h,h'}$ can be now defined as
$$\overline A_{k,\eta}^{h,h'}=\Big(\eta^\#\cdot(\tA_{k^*,\eta^*}^{h^*,h'}\backslash  Z_{k,\eta}^{h,h'})\Big)\backslash \tZ_{k,\eta}^{h,h'}.$$
\subsubsection{Proof of Theorem \ref{Theorem lin equid}}

For property (i), it suffices to prove that if $\min(h,k-h)$ is sufficiently large (depending only on $\epsilon$ and $\mathbf p$), then $\frac{\bP(\overline A_{k,\eta}^{h,h'})}{\bP(A_\eta)}>1-O_\bfp(\epsilon)$. Indeed,
\begin{eqnarray*}
\frac{\bP(\overline A_{k,\eta}^{h,h'})}{\bP(A_\eta)} &=& \frac{\bP(\tA_{k^*,\eta^*}^{h^*,h'}\backslash  Z_{k,\eta}^{h,h'})}{\bP(A_{\eta^*})}-\frac{\bP(\tZ_{k,\eta}^{h,h'} )}{\bP(A_\eta)} \\
&\geq & \Big((1-o_{\bfp}^{k-h\to\infty}(1))-(O_\bfp(\epsilon^2)+o_{\epsilon,\bfp}^{k-h\to\infty}(1))\Big) - (O_\bfp(\epsilon)+o_{\epsilon,\bfp}^{k-h\to\infty}(1))\\
&=& 1- (O_\bfp(\epsilon)+o_{\epsilon,\bfp}^{k-h\to\infty}(1)).
\end{eqnarray*}
This implies the claim.

For property (ii): This property is a formality and always holds after omitting a null set  (in fact, we don't need to omit a null set based on the construction here).

The proof of property (iii) will requires more effort. By adjusting the threshold on $\min(h,k-h)$ based on $\epsilon$, it suffices to prove that $$
 \bP_{\cA_k^{h,h'}(\omega)}(S_{\tau_k}\in J)= \Gamma_{\cA_k^{h,h'}(\omega)} (J)  +O_p(\epsilon)+o^{\min(h,k-h)\to\infty}_{\epsilon,\bfp}(1)
$$ for all $\omega\in \overline A_{k,\eta}^{h,h'}$. In fact, proving $\geq$ instead of $=$ is enough, by applying to both $J$ and $J^c$.

In particular, it suffices to show \begin{equation}\label{lin equid 3 eq 1}\bP_{\cA_k^{h,h'}(\omega)}(\{\xi\in\eta^\#\cdot\{A_{\eta^*}\backslash Z_{k,\eta}^{h,h'}\}:\quad  S_{\tau_k(\xi)}  \in J \})\geq  \Gamma_{\cA_k^{h,h'}(\omega)} (J)-O_p(\epsilon)-o^{k-h\to\infty}_{\epsilon,\bfp}(1).\end{equation}

In the following Lemmas we take a closer look at $\eta^\#\cdot\{A_{\eta^*}\backslash Z_{k,\eta}^{h,h'}\}$:
\begin{Lemma}\label{lem atom approx} If  $\omega\in \overline A_{k,\eta}^{h,h'}$, then $$\cA_k^{h,h'}(\omega)\backslash(\eta^\#\cdot Z_{k,\eta}^{h,h'})=\eta^\#\cdot(\cA_{k^*}^{h^*,h'}(\omega^*)\backslash Z_{k,\eta}^{h,h'}).$$\end{Lemma}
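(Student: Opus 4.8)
The claim is an identity of subsets of $A_\eta = \eta^\# \cdot A_{\eta^*}$, so the natural strategy is to translate everything back by the prefix $\eta^\#$ and argue at the level of $A_{\eta^*}$, using the definition $\tZ := Z_{k,\eta}^{h,h'}$ (which is already a subset of $A_{\eta^*}$) and the fact that on $A_{\eta^*}\setminus Z_{k,\eta}^{h,h'}$ the stopping time splits as $\tau_k(\eta^\#\xi^*) = |\eta^\#| + \tau_{k^*}(\xi^*)$. First I would recall that a cell $\cA_k^{h,h'}(\omega)$ is, by Definition \ref{Def R.V. D}(4), determined by the pair $\big(\iota^h(\omega),\ \iota^{h'}(\sigma^{\tau_k(\omega)-1}\omega)\big)$, i.e. by the first $\ttau_h(\omega)$ symbols of $\omega$ together with the first $\ttau_{h'}$ symbols of the shift of $\omega$ by $\tau_k(\omega)-1$; similarly $\cA_{k^*}^{h^*,h'}(\omega^*)$ is determined by $\big(\iota^{h^*}(\omega^*),\ \iota^{h'}(\sigma^{\tau_{k^*}(\omega^*)-1}\omega^*)\big)$. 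Since $A_\eta \in \cA^h$ and $A_{\eta^*}\in\cA^{h^*}$ (this was arranged by the choice of $h^*$ in Section \ref{Section para}), the first coordinate of the $\cA_k^{h,h'}$-cell containing any point of $A_\eta$ is simply $\eta$ itself, i.e. it is $\eta^\#$ followed by the first-coordinate data of the $\cA_{k^*}^{h^*,h'}$-cell; so the only content of the Lemma is the matching of the \emph{second} coordinates, which is where the stopping-time identity enters.

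The key step is: for $\xi = \eta^\#\xi^*$ with $\xi^* \in A_{\eta^*}\setminus Z_{k,\eta}^{h,h'}$, one has $\tau_k(\xi)-1 = |\eta^\#| + (\tau_{k^*}(\xi^*)-1)$ by the very definition of $Z_{k,\eta}^{h,h'}$, and hence $\sigma^{\tau_k(\xi)-1}(\xi) = \sigma^{\tau_{k^*}(\xi^*)-1}(\xi^*)$, because shifting $\eta^\#\xi^*$ past its first $|\eta^\#|$ symbols returns $\xi^*$ and then one shifts $\xi^*$ by the remaining $\tau_{k^*}(\xi^*)-1$. Consequently $\iota^{h'}\big(\sigma^{\tau_k(\xi)-1}\xi\big) = \iota^{h'}\big(\sigma^{\tau_{k^*}(\xi^*)-1}\xi^*\big)$, so $\xi$ and $\eta^\#\xi^*$ (for $\xi^*$ ranging over $A_{\eta^*}\setminus Z_{k,\eta}^{h,h'}$ in the same $\cA_{k^*}^{h^*,h'}$-cell) all lie in one and the same $\cA_k^{h,h'}$-cell. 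Applying this with $\xi^* = \omega^*$ (note $\omega = \eta^\#\omega^*$ with $\omega^*\in\tA_{k^*,\eta^*}^{h^*,h'}\setminus Z_{k,\eta}^{h,h'}$ since $\omega\in\overline A_{k,\eta}^{h,h'}$, and in particular $\omega^*\notin Z_{k,\eta}^{h,h'}$) gives the inclusion $\eta^\#\cdot\big(\cA_{k^*}^{h^*,h'}(\omega^*)\setminus Z_{k,\eta}^{h,h'}\big) \subseteq \cA_k^{h,h'}(\omega)\setminus(\eta^\#\cdot Z_{k,\eta}^{h,h'})$. For the reverse inclusion, take $\xi\in\cA_k^{h,h'}(\omega)$ with $\xi\notin\eta^\#\cdot Z_{k,\eta}^{h,h'}$; since $\cA_k^{h,h'}(\omega)\subseteq A_\eta = \eta^\#\cdot A_{\eta^*}$, write $\xi=\eta^\#\xi^*$ with $\xi^*\in A_{\eta^*}\setminus Z_{k,\eta}^{h,h'}$, and run the same stopping-time identity to see that $\iota^{h'}(\sigma^{\tau_{k^*}(\xi^*)-1}\xi^*) = \iota^{h'}(\sigma^{\tau_k(\xi)-1}\xi) = \iota^{h'}(\sigma^{\tau_k(\omega)-1}\omega) = \iota^{h'}(\sigma^{\tau_{k^*}(\omega^*)-1}\omega^*)$, and likewise $\iota^{h^*}(\xi^*) = \iota^{h^*}(\omega^*)$ (both equal $\eta^*$), so $\xi^*\in\cA_{k^*}^{h^*,h'}(\omega^*)$, i.e. $\xi\in\eta^\#\cdot\big(\cA_{k^*}^{h^*,h'}(\omega^*)\setminus Z_{k,\eta}^{h,h'}\big)$.

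The main obstacle — though it is bookkeeping rather than a genuine difficulty — is keeping the two partition-coordinate descriptions straight and making sure the stopping-time offset $|\eta^\#|$ is applied consistently to $\tau$ but \emph{not} to the $\ttau_{h'}$ that reads off the second partition coordinate (the index $h'$ is deliberately left unchanged in Section \ref{Section para}, and $h^*$ is the one that absorbs the offset on the first coordinate). One should also double-check the edge case where $\tau_k(\xi)-1$ or $\tau_{k^*}(\xi^*)-1$ is small, but $\min(h,k-h)>\ell$ together with $k^*<k$ (noted after Lemma \ref{Lemma Defect Lin}) guarantees the relevant stopping times are positive, so no degeneracy arises. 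Once the coordinate matching is in place the identity follows by mutual inclusion, as above.
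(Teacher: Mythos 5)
Your proof is correct and is essentially the same as the paper's: both directions hinge on the fact that $\xi^*,\omega^*\notin Z_{k,\eta}^{h,h'}$ gives the stopping-time identity $\tau_k(\eta^\#\cdot) = |\eta^\#|+\tau_{k^*}(\cdot)$, so the shifted suffixes read off by $\iota^{h'}$ coincide, while the first partition coordinate is automatically $\eta$ (resp.\ $\eta^*$) on $A_\eta$ (resp.\ $A_{\eta^*}$). Your version is in fact slightly more careful than the paper's write-up about the $-1$ offset appearing in the definition $\iota_k^{h,h'}=(\iota^h(\omega),\iota^{h'}(\sigma^{\tau_k(\omega)-1}\omega))$.
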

\begin{proof}{\bf The $\subseteq$ direction:} Suppose $\xi\in \cA_k^{h,h'}(\omega)\backslash(\eta^\#\cdot Z_{k,\eta}^{h,h'})$, then both $\xi, \omega \in \eta^\#\cdot(A_{\eta^*}\backslash Z_{k,\eta}^{h,h'})$. Write them respectively as $\eta^\#\xi^*$ and $\eta^\#\omega^*$, then \begin{equation}\label{eq lem atom approx 1}\tau_k(\xi)-\tau_{k^*}(\xi^*)= |\eta^\#| =\tau_k(\omega)-\tau_{k^*}(\omega^*).\end{equation} Hence, $$\sigma^{\tau_{k^*}(\xi^*)}\xi^*=\sigma^{\tau_k(\xi)}\xi\equiv_{\cA^{h'}}\sigma^{\tau_k(\omega)}\omega=\sigma^{\tau_{k^*}(\omega^*)}\omega^*.$$
This shows that $\xi^*\in\cA_{k^*}^{h^*,h'}(\omega^*)$, meaning $\cA_k^{h,h'}(\omega)\backslash(\eta^\#\cdot Z_{k,\eta}^{h,h'})\subseteq \eta^\#\cdot\cA_{k^*}^{h^*,h'}(\omega^*)$. In fact: $$\cA_k^{h,h'}(\omega)\backslash(\eta^\#\cdot Z_{k,\eta}^{h,h'})\subseteq \eta^\#\cdot(\cA_{k^*}^{h^*,h'}(\omega^*)\backslash Z_{k,\eta}^{h,h'}).$$

{\bf The $\supseteq$ direction:} Keep the notations and assume $\xi^*\in \cA_{k^*}^{h^*,h'}(\omega^*)\backslash Z_{k,\eta}^{h,h'}$. The equality \eqref{eq lem atom approx 1} still holds, showing that  $$\sigma^{\tau_k(\xi)}\xi=\sigma^{\tau_{k^*}(\xi^*)}\xi^*\equiv_{\cA^{h'}}\sigma^{\tau_{k^*}(\omega^*)}\omega^*=\sigma^{\tau_k(\omega)}\omega.$$ Hence $\eta^\#\cdot(\cA_{k^*}^{h^*,h'}(\omega^*)\backslash Z_{k,\eta}^{h,h'})\subseteq \cA_k^{h,h'}(\omega)$, or more precisely, 
$$\eta^\#\cdot(\cA_{k^*}^{h^*,h'}(\omega^*)\backslash Z_{k,\eta}^{h,h'})\subseteq \cA_k^{h,h'}(\omega)\backslash(\eta^\#\cdot Z_{k,\eta}^{h,h'}).$$
\end{proof}

\begin{Lemma}\label{lem prefix approx} If $\omega=\eta^\#\omega^*\in \overline A_{k,\eta}^{h,h'}$ and $\xi=\eta^\#\xi^*\in\cA_k^{h,h'}(\omega)\backslash(\eta^\#\cdot Z_{k,\eta}^{h,h'})$, then $$S_{\tau_k}(\xi)-S_{\tau_{k^*}}(\xi^*)\in [k-k^*-\frac{C\epsilon^2}\chi,k-k^*]$$
where $C$ is as in Lemma \ref{Lemma Defect Lin}.  \end{Lemma}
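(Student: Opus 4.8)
\textbf{Proof proposal for Lemma~\ref{lem prefix approx}.}
The plan is to unwind the definition of the stopping times and use the additivity of $S_n$ under the shift, exactly as in the proof of the previous Lemma~\ref{lem atom approx}. First I would write $\omega=\eta^\#\omega^*$ and $\xi=\eta^\#\xi^*$ with $\omega^*,\xi^*\in A_{\eta^*}\backslash Z_{k,\eta}^{h,h'}$, so that by the definition of $Z_{k,\eta}^{h,h'}$ and the fact that $\xi$ lies outside $\eta^\#\cdot Z_{k,\eta}^{h,h'}$ we have the key identity
$$\tau_k(\xi)=|\eta^\#|+\tau_{k^*}(\xi^*),$$
and likewise for $\omega$; this is precisely \eqref{eq lem atom approx 1}. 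Writing $a=|\eta^\#|$ and $b=\tau_{k^*}(\xi^*)$, the chain rule (Lemma~\ref{Lemma relation between R.V and stopping time}) gives $S_{a+b}(\xi)=S_a(\xi)+S_b(\xi^*)$, i.e.
$$S_{\tau_k}(\xi)-S_{\tau_{k^*}}(\xi^*)=S_a(\eta^\#\xi^*)=-\tfrac{1}{\chi}\log|f'_{\eta^\#}(x_{\xi^*})|,$$
after recalling that $S_a$ is measured in units of $\chi$ in the conventions of this section (so $S_a(\eta^\#\xi^*)=\frac{-\log|f'_{\eta^\#}(x_{\xi^*})|}{\chi}$, matching the computation in Case 1 and Case 2 of the preceding proof).

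The second and final step is to estimate this quantity using the definition of $k^*$ together with Lemma~\ref{Lemma Defect Lin}. By definition,
$$k-k^*=\frac{-\log\max_{x\in I}|f'_{\eta^\#}(f_{\eta^*}(x))|+C\epsilon^2}{\chi},$$
so that
$$\big(S_{\tau_k}(\xi)-S_{\tau_{k^*}}(\xi^*)\big)-(k-k^*)=\frac{\log\max_{x\in I}|f'_{\eta^\#}(f_{\eta^*}(x))|-\log|f'_{\eta^\#}(x_{\xi^*})|-C\epsilon^2}{\chi}.$$
Lemma~\ref{Lemma Defect Lin} asserts that the difference $\log\max_{x\in I}|f'_{\eta^\#}(f_{\eta^*}(x))|-\log|f'_{\eta^\#}(x_{\xi^*})|$ lies in $[0,C\epsilon^2]$ (with $C=C(\bfp)$ the same constant). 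Hence the right-hand side lies in $[-\frac{C\epsilon^2}{\chi},0]$, which is exactly the claimed inclusion
$$S_{\tau_k}(\xi)-S_{\tau_{k^*}}(\xi^*)\in\Big[k-k^*-\frac{C\epsilon^2}{\chi},\,k-k^*\Big].$$

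I do not expect any real obstacle here: the lemma is essentially a bookkeeping consequence of the three ingredients already in place — the stopping-time splitting \eqref{eq lem atom approx 1} guaranteed by $\xi\notin\eta^\#\cdot Z_{k,\eta}^{h,h'}$, the cocycle additivity of $S_n$, and the distortion bound of Lemma~\ref{Lemma Defect Lin}. The only point requiring mild care is keeping consistent the normalization by $\chi$ between the ``raw'' cocycle sums $S_n$ and the rescaled indices $k,k^*$, so that the $C\epsilon^2$ appearing in the definition of $k^*$ cancels against the $C\epsilon^2$ in Lemma~\ref{Lemma Defect Lin} up to the single division by $\chi$ that produces the width of the target interval.
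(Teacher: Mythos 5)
Your proposal is correct and follows the paper's proof essentially verbatim: both use the stopping-time identity $\tau_k(\xi)=|\eta^\#|+\tau_{k^*}(\xi^*)$ (guaranteed by $\xi^*\notin Z_{k,\eta}^{h,h'}$, exactly \eqref{eq lem atom approx 1}), the additivity $S_{\tau_k}(\xi)-S_{\tau_{k^*}}(\xi^*)=S_{|\eta^\#|}(\xi)$, and then Lemma \ref{Lemma Defect Lin} together with the definition of $k^*$ to pin this quantity into the stated interval. You also correctly flagged the one delicate point — the implicit normalization by $\chi$ when identifying $S_{|\eta^\#|}(\xi)$ with $\frac{-\log|f'_{\eta^\#}(x_{\xi^*})|}{\chi}$ — and matched the convention the paper itself uses in this passage.
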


\begin{proof} By the proof of the previous lemma,
$$\begin{aligned}&S_{\tau_k}(\xi)-S_{\tau_{k^*}}(\xi^*)\\
=&S_{|\eta^\#|}(\xi)=\frac{-\log|f'_{\eta^\#}(x_{\xi^*})|}\chi\\
\in&[\frac{-\log\max_{x\in I}|f'_{\eta^\#}(f_{\eta^*}(x))|}\chi,\frac{-\log\max_{x\in I}|f'_{\eta^\#}(f_{\eta^*}(x))|+C\epsilon^2}\chi]\\
=&[k-k^*-\frac{C\epsilon^2}\chi,k-k^*].\end{aligned}$$\end{proof}

We proceed to compute a few auxiliary bounds:

Since $\omega\notin \tZ_{k,\eta}^{h,h'}$, 
\begin{equation}\label{lin equid 3 eq 2}\bP_{\cA_k^{h,h'}(\omega)}\big(\cA_k^{h,h'}(\omega)\backslash(\eta^\#\cdot Z_{k,\eta}^{h,h'})\big) \geq  1-\bP_{A_{\eta^*}}(Z_{k,\eta}^{h,h'})^{\frac12}\geq 1- O_\bfp(\epsilon)-o_{\epsilon,\bfp}^{k\to\infty}(1). \end{equation}

Moreover, by Lemma \ref{lem atom approx},  Lemma \ref{lem prefix approx} and property \ref{lin equid 3*},

\begin{equation}\label{lin equid 3 eq 3}\begin{aligned} 
 & \bP_{\cA_k^{h,h'}(\omega)\backslash(\eta^\#\cdot Z_{k,\eta}^{h,h'})}\big(\{\xi\in\cA_k^{h,h'}(\omega)\backslash(\eta^\#\cdot Z_{k,\eta}^{h,h'}): S_{\tau_k(\xi)}\in J \}\big)\\
\geq &  \bP_{\eta^\#\cdot\cA_{k^*}^{h^*,h'}(\omega^*)}\big(\{\xi\in\cA_k^{h,h'}(\omega)\backslash(\eta^\#\cdot Z_{k,\eta}^{h,h'}): S_{\tau_k(\xi)}\in J \}\big) \\
\geq &  \bP_{\eta^\#\cdot\cA_{k^*}^{h^*,h'}(\omega^*)}\big(\{\xi\in\eta^\#\cdot(\cA_{k^*}^{h^*,h'}(\omega^*)\backslash Z_{k,\eta}^{h,h'}):\\
&\quad S_{\tau_{k^*}(\xi^*)}\in \big(J-(k-k^*-\frac{C\epsilon^2}\chi)\big)\cap \big(J-(k-k^*)\big)\}\big)\\ 
\geq & \bP_{\cA_{k^*}^{h^*,h'}(\omega^*)}(S_{\tau_{k^*}}\in J^*)-\bP_{\cA_{k^*}^{h^*,h'}(\omega^*)}(Z_{k,\eta}^{h,h'})\\
\geq &\big(\Gamma_{\cA_{k^*}^{h^*,h'}(\omega^*)} (J^*)  - o_{\epsilon,\bfp}^{k-h\to\infty}(1)\big) -\bP_{A_{\eta^*}}(Z_{k,\eta}^{h,h'})^{\frac12}\\
\geq &\Gamma_{\cA_{k^*}^{h^*,h'}(\omega^*)} (J^*) -O_\bfp(\epsilon)-o_{\epsilon,\bfp}^{k-h\to\infty}(1),
 \end{aligned}\end{equation}
Here  $J^*$ denotes, provided that $J$ is a subinterval of $[k\chi,k\chi+D']$, the interval
$$ J^*=\big(J-(k-k^*-\frac{C\epsilon^2}\chi)\big)\cap \big(J-(k-k^*)\big)\}\big)\subseteq [k^*\chi,k^*\chi+D'].$$ Since $\big| \big(J-(k-k^*)\big)\backslash J^*\big|\leq\frac{C\epsilon^2}\chi$, 
$$\Gamma_{\cA_{k^*}^{h^*,h'}(\omega^*)} (J^*)\geq \Gamma_{\cA_{k^*}^{h^*,h'}(\omega^*)}\big (J-(k-k^*)\big)-O_\bfp(\epsilon^2)$$ by the uniform absolute continuity of the probability distribution $\Gamma_{\cA_{k^*}^{h^*,h'}(\omega^*)}$ (Lemma \ref{Lemma Abs. contin of gamma}).

Finally, since $\sigma^{\tau_{k^*}(\omega^*)}\omega^*=\sigma^{\tau_k(\omega)}\omega$ (i.e. the $h'$ components are the same $\eta'$), one can check by construction that $\Gamma_{\cA_{k^*}^{h^*,h'}(\omega^*)}= U_{-(k-k^*)\chi} \Gamma_{\cA_k^{h,h'}(\omega)}$, where for $x\in \mathbb{R}$ , $U_x:\mathbb{R}\rightarrow \mathbb{R}$ is the translation by $x$.
So
\begin{equation}\label{lin equid 3 eq 4}\Gamma_{\cA_{k^*}^{h^*,h'}(\omega^*)} (J^*)\geq  \Gamma_{\cA_k^{k,h'}(\omega)} (J)-O_\bfp(\epsilon^2).\end{equation}

To obtain \eqref{lin equid 3 eq 1}, plug \eqref{lin equid 3 eq 4} into \eqref{lin equid 3 eq 3}, then multiply by \eqref{lin equid 3 eq 2}. This completes the proof of Property \ref{lin equid 3}.

Theorem \ref{Theorem lin equid} is established.

\section{Proof of Theorem \ref{Theorem main tech} part (1)} \label{Section proof 1}
In this Section we prove Theorem \ref{Theorem main tech} part (1). We  require a preliminary step, which is an adaptation of Theorem \ref{Theorem equid} for Fourier modes. This is the content of the next subsection:
\subsection{Application of Theorem \ref{Theorem equid} to Fourier modes} \label{Section Fourier modes}
Fix a Borel probability measure $\rho\in \mathcal{P}(\mathbb{R})$.  For every $q\in \mathbb{R}$ we define a function $g_{q,\rho}:\mathbb{R}\rightarrow \mathbb{R}$ via
$$g_{q,\rho} (t) = \left| \mathcal{F}_q \left( M_{e^{-t}} \rho \right) \right|^2$$
where we recall that $M_s(x)=s\cdot x$ for any $s,x\in \mathbb{R}$.

Next, fixing $h_0=0$ in Theorem \ref{Theorem equid} and assuming $k\in \mathbb{N}$, we define the sequence $o_k :=o_{0,\textbf{p}} ^{k\rightarrow \infty}$. Notice that the assumption $h_0=0$ means that $A_\eta$ is the entire symbolic space $\mathcal{A}^\mathbb{N}$ (since the only word of length $0$ is the empty word). The following Theorem is needed in conjunction with Theorem \ref{Theorem equid}, since in practice we will need a version of Theorem \ref{Theorem equid} for functions rather than intervals.

\begin{theorem} \label{Theorem equid 2}
Let $q$ be large, let $C>1$, and let $k=k(q)$ be defined implicitly as an integer satisfying
\begin{equation} \label{Eq relation between q and k}
|q|= \Theta_{C} \left( o_k ^{-\frac{1}{4}} e^{(k+h') \chi} \right)
\end{equation}
where  $h'= \sqrt{k}$. Let $\rho\in \mathcal{P}(\mathbb{R})$ be a measure such that 
$$\diam \left( \supp \left( \rho \right) \right) = O(e^{-h'\chi}).$$
Then for every $\xi \in \tA_{k,\eta}^{h,h'} \subseteq \mathcal{A}^\mathbb{N}$ as in Theorem \ref{Theorem equid}, recalling that here $h=h_0=0$,  we have
\begin{equation*}
\left| \mathbb{E}_{\cA_k^{h,h'}(\xi)} \left[ g_{q,\rho} (S_{\tau_k(\omega)}) \right] - \int_{k\chi} ^{k\chi +D'} g_{q,\rho}(x) d \Gamma_{\cA_k^{h,h'}(\xi)} (x) \right|\leq O(o_k ^{\frac{1}{4}}).
\end{equation*}
\end{theorem}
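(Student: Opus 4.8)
The plan is to deduce Theorem \ref{Theorem equid 2} from Theorem \ref{Theorem equid} by a standard approximation argument: the conclusion of Theorem \ref{Theorem equid} compares $\mathbb{P}_{\cA_k^{h,h'}(\xi)}(S_{\tau_k}\in J)$ with $\Gamma_{\cA_k^{h,h'}(\xi)}(J)$ on every subinterval $J\subseteq[k\chi,k\chi+D']$ up to an error $o_k=o^{k\to\infty}_{0,\bfp}(1)$, and we want to upgrade this to an estimate on $\int g_{q,\rho}\, d(\text{law of }S_{\tau_k}) - \int g_{q,\rho}\, d\Gamma$. The natural tool is to partition the interval $[k\chi,k\chi+D']$ into $N$ subintervals $J_1,\dots,J_N$ of length $D'/N$ and approximate $g_{q,\rho}$ by a step function that is constant on each $J_i$. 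First I would record the two basic facts about $g_{q,\rho}$: it is bounded, $0\le g_{q,\rho}\le 1$ (since $g_{q,\rho}(t)=|\mathcal{F}_q(M_{e^{-t}}\rho)|^2$ and Fourier coefficients of probability measures have modulus at most $1$), and it is Lipschitz in $t$ with a constant controlled by $|q|\cdot\diam(\supp\rho)$. Indeed, differentiating under the integral sign, $\frac{d}{dt}\mathcal{F}_q(M_{e^{-t}}\rho)=\int(-2\pi i q)(-e^{-t}x)e^{2\pi iqe^{-t}x}\,d\rho(x)$, which after centering $\rho$ about a point of its support has modulus $\lesssim |q|e^{-t}\diam(\supp\rho)$; since $t\in[k\chi,k\chi+D']$ and, by hypothesis, $|q|=\Theta_C(o_k^{-1/4}e^{(k+h')\chi})$ with $\diam(\supp\rho)=O(e^{-h'\chi})$, we get $e^{-t}|q|\diam(\supp\rho)=O(o_k^{-1/4})$, so $g_{q,\rho}$ has Lipschitz constant $O(o_k^{-1/4})$ on the relevant range.

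Next I would carry out the discretization. Pick $N=N(k)$ to be chosen at the end, let $J_i=[k\chi+(i-1)D'/N,\,k\chi+iD'/N)$, and set $\bar g_i=\inf_{t\in J_i}g_{q,\rho}(t)$. Write $\mu_S$ for the law of $S_{\tau_k}$ under $\mathbb{P}_{\cA_k^{h,h'}(\xi)}$ and $\Gamma=\Gamma_{\cA_k^{h,h'}(\xi)}$, both supported in $[k\chi,k\chi+D']$ (the containment for $\mu_S$ is Lemma \ref{Lemma relation between R.V and stopping time}, and $\Gamma$ by Definition \ref{Def Gammak}). Then
\begin{align*}
\Big|\int g_{q,\rho}\,d\mu_S-\int g_{q,\rho}\,d\Gamma\Big|
&\le \Big|\int g_{q,\rho}\,d\mu_S-\sum_i\bar g_i\,\mu_S(J_i)\Big|
+\Big|\sum_i\bar g_i\big(\mu_S(J_i)-\Gamma(J_i)\big)\Big|\\
&\quad+\Big|\sum_i\bar g_i\,\Gamma(J_i)-\int g_{q,\rho}\,d\Gamma\Big|.
\end{align*}
The first and third terms are each bounded by the oscillation of $g_{q,\rho}$ over a single $J_i$, which is at most $\mathrm{Lip}(g_{q,\rho})\cdot D'/N=O(o_k^{-1/4}/N)$. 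The middle term is bounded by $\sum_i |\bar g_i|\cdot|\mu_S(J_i)-\Gamma(J_i)|\le \sum_i |\mu_S(J_i)-\Gamma(J_i)|$ since $0\le\bar g_i\le 1$; applying part (iii) of Theorem \ref{Theorem equid} to each of the $N$ intervals $J_i$ (valid because $\xi\in\tA_{k,\eta}^{h,h'}$ and $h=h_0=0$), each summand is $o_k$, so the middle term is $\le N\cdot o_k$. Altogether the total error is $O(o_k^{-1/4}/N)+O(N o_k)$.

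Optimizing, I would choose $N=N(k)=\lceil o_k^{-5/8}\rceil$, which makes $o_k^{-1/4}/N=O(o_k^{3/8})$ and $N o_k=O(o_k^{3/8})$; since $o_k\to 0$ we certainly have $o_k^{3/8}=O(o_k^{1/4})$ for $k$ large, giving the claimed bound $O(o_k^{1/4})$. (In fact any choice balancing the two terms below $o_k^{1/4}$ works; for instance $N\asymp o_k^{-1/2}$ already yields $O(o_k^{1/4})$ for both terms.) The only genuine subtlety — and the step I expect to need the most care — is the Lipschitz estimate on $g_{q,\rho}$ and its interaction with the implicitly-defined parameter $k=k(q)$: one must check that the relation \eqref{Eq relation between q and k} together with $h'=\sqrt k$ and $\diam(\supp\rho)=O(e^{-h'\chi})$ really does force $e^{-t}|q|\diam(\supp\rho)=O(o_k^{-1/4})$ uniformly for $t\in[k\chi,k\chi+D']$, so that the Lipschitz constant is exactly of the size that lets the balancing in the previous step close; the constants $C$ and the implied constants must be tracked, but this is routine. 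Everything else is the elementary step-function approximation above, applied on top of Theorem \ref{Theorem equid} used as a black box.
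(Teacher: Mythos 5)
Your proposal is correct and takes essentially the same route as the paper: both establish that $g_{q,\rho}$ is $O(o_k^{-1/4})$-Lipschitz on $[k\chi,k\chi+D']$ using the relation between $q$, $k$, $h'$ and the diameter of $\supp\rho$, then approximate $g_{q,\rho}$ by a step function with a controlled number of pieces, apply Theorem~\ref{Theorem equid} piecewise, and balance the two error sources (the paper takes $N\asymp o_k^{-1/2}$ pieces, you take $N\asymp o_k^{-5/8}$; both land at $O(o_k^{1/4})$). The only cosmetic difference is that you specify the step function as the lower envelope, whereas the paper leaves it as an unspecified $o_k^{1/4}$-uniform approximant.
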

\begin{proof}
We first claim that the function $g_{q,\rho} (t)$ is $4\pi qe^{-\chi k} \cdot \diam \left( \supp \left( \rho \right) \right)$ Lipschitz, whenever $t\in [k\chi, k\chi +D']$. Indeed, since the complex exponential is a $1$-Lipschitz function, for any $x,y\in \supp(\rho)$ and $t,s\in [k\chi, k\chi +D']$  we have
$$|\exp(2\pi i q e^{-t} (x-y)) - \exp(2\pi i q e^{-s} (x-y))| \leq |2 \pi q (x-y)|\cdot |e^{-t}-e^{-s}|\leq 4 \pi q \diam \left( \supp \left( \rho \right) \right) \cdot e^{-k \chi}.$$
Since the $L^1$ norm is always bounded by the $L^\infty$ norm, and since
$$g_{q,\rho} (t) = \left| \mathcal{F}_q \left( M_{e^{-t}} \rho \right) \right|^2 = \int \int \exp(2\pi i q e^{-t} (x-y))d\rho(x) d\rho(y) $$
the Claim follows.

Recalling that  $k=k(q)$ satisfies
$$q=\Theta_{C} \left( o_k ^{-\frac{1}{4}} e^{(k+h') \chi} \right)$$
and that
$$\diam \left( \supp \left( \rho \right) \right) = O(e^{-h'\chi})$$
it follows that the function
$$t\in [k\chi, k\chi +D'] \mapsto g_{q,\rho} (t)$$
is $o_k ^{-\frac{1}{4}}$ Lipschitz (up to a constant universal multiplicative factor). Therefore, there exists a step function $\psi: [k\chi, k\chi +D'] \rightarrow \mathbb{R}$ such that:
\begin{enumerate}
\item $\psi$ consists of $o_k ^{-\frac{2}{4}}$ steps (indicators of intervals).

\item $|| \psi - g_{q,\rho}||_{\infty} \leq o_k ^{\frac{1}{4}}$ on the interval $[k\chi, k\chi +D']$.
\end{enumerate}
Thus,
 $$\left| \mathbb{E}_{\cA_k^{h,h'}(\xi)} \left[ g_{q,\rho} (S_{\tau_k(\omega)}) \right] - \int_{k\chi} ^{k\chi +D'} g_{q,\rho}(x) d \Gamma_{\cA_k^{h,h'}(\xi)} (x) \right|$$
\begin{equation} \label{Term 1}
\leq \left| \mathbb{E}_{\cA_k^{h,h'}(\xi)} \left[ g_{q,\rho} (S_{\tau_k(\omega)}) \right]  - \mathbb{E}_{\cA_k^{h,h'}(\xi)} \left[ \psi (S_{\tau_k(\omega)}) \right] \right|
\end{equation}
\begin{equation} \label{Term 2}
+ \left| \mathbb{E}_{\cA_k^{h,h'}(\xi)} \left[ \psi (S_{\tau_k(\omega)}) \right]  - \int_{k\chi} ^{k\chi +D'} \psi(x) d \Gamma_{\cA_k^{h,h'}(\xi)} (x)  \right|
\end{equation}
\begin{equation} \label{Term 3}
+ \left| \int_{k\chi} ^{k\chi +D'} \psi(x) d \Gamma_{\cA_k^{h,h'}(\xi)} (x) - \int_{k\chi} ^{k\chi +D'} g_{q,\rho}(x) d \Gamma_{\cA_k^{h,h'}(\xi)} (x)  \right|.
\end{equation}
Now, the terms in \eqref{Term 1} and \eqref{Term 3} are bounded by $o_k ^{\frac{1}{4}}$ by point 2 above. \newline
Finally, the term in \eqref{Term 2} is bounded by $o_k ^{-\frac{2}{4}} \cdot o_k$ since by point 1 above there are at most $o_k ^{-\frac{2}{4}}$ steps in $\psi$, and since by Theorem \ref{Theorem equid} each such step introduces an error of at most $o_k$.   
\end{proof}

In the context of Theorem \ref{Theorem equid 2}, it is natural to ask about the existence of integers $k$ that satisfy \eqref{Eq relation between q and k} with respect to some uniform $C>1$. This is the content of the following Lemma:
\begin{Lemma} \label{Lemma choice}
By potentially making $o_k$ go to zero slower, we may assume that there exists a constant $C>1$ such that for every $q$ large there exists  $k=k(q) \in \mathbb{N}$ such that
\begin{equation*} 
|q|= \Theta_{C} \left( o_k ^{-\frac{1}{4}} e^{(k+\sqrt{k}) \chi} \right).
\end{equation*}
\end{Lemma}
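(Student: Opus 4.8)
The plan is to exploit the freedom we have in choosing the auxiliary sequence $(o_k)=(o_{0,\bfp}^{k\to\infty})$ from Theorem~\ref{Theorem equid}: the conclusions of that theorem only become weaker when $o_k$ is enlarged, so they remain valid if $(o_k)$ is replaced by any larger sequence that still tends to $0$ --- and, its error terms being differences of quantities in $[0,1]$, also by any sequence that is $\ge 1$. So I would replace $(o_k)$ by a sufficiently ``regular'' dominating sequence, chosen so that the values $V_k:=o_k^{-1/4}e^{(k+\sqrt k)\chi}$ are coarsely dense in $[1,\infty)$, i.e.\ so that $(V_k)$ increases to $\infty$ while $\log V_{k+1}-\log V_k$ stays uniformly bounded; the only thing that could obstruct this is $(o_k)$ decaying too fast along a subsequence, and enlarging $(o_k)$ is precisely what cures that.

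\textbf{Normalising the sequence.} First I would replace $o_k$ by $\min\{1,\sup_{j\ge k}o_j\}$, a legitimate choice by the above, which is non-increasing, lies in $(0,1]$, and tends to $0$. Writing $a_k:=-\tfrac14\log o_k\ge 0$, the sequence $(a_k)$ is then non-decreasing with $a_k\to\infty$. Next I would ``slow it down'' via $\tilde a_0:=0$, $\tilde a_{k+1}:=\min\{a_{k+1},\,\tilde a_k+1\}$; an easy induction gives $0\le\tilde a_k\le a_k$, and since $a_{k+1}\ge a_k\ge\tilde a_k$ one gets $\tilde a_k\le\tilde a_{k+1}$ with increments $\tilde a_{k+1}-\tilde a_k\in[0,1]$. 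Moreover $(\tilde a_k)$ cannot be bounded: if $\tilde a_k\le M$ for all $k$, choosing $N$ with $a_k\ge M+1$ for $k\ge N$ forces $\tilde a_{k+1}=\tilde a_k+1$ for every $k\ge N$, a contradiction. Since $\tilde a_k\le a_k$, the new sequence $o_k:=e^{-4\tilde a_k}$ is again an enlargement, so from here on I would assume $o_k=e^{-4\tilde a_k}$ with $(\tilde a_k)$ non-decreasing, unbounded, and of unit-bounded increments.

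\textbf{Choosing $k(q)$.} With this sequence, $V_k=e^{\tilde a_k+(k+\sqrt k)\chi}$ is strictly increasing (because $k+\sqrt k$ is), tends to $\infty$, and, using $\tilde a_{k+1}-\tilde a_k\le 1$ and $\sqrt{k+1}-\sqrt k\le 1$,
\[
\log\frac{V_{k+1}}{V_k}=(\tilde a_{k+1}-\tilde a_k)+\bigl(1+\sqrt{k+1}-\sqrt k\bigr)\chi\ \le\ 1+2\chi .
\]
I would then set $C:=e^{1+2\chi}>1$, and for $|q|\ge V_1$ define $k(q):=\max\{k\ge 1:\ V_k\le|q|\}$, which exists since $V_k\uparrow\infty$. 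By maximality $V_{k(q)}\le|q|<V_{k(q)+1}\le C\,V_{k(q)}$, hence
\[
C^{-1}\,o_{k(q)}^{-1/4}e^{(k(q)+\sqrt{k(q)})\chi}\ \le\ |q|\ \le\ C\,o_{k(q)}^{-1/4}e^{(k(q)+\sqrt{k(q)})\chi},
\]
which is exactly $|q|=\Theta_C\bigl(o_{k(q)}^{-1/4}e^{(k(q)+\sqrt{k(q)})\chi}\bigr)$ (and one checks $k(q)\to\infty$ as $|q|\to\infty$, since $k(q)\ge N$ once $|q|\ge V_N$). This yields the lemma.

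I do not expect a genuine obstacle here: the entire content is the routine verification that the two successive modifications of $(o_k)$ are permitted --- the monotonisation/capping step, and the unit-step ``slow down'' --- the latter engineered precisely so that it simultaneously still dominates the original error sequence ($\tilde a_k\le a_k$) and is forced to tend to infinity.
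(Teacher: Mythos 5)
Your proof is correct for the lemma as literally stated, and the core idea — monotonising $(o_k)$ and then engineering a slower-decaying dominating sequence whose $-\tfrac14\log o_k$ has uniformly bounded increments, so that $\log V_{k+1}-\log V_k=O(1)$ and $k(q)=\max\{k:V_k\le|q|\}$ does the job — is exactly the paper's mechanism; the paper realises the increment bound multiplicatively (via $b_{k}=\max\{o_k,b_{k-1}/4\}$, giving $\tfrac14 b_k\le b_{k+1}\le b_k$) where you realise it additively (unit steps on $\tilde a_k$), and it proceeds through a smooth interpolant $g$ and the floor $[x]$ where you argue directly on the integer sequence. Those are cosmetic differences.

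However, you omit the paper's first normalisation, $o_k\ge 1/k$, and this is not cosmetic. Your unit-step cap only forces $\tilde a_k\le k$, i.e.\ $o_k\ge e^{-4k}$ — an exponential lower bound. The paper's modified $o_k$ is explicitly arranged to decay no faster than polynomially (it keeps $b_k\ge o_k\ge 1/k$), and that is load-bearing downstream: in Corollary~\ref{Coro finite sum} the linearization error is rewritten as $O\big(o_k^{-1/4}e^{-\beta h'\chi}\big)$ with $h'=\sqrt k$, and the paper concludes it vanishes \emph{because} $o_k$ decays at most polynomially, so $o_k^{-1/4}\le k^{1/4}\ll e^{\beta\sqrt k\chi}$. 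With your construction one only gets $o_k^{-1/4}\le e^{k}$, which overwhelms $e^{-\beta\sqrt k\chi}$, and that step of the argument breaks. The fix is easy and in the spirit of what you did: first enlarge to $\max\{o_k,1/k\}$ (a legal enlargement with increments of $\tfrac14\log k$ bounded since $\log(k+1)-\log k<1$), \emph{then} monotonise and apply your unit-step smoothing; this recovers both the $\Theta_C$ statement and the polynomial lower bound the paper needs.
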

\begin{proof}
We first make the following assumptions on the sequence $o_k$:
\begin{enumerate}
\item $o_k \geq \frac{1}{k}$. Otherwise, we move to the sequence $a_k = \max \lbrace o_k, \frac{1}{k} \rbrace$.  Then $o_k \leq a_k$ and still $a_k\rightarrow 0$.

\item It is monotonic decreasing. Otherwise, for every $k$ define $v_k = \sup \lbrace o_n : n\geq k \rbrace$. Then $o_k \leq v_k$, $v_k$ is decreasing,  and it is clear that $v_k \rightarrow 0$.

\item For every $k$ we have $\frac{1}{4}\cdot o_{k} \leq o_{k+1} \leq o_{k}$. Otherwise, we move to the recursively defined sequence $b_k$, where
$$b_1= o_1,\quad b_k = \max \lbrace o_k, \frac{b_{k-1}}{4} \rbrace.$$
Then $b_k \geq o_k \geq \frac{1}{k}$ and 
$$\frac{b_k}{4} \leq b_{k+1} \leq b_k, \quad b_k  \rightarrow 0.$$
\end{enumerate}

Now, let $g:\mathbb{R}_+ \rightarrow \mathbb{R}_+$ be a smooth monotonic decreasing function such that $g(k)=o_k$. Let $q$ be large. Find $x\in \mathbb{R}_+$ such that
$$|q|= g(x)^{-\frac{1}{4}} \cdot e^{(x+ \sqrt{x}) \chi}.$$
Notice that
$$|\frac{1}{4}\log \frac{1}{g([x])}+(x+\sqrt{x})\chi -\frac{1}{4}\log \frac{1}{g(x)} - ([x]+ \sqrt{[x]} )\chi|$$
$$ \leq |\frac{1}{4} \log \frac{g(x)}{g([x])}|  + |(x + \sqrt{x}-[x] -  \sqrt{[x]} )\cdot \chi|$$
$$\leq \frac{1}{4} |\log \frac{g([x]+1)}{g([x])}| + 3\chi$$
$$\leq \frac{1}{4} |\log 4| + 3\chi.$$
It follows that
$$\frac{g(x)^{-\frac{1}{4}} \cdot e^{(x+ \sqrt{x}) \chi}}{g([x])^{-\frac{1}{4}} \cdot e^{([x]+   \sqrt{[x]}  \chi}} = \exp\left( \frac{1}{4}\log \frac{1}{g([x])}+(x+\sqrt{x})\chi -\frac{1}{4}\log \frac{1}{g(x)} - ([x]+ \sqrt{[x]} )\chi \right)=O(1).$$
We thus choose, for every $q$ large, our $k$ as $[x]$. It follows that there is some uniform $C>1$ such that
$$C^{-1} \leq \frac{|q|}{o_k ^{-\frac{1}{4}} e^{(k+\sqrt{k}) \chi}} \leq C$$
which is what we claimed.
\end{proof}

\subsection{Proof of Theorem \ref{Theorem main tech} part (1)}

 Let $\nu$ be as in Theorem \ref{Theorem main tech}. Our goal is to show that 
$$\lim_{|q|\rightarrow \infty} \mathcal{F}_q (\nu) =0.$$
So, let $\epsilon>0$,  let $|q|$ be large, and choose $k=k(q)\in \mathbb{N}$ as in Lemma \ref{Lemma choice}. Recall that this means that for some $C>1$,
$$q= \Theta_C \left( o_k ^{-\frac{1}{4}} \cdot e^{(k+h')\chi} \right)$$
where our standing assumption is that 
$$h'=\sqrt{k}.$$
By Lemma \ref{Lemma choice} (and its proof),  any requirement that $k$ be large translates to a requirement on $q$ being large. In the notation of Theorem \ref{Theorem equid}, we let $k$ be large, and fix $h_0=0$. Recall that $h_0=0$ means that $h=0$ and so $A_\eta = \mathcal{A}^\mathbb{N}$.  We also define an auxiliary stopping time $\tilde{\beta}_k : \mathcal{A}^\mathbb{N} \rightarrow \mathbb{N}$ by
\begin{equation} \label{Eq for tilde beta}
\tilde{\beta}_k (\omega) = \min \lbrace m: |f_{\omega|_m}'(x_0)|<e^{-(k+h')\chi} \rbrace
\end{equation}
where we recall that $x_0\in I$ is our prefixed point as in Section \ref{Section sketch}.
\begin{Lemma} \label{Claim 3'} 
For every $k\in \mathbb{N}$,
$$\nu = \mathbb{E}(f_{\omega|_{\tilde{\beta}_k (\omega)}} \nu).   $$ 
\end{Lemma}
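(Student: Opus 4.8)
The plan is to derive the identity directly from the self-conformality relation $\nu=\sum_{i=1}^n p_i f_i\nu$, by iterating it along a suitable finite partition of the coding space, after first observing that $\tilde\beta_k$ is a \emph{bounded} stopping time. I would begin by recording this boundedness: by the chain rule together with \eqref{Eq C and C prime} one has $|f'_{\omega|_m}(x_0)|\le e^{-Dm}$ for every $\omega$ and every $m$, so $\tilde\beta_k(\omega)\le N:=\lceil(k+h')\chi/D\rceil$ for all $\omega$; moreover, since $m\mapsto|f'_{\omega|_m}(x_0)|$ is non-increasing, the event $\{\tilde\beta_k=m\}$ depends only on $(\omega_1,\dots,\omega_m)$, hence is a finite union of level-$m$ cylinders $A_\eta$.

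Next I would introduce the finite prefix-free ``stopping antichain'' $\Lambda_k\subseteq\cA^*$ consisting of all words $\eta$ for which $\tilde\beta_k\equiv|\eta|$ on $A_\eta$; by the previous step every word in $\Lambda_k$ has length at most $N$, and $\{A_\eta:\eta\in\Lambda_k\}$ is a partition of $\cA^\mathbb N$. Since $f_{\omega|_{\tilde\beta_k(\omega)}}=f_\eta$ on $A_\eta$, this gives
$$\mathbb E\big(f_{\omega|_{\tilde\beta_k(\omega)}}\nu\big)=\sum_{\eta\in\Lambda_k}\mathbb P(A_\eta)\,f_\eta\nu=\sum_{\eta\in\Lambda_k}p_\eta\,f_\eta\nu,$$
where $p_\eta=\prod_j p_{\eta_j}=\mathbb P(A_\eta)$ and $f_\eta=f_{\eta_1}\circ\cdots\circ f_{\eta_{|\eta|}}$, so the lemma reduces to the claim that $\sum_{\eta\in\Lambda_k}p_\eta f_\eta\nu=\nu$.

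To establish this claim I would ``fill $\Lambda_k$ out'' to the uniform level $N$: iterating $\nu=\sum_i p_i f_i\nu$ exactly $N-|\eta|$ times and pushing forward by $f_\eta$ yields $f_\eta\nu=\sum_{\zeta\in\cA^{N-|\eta|}}p_\zeta f_{\eta\zeta}\nu$ for each $\eta\in\Lambda_k$; and since $\{A_\eta:\eta\in\Lambda_k\}$ partitions $\cA^\mathbb N$ with all words of length $\le N$, every length-$N$ word has a unique prefix in $\Lambda_k$, so $(\eta,\zeta)\mapsto\eta\zeta$ is a bijection onto $\cA^N$. Summing over $\eta\in\Lambda_k$ then gives $\sum_{\eta\in\Lambda_k}p_\eta f_\eta\nu=\sum_{\xi\in\cA^N}p_\xi f_\xi\nu=\nu$, the last equality being the $N$-fold iteration of self-conformality. (Alternatively one can prove the more general statement $\nu=\mathbb E(f_{\omega|_{\tau(\omega)}}\nu)$ for every bounded stopping time $\tau$ by induction on $\sup\tau$, splitting according to the first symbol.) I do not anticipate a genuine obstacle here: the only points requiring care are confirming that $\tilde\beta_k$ is bounded, so that no convergence argument is needed, and the elementary bookkeeping of the antichain-to-level-$N$ refinement, both of which are immediate from uniform contraction and the open-ended iteration of the self-conformality identity.
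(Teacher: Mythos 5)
Your proof is correct. The paper itself offers no argument here — it simply records the identity as "standard" and cites \cite[Lemma~2.2.4]{bishop2013fractal} — and the argument you give (observe that $\tilde\beta_k$ is a bounded stopping time, pass to the finite prefix-free antichain $\Lambda_k$, and refine every $\eta\in\Lambda_k$ to the common level $N$ by iterating $\nu=\sum_i p_i f_i\nu$, using the bijection $(\eta,\zeta)\mapsto\eta\zeta$ onto $\cA^N$) is precisely the standard proof that the citation points to. The only cosmetic quibble is a possible off-by-one in $N=\lceil(k+h')\chi/D\rceil$ (the bound $|f'_{\omega|_m}(x_0)|\le e^{-Dm}$ need not be strict, so one should take, say, $N=\lfloor(k+h')\chi/D\rfloor+1$ to guarantee the strict inequality defining $\tilde\beta_k$); this is immaterial since all that is used is that $\tilde\beta_k$ is bounded.
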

\begin{proof}
This is standard, and follows since $\nu$ is self-conformal. See e.g.  \cite[Lemma 2.2.4]{bishop2013fractal}.
\end{proof}

So,  by Lemma \ref{Claim 3'} and  Jensen's inequality we obtain
\begin{eqnarray*}
|\mathcal{F}_q (\nu)|^2 &=& \left|\mathcal{F}_q (\mathbb{E}(f_{\omega|_{\tilde{\beta}_k (\omega)}} \nu)) \right|^2 \\
&=& \left|\mathbb{E} \left( \mathcal{F}_q \left( f_{\omega|_{\tilde{\beta}_k (\omega)}} \nu \right) \right) \right|^2 \\
&\leq & \mathbb{E} \left( \left| \mathcal{F}_q \left( f_{\omega|_{\tilde{\beta}_k (\omega)}} \nu \right) \right|^2  \right). \\
\end{eqnarray*}
Next, appealing to Theorem \ref{Theorem equid} with our choice of $k,h_0,h'$,  there is a subset that we denote by $\tilde{A_\eta} \subseteq \mathcal{A}^\mathbb{N}$ with $\mathbb{P} (\tilde{A_\eta}) \geq 1-o_k(1)$ such that

$$ \mathbb{E} \left( \left| \mathcal{F}_q \left( f_{\omega|_{\tilde{\beta}_k (\omega)}} \nu \right) \right|^2 \right) =  $$
$$ = \int_{\xi \in \mathcal{A}^\mathbb{N}\setminus \tilde{A_\eta}}  \mathbb{E}_{\cA_k^{h,h'}(\xi)}  \left| \mathcal{F}_q \left( f_{\omega|_{\tilde{\beta}_k (\omega)}} \nu \right) \right|^2 d \mathbb{P}( \xi) +  \int_{\xi \in \tilde{A_\eta}}  \mathbb{E}_{\cA_k^{h,h'}(\xi)}  \left| \mathcal{F}_q \left( f_{\omega|_{\tilde{\beta}_k (\omega)}} \nu \right) \right|^2 d \mathbb{P}( \xi).$$
Combining this with the previous equation array, and using that $\left| \mathcal{F}_q \left( f_{\omega|_{\tilde{\beta}_k (\omega)}} \nu \right) \right|^2\leq 1$ uniformly in all parameters, we conclude that
\begin{equation} \label{Eq first bound}
|\mathcal{F}_q (\nu)|^2 \leq  \int_{\xi \in \tilde{A_\eta}}  \mathbb{E}_{\cA_k^{h,h'}(\xi)}  \left| \mathcal{F}_q \left( f_{\omega|_{\tilde{\beta}_k (\omega)}} \nu \right) \right|^2 d \mathbb{P}( \xi) + o_k(1).
\end{equation}
Next, we take a closer look at the maps $f_{\omega|_{\tilde{\beta}_k (\omega)}}$:

\begin{Lemma} \label{Lemma big P}
There exists some integer $P>1$ such that:

For all $k$ large enough, and for every $\omega$, letting $\eta'$ be such that $A_{k,\eta,\eta'} = \cA_k^{h,h'}(\omega) $, we have
$$\left|\tilde{\beta}_k(\omega)-\tau_k(\omega)- |\eta'| \right| \leq P.$$
\end{Lemma}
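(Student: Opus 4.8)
The plan is to compare three quantities: the derivative $|f'_{\omega|_m}(x_0)|$ that governs $\tilde\beta_k$, the cocycle sum $S_m(\omega) = -\log|f'_{\omega|_m}(x_{\sigma^m\omega})|$ that governs $\tau_k$, and the "maximal" derivative $\tilde S_m(\omega) = -\log\max_{x}|f'_{\omega|_m}(x)|$ that governs the word $\eta'$ appearing in the partition cell. The key observation is that all three differ only by bounded (in $m$, and in $k$) amounts, thanks to bounded distortion (Theorem \ref{Bounded distortiion theorem}): for any $m$ and any $x,y \in I$ we have $\big|\log|f'_{\omega|_m}(x)| - \log|f'_{\omega|_m}(y)|\big| \le \log L$. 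In particular $|S_m(\omega) - \tilde S_m(\omega)| \le \log L$ and $\big| -\log|f'_{\omega|_m}(x_0)| - S_m(\omega)\big| \le \log L$ uniformly. I would fix the integer $P$ essentially as $P = \lceil 3\log L / (D\chi) \rceil + O(1)$ (the step sizes $X_i$ are all at least $D>0$ by \eqref{Eq C and C prime}, so a bounded change in the cocycle sum can only move a first-passage index by a bounded amount).

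First I would unpack what $\tau_k$, $\tilde\beta_k$, and $|\eta'|$ measure. By definition $\tau_k(\omega) = \min\{n : S_n(\omega) \ge k\chi\}$, and by \eqref{Eq for tilde beta}, $\tilde\beta_k(\omega) = \min\{n : -\log|f'_{\omega|_n}(x_0)| > (k+h')\chi\}$. For the term $|\eta'|$: recall that $\cA_k^{h,h'}(\omega) = A_{k,\eta,\eta'}$ where $\eta'$ is determined by $\iota^{h'}(\sigma^{\tau_k(\omega)-1}(\omega))$, i.e. $\eta'$ is the prefix of $\sigma^{\tau_k(\omega)-1}(\omega)$ of length $\tilde\tau_{h'}(\sigma^{\tau_k(\omega)-1}(\omega))$ — the first-passage index at level $h'\chi$ for the maximal-derivative cocycle $\tilde S$ applied to the shifted sequence. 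So $\tau_k(\omega) + |\eta'|$ is (up to the $\pm 1$ from $\tau_k-1$ versus $\tau_k$, which contributes a bounded amount) the first index $n$ at which the cumulative maximal-log-derivative of $\omega|_n$ exceeds roughly $(k + h')\chi$ — more precisely, $-\log\max_x|f'_{\omega|_{\tau_k+|\eta'|}}(x)| = \tilde S_{\tau_k(\omega)}(\text{first } \tau_k \text{ letters}) + \tilde S_{|\eta'|}(\text{next } |\eta'| \text{ letters})$, and each of these pieces is pinned: $S_{\tau_k(\omega)}(\omega) \in [k\chi, k\chi + D']$ by Lemma \ref{Lemma relation between R.V and stopping time}, and the second piece is in $[h'\chi - O_\Phi(1), h'\chi + O_\Phi(1)]$ by the definition of $\tilde\tau_{h'}$ together with the boundedness of each increment $X_i \le D'$.

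Now I would assemble the estimate. Using bounded distortion to pass between $x_0$, $x_{\sigma^n\omega}$ and the argmax, the quantity $-\log|f'_{\omega|_n}(x_0)|$ and the quantity $-\log\max_x|f'_{\omega|_n}(x)|$ both lie within $\log L$ of $S_n(\omega)$. Hence the first index where $-\log|f'_{\omega|_n}(x_0)|$ crosses $(k+h')\chi$ and the first index where the running maximal-log-derivative crosses $(k+h')\chi$ both lie within $\lceil (\log L + D')/D \rceil$ of the first index where $S_n(\omega)$ crosses $(k+h')\chi$. Since each increment $X_i \ge D$, we also have that the first index where $S_n$ crosses $(k+h')\chi$ exceeds $\tau_k(\omega)$ by at most $\lceil h'\chi/D\rceil + 1$ and by at least $\lfloor (h'\chi - D')/D'\rfloor$; but we don't need that two-sided bound against $\tau_k$ alone — rather we compare $\tilde\beta_k(\omega)$ directly to $\tau_k(\omega) + |\eta'|$, both of which are first-passage indices at the same threshold $(k+h')\chi$ up to a bounded additive error, for cocycles that differ from $S_n$ by a bounded amount. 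Two first-passage indices at thresholds differing by $O_\Phi(1)$, for a nondecreasing sequence with increments in $[D, D']$, differ by at most $O_\Phi(1)/D$. Taking $P$ to be this bound (absorbing the $\pm 1$ from $\tau_k - 1$ and the finitely many corrections), the lemma follows.

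The main obstacle is bookkeeping rather than conceptual: one must be careful that $\eta'$ is read off from $\sigma^{\tau_k(\omega)-1}(\omega)$, not $\sigma^{\tau_k(\omega)}(\omega)$, so there is an off-by-one letter whose log-derivative is bounded by $D'$ and must be tracked; and one must correctly identify that the threshold defining $|\eta'|$ via $\tilde\tau_{h'}$ is stated for the \emph{maximal} derivative $\tilde S$ while $\tilde\beta_k$ is stated for the derivative at the specific point $x_0$ — reconciling these is exactly where bounded distortion (Theorem \ref{Bounded distortiion theorem}) does the work. None of this should require more than a few lines once the three cocycles are lined up.
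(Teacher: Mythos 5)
Your proposal is correct and takes essentially the same approach as the paper's proof: both rely on bounded distortion (Theorem \ref{Bounded distortiion theorem}) to identify, up to a bounded multiplicative error, the derivatives $|f'_{\omega|_{\tilde\beta_k(\omega)}}|$ and $|f'_{\omega|_{\tau_k(\omega)+|\eta'|}}|$ as both being $\Theta(e^{-(k+h')\chi})$, and then use that each additional letter contributes a factor in $[e^{-D'},e^{-D}]$ to pin the index difference. (The paper phrases the argument multiplicatively in terms of derivative ratios while you phrase it additively in terms of cocycle sums, which is a cosmetic difference; also your formula $P \approx \lceil 3\log L/(D\chi)\rceil$ has an extraneous $\chi$ in the denominator, but this does not affect the substance.)
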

\begin{proof}
We first observe that, by the definition of $\tilde{\beta}_k$ from \eqref{Eq for tilde beta},  
\begin{equation*} 
f_{\omega|_{\tilde{\beta}_k (\omega)}} = f_{\omega|_{\tau_k (\omega)}}\circ f_{\omega|_{\tau_k(\omega)} ^{\tilde{\beta}_k(\omega)}},\quad \text{ and } f_{\omega|_{\tau_k(\omega)+|\eta'|}} = f_{\omega|_{\tau_k (\omega)}}\circ f_{\eta'} \text{ since } \omega\in A_{k,\eta,\eta'}.  
\end{equation*}
So, either $\omega|_{\tau_k(\omega)} ^{\tilde{\beta}_k(\omega)}$ is a prefix of $\eta'$, or vice versa. By the last displayed equation, for any $x\in I$,
$$\frac{ \left| f_{\omega|_{\tilde{\beta}_k (\omega)}} ' (x) \right| }{\left| f_{\omega|_{\tau_k (\omega)}}' \left( f_{\omega|_{\tau_k(\omega)} ^{\tilde{\beta}_k(\omega)}} (x) \right) \right|} = |f_{\omega|_{\tau_k(\omega)} ^{\tilde{\beta}_k(\omega)}}'(x)|.$$
Now, it is a consequence of Theorem \ref{Bounded distortiion theorem} (bounded distortion), the definition of $\tilde{\beta}_k$, and of the definition  of $\tau_k$ (Section \ref{Section LLT}), that for some $L>1$ and  all $y\in I$,
\begin{equation} \label{Use of bounded dist}
L^{-1} \cdot e^{-(k+h')\chi-D'} \leq  \left| f_{\omega|_{\tilde{\beta}_k (\omega)}} ' (y) \right| \leq L \cdot e^{-(k+h')\chi},\quad L^{-1}\cdot e^{-k\chi-D'} \leq  \left| f_{\omega|_{\tau_k (\omega)}} ' (y) \right| \leq L \cdot e^{-k\chi}.
\end{equation}
combining the last two displayed equations, we see that there some constant $C'>1$ such that
\begin{equation} \label{Eq der of rest of digits}
|f_{\omega|_{\tau_k(\omega)} ^{\tilde{\beta}_k(\omega)}}'(x)| = \Theta_{C'} \left( e^{-h'\chi} \right), \quad \forall x\in I.
\end{equation}

On the other hand, by the definition of the event $ A_{k,\eta,\eta'}$ and by Theorem \ref{Bounded distortiion theorem}
\begin{equation} \label{Eq der of eta prime}
|f_{\eta'} (x)| = \Theta_{L} \left( e^{-h'\chi} \right), \quad \forall x\in I.
\end{equation}

Therefore, combining equation \eqref{Eq der of eta prime} with \eqref{Eq der of rest of digits} (and noting that the constants $C',L$ are uniform), that  either $\omega|_{\tau_k(\omega)} ^{\tilde{\beta}_k(\omega)}$ is a prefix of $\eta'$ or vice versa, and equation \eqref{Eq C and C prime}, the Lemma follows.
\end{proof}

Let $P$ be as in Lemma \ref{Lemma big P}. For every word $\eta'\in \lbrace1,...,n\rbrace^*$ of length $|\eta'|>P$ we define
$$\bar{\eta'}:=\eta'|_{|\eta'|-P}$$
That is, $\bar{\eta'}$ is the prefix of $\eta'$ of length $|\eta'|-P$. It is now a corollary of Lemma \ref{Lemma big P} that for any $\omega$, if $\eta'=\eta'(\omega)$ is as in Lemma \ref{Lemma big P}, then there is a   word $\rho_{\omega,k}$ such that
$$ f_{\omega|_{\tilde{\beta}_k (\omega)}} = f_{\omega|_{\tau_k (\omega)}}\circ f_{\bar{\eta'}} \circ f_{\rho_{\omega,k}}$$
and $|\rho_{\omega,k}|\leq 2P$.

With this information, we revisit equation \eqref{Eq first bound}. Recall that $M_s(t)=s\cdot t$.

\begin{Claim} \label{Claim linear} Fix $\beta\in(0,\gamma)$. Then for all $k$ large enough,
\begin{eqnarray*}
|\mathcal{F}_q (\nu)|^2 &\leq& \int_{\xi \in \tilde{A_\eta}}  \mathbb{E}_{\cA_k^{h,h'}(\xi)}  \left| \mathcal{F}_q \left(   M_{e^{-S_{\tau_k(\omega)} (\omega)}} \circ M_{ \sign\left( f'_{\omega|_{\tau_k(x)}} (x_{\sigma^{\tau_k(\omega)}(\omega)})\right)}  \circ f_{\bar{\eta'}} \circ f_{\rho_{\omega,k}}  \nu \right) \right|^2 \, d \mathbb{P}( \xi)\\
&+& O(q\cdot e^{-(k+h') \chi-\beta\cdot h' \chi})+o_k(1)
\end{eqnarray*}
where for every $\xi \in \tilde{A_\eta}$, recalling that ${\cA_k^{h,h'}(\xi)}= A_{k,\eta,\eta'}$, $\eta'$ is defined as  $\eta'=\eta'(\xi)$. 
\end{Claim}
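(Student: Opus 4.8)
The plan is to take the bound \eqref{Eq first bound} and replace the non-linear map $f_{\omega|_{\tilde\beta_k(\omega)}}$ by an explicit affine map, tracking the error. First I would use the corollary of Lemma \ref{Lemma big P} to write, for each $\omega$, $f_{\omega|_{\tilde\beta_k(\omega)}}=f_{\omega|_{\tau_k(\omega)}}\circ f_{\bar{\eta'}}\circ f_{\rho_{\omega,k}}$ with $|\rho_{\omega,k}|\le 2P$, so that in \eqref{Eq first bound} the push-forward $f_{\omega|_{\tilde\beta_k(\omega)}}\nu$ becomes $f_{\omega|_{\tau_k(\omega)}}\circ f_{\bar{\eta'}}\circ f_{\rho_{\omega,k}}\nu$. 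Since $\mathcal F_q$ is invariant in absolute value under precomposition only through the map applied to $\nu$, the point is to linearize the outer map $f_{\omega|_{\tau_k(\omega)}}$.

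The key step is the $C^0$ linearization Lemma \ref{Lemma lin}: fixing $\beta\in(0,\gamma)$, it gives $\epsilon\in(0,1)$ so that for $g=f_{\omega|_{\tau_k(\omega)}}\in\Phi^{*\tau_k(\omega)}$ and any $x,y\in I$ with $|x-y|<\epsilon$, $|g(x)-g(y)-g'(y)(x-y)|\le |g'(y)|\,|x-y|^{1+\beta}$. I would apply this with $y=x_{\sigma^{\tau_k(\omega)}(\omega)}$ (so that $g'(y)=f'_{\omega|_{\tau_k(\omega)}}(x_{\sigma^{\tau_k(\omega)}(\omega)})$, whose absolute value is $e^{-S_{\tau_k(\omega)}(\omega)}$ by Lemma \ref{Lemma relation between R.V and stopping time}) and $x$ ranging over $\supp(f_{\bar{\eta'}}\circ f_{\rho_{\omega,k}}\nu)\subseteq f_{\bar{\eta'}}(I)$; since $|\bar{\eta'}|=|\eta'|-P\ge h'-O_{\mathbf p}(1)\to\infty$, the set $f_{\bar{\eta'}}(I)$ has diameter $O(e^{-h'\chi})$ by bounded distortion, so for $k$ large it lies in an $\epsilon$-ball around $y$ and also, crucially, $|x-y|\lesssim e^{-h'\chi}$. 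Thus on the support of $f_{\bar{\eta'}}\circ f_{\rho_{\omega,k}}\nu$ the map $f_{\omega|_{\tau_k(\omega)}}$ agrees with the affine map $x\mapsto f_{\omega|_{\tau_k(\omega)}}(y)+f'_{\omega|_{\tau_k(\omega)}}(y)(x-y)$ up to an additive error of size at most $|g'(y)|\,|x-y|^{1+\beta}=O(e^{-k\chi}\cdot e^{-(1+\beta)h'\chi})$. The affine map has the form $x\mapsto (\text{const})+\operatorname{sign}(f'_{\omega|_{\tau_k(\omega)}}(y))\,e^{-S_{\tau_k(\omega)}(\omega)}(x-y)$, i.e. $M_{e^{-S_{\tau_k(\omega)}(\omega)}}\circ M_{\operatorname{sign}(\cdots)}$ composed with translations; translations do not change $|\mathcal F_q|$.

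Next I would convert the pointwise approximation of the map into an approximation of the Fourier transform. Since $|e^{2\pi i q a}-e^{2\pi i q b}|\le 2\pi|q|\,|a-b|$ and $|\mathcal F_q(\theta_1)-\mathcal F_q(\theta_2)|\le 2\pi|q|\sup|a-b|$ when $\theta_2$ is the push-forward of $\theta_1$ by a map that moves points by at most $\sup|a-b|$, replacing $f_{\omega|_{\tau_k(\omega)}}$ by its affine approximation on $\supp(f_{\bar{\eta'}}\circ f_{\rho_{\omega,k}}\nu)$ changes $\mathcal F_q$ by $O(|q|\cdot e^{-k\chi}\cdot e^{-(1+\beta)h'\chi})$; since $|q|=\Theta_C(o_k^{-1/4}e^{(k+h')\chi})$ and $o_k^{-1/4}$ is subexponential, this is $O(q\,e^{-(k+h')\chi-\beta h'\chi})$ after folding the $o_k^{-1/4}$ into the estimate (more precisely it is $O(o_k^{-1/4}e^{-\beta h'\chi})=o_k(1)$, but stating it as $O(q e^{-(k+h')\chi-\beta h'\chi})$ as in the claim suffices). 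Finally, using $|a|^2-|b|^2=(|a|-|b|)(|a|+|b|)$ and that all these Fourier transforms are bounded by $1$, the same error bound controls the difference of the squared moduli, and integrating over $\xi\in\tilde A_\eta$ against $\mathbb P$ and taking conditional expectations $\mathbb E_{\cA_k^{h,h'}(\xi)}$ preserves it. Plugging into \eqref{Eq first bound} yields the stated inequality. The main obstacle is the bookkeeping in the linearization step: one must verify that the base point of the affine approximation can uniformly be taken to be $x_{\sigma^{\tau_k(\omega)}(\omega)}$ (so that the dilation factor is exactly $e^{-S_{\tau_k(\omega)}(\omega)}$ and matches the stopping-time normalization used later), that the $O(e^{-h'\chi})$ diameter bound on $f_{\bar\eta'}(I)$ holds uniformly in $\omega$ and $\eta'$ via Theorem \ref{Bounded distortiion theorem}, and that absorbing the word $f_{\rho_{\omega,k}}$ of bounded length does not affect the diameter estimates — all of which follow from bounded distortion and uniform contraction, but need to be assembled carefully.
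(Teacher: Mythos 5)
Your proposal follows essentially the same route as the paper's proof: decompose $f_{\omega|_{\tilde\beta_k(\omega)}}=f_{\omega|_{\tau_k(\omega)}}\circ f_{\bar{\eta'}}\circ f_{\rho_{\omega,k}}$ via Lemma \ref{Lemma big P}, apply the $C^0$ linearization Lemma \ref{Lemma lin} to the outer map with base point $y=x_{\sigma^{\tau_k(\omega)}(\omega)}$ (which the paper parametrizes as $f_{\bar{\eta'}}\circ f_{\rho_{\omega,k}}(z)$), combine the $C^0$ error of size $O(e^{-k\chi}e^{-(1+\beta)h'\chi})$ with the $O(q)$-Lipschitz continuity of $\mathcal F_q$, and integrate over $\xi\in\tilde A_\eta$ and the conditional cells. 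The bounds and the role of Theorem \ref{Bounded distortiion theorem} in controlling the diameter of $f_{\bar{\eta'}}\circ f_{\rho_{\omega,k}}(I)$ match the paper's argument exactly.
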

\begin{proof}
Fix $\xi \in \tilde{A_\eta}$. Assuming ${\cA_k^{h,h'}(\xi)} = A_{k,\eta,\eta'}$, let $\eta'$ be this  $\eta'=\eta'(\xi)$. Assume $\omega\in A_{k,\eta,\eta'}$. Then we have seen that there is a word $\rho_{\omega,k}$ such that
\begin{equation}  \label{Eq unwinding of beta tilde}
f_{\omega|_{\tilde{\beta}_k (\omega)}} = f_{\omega|_{\tau_k (\omega)}}\circ f_{\bar{\eta'}} \circ f_{\rho_{\omega,k}}
\end{equation}
and $|\rho_{\omega,k}|\leq 2P$. It follows from the proof of Lemma \ref{Lemma big P} (specifically, equation \eqref{Eq der of rest of digits}) that there some constant $C'>1$ such that
\begin{equation} \label{Eq der of eta bar}
|\left( f_{\bar{\eta'}} \circ f_{\rho_{\omega,k}} \right) ' (x)| = \Theta_{C'} \left( e^{-h'\chi} \right), \quad \forall x\in I.
\end{equation}
Also, it is a consequence of \eqref{Eq unwinding of beta tilde} that there exists some $z\in I$ with $f_{\bar{\eta'}} \circ f_{\rho_{\omega,k}}(z)=  x_{\sigma^{\tau_k(\omega)}(\omega)}$.

Now,  plug into the $C^0$ linearization Lemma \ref{Lemma lin} the parameters $g = f_{\omega|_{\tau_k (\omega)}}$,  $y=f_{\bar{\eta'}} \circ f_{\rho_{\omega,k}} (z)$ and for $x\in  I$ we plug in $f_{\bar{\eta'}} \circ f_{\rho_{\omega,k}} (x)$. Then, by \eqref{Eq unwinding of beta tilde} and assuming $k$ (and therefore $h'$) are large enough,
$$\left| f_{\omega|_{\tilde{\beta}_k (\omega)}} (x) - f_{\omega|_{\tau_k(\omega)}} \left( f_{\bar{\eta'}} \circ f_{\rho_{\omega,k}}(z) \right) - f_{\omega|_{\tau_k(\omega)}} ' \left( f_{\bar{\eta'}} \circ f_{\rho_{\omega,k}} (z)\right) \left( f_{\bar{\eta'}} \circ f_{\rho_{\omega,k}}(x)- f_{\bar{\eta'}} \circ f_{\rho_{\omega,k}}(z) \right) \right| $$
$$ \leq |f_{\omega|_{\tau_k(\omega)}} ' (f_{\bar{\eta'}} \circ f_{\rho_{\omega,k}}(z))| \cdot  \left| f_{\bar{\eta'}} \circ f_{\rho_{\omega,k}}(x)- f_{\bar{\eta'}} \circ f_{\rho_{\omega,k}}(z) \right|^{1+\beta}. $$
And, by the definition of  $\tau_k$, since $f_{\bar{\eta'}} \circ f_{\rho_{\omega,k}}(z)=  x_{\sigma^{\tau_k(\omega)}(\omega)}$, and by  \eqref{Eq der of eta bar}
$$|f_{\omega|_{\tau_k(\omega)}} ' \left( f_{\bar{\eta'}} \circ f_{\rho_{\omega,k}}(z) \right) | \leq e^{-k \chi},\quad \left| f_{\bar{\eta'}} \circ f_{\rho_{\omega,k}}(x)- f_{\bar{\eta'}} \circ f_{\rho_{\omega,k}}(z) \right| \leq  O(e^{-h' \chi}).$$

Now, for every $\omega\in A_{k,\eta,\eta'}$  we define a smooth map $S_{\omega,k,\eta'}:I\rightarrow \mathbb{R}$ via
\begin{eqnarray} \label{Eq for linear}
S_{\omega,k,\eta'} (x)&=&  \left| f'_{\omega|_{\tau_k(\omega)}} (x_{\sigma^{\tau_k(\omega)}(\omega)}) \right| \cdot \sign\left( f'_{\omega|_{\tau_k(\omega)}} (x_{\sigma^{\tau_k(\omega)}(\omega)})\right)\cdot f_{\bar{\eta'}} \circ f_{\rho_{\omega,k}} (x)\\
&& -f'_{\omega|_{\tau_k(\omega)}} (x_{\sigma^{\tau_k(\omega)}(\omega)})\cdot x_{\sigma^{\tau_k(\omega)}(\omega)}+ f'_{\omega|_{\tau_k(\omega)}} (x_{\sigma^{\tau_k(\omega)}(\omega)}). \nonumber
\end{eqnarray}
This map is affine in  $\sign\left( f'_{\omega|_{\tau_k(\omega)}} (x_{\sigma^{\tau_k(\omega)}(\omega)})\right) \cdot f_{\bar{\eta'}} \circ f_{\rho_{\omega,k}} (x)$. Then we have just shown that
$$||f_{\omega|_{\tilde{\beta}_k (\omega)}} - S_{\omega,k,\eta'}||_{C^0 (I)} \leq O(e^{-(k+h') \chi -\beta h' \chi}).$$
So, since $\mathcal{F}_q(\cdot )$ is a $2  \pi q$-Lipschitz function, 
$$\left|\mathcal{F}_q \left( f_{\omega|_{\tilde{\beta}_k (\omega)}} \nu \right) -  \mathcal{F}_q \left( S_{\omega,k,\eta'} \nu \right)\right| \leq O(q\cdot e^{-(k+k') \chi -\beta k' \chi}).$$
Therefore
$$\left| |\mathcal{F}_q \left( f_{\omega|_{\tilde{\beta}_k (\omega)}} \nu \right)|^2 -  |\mathcal{F}_q \left( S_{\omega,k,\eta'} \nu \right)|^2  \right| \leq O(q\cdot e^{-(k+h') \chi -\beta h' \chi}).$$
and so for every event $A_{k,\eta,\eta'}$ we have
\begin{equation} \label{Eq for every event}
\left| \mathbb{E}_{A_{k,\eta,\eta'}} \left| \mathcal{F}_q \left( f_{\omega|_{\tilde{\beta}_k (\omega)}} \nu \right) \right|^2  - \mathbb{E}_{A_{k,\eta,\eta'}}  \left| \mathcal{F}_q \left( S_{\omega,k,\eta'}  \nu \right) \right|^2  \right| \leq O(q\cdot e^{-(k+h') \chi -\beta h' \chi}).
\end{equation}

Finally, recall equation \eqref{Eq first bound}, and recall the definition of the maps $S_{\omega,k,\eta'}$ from \eqref{Eq for linear}. Note that
$$\log |f_{\omega|_{\tau_k(\omega)}}'(x_{\sigma^{\tau_k(\omega)}(\omega)})|=- S_{\tau_k(\omega)} (\omega)$$
by Lemma \ref{Lemma relation between R.V and stopping time}. The Claim follows from \eqref{Eq first bound} and \eqref{Eq for every event}, since the translation of $S_{\omega,k,\eta'}$  does not effect the absolute value of $\mathcal{F}_q(\cdot )$, by integrating over all $\xi  \in \tilde{A_\eta}$ (using that the bounds we got are uniform in $\xi$). 
\end{proof}

\begin{Corollary} \label{Coro finite sum}
There is some $K_1=K_1(\epsilon)$ such that for all $k>K_1$,
\begin{equation} \label{equation before fun}
\left| \mathcal{F}_q (\nu) \right|^2 \leq \sum_{|\rho|\leq 2P} \int_{\xi \in \tilde{A_\eta}}  \mathbb{E}_{\cA_k^{h,h'}(\xi)}  \left| \mathcal{F}_q \left( M_{e^{-S_{\tau_k(\omega)} (\omega)}}\circ  f_{\bar{\eta'}} \circ f_\rho  \nu \right) \right|^2 \, d \mathbb{P}( \xi) + \epsilon
\end{equation}
where $P$ is the constant from Lemma \ref{Lemma big P}. Furthermore, there is some global constant $C'>1$ such that for all $\bar{\eta'}$ and $\rho$ as above
$$|\left( f_{\bar{\eta'}} \circ f_{\rho} \right) ' (x)| = \Theta_{C'} \left( e^{-h'\chi} \right), \quad \forall x\in I.$$
\end{Corollary}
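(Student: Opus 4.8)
The plan is to treat Corollary~\ref{Coro finite sum} as a bookkeeping step on top of Claim~\ref{Claim linear}, which has already done the hard linearization. Starting from the estimate of Claim~\ref{Claim linear}, the first move is to discard the reflection factor $M_{\sign(\cdot)}$: since $M_{e^{-t}}\circ M_{-1}=M_{-1}\circ M_{e^{-t}}$ and $\mathcal F_q(M_{-1}\theta)=\overline{\mathcal F_q(\theta)}$ for any $\theta\in\mathcal P(\mathbb R)$, one has $\big|\mathcal F_q(M_{e^{-t}}\circ M_{\epsilon}\theta)\big|=\big|\mathcal F_q(M_{e^{-t}}\theta)\big|$ for $\epsilon\in\{-1,1\}$, so the sign may be deleted inside $|\mathcal F_q(\cdot)|^2$ without changing anything. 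The second move handles the $\omega$-dependent word $\rho_{\omega,k}$: by Lemma~\ref{Lemma big P} we always have $|\rho_{\omega,k}|\leq 2P$, and all the quantities $\big|\mathcal F_q(M_{e^{-S_{\tau_k(\omega)}(\omega)}}\circ f_{\bar{\eta'}}\circ f_\rho\,\nu)\big|^2$ indexed by words $\rho$ with $|\rho|\leq 2P$ are nonnegative. Hence, for a fixed $\xi\in\tilde{A_\eta}$ (which fixes $\bar{\eta'}$), pointwise in $\omega$ the single term with $\rho=\rho_{\omega,k}$ is dominated by $\sum_{|\rho|\leq 2P}(\cdots)$, and this inequality is preserved upon applying $\mathbb E_{\cA_k^{h,h'}(\xi)}$ and integrating over $\xi\in\tilde{A_\eta}$, producing the finite sum in \eqref{equation before fun}.

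Next I would absorb the error term $O(q\cdot e^{-(k+h')\chi-\beta h'\chi})+o_k(1)$ from Claim~\ref{Claim linear} into $\epsilon$. By Lemma~\ref{Lemma choice} we have $q=\Theta_{C}\big(o_k^{-1/4}e^{(k+h')\chi}\big)$ with $h'=\sqrt k$, so the first error term equals $\Theta_{C}\big(o_k^{-1/4}e^{-\beta\chi\sqrt k}\big)$; and since (by the first reduction in the proof of Lemma~\ref{Lemma choice}) we may assume $o_k\geq 1/k$, this is $O\big(k^{1/4}e^{-\beta\chi\sqrt k}\big)$, which tends to $0$ as $k\to\infty$, as does $o_k(1)$. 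Recalling that $k\to\infty$ is equivalent to $|q|\to\infty$, we may choose $K_1=K_1(\epsilon)$ so that the combined error is $\leq\epsilon$ for all $k>K_1$; this gives \eqref{equation before fun}.

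For the ``furthermore'' clause I would argue as follows. Fix $k>K_1$, enlarging $K_1$ if necessary so that $h'=\sqrt k$ is large enough that $|\eta'|=\ttau_{h'}>P$ for every cell $A_{k,\eta,\eta'}$ of $\cA_k^{h,h'}$, and write $\eta'=\bar{\eta'}\,\tilde\rho$ with $|\tilde\rho|=P$. Since $A_{\eta'}\in\cA^{h'}$, the definition of $\cA^{h'}$ together with bounded distortion (Theorem~\ref{Bounded distortiion theorem}) gives $|f'_{\eta'}(y)|=\Theta_L(e^{-h'\chi})$ uniformly in $y\in I$ (this is \eqref{Eq der of eta prime}); the chain rule $f_{\eta'}=f_{\bar{\eta'}}\circ f_{\tilde\rho}$ and \eqref{Eq C and C prime} (which force $|f'_{\tilde\rho}|,|f'_\rho|=\Theta(1)$ because $|\tilde\rho|,|\rho|$ are bounded by $P$ and $2P$) then yield $|f'_{\bar{\eta'}}(y)|=\Theta(e^{-h'\chi})$ uniformly, with implied constant depending only on $\Phi$. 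A final application of the chain rule gives $\big|(f_{\bar{\eta'}}\circ f_\rho)'(x)\big|=|f'_{\bar{\eta'}}(f_\rho(x))|\cdot|f'_\rho(x)|=\Theta_{C'}(e^{-h'\chi})$ for all $x\in I$, for a global constant $C'>1$. The whole argument is routine; the only point that requires a little care is verifying that the error term genuinely vanishes, which hinges on the normalizations $q=\Theta_C(o_k^{-1/4}e^{(k+h')\chi})$ and $o_k\geq 1/k$ supplied by Lemma~\ref{Lemma choice}.
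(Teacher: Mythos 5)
Your argument is correct and follows the same structure as the paper's own proof: pointwise domination of the random word $\rho_{\omega,k}$ by the finite sum over all $|\rho|\le 2P$, absorption of the error term from Claim~\ref{Claim linear} into $\epsilon$ via Lemma~\ref{Lemma choice} (using $q=\Theta_C(o_k^{-1/4}e^{(k+h')\chi})$, $h'=\sqrt k$, and $o_k\ge 1/k$), and the chain-rule/bounded-distortion derivation of $|(f_{\bar{\eta'}}\circ f_\rho)'|=\Theta_{C'}(e^{-h'\chi})$. Your treatment of the sign factor $M_{\sign(\cdot)}$ via $\mathcal{F}_q(M_{-1}\theta)=\overline{\mathcal{F}_q(\theta)}$ is in fact cleaner than the paper's Remark~\ref{Remark}, which assumes the sign positive for convenience and otherwise enlarges the sum by an extra discrete variable.
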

\begin{Remark} \label{Remark}
 For notational convenience, in this Corollary and the subsequent argument, we make the  assumption that we always have $ f'_{\omega|_{\tau_k(x)}} (x_{\sigma^{\tau_k(\omega)}(\omega)})>0$. Otherwise, we simply make the sum on the right hand side of \eqref{equation before fun} larger, by including the possibility that it is negative. Since there are uniformly finitely many such options, still the sum above is over uniformly finitely many terms, and the proof follows through.
\end{Remark}
\begin{proof} 
For every $k$ large enough, for every $\xi \in \tilde{A_\eta}$ and every $\omega \in {\cA_k^{h,h'}(\xi)}$, as $|\rho_{\omega,k}|\leq 2P$ 
$$\left| \mathcal{F}_q \left( M_{e^{-S_{\tau_k(\omega)} (\omega)}}\circ f_{\bar{\eta'}} \circ f_{\rho_{\omega,k}}  \nu \right) \right|^2 \leq \sum_{|\rho|\leq 2P} \left| \mathcal{F}_q \left( M_{e^{-S_{\tau_k(\omega)} (\omega)}}\circ f_{\bar{\eta'}} \circ f_\rho  \nu \right) \right|^2$$
and so, by Claim \ref{Claim linear}, assuming  $f'_{\omega|_{\tau_k(x)}} (x_{\sigma^{\tau_k(\omega)}(\omega)})>0$ is always true,
$$|\mathcal{F}_q (\nu)|^2 \leq \int_{\xi \in \tilde{A_\eta}}  \mathbb{E}_{\cA_k^{h,h'}(\xi)}  \left| \mathcal{F}_q \left( M_{e^{-S_{\tau_k(\omega)} (\omega)}}\circ f_{\bar{\eta'}} \circ f_{\rho_{\omega,k}}  \nu \right) \right|^2 \, d \mathbb{P}( \xi) + O(q\cdot e^{-(k+h') \chi-\beta\cdot h' \chi})+o_k(1)$$
$$\leq \sum_{|\rho|\leq 2P} \int_{\xi \in \tilde{A_\eta}}  \mathbb{E}_{\cA_k^{h,h'}(\xi)}  \left| \mathcal{F}_q \left( M_{e^{-S_{\tau_k(\omega)} (\omega)}}\circ f_{\bar{\eta'}} \circ f_\rho  \nu \right) \right|^2 \, d \mathbb{P}( \xi) + O(q\cdot e^{-(k+h') \chi-\beta\cdot h' \chi})+o_k(1).$$
Recalling the choice of $k=k(q)$ (which is as in Lemma \ref{Lemma choice}),
$$O(q\cdot e^{-(k+h') \chi -\beta h' \chi})= O( o_k^{-\frac{1}{4}} \cdot e^{-\beta h' \chi}).$$
So, since $o_k$ decays in at most a polynomial rate (by e.g. Lemma \ref{Lemma choice}), there is some $K_1=K_1(\epsilon)$ as we claimed. The last assertion is a consequence of equation \eqref{Eq der of eta prime}, Theorem \ref{Bounded distortiion theorem} (bounded distortion), that $|\rho|\leq 2P$, and of equation \eqref{Eq C and C prime}.
\end{proof}

Now, fix some $\rho$ with $|\rho|\leq 2P$ and consider the corresponding term in \eqref{equation before fun}
$$ \int_{\xi \in \tilde{A_\eta}}  \mathbb{E}_{\cA_k^{h,h'}(\xi)}  \left| \mathcal{F}_q \left( M_{e^{-S_{\tau_k(\omega)} (\omega)}}\circ  f_{\bar{\eta'}} \circ f_\rho  \nu \right) \right|^2 \, d \mathbb{P}( \xi).$$
We next appeal to Theorem \ref{Theorem equid 2} for every event ${\cA_k^{h,h'}(\xi)}$   separately. To do this, we notice that by Corollary  \ref{Coro finite sum}, for every $f_{\bar{\eta'}} \circ f_\rho$ involved 
$$\diam \left( \supp \left( f_{\bar{\eta'}} \circ f_\rho    \nu \right) \right) =O(e^{-h' \chi}).$$
Notice that the the error term in Theorem \ref{Theorem equid 2} is $O(o_k^{\frac{1}{4}})$ independently of the event ${\cA_k^{h,h'}(\xi)}$. So, 
$$ \int_{\xi \in \tilde{A_\eta}}  \mathbb{E}_{\cA_k^{h,h'}(\xi)} \left| \mathcal{F}_q \left( M_{e^{-S_{\tau_k(\omega)} (\omega)}}\circ f_{\bar{\eta'}} \circ f_\rho   \nu \right) \right|^2 \, d \mathbb{P}( \xi) $$
$$\leq \int_{\xi \in \tilde{A_\eta}}   \int_{k\chi} ^{k\chi +D'} \left| \mathcal{F}_q \left(M_{e^{-x}} \circ f_{\bar{\eta'}} \circ f_\rho    \nu \right) \right|^2 \, d \Gamma_{\cA_k^{h,h'}(\xi)} (x) \,  d \mathbb{P}( \xi) +O(o_k^{\frac{1}{4}}).$$
Let $K_2=K_2(\epsilon)$ be large enough to ensure that if $k\geq K_2$ then $O(o_k^{\frac{1}{4}}) \leq \frac{\epsilon}{|\lbrace \rho: |\rho|\leq 2P\rbrace| }$.  Then, since this is true for every $\rho$ with $|\rho|\leq 2P$, we see that for $k\geq \max \lbrace K_2,K_1 \rbrace$, putting this into \eqref{equation before fun} we get
$$\left| \mathcal{F}_q (\nu) \right|^2 \leq \sum_{|\rho|\leq 2P} \int_{\xi \in \tilde{A_\eta}}   \int_{k\chi} ^{k\chi +D'} \left| \mathcal{F}_q \left(M_{e^{-x}}\circ f_{\bar{\eta'}} \circ f_\rho   \nu \right) \right|^2 d \Gamma_{\cA_k^{h,h'}(\xi)} (x)   d \mathbb{P}( \xi) + 2\epsilon. $$

Recall that by Lemma \ref{Lemma Abs. contin of gamma}, the probability measure $\Gamma_{\cA_k^{h,h'}(\xi)}$ is absolutely continuous with respect to the Lebesgue measure on $[k \chi, k \chi +D']$, such that  the norm of its density function is  uniformly bounded by $\frac{1}{D}>0$ independently of all parameters. Using this fact,  as long as $k \geq \max \lbrace K_1,K_2\rbrace$,
$$\left| \mathcal{F}_q (\nu) \right|^2 \leq \sum_{|\rho|\leq 2P} \int_{\xi \in \tilde{A_\eta}} \left(   \int_{k\chi} ^{k\chi +D'} \left| \mathcal{F}_q \left(M_{e^{-z}} \circ f_{\bar{\eta'}} \circ f_\rho \nu \right) \right|^2 \cdot \frac{1}{D} dz   \right) d \mathbb{P}( \xi) + 2\epsilon. $$

We now invoke  Lemma \ref{Lemma 3.2 }. Taking the measure(s) to be $f_{\bar{\eta'}} \circ f_\rho \nu $,  for any $r>0$, we get the inequality (as long as $k$ is large enough)
$$\left| \mathcal{F}_q (\nu) \right|^2 \leq \sum_{|\rho|\leq 2P} \int_{\xi \in \tilde{A_\eta}} \frac{D'}{D}  \cdot \left(  \frac{e^2}{r\cdot |q|}+ \int f_{\bar{\eta'}} \circ f_\rho  \nu \left( B_{e^{\chi k}\cdot r} (y) \right) d \left( f_{\bar{\eta'}} \circ f_\rho \nu (y) \right)  \right) d\mathbb{P}(\xi) +2 \epsilon.$$
By Corollary \ref{Coro finite sum}, all the maps $\left( f_{\bar{\eta'}} \circ f_\rho \right)  ^{-1}$ as above are $O(e^{h'\chi})$ Lipschitz (with the implied constant in the $O(\cdot )$ being uniform).  Therefore,  there is some $T>1$ such that for every $\eta'$ and $y\in I$, 
$$\left( f_{\bar{\eta'}} \circ f_\rho \right)  ^{-1} \left( B_{e^{\chi k}\cdot r} ( f_{\bar{\eta'}} \circ f_\rho (y)  \right) \subseteq B_{T\cdot e^{\chi (k+h')}\cdot   r} (y)$$
so, for a fixed $r>0$ we can relax the dependence on both $\rho$ and $\xi$, and get
\begin{equation} \label{Eq a}
|\mathcal{F}_q (\nu)|^2 \leq |\lbrace \rho: |\rho|\leq 2P\rbrace| \cdot \left( \frac{e^2}{r\cdot |q|}+  \int \nu(B_{T\cdot e^{\chi (k+h')}\cdot   r} (y)) d  \nu (y) \right) \cdot    \frac{D'}{D}  + 2\epsilon.
\end{equation}

By Lemma \ref{Lemma nu is continuous} there exists some $\delta=\delta(\epsilon)>0$ such that
\begin{equation} \label{Eq b}
\nu(B_\delta(y))< \frac{\epsilon \cdot D}{2 \cdot D'\cdot |\lbrace \rho: |\rho|\leq 2P\rbrace| },\quad \forall y\in \mathbb{R}.
\end{equation}
Now,  we choose $r$ so that $T\cdot e^{\chi (k+k')}\cdot r =\delta$. This implies that $\nu(B_{T\cdot e^{\chi (k+h')}\cdot   r} (y))\leq \frac{\epsilon \cdot D}{2 \cdot D'\cdot |\lbrace \rho: |\rho|\leq 2P\rbrace| }$ for every $y$. Therefore, $\frac{1}{r} = \frac{T\cdot e^{\chi (k+h')}}{\delta}$. So,  as $|q|= \Theta_C \left( o_k ^{-\frac{1}{4}} \cdot e^{(k+h')\chi} \right)$,
$$\frac{e^2}{r\cdot |q|} = e^2 \cdot T \cdot  \frac{e^{\chi (k+h')}}{\delta \cdot |q|} \leq C\cdot  e^2 \cdot T \cdot o_k ^{\frac{1}{4}} \cdot  \frac{e^{\chi (k+h')} }{\delta\cdot e^{\chi (k+h')}} = o_k ^{\frac{1}{4}} \cdot \frac{e^2 \cdot  T\cdot  C}{\delta}.$$
So, as long as $k\geq K_3=K_3 (\epsilon)$,
\begin{equation} \label{Eq c}
\frac{e^2}{r\cdot |q|} \leq  \frac{\epsilon \cdot D}{2 \cdot D'\cdot |\lbrace \rho: |\rho|\leq 2P\rbrace| }.
\end{equation}
Finally, if $k\geq \max \lbrace K_1,K_2,K_3\rbrace$, plugging \eqref{Eq b} and \eqref{Eq c} into \eqref{Eq a},
$$|\mathcal{F}_q (\nu)|^2 \leq 3 \epsilon$$
which implies the Theorem.

\section{Proof of Theorem \ref{Theorem main tech} part (2)} \label{Section proof 2}
\subsection{Some reductions} \label{Section reductions}
We continue to assume the condition of Theorem \ref{Theorem main tech}, and use the notation introduced in Sections \ref{Section sketch}, \ref{Section pre}, \ref{Section LLT}.  Fix an integer $p\geq 2$. We aim to prove the following Theorem:
\begin{theorem} \label{Main theorem}
Let $\nu$ be a measure as in Theorem \ref{Theorem main tech}. Then for $\nu$ almost every $x$,
$$ \lim_{N} \frac1N\sum_{n=1}^N \delta_{T_p^n (x)} = \lambda_{[0,1]}$$
where $\lambda_{[0,1]}$ is the Lebesgue  measure on $[0,1]$.
\end{theorem}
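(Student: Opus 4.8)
The plan is to reduce the equidistribution of $\{T_p^n(x)\}$ to a Rajchman-type statement for a countable family of measures obtained by magnifying small pieces of $\nu$ about $x$, in the spirit of Hochman--Shmerkin \cite[Theorem 2.1]{hochmanshmerkin2015}. First I would recall Weyl's criterion: for a fixed $x$, the sequence $\{T_p^n(x)\}_{n\in\mathbb N}$ equidistributes for $\lambda_{[0,1]}$ if and only if for every nonzero integer $\ell$ one has $\frac1N\sum_{n=1}^N e^{2\pi i \ell p^n x}\to 0$. Using the identity $T_p^n = T_{p^n}$ (which is where the restriction to integer $p$ enters, cf.\ Claim \ref{First linearization}), the key quantity to control is $e^{2\pi i \ell p^n x} = \mathcal F_{\ell p^n}(\delta_x)$, and the standard martingale/$L^2(\nu)$ argument says it suffices to prove: for $\nu$-a.e.\ $x$ and every fixed $\ell$, the averages $\frac1N\sum_{n=1}^N |\mathcal F_{\ell p^n}(\theta_{x,n})|$ tend to $0$, where $\theta_{x,n}$ is the renormalized restriction of $\nu$ to a cylinder/ball of size $\asymp p^{-n}$ around $x$, pushed to unit scale by $T_{p^n}$. (More precisely one passes through $\int |\frac1N\sum_n e^{2\pi i\ell p^n x}|^2\,d\nu(x)$ and expands, using non-atomicity (Lemma \ref{Lemma nu is continuous}) to kill the diagonal.)

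Next I would perform a linearization step analogous to Claim \ref{Claim linear}: a piece of the self-conformal measure $\nu$ of diameter $\asymp p^{-n}$ around $x$ is, up to a controlled $C^0$ error coming from Lemma \ref{Lemma lin}, an affine image $M_{s}\circ(\text{affine})\,\nu'$ of a measure $\nu'$ of the form $f_{\bar\eta'}\circ f_\rho\,\nu$ with $|\rho|$ bounded, where the scaling $s = e^{-S_{\tau_k}(\omega)}$ is governed by the derivative cocycle. Thus, just as in the proof of part (1), the quantity $|\mathcal F_{\ell p^n}(\theta_{x,n})|$ is (up to negligible errors) $|\mathcal F_{\ell p^n}(M_{e^{-S_{\tau_k}(\omega)}}\circ f_{\bar\eta'}\circ f_\rho\,\nu)|$ for appropriate $k=k(n)$. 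Here the extra layer of difficulty over part (1) is that we must control this \emph{along the arithmetic progression of scales} $p^n$, and simultaneously for a measure that is already a magnification of $\nu$ about $x$ to scale $p^{-h}$ for $h$ growing with $n$. This is exactly the scenario for which Theorem \ref{Theorem lin equid} was designed: it provides the conditional local limit theorem for $S_{\tau_k}$ with an error $o_{\bfp}^{\min(h,k-h)\to\infty}(1)$, so letting both $h$ and $k-h$ grow is what makes the localized scenery equidistribute.

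The endgame then mirrors the end of the proof of part (1): apply Theorem \ref{Theorem lin equid} (through a function-valued version like Theorem \ref{Theorem equid 2}) to replace the conditional average of $g_{\ell p^n,\rho}(S_{\tau_k})$ over a good cell by an integral against $\Gamma_{\cA_k^{h,h'}(\xi)}$, use the uniform absolute continuity of $\Gamma$ with density bounded by $1/D$ (Lemma \ref{Lemma Abs. contin of gamma}) to pass to an honest integral $\int_{k\chi}^{k\chi+D'}|\mathcal F_{\ell p^n}(M_{e^{-z}}\,\nu')|^2\,dz$, invoke Hochman's oscillatory-integral bound (Lemma \ref{Lemma 3.2 }) to bound this by $\frac{e^2}{r|\ell p^n|}+\int \nu'(B_{e^{\chi k}r}(y))\,d\nu'(y)$, and finally use non-atomicity of $\nu$ to make the ball term small for an appropriate choice of $r$ while the first term is small because $p^n$ is exponentially large in $k$. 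Averaging over $n\le N$ and using a Borel--Cantelli / maximal-inequality argument over the countably many $\ell$ and the $\nu$-typical $x$ finishes the proof. I expect the main obstacle to be bookkeeping the two-parameter limit in Theorem \ref{Theorem lin equid} correctly: one must choose the relation between $n$ (equivalently $k$), the magnification depth $h$, and the auxiliary parameter $h'$ so that $\min(h,k-h)\to\infty$ uniformly as $n\to\infty$, while keeping the linearization error $O(\ell p^n e^{-(k+h')\chi-\beta h'\chi})$ and the Hochman error both summable/averageable — and to do this uniformly enough in $x$ that the resulting exceptional sets are $\nu$-null.
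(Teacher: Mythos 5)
Your outline matches the paper's proof in its essential architecture: reduce to a Rajchman-type bound for rescaled local pieces of $\nu$ via the martingale/scenery theorem, linearize twice (Claim \ref{First linearization}, then Claim \ref{Claim linear2}, with Claim \ref{Claim stopping} to bridge $\ttau_h$ and $\tau_k$), feed this into Theorem \ref{Theorem lin equid}, pass through $\Gamma$'s absolute continuity (Lemma \ref{Lemma Abs. contin of gamma}) and Hochman's oscillatory-integral bound (Lemma \ref{Lemma 3.2 }), and close by non-atomicity and the careful choice of $h,h',k$ against $q,\epsilon$. You also correctly flag the hard part: the two-parameter limit in Theorem \ref{Theorem lin equid} forces $\min(h,k-h)\to\infty$ uniformly, and the bookkeeping of the error terms is where the work lies.

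One parenthetical should be dropped, however, as it describes a method that would not succeed here: expanding $\int \big|\frac1N\sum_n e^{2\pi i\ell p^n x}\big|^2\,d\nu(x)$ produces off-diagonal terms $\mathcal{F}_{\ell(p^n-p^m)}(\nu)$, and controlling them requires exactly the kind of quantitative decay rate for $\mathcal{F}_q(\nu)$ that, by design, this theorem avoids (that is the Davenport--Erd\H{o}s--LeVeque route, which the whole point of part (2) is to circumvent). Non-atomicity does not "kill the diagonal" in a useful way — the diagonal is already $O(1/N)$ trivially — and does not help the off-diagonal terms. The correct mechanism is precisely Hochman's martingale theorem (the paper's Theorem \ref{Theorem 2.2}, applied in Theorem \ref{Theorem martingale}), which replaces $\delta_{T_p^n(x_\omega)}$ by $T_p^n f_{\omega|_{\beta_{n,h}(\omega)}}\nu$ and requires no rate; the subsequent Borel--Cantelli step you mention is then also unnecessary, since the martingale theorem already gives a $\mathbb{P}$-a.e.\ statement. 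Finally, a small but important routing detail the paper uses and you elide: one first proves the bound only for $|q|\geq q^*(\epsilon)$ (Theorem \ref{Lemma sufficient}), and then upgrades to all integer $q\neq 0$ by observing that any weak-* limit of the empirical measures projects to a $\bar T_p$-invariant measure, so that $|\mathcal{F}_q(\nu_\infty)| = |\mathcal{F}_{qp^n}(\pi\nu_\infty)|$ for all $n$; picking $n$ large pushes the frequency above $q^*$. Your Weyl-criterion framing ultimately encodes the same idea (for fixed $\ell$ and large $n$, the frequency $\ell p^n$ is large), but it is cleaner to make the bootstrap explicit.
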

Since $p$ is arbitrary,  Theorem \ref{Main theorem} implies Theorem \ref{Theorem main tech} part (2). We will need the following definitions: let $\pi: \mathbb{R} \rightarrow \mathbb{T}=\mathbb{R}/ \mathbb{Z}\simeq [0,1)$ be the projection
$$\pi(x)= x \mod 1.$$
Let $\bar{T}_p :\mathbb{T} \rightarrow \mathbb{T}$ be the continuous map
\begin{equation*} 
\bar{T}_p (y) = p\cdot y \mod 1.
\end{equation*}
Notice that for any $x\in \mathbb{R}$ and any $n\in \mathbb{N}$ we have
\begin{equation} \label{Eq relation times p}
\bar{T}_p ^n \circ \pi (x) = \pi \circ T_p ^n (x) =  p^n \cdot x   \mod 1.
\end{equation}

 Our first step is to reduce to the following statement, where we make use of the fact that $\nu$ is the push-forward of $\mathbb{P}$ under $\omega \mapsto x_\omega$.

\begin{theorem} \label{Lemma sufficient} For every $\epsilon>0$ there exists $q^*=q^*(\epsilon) \in \mathbb{N}$ such that for all integers $q$ with $|q|\geq q^*$ and for $\mathbb{P}$ almost every $\omega$, 
$$\limsup_{N} \left| \mathcal{F}_q \left( \frac1N\sum_{n=1}^N\delta_{T_p^n (x_\omega)} \right) \right| <\epsilon.$$
 \end{theorem}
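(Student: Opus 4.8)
\textbf{Proof proposal for Theorem \ref{Lemma sufficient}.}

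The plan is to establish the conclusion via a martingale / convexity argument in the spirit of \cite[Theorem 2.1]{hochmanshmerkin2015}, reducing the study of the Ces\`aro averages $\frac1N\sum_{n=1}^N\delta_{T_p^n(x_\omega)}$ to the Fourier transforms of a countable family of ``magnified pieces'' of $\nu$, and then applying the Rajchman-type estimates produced by the linearization machinery of Section \ref{Section proof 2} together with the second local limit Theorem \ref{Theorem lin equid}. First I would fix $\epsilon>0$ and $q$ with $|q|\geq q^*$, and note that using \eqref{Eq relation times p} we may write $\mathcal F_q(\frac1N\sum_{n=1}^N\delta_{T_p^n(x_\omega)}) = \frac1N\sum_{n=1}^N e^{2\pi i q p^n x_\omega}$. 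The key identity $T_p^n = T_{p^n}$ lets us interpret $e^{2\pi i q p^n x_\omega}$ as $\mathcal F_{q p^n}(\delta_{x_\omega})$, and the strategy is to replace the point mass $\delta_{x_\omega}$ by a small conformal ``cell'' of $\nu$ around $x_\omega$ at a scale comparable to $p^{-n}$: this introduces an error controlled by the diameter of the cell times $|q|p^n$, which is $O(1)$ by design, and more importantly it will allow us to invoke the Fourier-decay estimates that hold for such magnified cells.

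Concretely, for each $n$ I would choose the stopping time that cuts $\omega$ at the first moment the contraction has brought the derivative below a threshold matched to $p^{-n}$ (as in \eqref{Eq for tilde beta} with $k+h'$ replaced by the appropriate function of $n$), and write $\nu$ via the corresponding disintegration $\nu=\mathbb E(f_{\omega|_{\beta(n,\omega)}}\nu)$. After the $C^0$ linearization Lemma \ref{Lemma lin} (exactly as in Claim \ref{Claim linear}), $\mathcal F_{qp^n}(f_{\omega|_{\beta(n,\omega)}}\nu)$ is approximated by $\mathcal F_{qp^n}$ of an affine image of a cell, i.e. by $\mathcal F_{qp^n}(M_{e^{-S}}\circ f_{\bar\eta'}\circ f_\rho\,\nu)$ for a stopping-time value $S\approx S_{\tau_k}$. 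The point is that $S$ now ranges over a window of length $O(1)$ as $\omega$ varies within a cell, so conditioning on a ``good'' cell of the partition $\cA_k^{h,h'}$ and applying Theorem \ref{Theorem lin equid}, the distribution of $S_{\tau_k}$ within the cell is close to the absolutely continuous measure $\Gamma_{\cA_k^{h,h'}(\xi)}$ with uniformly bounded density (Lemma \ref{Lemma Abs. contin of gamma}). This converts the relevant conditional expectation of $|\mathcal F_{qp^n}(M_{e^{-S}}\cdots\nu)|^2$ into an integral $\int |\mathcal F_{qp^n}(M_{e^{-x}}\cdots\nu)|^2\,\frac1D\,dx$ over a window of length $D'$, to which Hochman's Lemma \ref{Lemma 3.2 } applies, giving a bound of the form $\frac{e^2}{r|qp^n|}+\int\nu(B_{re^{\chi k}}(y))\,d\nu(y)$; choosing $r$ appropriately and using non-atomicity (Lemma \ref{Lemma nu is continuous}) makes this $<\epsilon$ provided $n$ (hence $qp^n$, hence $q$) is large, \emph{uniformly in the cell}. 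The crucial feature — which is why Theorem \ref{Theorem lin equid} rather than Theorem \ref{Theorem equid} is needed — is that here the ``prefix length'' parameter $h$ grows with $n$ and is not bounded, so the error $o^{\min(h,k-h)\to\infty}_{\bfp}(1)$ of Theorem \ref{Theorem lin equid} is exactly what makes the scheme work.

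To pass from an $L^2$-in-$\omega$ bound on $|\mathcal F_{qp^n}(f_{\omega|_{\beta(n,\omega)}}\nu)|$ (for each fixed $n$ large) to the pointwise $\limsup$ statement, I would apply the Davenport--Erd\H{o}s--LeVeque-type / Borel--Cantelli argument: the estimates above show $\mathbb E\big(|\mathcal F_q(\frac1N\sum_{n=1}^N\delta_{T_p^n(x_\omega)})|^2\big)$ is small for $N$ large (after summing the $n$-by-$n$ contributions and handling cross terms by the same cell-conditioning, since distinct frequencies $qp^n$, $qp^m$ interact weakly), and then a standard maximal-function / subsequence argument upgrades the $L^2(\mathbb P)$ control of the averages along a suitable subsequence of $N$'s to an almost-sure bound on the full $\limsup_N$, at the cost of enlarging $q^*$. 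The main obstacle I anticipate is precisely the bookkeeping that ties together the three coupled parameters — the base power $n$, the stopping-time scale $k=k(n)$, and the partition depth $h$ — so that the single error term in Theorem \ref{Theorem lin equid} is genuinely $o(1)$ simultaneously in all of them while the linearization error $|qp^n|\cdot e^{-\beta h'\chi}$ stays negligible; getting these rates to be compatible, and uniform over the (infinitely many) cells and over $n$, is the delicate part, and is the analogue of the parameter juggling already carried out in the proof of Theorem \ref{Theorem main tech} part (1) in Section \ref{Section proof 1}, but now with the extra layer coming from the $\beta(n,\cdot)$ stopping times depending on the running index $n$.
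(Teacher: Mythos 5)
Your first two paragraphs capture the right skeleton of the argument — reduce to the Fourier transforms of magnified cells of $\nu$, linearize, apply the second local limit Theorem \ref{Theorem lin equid} on good partition cells, and finish with the oscillatory-integral Lemma \ref{Lemma 3.2 } — modulo a small inaccuracy: in the paper's proof the prefix depth $h$ is chosen to grow with $|q|$ (not with the running index $n$), and the resulting cell estimate is \emph{uniform} in $n$ and $\omega$; the frequency $qp^n$ growing with $n$ is exactly cancelled by the cell shrinking like $p^{-n}$, so $n$ being ``large'' plays no role in making the bound small.

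The genuine gap is in your last paragraph. You propose to bound $\mathbb E_{\mathbb P}\bigl|\mathcal F_q\bigl(\frac1N\sum_{n=1}^N\delta_{T_p^n(x_\omega)}\bigr)\bigr|^2$ — whose expansion produces the cross terms $\mathcal F_{q(p^n-p^m)}(\nu)$ — and then pass from $L^2(\mathbb P)$ control along a sparse subsequence of $N$'s to an a.s.\ bound on $\limsup_N$. This is precisely the Davenport--Erd\H os--LeVeque scheme, and it needs a quantitative \emph{rate} of decay of $\mathcal F_q(\nu)$: the Borel--Cantelli step requires the tail probabilities $\mathbb P(|W_{N_k}|>\epsilon)$ to be summable, and the qualitative Rajchman property alone gives no such rate. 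The whole point of this part of the paper (stressed explicitly in Section 1) is that in general no decay rate is available, so pointwise normality must be proved without DEL. The paper replaces DEL by the Hochman--Shmerkin martingale step, Theorem \ref{Theorem 2.2}, instantiated here as Theorem \ref{Theorem martingale}: for a.e.\ $\omega$, $\mathcal F_q\bigl(\frac1N\sum_n\delta_{T_p^n(x_\omega)}\bigr)-\mathcal F_q\bigl(\frac1N\sum_n T_p^n\circ f_{\omega|_{\beta_{n,h}(\omega)}}\nu\bigr)\to 0$ — a soft almost-sure statement from the reverse-martingale law of large numbers, with no rate and no cross terms. The triangle inequality then reduces everything to a \emph{uniform-in-$n$-and-$\omega$} pointwise bound on $|\mathcal F_q(T_p^n\circ f_{\omega|_{\beta_{n,h}(\omega)}}\nu)|$, supplied by Claims \ref{First linearization}, \ref{Claim stopping}, Jensen, and Theorem \ref{Claim sufficient}. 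You name a martingale argument at the start, but your closing paragraph abandons it for the DEL route; as written, that route breaks at the Borel--Cantelli step.
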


\noindent{\bf Proof that Theorem \ref{Lemma sufficient} implies Theorem \ref{Main theorem}}  Let $\omega$ be a $\mathbb{P}$ typical point, and let $\nu_\infty$ be a weak-* limit of the sequence 
$$\frac1N\sum_{n=1}^N\delta_{T_p^n (x_\omega)}.$$
We will show that $\nu_\infty$ is the Lebesgue measure on $[0,1]$. It suffices to show that $\mathcal{F}_q (\nu_\infty)=0$ for every integer $q\neq 0$. Consider the measure $\pi \nu_\infty$ on $\mathbb{T}$: It is a consequence of \eqref{Eq relation times p} that $\pi \nu_\infty$ arises from the $\bar{T}_p$ orbit of $\pi(x_\omega)$, and so $\pi \nu_\infty$ is $\bar{T}_p$ invariant. Now, let $\epsilon>0$. Assuming Theorem \ref{Lemma sufficient} holds true, let $n\in \mathbb{N}$ be large enough so that $|q\cdot p^n|\geq q^* (\epsilon)$. Then
\begin{eqnarray*}
|\mathcal F_q(\nu_\infty)| &=& |\mathcal F_q(\pi \nu_\infty)| \\
&=& |\mathcal F_q(\bar{T}_p  ^n \pi \nu_\infty)| \\
&=&  |\mathcal F_{qp^n}( \pi \nu_\infty)| \\
&\leq&  \limsup_{N} \left| \mathcal{F}_{qp^n} \left( \pi \left( \frac1N\sum_{k=1}^N\delta_{T_p^k (x_\omega)} \right) \right) \right| \\
& =& \limsup_{N} \left| \mathcal{F}_{qp^n} \left(  \frac1N\sum_{k=1}^N\delta_{T_p^k (x_\omega)} \right) \right|\\
&<& \epsilon
\end{eqnarray*}
where we have made use several times of the fact that $q$ is an integer, that $\pi \nu_\infty$ is $\bar{T}_p$ invariant, and in the last line of our choice of $q$. Since $\epsilon$ was arbitrary we obtain $\mathcal{F}_q (\nu_\infty)=0$, and we are done. \hfill{$\Box$}
\newline

Theorem \ref{Lemma sufficient}, in turn, reduces to the following statement. Recall the definition of the stopping time $\ttau$ and the random variable $\tau$ as in the beginning of Section \ref{Section statement}.

\begin{theorem} \label{Claim sufficient} For all $\epsilon>0$ and $\gamma'\in (0,\gamma)$ there is $q^*(\epsilon,\bfp) \in \mathbb{N}$ such that for every  $q\in \mathbb{Z}$ with $|q|\geq q^*(\epsilon,\bfp)$:

There are values $h(q,\epsilon,\bfp)$, $k(q,\epsilon,\bfp), h'(q,\epsilon,\bfp)>0$ such that, for all  $n\in\mathbb N$ and  every $\omega \in \mathcal{A}^{\mathbb{N}}$,
$$ \mathbb{E}_{\xi \in \mathcal{A}^h (\omega')}\big|\mathcal F_{qs}(f_{\xi|_{ \tau_k (\xi)+\ttau_{h'} ( \sigma^{\tau_k (\xi)} (\xi))}} \circ f_{\rho_\xi} \nu)\big|^2 + |q|e^{(1+\gamma')h\chi}<\epsilon.$$
Where the partition $\mathcal{A}^h$ is as in Definition  \ref{Def R.V. D} part (2), and:
\begin{enumerate}
\item $\omega ' =  \omega'(n,\omega) = \sigma^{\ttau_{\frac{n\log p}{\chi}}(\omega)} (\omega)$.

\item $s=s(\omega,n,h)= p^n \cdot f_{\omega|_{\ttau_{\frac{n\log p}{\chi}}(\omega)}} ' \left( f_{\omega' |_{\ttau_ h (\omega')}} (x_0) \right)$, where $x_0\in I$ is our prefixed point. In particular, $s,s^{-1}=O(1)$   where this $O(1)$ depends only on the IFS.

\item The word $\rho_\xi$ for $\xi \in \mathcal{A}^\mathbb{N}$ is defined as the unique word satisfying
$$\xi|_{\tau_k (\xi)+\ttau_{h'} ( \sigma^{\tau_k (\xi)} (\xi))}*\rho_\xi= \xi|_{\ttau_h(\xi) + \ttau_{k+h'-h+Q } \left( \sigma^{ {\ttau_h(\xi)} } \xi\right)}$$
for some global $Q>0$ that only depends on the IFS. Furthermore, there exists some global $P>0$ such that $|\rho_\xi| \leq P$ for all $\xi$.
\end{enumerate}
\end{theorem}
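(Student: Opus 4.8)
The plan is to run the same overall scheme as in the proof of part (1) (Section \ref{Section proof 1}), but now everything takes place inside a "magnified" piece of $\nu$, and the new conditional local limit theorem that must be fed into the argument is Theorem \ref{Theorem lin equid} instead of Theorem \ref{Theorem equid}. First I would unwind the notation. Fix $q$ large. The factor $f_{\omega|_{\ttau_{\frac{n\log p}{\chi}}(\omega)}}$ is the level-$n$ cylinder map picked out by looking at the $T_p^n$-expansion scale $p^{-n}\asymp e^{-n\log p}$, so that $s$, defined in item (2), has $s,s^{-1}=O(1)$ by bounded distortion (Theorem \ref{Bounded distortiion theorem}); this is exactly the identity $T_p^n=T_{p^n}$ being used to translate "$n$ digits base $p$" into "a cylinder of diameter $\asymp p^{-n}$", which is why the argument is restricted to integer $p$. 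The map $f_{\xi|_{\tau_k(\xi)+\ttau_{h'}(\sigma^{\tau_k(\xi)}\xi)}}$ is then, up to the bounded word $\rho_\xi$ from item (3), the composition $f_{\xi|_{\tau_k(\xi)}}\circ(\text{a piece of derivative-drop }h')$, exactly as in Lemma \ref{Lemma big P} and Corollary \ref{Coro finite sum}; so the target expression is morally $\mathbb E_{\cA_k^{h,h'}(\xi)}|\mathcal F_{qs}(M_{e^{-S_{\tau_k}}}\circ(\text{bounded-distortion affine-ish map})\nu)|^2$.

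Next I would set the parameters and apply the main tool. Following Section \ref{Section proof 1}, choose $h'=\sqrt{k}$, and choose $k=k(q)$ so that $|qs|=\Theta(o_k^{-1/4}e^{(k+h')\chi})$ (possible uniformly in the $O(1)$ factor $s$ by Lemma \ref{Lemma choice}); the extra ingredient here is that we also need $h$ large and $k-h$ large, so I would pick $h$ to be, say, a slowly growing function with $h,k-h\to\infty$ as $q\to\infty$, chosen so that $|q|e^{(1+\gamma')h\chi}\to 0$ — this is possible because $h=o(k)$ and the relation $|q|\asymp e^{(k+h')\chi}$ up to sub-exponential factors forces $|q|e^{(1+\gamma')h\chi}=e^{(1+\gamma')h\chi-(k+h')\chi+o(k)}\to 0$ once $h$ is a small enough fraction of $k$. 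Then I would use the martingale/self-conformality identity (the analogue of Lemma \ref{Claim 3'}, but applied to the magnified measure $f_{\xi|_{\ttau_h(\xi)}}\nu$ rather than $\nu$ itself) plus Jensen, to reduce $\mathbb E_{\mathcal A^h(\omega')}|\mathcal F_{qs}(\cdots)|^2$ to an integral over good cells of the partition $\cA_k^{h,h'}$ of $|\mathcal F_{qs}(M_{e^{-S_{\tau_k}}}\circ f_{\bar\eta'}\circ f_\rho\nu)|^2$; Theorem \ref{Theorem lin equid} then guarantees that outside a set of $\mathbb P$-measure $o^{\min(h,k-h)\to\infty}_{\bfp}(1)$, on each cell $S_{\tau_k}$ equidistributes for $\Gamma_{\cA_k^{h,h'}(\xi)}$ up to the same error. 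Exactly as in the function version Theorem \ref{Theorem equid 2}, Lipschitz-approximating $t\mapsto g_{qs,\rho}(t)$ by a step function with $o_k^{-1/2}$ steps turns this into $\mathbb E_{\cA_k^{h,h'}(\xi)}g_{qs,\rho}(S_{\tau_k})=\int g_{qs,\rho}\,d\Gamma_{\cA_k^{h,h'}(\xi)}+O(o_k^{1/4})$. Then, since $\Gamma_{\cA_k^{h,h'}(\xi)}\ll\lambda_{[k\chi,k\chi+D']}$ with density $\le 1/D$ (Lemma \ref{Lemma Abs. contin of gamma}), I bound the integral against $\lambda$ and apply Hochman's oscillatory-integral Lemma \ref{Lemma 3.2 } to $f_{\bar\eta'}\circ f_\rho\nu$, exactly as in equations \eqref{Eq a}–\eqref{Eq c}, using non-atomicity of $\nu$ (Lemma \ref{Lemma nu is continuous}) and the choice of $k(q)$ to make $\frac{e^2}{r|qs|}$ and $\nu(B_{Te^{\chi(k+h')}r}(y))$ both $<\epsilon$. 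Summing over the boundedly many words $\rho$ and sign choices (Remark \ref{Remark}) finishes it.

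The main obstacle, and the reason Theorem \ref{Theorem lin equid} was proved in the first place, is that here $h$ is not bounded: the magnification by $f_{\xi|_{\ttau_h(\xi)}}$ drags us to depth $\sim h$ in the symbolic space before the stopping-time $\tau_k$ analysis even begins, so the conditioning in the Benoist–Quint local limit theorem is on a cylinder $A_\eta$ of unbounded length, and the error terms in Theorems \ref{B-Q CLT}, \ref{B-Q LLT general} a priori depend on $A_\eta$. Theorem \ref{Theorem lin equid} resolves this by the $C^1$-linearization trick (Lemma \ref{Lemma C1 lin}, via Lemma \ref{Lemma Defect Lin}): one peels off the last $\ell$ symbols of $\eta$ to get a bounded-length remainder $\eta^*$, absorbs the long prefix $\eta^\#$ into a shift of the stopping level ($k\mapsto k^*$, $h\mapsto h^*=O_\bfp(1)$), controls the mismatch by the defect estimate $O_\bfp(\epsilon^2)$, and then invokes Theorem \ref{Theorem equid} at bounded $h^*$. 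So the delicate point in assembling Theorem \ref{Claim sufficient} is to check that these two reparametrizations — the $T_p$-magnification giving the $O(1)$ scaling $s$ and the bounded word $\rho_\xi$ of item (3), and the $\eta=\eta^\#\eta^*$ splitting inside Theorem \ref{Theorem lin equid} — are compatible: the word $\rho_\xi$ in item (3) must be exactly the bounded correction that makes $\tau_k+\ttau_{h'}\circ\sigma^{\tau_k}$ on the magnified cylinder agree, up to $|\rho_\xi|\le P$, with the natural stopping time $\ttau_h+\ttau_{k+h'-h+Q}\circ\sigma^{\ttau_h}$ on the full space, which is the analogue of Lemma \ref{Lemma big P} in this setting and follows the same bounded-distortion bookkeeping. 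Once the dictionary between the two setups is pinned down, the quantitative estimates are a line-by-line repeat of Section \ref{Section proof 1} with $\nu$ replaced by the magnified measure and Theorem \ref{Theorem equid} replaced by Theorem \ref{Theorem lin equid}.
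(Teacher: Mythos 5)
Your high-level picture is right: the proof does run the Section \ref{Section proof 1} machine again, feeding in the conditional local limit Theorem \ref{Theorem lin equid} instead of Theorem \ref{Theorem equid}, and the four error terms you list (first-linearization, second-linearization, LLT-function-approximation, oscillatory-integral) are exactly the ones the paper collects in Section \ref{Section error terms}. You also correctly identify why Theorem \ref{Theorem lin equid} is needed (unbounded $h$) and how $\rho_\xi$, $s$, and the integer $p$ enter. But the parameter calibration you propose does not close, and this is precisely where part (2) differs from part (1).

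You set $h'=\sqrt{k}$ and pick $k(q)$ so that $|qs|=\Theta\bigl(o_k^{-1/4}e^{(k+h')\chi}\bigr)$, then assert that the LLT step produces an $O(o_k^{1/4})$ error ``exactly as in Theorem \ref{Theorem equid 2}''. That calibration only works in part (1) because the $o_k^{-1/4}$ prefactor is tuned to the \emph{same} sequence $o_k=o_{0,\bfp}^{k\to\infty}(1)$ that measures the error in Theorem \ref{Theorem equid}. Here the per-cell error is instead $o_{\bfp}^{\min(h,k-h)\to\infty}(1)$ from Theorem \ref{Theorem lin equid}, a different, non-quantitative $o$-term. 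Running the step-function approximation with $M$ steps yields an error of the shape $\frac{qe^{-(k+h')\chi}}{M}+M\cdot o_{\bfp}^{\min(h,k-h)\to\infty}(1)$; optimizing gives $(qe^{-(k+h')\chi})^{-1}+(qe^{-(k+h')\chi})^2\,o_{\bfp}^{\min(h,k-h)\to\infty}(1)$, which is exactly the form recorded in Claim \ref{Claim LLT}. With your choice $qe^{-(k+h')\chi}\asymp o_k^{-1/4}\to\infty$, the second term is $o_k^{-1/2}\cdot o_{\bfp}^{\min(h,k-h)\to\infty}(1)$, and nothing forces this to vanish: the two $o$-sequences have no a priori relationship, and trying to re-calibrate $|q|$ against $o_{\bfp}^{\min(h,k-h)\to\infty}$ instead is circular, since $\min(h,k-h)$ depends on $h$, which is itself fixed by $q$ through the first-linearization constraint.

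The paper's actual choice breaks this circle by making $qe^{-(k+h')\chi}$ a \emph{constant} in $q$: it fixes $\delta=\delta(\epsilon,\bfp)$ with $\sup_y\nu(B_\delta(y))\ll\epsilon$, fixes $h'=h'(\epsilon,\bfp)$ (not $\sqrt k$) so that $e^{-\gamma'h'\chi}=\delta\epsilon^2$, fixes a threshold $h^*(\epsilon,\bfp)$ so that $\min(h,k-h)\geq h^*$ forces the $o$-term of Theorem \ref{Theorem lin equid} to be $\ll\delta^2\epsilon^3$, then chooses $h$ with $qe^{-(1+\gamma')h\chi}=\epsilon$ and $k$ with $qe^{-(k+h')\chi}=\delta^{-1}\epsilon^{-1}$. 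Then the LLT error is $\delta\epsilon+\delta^{-2}\epsilon^{-2}\cdot\delta^2\epsilon^3\lesssim\epsilon$, the oscillatory-integral error is $\frac{1}{\delta\cdot\delta^{-1}\epsilon^{-1}}+\epsilon\lesssim\epsilon$, and the second-linearization error is $\delta^{-1}\epsilon^{-1}\cdot\delta\epsilon^2=\epsilon$; the only thing left to verify is that $k-h\geq h^*$ once $q\geq q^*(\epsilon,\bfp)$, and this is Claim \ref{Claim k-h}, which uses that $k-h=\gamma'h-O_{\epsilon,\bfp}(1)$. Note also that this forces $h$ to be of order $\frac{k}{1+\gamma'}$, a fixed positive fraction of $k$ — not the ``slowly growing, $h=o(k)$'' regime you describe, which would in fact make the first-linearization term $|q|e^{-(1+\gamma')h\chi}\asymp e^{(k+h')\chi-(1+\gamma')h\chi}$ blow up rather than vanish.
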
  

We remark that the  word $\rho_\xi$ also depends on the parameters $k,h,h',Q$, but we suppress this in our notation. Both the proof of Theorem \ref{Claim sufficient} and the proof that it implies Theorem \ref{Lemma sufficient} are not trivial. Thus, we dedicate the next Section to the proof that Theorem \ref{Claim sufficient} implies Theorem \ref{Lemma sufficient}. The subsequent Section contains the proof of Theorem \ref{Claim sufficient}. So, all in all, once these two assertions are established, Theorem \ref{Main theorem} is proved and we are done.

\subsection{Proof that Theorem \ref{Claim sufficient} implies Theorem \ref{Lemma sufficient}} 
\subsubsection{The martingale argument}
In this Section we employ a deep observation that was  originally made by Hochman and Shmerkin \cite[Theorem 2.1]{hochmanshmerkin2015}, and was recently further refined by Hochman \cite{Hochman2020Host}. Recall that if $\mathcal{C}$ is a partition of a space $X$ and $\mu$ is a probability measure on $X$, then for any $x \in \supp(\mu)$ we denote by $\mathcal{C}(x)$ the atom of $\mathcal{C}$ containing $x$, and by $\mu_{\mathcal{C}(x)}$ the conditional measure of $\mu$ on this atom. 

\begin{theorem} \cite[Theorem 2.2]{Hochman2020Host} \label{Theorem 2.2}
Let $T:X\rightarrow X$ be a continuous  map of a compact metric space, and let $\mu$ be a Borel probability measure on $X$. Let $\lbrace \mathcal{C}_n \rbrace_{n\in \mathbb{N}}$ be a refining sequence of Borel partitions. Suppose that
\begin{equation} \label{Eq diameter condition}
\lim_{k\rightarrow \infty} \, \sup_{n\in \mathbb{N}} \lbrace \diam (T^n A):\, A\in \mathcal{C}_{n+k},\mu(A)>0 \rbrace = 0.
\end{equation}
Then for $\mu$ almost every $x$,
$$\lim_{N\rightarrow \infty} \left( \frac{1}{N} \sum_{n=1} ^N \delta_{T^n (x)} - \frac{1}{N} \sum_{n=1} ^N T^n \mu_{\mathcal{C}_n (x)} \right)=0$$
in the weak-* sense. 
\end{theorem}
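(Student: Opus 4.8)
The statement to prove is Theorem \ref{Theorem 2.2}, a martingale-type equidistribution result of Hochman. Since the paper cites it as \cite[Theorem 2.2]{Hochman2020Host}, the expectation here is to reproduce (or sketch) its proof. The core idea is that for a fixed continuous test function $\varphi$, the sequence of random variables $D_n(x) := \varphi(T^n x) - \int \varphi \, d(T^n \mu_{\mathcal{C}_n(x)})$ forms a martingale difference sequence with respect to the filtration generated by the refining partitions $\{\mathcal{C}_n\}$, after an appropriate shift accounting for the fact that $\mathcal{C}_n(x)$ determines $\mu_{\mathcal{C}_n(x)}$ but not $T^n x$ itself. So the plan is: first reduce to a countable dense family of test functions $\varphi$ (using separability of $C(X)$), so that it suffices to show $\frac1N \sum_{n=1}^N D_n(x) \to 0$ $\mu$-a.e. for each such $\varphi$.

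The main step is to verify the martingale structure. Write $E_n := \mathbb{E}_\mu[\,\cdot \mid \mathcal{C}_n]$. The key observation is that $\int \varphi \, d(T^n \mu_{\mathcal{C}_n(x)}) = E_n[\varphi \circ T^n](x)$, essentially by definition of the conditional measure. Thus $D_n = \varphi\circ T^n - E_n[\varphi\circ T^n]$, which has conditional mean zero given $\mathcal{C}_n$: $E_n[D_n] = 0$. To get a genuine martingale difference sequence one wants $E_n[D_m] = 0$ for $m < n$ as well; here one uses that $\{\mathcal{C}_n\}$ is refining, so $\mathcal{C}_m$ is coarser than $\mathcal{C}_n$ and $E_m E_n = E_m$, giving the needed orthogonality $\mathbb{E}[D_m D_n] = 0$ for $m \neq n$. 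Since $\varphi$ is bounded, $\|D_n\|_\infty \leq 2\|\varphi\|_\infty$, so $\sum_n \mathrm{Var}(D_n)/n^2 < \infty$, and the $L^2$ (Menshov--Rademacher / Gál--Koksma, or simply the martingale) strong law of large numbers yields $\frac1N\sum_{n=1}^N D_n \to 0$ $\mu$-a.e. This handles the ``scenery-averaged'' part.

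It remains to use hypothesis \eqref{Eq diameter condition} to control the discrepancy between $\frac1N\sum \delta_{T^n x}$ and $\frac1N \sum T^n \mu_{\mathcal{C}_n(x)}$ at the level of the actual measures rather than a single $\varphi$ — but in fact the test-function reduction already packages this, provided the family of test functions is countable and $D_n$ is well-defined, EXCEPT that \eqref{Eq diameter condition} is what guarantees the whole scheme is non-vacuous and that the same a.e. set works: more precisely, condition \eqref{Eq diameter condition} ensures that $\diam(T^n\mathcal{C}_{n+k}(x)) \to 0$ uniformly in $n$ as $k\to\infty$, which is exactly what lets one replace $T^n\mu_{\mathcal{C}_n(x)}$ by point masses up to an error tending to $0$ when one refines — this is needed if one wants to further simplify the conclusion, but for the stated conclusion it is used to ensure $T^n\mu_{\mathcal{C}_n(x)}$ is a meaningful approximation and that the martingale increments are adapted correctly. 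I would spell out that $\varphi \circ T^n$ restricted to an atom $A \in \mathcal{C}_{n}$ has oscillation controlled by $\diam(T^n A)$, so that $D_n$ is, up to a uniformly small error as the partition index grows, measurable with respect to a slightly finer partition — tightening the adaptedness argument.

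\textbf{Main obstacle.} The delicate point is the adaptedness/measurability bookkeeping: $T^n x$ is \emph{not} $\mathcal{C}_n$-measurable, so $D_n$ is not literally a martingale difference for the filtration $(\mathcal{C}_n)$. One fixes this either by passing to the natural filtration generated by $x \mapsto \mathcal{C}_n(x)$ together with the ``position'' information and invoking \eqref{Eq diameter condition} to show the resulting defect is summable, or by a direct second-moment computation showing $\sum_n \frac1{n^2}\mathbb{E}[D_n^2] < \infty$ and $\mathbb{E}[D_m D_n] = o(\cdot)$ with errors controlled by the diameters in \eqref{Eq diameter condition} — then applying a Gál--Koksma / Cauchy-type strong law for weakly correlated sequences rather than a strict martingale SLLN. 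Getting the cross-term estimates to decay fast enough, using only \eqref{Eq diameter condition} and continuity of $\varphi$, is where the real work lies; everything else is routine.
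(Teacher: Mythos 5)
Your plan correctly identifies the ingredients—the identity $\int\varphi\,d\bigl(T^n\mu_{\mathcal{C}_n(x)}\bigr)=E_n[\varphi\circ T^n](x)$, a martingale-type SLLN, a countable test-function reduction—and you do flag the adaptedness defect as the main obstacle. But the proposal as written asserts a false orthogonality and never resolves that obstacle. You claim $\mathbb{E}[D_m D_n]=0$ for $m\neq n$ on the grounds that $E_m E_n=E_m$. What that actually gives, for $m<n$, is $E_m[D_n]=0$; to pass from there to $\mathbb{E}[D_m D_n]=\mathbb{E}\bigl[D_m\,E_m[D_n]\bigr]=0$ you would need $D_m$ to be $\mathcal{C}_m$-measurable, and it is not: $D_m=\varphi\circ T^m - E_m[\varphi\circ T^m]$ and $\varphi\circ T^m$ is continuous, not piecewise constant on $\mathcal{C}_m$. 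The cross terms do not vanish, $\{D_n\}$ is not a martingale-difference sequence for $(\mathcal{C}_n)$, and the orthogonal-series SLLN you invoke does not apply. This is not a bookkeeping nuisance to be waved away; as stated, the step fails.

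The fix—which your closing paragraph gestures at but does not execute—is a lag-$k$ decomposition, and it is exactly here that \eqref{Eq diameter condition} does its work. Fix $k$ and set $D_n^{(k)}:=E_{n+k}[\varphi\circ T^n]-E_n[\varphi\circ T^n]$. This \emph{is} $\mathcal{C}_{n+k}$-measurable with $E_n[D_n^{(k)}]=0$, so $\mathbb{E}[D_m^{(k)}D_n^{(k)}]=0$ whenever $n\geq m+k$; together with the uniform bound $\|D_n^{(k)}\|_\infty\leq 2\|\varphi\|_\infty$, a strong law for bounded $k$-dependent sequences (an $L^2$ bound $\mathbb{E}\bigl[(\sum_{n\leq N}D_n^{(k)})^2\bigr]\lesssim kN$, plus a subsequence argument) gives $\frac1N\sum_{n\leq N}D_n^{(k)}\to 0$ $\mu$-a.e. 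The remainder $\varphi(T^n x)-E_{n+k}[\varphi\circ T^n](x)$ is bounded by $\varepsilon_k$, where $\varepsilon_k$ is the modulus of continuity of $\varphi$ evaluated at $\sup_n\sup\{\diam(T^nA):A\in\mathcal{C}_{n+k},\ \mu(A)>0\}$, and \eqref{Eq diameter condition} says $\varepsilon_k\to0$. Hence $\limsup_N\bigl|\frac1N\sum_{n\leq N}D_n\bigr|\leq\varepsilon_k$ a.e.; send $k\to\infty$ and intersect over a countable dense set of $\varphi$. Your mid-paragraph description of the role of \eqref{Eq diameter condition} as ensuring the scheme is "non-vacuous" or the approximation "meaningful" misses that it supplies the precise quantitative bound closing the lag trick. (For the record, the present paper gives no internal proof of this theorem—it is cited directly from Hochman—so there is no in-paper argument to compare against, but the gap above is real.)
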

That is, as long as the partitions $\lbrace \mathcal{C}_n \rbrace_{n\in \mathbb{N}}$ are compatible with the dynamics of $T$ in the sense of \eqref{Eq diameter condition},  the orbits of $\mu$ typical points are Ces\`aro equivalent to the $T^n$-magnifications of the conditionals of $\mu$ on their $\mathcal{A}^n$ atoms.

Now, let $0<h \ll 1$ be a fixed parameter, and consider the stopping time $\beta_{n,h}$ defined by
$$\beta_{n,h} (\omega) = \ttau_{\frac{n\log p}{\chi}}(\omega) + \ttau_ h(\sigma^{\ttau_{\frac{n\log p}{\chi}}(\omega)}\omega).$$
Note that in the  stopping time  $\beta_{n,h}$ we let $n$ vary but keep $h$ fixed. 

\begin{theorem}  \label{Theorem martingale}
For $\mathbb{P}$ almost every $\omega$ and for every integer $q\in \mathbb{Z}$,
\begin{equation*}
\lim_{N\rightarrow\infty} \mathcal{F}_q \left( \frac{1}{N}  \sum_{n=0} ^{N-1} \delta_{T_p ^n (x_\omega)} \right) - \mathcal{F}_q \left( \frac{1}{N} \sum_{n=0} ^{N-1} T_p ^n \circ f_{\omega|_{\beta_{n,h} (\omega)}} \nu \right)  =0.
\end{equation*}
\end{theorem}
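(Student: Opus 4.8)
The plan is to deduce Theorem \ref{Theorem martingale} from the abstract martingale statement Theorem \ref{Theorem 2.2} by choosing the ambient space, the map, the measure, and the refining sequence of partitions appropriately, and then verifying the diameter hypothesis \eqref{Eq diameter condition}. I would take $X=\mathbb{T}=\mathbb{R}/\mathbb{Z}$ with $T=\bar T_p$ the times-$p$ map, and $\mu=\pi\nu$ the projection of $\nu$ to the circle. For the partitions I would push forward, under the coding map composed with $\pi$, the partitions of $\mathcal{A}^{\mathbb{N}}$ determined by the stopping times $\beta_{n,h}$: concretely, let $\mathcal{C}_n$ be the partition of $\mathbb{T}$ whose atoms are (the $\pi$-images of) the sets $\{x_\xi:\ \xi|_{\beta_{n,h}(\xi)}=\eta\}$ as $\eta$ ranges over admissible finite words, intersected/relabelled so that the sequence is genuinely refining in $n$ (this is where one uses that $\beta_{n,h}$ is increasing in $n$ for fixed $h$, since $\ttau_{\frac{n\log p}{\chi}}$ is increasing in $n$). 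The conditional measure $\mu_{\mathcal{C}_n(x_\omega)}$ is then exactly $\pi\circ f_{\omega|_{\beta_{n,h}(\omega)}}\nu$ up to the identification of $\nu$ with the pushforward of $\mathbb{P}$, because the self-conformal relation $\nu=\mathbb{E}(f_{\omega|_m}\nu)$ (Lemma \ref{Claim 3'}, with $m$ replaced by a stopping time) disintegrates $\nu$ along cylinders.

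Next I would verify \eqref{Eq diameter condition}. The atom of $\mathcal{C}_{n+k}$ containing $x_\omega$ is the image $f_{\omega|_{\beta_{n+k,h}(\omega)}}(K)$, whose diameter is $O(\|f_{\omega|_{\beta_{n+k,h}(\omega)}}'\|_\infty)=O(e^{-\beta_{n+k,h}(\omega)\cdot D})$ by uniform contraction, but more precisely by bounded distortion (Theorem \ref{Bounded distortiion theorem}) it is comparable to $e^{-(n+k)\log p}\cdot e^{-h\chi}$ up to multiplicative constants depending only on $\Phi$, since $\ttau_{\frac{(n+k)\log p}{\chi}}$ is the first time the log-derivative exceeds $(n+k)\log p$ and the extra $\ttau_h$ contributes a further factor comparable to $e^{-h\chi}$. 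Applying $\bar T_p^{\,n}$ multiplies lengths by $p^n$ (as long as the atom is short enough to avoid wraparound, which holds once $n+k$ is large), so $\diam(\bar T_p^{\,n}A)=O(p^n\cdot p^{-(n+k)}\cdot e^{-h\chi})=O(p^{-k}e^{-h\chi})$, uniformly in $n$. This tends to $0$ as $k\to\infty$, so the hypothesis holds.

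With Theorem \ref{Theorem 2.2} in force, for $\mu=\pi\nu$-a.e. point $\pi(x_\omega)$ — equivalently $\mathbb{P}$-a.e.\ $\omega$ — we get
$$\lim_{N\to\infty}\Big(\frac1N\sum_{n=0}^{N-1}\delta_{\bar T_p^{\,n}(\pi x_\omega)}-\frac1N\sum_{n=0}^{N-1}\bar T_p^{\,n}(\pi\nu)_{\mathcal{C}_n(\pi x_\omega)}\Big)=0$$
weak-$*$ on $\mathbb{T}$. Since $\mathcal{F}_q$ for integer $q$ factors through $\pi$ and is continuous in the weak-$*$ topology on $\mathcal{P}(\mathbb{T})$, and since $\pi T_p^n=\bar T_p^{\,n}\pi$ and $(\pi\nu)_{\mathcal{C}_n(\pi x_\omega)}=\pi(f_{\omega|_{\beta_{n,h}(\omega)}}\nu)$, evaluating $\mathcal{F}_q$ on both Ces\`aro averages gives exactly the claimed identity
$$\lim_{N\to\infty}\mathcal{F}_q\Big(\frac1N\sum_{n=0}^{N-1}\delta_{T_p^n(x_\omega)}\Big)-\mathcal{F}_q\Big(\frac1N\sum_{n=0}^{N-1}T_p^n\circ f_{\omega|_{\beta_{n,h}(\omega)}}\nu\Big)=0.$$
The main obstacle is not any single deep estimate but the bookkeeping needed to make the pushed-forward partitions genuinely refining and to confirm that the conditional measures of $\pi\nu$ on their atoms coincide on the nose with $\pi(f_{\omega|_{\beta_{n,h}(\omega)}}\nu)$; this requires the disintegration of a self-conformal measure along stopping-time cylinders, a mild strengthening of Lemma \ref{Claim 3'}, together with care that $\pi$ does not identify distinct atoms once $n$ is large (again handled by the short-diameter/no-wraparound observation above). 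Everything else is a direct application of Theorem \ref{Theorem 2.2} and the continuity of $\mathcal{F}_q$.
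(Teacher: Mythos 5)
Your choice of $X=\mathbb{T}$, $T=\bar T_p$, $\mu=\pi\nu$ runs into a genuine obstruction that the paper is specifically designed to avoid: the IFS $\Phi$ is allowed to have arbitrary overlaps (no separation condition is assumed anywhere in Theorem~\ref{Theorem main tech}). Consequently, the images $\pi(f_\eta(K))$ for different admissible prefixes $\eta$ of the same ``generation'' (same value of $\beta_{n,h}$) can intersect on sets of positive $\pi\nu$-measure, so the pushed-forward collection is not a partition of $\mathbb{T}$ at all, and there is no honest way to ``intersect/relabel'' it into a refining sequence. More seriously, even where it is a partition, the conditional measure $(\pi\nu)_{\mathcal{C}_n(\pi x_\omega)}$ is \emph{not} $\pi(f_{\omega|_{\beta_{n,h}(\omega)}}\nu)$ when overlaps occur: conditioning $\pi\nu$ on an atom mixes together contributions from every cylinder whose image meets that atom, normalized by total mass, rather than reproducing the single pushforward $f_{\omega|_{\beta_{n,h}(\omega)}}\nu$. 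Your ``no-wraparound for large $n$'' observation only addresses the mod-$1$ identification, not the overlap problem intrinsic to the IFS, and the latter does not disappear as $n\to\infty$.

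The paper's proof circumvents this by running Theorem~\ref{Theorem 2.2} on the product space $X=\mathcal{A}^{\mathbb{N}}\times\mathbb{T}$ with $T(\omega,x)=(\omega,\bar T_p(x))$, taking $\mu$ to be the ``graph measure'' supported on $\{(\omega,\pi(x_\omega))\}$ (i.e.\ the pushforward of $\mathbb{P}$ under $\omega\mapsto(\omega,\pi(x_\omega))$), and defining $\mathcal{C}_{n,h}$ by declaring $(\omega,\pi(x_\omega))\sim(\eta,\pi(x_\eta))$ iff $\omega|_{\beta_{n,h}(\omega)}=\eta|_{\beta_{n,h}(\eta)}$, with all remaining points lumped into a single null cell. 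Because the symbolic coordinate is retained, distinct cylinders give genuinely disjoint atoms, the partitions refine trivially, and the conditional measure on an atom projects exactly to $\pi\circ f_{\omega|_{\beta_{n,h}(\omega)}}\nu$; the diameter hypothesis~\eqref{Eq diameter condition} is checked coordinatewise (the symbolic diameter is controlled by $\rho^{\ttau_{(k\log p)/\chi}}$ and the circle diameter by $p^{-k}$). After applying Theorem~\ref{Theorem 2.2} on the product, one projects to $\mathbb{T}$ and takes $\mathcal{F}_q$, just as you do in your final step. Your diameter estimate and final passage through $\mathcal{F}_q$ are fine; it is the choice of ambient space and the identification of the conditionals that need the product-space device to work without separation.
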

\begin{proof}
Equip the compact space $X = \mathcal{A}^\mathbb{N} \times \mathbb{T}$ with the usual metric on each coordinate and the $\sup$ metric on the product space. For every $n$ let $\mathcal{C}_{n,h}$ be the partition of $X$ given by
$$(\omega, \pi(x_\omega)) \sim_{\mathcal{C}_{n,h}} (\eta, \pi(x_\eta))  \iff (\omega_1,...,\omega_{\beta_{n,h} (\omega)})  = (\eta_1,...,\eta_{\beta_{n,h} (\eta)}),$$
$$ \text{ and if } x\neq \pi(x_\omega), y \neq \pi(x_\eta) \text{ then }  (\omega, x) \sim_{\mathcal{C}_{n,h}} (\eta,y).$$
Notice that we are grouping all the elements of $X$ that are not of the form $(\omega,\pi(x_\omega))$ into a single partition cell, which we denote by $B$.

Let $\mu$ be the probability measure on $X$ defined by
$$\mu(A)= \mathbb{P} \left( \lbrace \omega:\quad (\omega,\pi(x_\omega))\in A \rbrace \right).$$
Notice that the projection of $\mu$ to $\mathbb{T}$ is $\pi \nu$, and that $\mu(B)=0$.
To complete the setup,  let $T:X\rightarrow X$ be the continuous map
$$T(\omega,x) = (\omega, \bar{T}_p (x)).$$

We now verify that \eqref{Eq diameter condition} holds true: Let $n,k\in \mathbb{N}$ and let $A \in \mathcal{C}_{n+k,h}$ be such that $\mu(A)>0$. Then for any $\omega$ with $(\omega,x_\omega)\in A$, recalling the metric on $\mathcal{A}^\mathbb{N}$ defined in Section \ref{Section sketch},
\begin{eqnarray*}
\diam\left( T^n A \right) &=& \max \lbrace \diam \left( A_{\omega|_{\beta_{n+k,h}(\omega)}} \right) , \diam \left( T_p ^n \circ \pi \circ f_{\omega|_{\beta_{n+k,h}(\omega)}} (K) \right) \rbrace\\
&\leq & \max \lbrace  \rho^{ \ttau_{\frac{k\log p}{\chi}}(\omega)}, p^{-k} \rbrace \rightarrow 0 \text{ as } k\rightarrow \infty \text{ uniformly in } n \text{ and } \omega.
\end{eqnarray*}
Thus,  we may apply Theorem \ref{Theorem 2.2}: For $\mu$ almost every $(\omega, \pi(x_\omega))$ letting $\mu_{\mathcal{C}_{k,h} (\omega,\pi(x_\omega))}$ be the conditional measure of $\mu$ on the atom $\mathcal{C}_{k,h} (\omega,x_\omega)$, we get
\begin{equation} \label{Eq before proj}
\lim_{N\rightarrow\infty}  \left( \frac{1}{N}\sum_{k=0} ^{N-1} \delta_{T ^k (\omega,\pi(x_\omega))} -\frac{1}{N} \sum_{k=0} ^{N-1} T ^k \mu_{\mathcal{C}_{k,h} (\omega,\pi(x_\omega))} \right) =0.
\end{equation}
Notice that the projection of $\mu_{\mathcal{C}_{k,h} (\omega,\pi(x_\omega))}$ to $\mathbb{T}$ is $\pi\circ f_{\omega|_{\beta_{k,h} (\omega)}} \nu$. So, projecting equation \eqref{Eq before proj} to $\mathbb{T}$ and using this observation, for $\mathbb{P}$ almost every $\omega$
\begin{equation*} 
\lim_{N\rightarrow\infty}  \left( \frac{1}{N}\sum_{k=0} ^{N-1} \delta_{\bar{T}_p ^k \circ \pi(x_\omega)} -\frac{1}{N} \sum_{k=0} ^{N-1} \bar{T}_p ^k \circ \pi\circ f_{\omega|_{\beta_{k,h} (\omega)}} \nu \right) =0.
\end{equation*}

Finally, let $q\in \mathbb{Z}$. Invoking \eqref{Eq relation times p}, for every $k\geq 1$,
$$\mathcal{F}_q \left(\delta_{\bar{T}_p ^k \circ \pi(x_\omega)} \right) = \mathcal{F}_q \left( \delta_{T_p ^k (x_\omega)} \right), \, \text{ and } \mathcal{F}_q \left( \bar{T}_p ^k \circ \pi\circ f_{\omega|_{\beta_{k,h} (\omega)}} \nu \right) = \mathcal{F}_q \left( T_p ^k \circ f_{\omega|_{\beta_{k,h} (\omega)}} \nu \right).$$
So, combining the last two displayed  equations, the Theorem is proved.
\end{proof}

\begin{Remark} \label{Rmk refree}
As pointed out to us by the anonymous referee, once Theorem \ref{Theorem martingale} is established there is another way to prove Theorem \ref{Theorem main tech} part (2): Via a  slight modification of Theorem \ref{Theorem martingale}, it is enough to show that for $\nu$ as in Theorem \ref{Theorem main tech},
$$\text{ For every } C,C_0>0,\quad \lim_q \sup \lbrace |\mathcal{F}_q (g\nu)|: {g\in C^{1+\gamma}:\, ||g||_{C^{1+\gamma}} <C_0,\, \inf |g'|\geq C_1} \rbrace =0.$$
That is, the rate of decay of $|F_q (g\nu)|$ is uniform in $g\in C^{1+\gamma}$, as long as its $C^{1+\gamma}$ norm and $\inf |g'|$  are uniformly bounded.  We believe this Claim to be true, and it should follow by verifying that:
\begin{enumerate}
\item The rates in Theorems \ref{B-Q CLT} and \ref{B-Q LLT general}  are uniform in $g$. 

\item The other estimates as in Section \ref{Section proof 1} are uniform in $g$.
\end{enumerate}
Taking this approach allows one to circumvent the use of Theorem \ref{Theorem lin equid}, thus possibly shortening the proof. However, since we hope to have other applications for our method that do make use of Theorem \ref{Theorem lin equid} (e.g. for higher dimensions or dimension theory), and since it is one of the goals of this paper to show how local limit Theorems may be adapted to study the geometry of self conformal measures, we present the proof of Theorem \ref{Theorem main tech}  in its original form.
\end{Remark}

\subsubsection{First linearization and stopping time argument}
Our next step is to linearize the maps appearing in Theorem \ref{Theorem martingale}, in the following sense: Fix an integer frequency $q \neq 0\in \mathbb{Z}$. We want to estimate $\mathcal{F}_q (\cdot )$ for the push-forward of $\nu$ via the map
$$T_p ^n \circ f_{\omega|_{\beta_{n,h} (\omega)}} = T_p ^n \circ f_{\omega|_{\ttau_{\frac{n\log p}{\chi}}(\omega) + \ttau_ h(\sigma^{\ttau_{\frac{n\log p}{\chi}}(\omega)}\omega)}}.$$
The idea is use the first $\ttau_{\frac{n\log p}{\chi}}(\omega)$ digits to cancel out the $T_p ^n$ factor. The price is a uniformly bounded defect $s$ in the frequency, and a controllable error term that relies on $h$ and $q$:
\begin{Claim} (First linearization) \label{First linearization}
For every $\omega \in \mathcal{A}^\mathbb{N}$, $q\neq 0 \in \mathbb{Z}$, $n \in \mathbb{N}$, and $h>0$ that is large enough in manner dependent only on $\gamma' \in (0,\gamma)$
$$\left| \left|\mathcal{F}_q \left( T_p ^n \circ f_{\omega|_{\beta_{n,h} (\omega)}} \nu \right) \right| - \left|\mathcal{F}_{qs} \left( f_{\omega' |_{\ttau_ h (\omega')}} \nu \right) \right|  \right| \leq  |q|e^{-(1+\gamma')h\chi} $$
where we recall that $\omega ' = \sigma^{\ttau_{\frac{n\log p}{\chi}}(\omega)} (\omega)$ and for our prefixed $x_0 \in I$ $$s=s(\omega,n,h)= p^n \cdot f_{\omega|_{\ttau_{\frac{n\log p}{\chi}}(\omega)}} ' \left( f_{\omega' |_{\ttau_ h (\omega')}} (x_0) \right) =O(1),\quad \text{ and also } s^{-1} = O(1)$$ 
where this $O(1)$ only depends on the IFS.
\end{Claim}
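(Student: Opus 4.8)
The plan is a direct linearization of the ``inner'' composition. Write $a:=\ttau_{\frac{n\log p}{\chi}}(\omega)$, so that $\omega'=\sigma^a(\omega)$ and, by the concatenation identity $f_{\eta|_{m_1+m_2}}=f_{\eta|_{m_1}}\circ f_{\sigma^{m_1}\eta|_{m_2}}$, one has $f_{\omega|_{\beta_{n,h}(\omega)}}=g\circ\psi$ with $g:=f_{\omega|_a}\in\Phi^{*a}$ and $\psi:=f_{\omega'|_{\ttau_h(\omega')}}$. Set $y_0:=\psi(x_0)$, and recall that the Fourier transform here is evaluated at the \emph{integer} frequency $q$: this is the one place integrality is used, via the identity $T_p^n=T_{p^n}$, which gives for any $\theta\in\mathcal P(I)$ that $\mathcal F_q(T_p^n\theta)=\int e^{2\pi i q(p^nx\bmod 1)}\,d\theta(x)=\int e^{2\pi i qp^n x}\,d\theta(x)=\mathcal F_{qp^n}(\theta)$. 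Applying this to $\theta=g\circ\psi\,\nu$ reduces the claim to comparing $\mathcal F_{qp^n}(g\circ\psi\,\nu)$ with $\mathcal F_{qs}(\psi\,\nu)$.

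Next I would record the bounded-distortion bookkeeping. By the definition of $\ttau_h$ we have $\max_{x\in I}|\psi'(x)|\le e^{-h\chi}$, hence $\diam(\psi(I))\le|I|\,e^{-h\chi}$ and $\supp(\psi\,\nu)=\psi(K)\subseteq\psi(I)$. By the definition of $\ttau_{\frac{n\log p}{\chi}}$ together with Theorem~\ref{Bounded distortiion theorem} (bounded distortion) and the lower bound $\|f'\|_{C^0}\ge e^{-D'}$ for $f\in\Phi$, one obtains $L^{-2}e^{-D'}p^{-n}\le|g'(y_0)|\le L\,p^{-n}$; in particular $s:=p^n\cdot g'(y_0)=p^n f'_{\omega|_a}(y_0)$ satisfies $s,s^{-1}=O(1)$ with an implied constant depending only on the IFS, which is one of the assertions of the claim.

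The core step is the affine approximation. Fix $\gamma'':=\tfrac12(\gamma'+\gamma)\in(\gamma',\gamma)$ and apply the $C^0$ linearization Lemma~\ref{Lemma lin} with $\beta=\gamma''$ to $g\in\Phi^{*a}$ at the base point $y_0$: once $h$ is large enough that $\diam(\psi(I))\le|I|e^{-h\chi}$ is smaller than the $\epsilon=\epsilon(\gamma'')$ of that lemma, every $x\in\psi(I)$ satisfies $|g(x)-g(y_0)-g'(y_0)(x-y_0)|\le|g'(y_0)|\,\diam(\psi(I))^{1+\gamma''}$. Let $A(x):=g(y_0)+g'(y_0)(x-y_0)$. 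Since $z\mapsto e^{2\pi i qp^n z}$ is $2\pi|qp^n|$-Lipschitz and $p^n|g'(y_0)|\le L$, integrating the previous estimate over $\supp(\psi\,\nu)$ yields $\bigl|\mathcal F_{qp^n}(g\circ\psi\,\nu)-\mathcal F_{qp^n}(A\circ\psi\,\nu)\bigr|\le 2\pi L\,|I|^{1+\gamma''}\,|q|\,e^{-(1+\gamma'')h\chi}$. On the other hand $A\circ\psi=M_{g'(y_0)}\circ\psi$ followed by a translation, and $|\mathcal F|$ is translation-invariant while $\mathcal F_m(M_r\theta)=\mathcal F_{mr}(\theta)$, so $|\mathcal F_{qp^n}(A\circ\psi\,\nu)|=|\mathcal F_{qp^n g'(y_0)}(\psi\,\nu)|=|\mathcal F_{qs}(f_{\omega'|_{\ttau_h(\omega')}}\nu)|$. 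Combining these two facts with the reverse triangle inequality gives the desired estimate with a prefactor $2\pi L|I|^{1+\gamma''}$ in front of $|q|e^{-(1+\gamma'')h\chi}$; since $\gamma''>\gamma'$, for $h$ above a threshold depending only on $\gamma'$ (through $\gamma''$ and the fixed IFS constants) this prefactor is absorbed and the bound becomes $|q|e^{-(1+\gamma')h\chi}$.

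The only genuinely delicate points are (i) remembering that integrality of $q$ is exactly what makes $T_p^n=T_{p^n}$ usable inside the Fourier transform, and (ii) the bounded-distortion estimates pinning $|g'(y_0)|\asymp p^{-n}$ down from the stopping rule defining $\ttau_{\frac{n\log p}{\chi}}$, which simultaneously delivers $s,s^{-1}=O(1)$ and cancels the $p^n$ factor coming from the Lipschitz bound on $e^{2\pi i qp^n z}$. The remainder is the routine ``approximate a long composition by its tangent affine map'' mechanism already employed in Section~\ref{Section proof 1}.
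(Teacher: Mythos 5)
Your proof is correct and mirrors the paper's argument: both decompose $f_{\omega|_{\beta_{n,h}(\omega)}}$ as $g\circ\psi$ with $g=f_{\omega|_{\ttau_{n\log p/\chi}(\omega)}}$ and $\psi=f_{\omega'|_{\ttau_h(\omega')}}$, apply the $C^0$ linearization Lemma~\ref{Lemma lin} at $y_0=\psi(x_0)$, use the integrality of $q$ to replace $T_p^n$ by $T_{p^n}$ inside $\mathcal{F}_q$, and extract $s$ via $|\mathcal{F}_{qp^n}(M_{g'(y_0)}\psi\nu)|=|\mathcal{F}_{qs}(\psi\nu)|$. The only cosmetic difference is that you apply the lemma with a slack exponent $\gamma''\in(\gamma',\gamma)$ and then absorb the prefactor $2\pi L|I|^{1+\gamma''}$ into the exponential for $h$ large, whereas the paper's displayed computation takes $\beta=\gamma'$ directly and quietly suppresses those multiplicative constants; your bookkeeping is the cleaner way to land exactly on $|q|e^{-(1+\gamma')h\chi}$.
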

\begin{proof}
We use the notation $\omega'$ as in the statement of the Claim.  Plugging in $g=f_{\omega|_{\ttau_{\frac{n\log p}{\chi}}(\omega)}}$ into Lemma \ref{Lemma lin}, as long as $h=h(\gamma')$ is large enough, for any $x$ and our prefixed $x_0\in I$
\begin{eqnarray} \label{substitue}
|f_{\omega|_{\ttau_{\frac{n\log p}{\chi}}(\omega)}} \left( f_{\omega' |_{\ttau_ h (\omega')}} (x) \right) &-& f_{\omega|_{\ttau_{\frac{n\log p}{\chi}}(\omega)}} \left( f_{\omega' |_{\ttau_ h (\omega')}} (x_0) \right) \\
&-& f_{\omega|_{\ttau_{\frac{n\log p}{\chi}}(\omega)}} ' \left( f_{\omega' |_{\ttau_ h (\omega')}} (x_0) \right) \cdot \left( f_{\omega' |_{\ttau_ h (\omega')}} (x) - f_{\omega' |_{\ttau_ h (\omega')}} (x_0) \right) | \nonumber \\
&\leq& \left| f_{\omega|_{\ttau_{\frac{n\log p}{\chi}}(\omega)}} ' \left( f_{\omega' |_{\ttau_ h (\omega')}} (x_0) \right) \right| \cdot  \left| f_{\omega' |_{\ttau_ h (\omega')}} (x) - f_{\omega' |_{\ttau_ h (\omega')}} (x_0) \right|^{1+\gamma'} \nonumber \\
&\leq& e^{ - \frac{n\log p}{\chi} \cdot \chi} \cdot e^{- h\chi(1+\gamma')} = p^{-n} \cdot e^{- h\chi(1+\gamma')}. \nonumber
\end{eqnarray}
Denote $t_0 = f_{\omega|_{\ttau_{\frac{n\log p}{\chi}}(\omega)}} ' \left( f_{\omega' |_{\ttau_ h (\omega')}} (x_0) \right)$ and $t_1 = (1-t_0) \cdot f_{\omega' |_{\ttau_ h (\omega')}} (x_0)$.  Using that $\mathcal{F}_q$  is $|q|$-Lipschitz and \eqref{substitue},
$$\left| \left|\mathcal{F}_q \left( T_p^n \circ f_{\omega|_{\beta_{n,h} (\omega)}} \nu \right) \right| -  \left|\mathcal{F}_{q } \left( T_p^n \left( t_0\cdot f_{\omega' |_{\ttau_ h (\omega')}} \nu \right) \right) \right|  \right|$$
$$= \left| \left|\mathcal{F}_q \left( T_p^n \circ f_{\omega|_{\ttau_{\frac{n\log p}{\chi}}(\omega)+\ttau_ h(\sigma^{\ttau_{\frac{n\log p}{\chi}}(\omega)}\omega)}} \nu \right) \right| -  \left|\mathcal{F}_{q } \left( T_p^n \circ \left( t_0\cdot f_{\omega' |_{\ttau_ h (\omega')}}+t_1 \right) \nu  \right) \right|  \right|$$
$$ \leq \left| \mathcal{F}_{qp^n } \left(  f_{\omega|_{\ttau_{\frac{n\log p}{\chi}}(\omega)+\ttau_ h(\sigma^{\ttau_{\frac{n\log p}{\chi}}(\omega)}\omega)}} \nu \right)  -  \mathcal{F}_{qp^n} \left(  \left( t_0 \cdot f_{\omega' |_{\tau_ h (\omega')}} +t_1 \right) \nu \right)  \right|$$
$$\leq |q|p^n \cdot ||f_{\omega|_{\ttau_{\frac{n\log p}{\chi}}(\omega)+\ttau_ h(\sigma^{\ttau_{\frac{n\log p}{\chi}}(\omega)}\omega)}} - \left( t_0 \cdot f_{\omega' |_{\tau_ h (\omega')}} +t_1 \right)||_{\infty}  $$
$$\leq |q|p^n \cdot  p^{-n} \cdot e^{- h\chi(1+\gamma')} = |q|e^{-(1+\gamma')h\chi}.$$

Finally, using bounded distortion (Theorem \ref{Bounded distortiion theorem}), set $s=p^n t_0$ and note that
$$|s| = \left|  p^n \cdot f_{\omega|_{\tau_{\frac{n\log p}{\chi}}(\omega)}} ' \left( f_{\omega' |_{\tau_ h (\omega')}} (x_0) \right) \right| \in [C_0,1],\quad \text{ where } 0<C_0<1 \text{ is a global constant.}$$
Then the result follows since, as $q$ is an integer,
$$\left|\mathcal{F}_{q } \left( T_p^n \left( t_0\cdot f_{\omega' |_{\ttau_ h (\omega')}} \nu \right) \right) \right| = \left|\mathcal{F}_{qp^n t_0 } \left(  f_{\omega' |_{\ttau_ h (\omega')}} \nu \right) \right| =\left|\mathcal{F}_{qs} \left(  f_{\omega' |_{\tau_ h (\omega')}} \nu \right) \right|.$$
\end{proof}
The next Claim, which is the final ingredient in the proof that Theorem \ref{Claim sufficient} implies Theorem \ref{Lemma sufficient}, is about writing measures of the form $f_{\omega|_{\ttau_h(\omega)}} \nu$ as a certain average of measures of the form 
$$f_{\xi|_{ \tau_k (\xi)+\ttau_{h'} ( \sigma^{\tau_k (\xi)} (\xi))}} \circ f_{\rho_\xi} \nu, \quad  \text{ where } \xi \in \mathcal{A}^h (\omega),\, \text{ and } k+h'>h$$
where $\rho_\xi$ is a  word of uniformly bounded length. This is crucially important for our argument, since the local limit Theorem \ref{Theorem lin equid}  applies for the random variable $\tau_k$, but not necessarily for the stopping time $\ttau_k$. 

\begin{Claim} (Relating stopping time with cocycle) \label{Claim stopping}
There is some $P>0$ such that: \newline
 For every $\omega \in \lbrace 1,..., n \rbrace^\mathbb{N}$, $h,h',k >0$  with $h<k+h'$
$$f_{\omega|_{\ttau_h(\omega)}} \nu = \mathbb{E}_{\xi \in \mathcal{A}^h (\omega)} \left( f_{\xi|_{ \tau_k (\xi)+\ttau_{h'} ( \sigma^{\tau_k (\xi)} (\xi))}} \circ f_{\rho_\xi} \nu \right)$$ 
where $\rho_\xi = \rho_{\xi, k,h,h',Q}$ is  the unique random word satisfying
$$\xi|_{\tau_k (\xi)+\ttau_{h'} ( \sigma^{\tau_k (\xi)} (\xi))}*\rho_\xi= \xi|_{\ttau_h(\xi) + \ttau_{k+h'-h+Q } \left( \sigma^{ {\ttau_h(\xi)} } \xi\right)}$$
for some global $Q>0$ that only depends on the IFS, so  that $|\rho_\xi| \leq P$ for all $\xi \in \mathcal{A}^\mathbb{N}$.
\end{Claim}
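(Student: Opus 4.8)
The plan is to verify the identity by first establishing the self-conformal ``desintegration'' formula that underlies it, and then checking that the words $\rho_\xi$ are well-defined and uniformly short. The starting point is the basic fact (as in Lemma~\ref{Claim 3'}, i.e.\ \cite[Lemma 2.2.4]{bishop2013fractal}) that for any stopping time $\beta$ adapted to the cylinder filtration on $\mathcal A^{\mathbb N}$ one has $\nu = \mathbb E_\omega(f_{\omega|_{\beta(\omega)}}\nu)$, and more generally that conditioning on a prefix is compatible with pushing forward: for any finite word $\eta$ and any stopping time $\beta\geq |\eta|$ that refines the cylinder $A_\eta$, $f_\eta\nu = \mathbb E_{\xi\in A_\eta}\big(f_{\xi|_{\beta(\xi)}}\nu\big)$, where the expectation is with respect to the conditional measure $\mathbb P_{A_\eta}$. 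I would apply this with $\eta$ ranging over the cells of $\mathcal A^h$: fixing $\omega$ and setting $\eta=\omega|_{\ttau_h(\omega)}$, I want to write $f_\eta\nu$ as an average over $\xi\in\mathcal A^h(\omega)$ of push-forwards under longer words.

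The second step is to identify the correct ``longer word.'' For $\xi\in\mathcal A^h(\omega)$ I consider the stopping time $\beta(\xi) = \ttau_h(\xi) + \ttau_{k+h'-h+Q}\big(\sigma^{\ttau_h(\xi)}\xi\big)$, which is indeed $\geq |\eta|=\ttau_h(\xi)$ and refines the cylinder; applying the displayed desintegration formula gives $f_{\omega|_{\ttau_h(\omega)}}\nu = \mathbb E_{\xi\in\mathcal A^h(\omega)}\big(f_{\xi|_{\beta(\xi)}}\nu\big)$. It then remains to show that, for a suitable global constant $Q$ depending only on the IFS, the word $\xi|_{\tau_k(\xi)+\ttau_{h'}(\sigma^{\tau_k(\xi)}\xi)}$ is a \emph{prefix} of $\xi|_{\beta(\xi)}$, so that $\rho_\xi$ — the ``tail'' completing the shorter word to $\xi|_{\beta(\xi)}$ — is well-defined, and that $|\rho_\xi|$ is bounded by a global $P$. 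This is a length comparison at the level of the cocycle: by the definition of $\tau_k$ and Lemma~\ref{Lemma relation between R.V and stopping time}, $-\log|f'_{\xi|_{\tau_k(\xi)}}(\cdot)|\in[k\chi,k\chi+D']$, and by bounded distortion (Theorem~\ref{Bounded distortiion theorem}) the deterministic stopping time $\ttau$ and the cocycle $S$ differ by a bounded additive constant, so the two contractions $f_{\xi|_{\tau_k(\xi)+\ttau_{h'}(\sigma^{\tau_k(\xi)}\xi)}}$ and $f_{\xi|_{\beta(\xi)}}$ both have log-derivative of order $(k+h')\chi + O_\Phi(1)$; choosing $Q=Q(\Phi)$ large enough (bigger than all the accumulated distortion and since $h<k+h'$) guarantees $\beta(\xi)$ is at least as long, and the uniform upper bound on $|\rho_\xi|$ follows because each additional symbol decreases the log-derivative by at least $D$ (equation~\eqref{Eq C and C prime}), so no more than $(\text{total slack})/D = O_\Phi(1)$ extra symbols are possible.

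I would also record the trivial but necessary point that $\rho_\xi$ depends only on $\xi$ through the finitely many digits up to $\beta(\xi)$ — in particular it is a genuine (measurable, in fact locally constant) function of $\xi$ — and that the identity $f_{\xi|_{\beta(\xi)}} = f_{\xi|_{\tau_k(\xi)+\ttau_{h'}(\sigma^{\tau_k(\xi)}\xi)}}\circ f_{\rho_\xi}$ is just the composition rule $f_{\eta_1*\eta_2}=f_{\eta_1}\circ f_{\eta_2}$. Plugging this into the averaged formula from the second step yields exactly
$$f_{\omega|_{\ttau_h(\omega)}}\nu = \mathbb E_{\xi\in\mathcal A^h(\omega)}\Big(f_{\xi|_{\tau_k(\xi)+\ttau_{h'}(\sigma^{\tau_k(\xi)}\xi)}}\circ f_{\rho_\xi}\nu\Big),$$
which is the claim.

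The main obstacle I anticipate is the length comparison in the second step: one must be careful that $\tau_k$ is the \emph{random} stopping time $\min\{n:S_n\geq k\chi\}$ while $\ttau$ is the deterministic one built from $\max_x|f'_{\omega|_n}(x)|$, and that after it one appends $\ttau_{h'}$ of the \emph{shifted} sequence — so the total log-contraction of $f_{\xi|_{\tau_k(\xi)+\ttau_{h'}(\sigma^{\tau_k(\xi)}\xi)}}$ is $k\chi + h'\chi$ up to the chain-rule corrections coming from bounded distortion, and similarly $\ttau_{k+h'-h+Q}$ applied after $\ttau_h$ gives $h\chi+(k+h'-h+Q)\chi = (k+h'+Q)\chi$ up to bounded error. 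Making the constant $Q$ absorb all of these $O_\Phi(1)$ distortion terms uniformly in $k,h,h'$ (using crucially $h<k+h'$ so that the shorter word is genuinely shorter) is the only delicate bookkeeping, but it is entirely analogous to the argument already carried out in Lemma~\ref{Lemma big P}, and presents no real difficulty beyond care with constants.
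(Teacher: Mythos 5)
Your proposal is correct and follows essentially the same route as the paper: you decompose via the self-conformal disintegration along the stopping time $\ttau_h(\xi)+\ttau_{k+h'-h+Q}(\sigma^{\ttau_h(\xi)}\xi)$, then use bounded distortion (Theorem~\ref{Bounded distortiion theorem}) together with the lower bound $D$ on the one-step cocycle from equation~\eqref{Eq C and C prime} to show that, for $Q=Q(\Phi)$ large enough, $\xi|_{\tau_k(\xi)+\ttau_{h'}(\sigma^{\tau_k(\xi)}\xi)}$ is a prefix of the longer word with a uniformly bounded completion $\rho_\xi$. The only cosmetic difference is that you phrase the first step as a general disintegration formula for $f_\eta\nu$ while the paper decomposes $\nu$ itself and then post-composes with $f_{\omega|_{\ttau_h(\omega)}}$, but these are the same identity.
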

\begin{proof}
Since $\nu$ is self conformal and $\ttau$ is a stopping time, for any fixed $Q>0$ (to be chosen later)
$$\nu = \mathbb{E}( f_{\eta|_{\ttau_{k+h'-h+Q}}} \nu)$$
so
\begin{equation} \label{Use of stopping time}
f_{\omega|_{\ttau_h(\omega)}} \nu  = \mathbb{E}_{\eta} ( f_{\omega|_{\ttau_h(\omega)}} \circ f_{\eta|_{\ttau_{k+h'-h+Q}}} \nu) = \mathbb{E}_{\xi \in \mathcal{A}^h (\omega)} ( f_{\xi|_{\ttau_h(\xi)} + \ttau_{k+h'-h+Q } \left( \sigma^{ {\ttau_h(\xi)} } \xi\right)} \nu).
\end{equation}
Now, by bounded distortion (Theorem \ref{Bounded distortiion theorem}), there is global $C>0$ such that for every $\xi$ 
$$f ' _{\xi|_{\ttau_h(\xi) + \ttau_{k+h'-h+Q } \left( \sigma^{ {\ttau_h(\xi)} } \xi\right)}} = \Theta_C (e^{ -(h+k+h'-h+Q)\chi}) = \Theta_C (e^{ -(k+h'+Q)\chi})$$ 
on the other hand,  for every $\xi$
$$f ' _{\xi|_{ \tau_k (\xi)+\ttau_{h'} ( \sigma^{\tau_k (\xi)} (\xi))}} = \Theta_C (e^{ -(k+h')\chi}).$$
It follows that we can choose $Q$ based only on $C$ such that for every $\xi$, 
$$\tau_k (\xi)+\ttau_{h'} ( \sigma^{\tau_k (\xi)} (\xi)) \leq \ttau_h(\xi) + \ttau_{k+h'-h+Q } \left( \sigma^{ {\ttau_h(\xi)} } \xi\right) \leq \tau_k (\xi)+\ttau_{h'} ( \sigma^{\tau_k (\xi)} (\xi)) +P$$
where $P$ has uniformly finite length. Therefore, there is a  word $\rho_\xi$ of length $\leq P$ with
$$ \xi|_{\tau_k (\xi)+\ttau_{h'} ( \sigma^{\tau_k (\xi)} (\xi))}*\rho_\xi= \xi|_{\ttau_h(\xi) + \ttau_{k+h'-h+Q } \left( \sigma^{ {\ttau_h(\xi)} } \xi\right)}.$$
Plugging this equality into \eqref{Use of stopping time}, the Claim is proved.
\end{proof}

\subsubsection{Proof that Theorem \ref{Claim sufficient} implies Theorem \ref{Lemma sufficient}} Let $\epsilon>0$. Suppose that for every integer $q$ with $|q|\geq q^*(\epsilon,\bfp)$ there are values $h(q,\epsilon,\bfp)$, $k(q,\epsilon,\bfp)$ and $h'(q,\epsilon,\bfp)$ satisfying the conclusion of Theorem \ref{Claim sufficient}. Then, for $\mathbb{P}$ almost every $\omega$, by applying successively Theorem \ref{Theorem martingale} and Claim \ref{First linearization} with this $h$, Claim \ref{Claim stopping} with these $h,h',k$ and $\omega'$, Jensen's inequality, and finally Theorem \ref{Claim sufficient}, we get 
\begin{eqnarray*}
\left| \mathcal{F}_q ( \frac1N\sum_{n=1}^N  \delta_{T_p^n x_\omega})  \right|&\leq & \frac1N\sum_{n=1}^N \left|\mathcal{F}_q \left( T_p ^n \circ f_{\omega|_{\beta_{n,h} (\omega)}} \nu \right)  \right| +o_N(1) \\
&=& \frac1N\sum_{n=1}^N \sqrt{ \left|\mathcal{F}_q \left( T_p ^n \circ f_{\omega|_{\beta_{n,h} (\omega)}} \nu \right)  \right|^2} +o_N(1) \\
&\leq& \frac{\sqrt{2}}{N}\sum_{n=1}^N \sqrt{  \left|\mathcal{F}_{qs} \left( f_{\omega' |_{\tau_ h (\omega')}} \nu \right) \right|^2  +|q|e^{-(1+\gamma')h\chi} } +o_N(1) \\
&=& \frac{\sqrt{2}}{N}\sum_{n=1}^N \sqrt{  \left|\mathcal{F}_{qs} \left(  \mathbb{E}_{\xi \in \mathcal{A}^h (\omega')} \left( f_{\xi|_{ \tau_k (\xi)+\ttau_{h'} ( \sigma^{\tau_k (\xi)} (\xi))}} \circ f_{\rho_\xi} \nu \right) \right) \right|^2  +|q|e^{-(1+\gamma')h\chi} }  \\
&+& o_N(1) \\
&\leq& \frac{\sqrt{2}}{N}\sum_{n=1}^N \sqrt{ \mathbb{E}_{\xi \in \mathcal{A}^h (\omega')}\big|\mathcal F_{qs}(f_{\xi|_{ \tau_k (\xi)+\ttau_{h'} ( \sigma^{\tau_k (\xi)} (\xi))}} \circ f_{\rho_\xi} \nu)\big|^2  +|q|e^{-(1+\gamma')h\chi} }  \\
&+& o_N(1) \\
&\leq& \sqrt{2\epsilon} +o_N(1). \\
\end{eqnarray*}
Taking $N\rightarrow \infty$, Theorem \ref{Lemma sufficient} is proved.

\subsection{Proof of Theorem \ref{Claim sufficient}}
The proof has two stages. First, for a fixed $\omega \in \mathcal{A}^\mathbb{N}$ and $n\in \mathbb{N}$, we let $q\in \mathbb{Z}$ and $h,h',k >0$ be arbitrary, and use them to bound
$$\mathbb{E}_{\xi \in \mathcal{A}^h (\omega')}\big|\mathcal F_{qs}(f_{\xi|_{ \tau_k (\xi)+\ttau_{h'} ( \sigma^{\tau_k (\xi)} (\xi))}} \circ f_{\rho_\xi} \nu)\big|^2$$
with the notation $\omega'=\omega'(n,\omega)$ and $s(\omega,n,h)$ as in Theorem \ref{Claim sufficient}. The resulting bound will be a sum of several error terms, depending variously on $|q|,k,h,h'$ up to universal constants. These errors are produced by running a similar argument to the one proving Theorem \ref{Theorem main tech} part (1) as in Section \ref{Section proof 1}.  In the second stage of the proof, we let $\epsilon>0$ be small and show that we may choose specific parameters $h,h',k$ such that all of these error terms can be made arbitrarily small simultaneously, as long as $|q|$ is large in a manner that only depends on $\epsilon$. This will give Theorem \ref{Claim sufficient}.
\subsubsection{Collecting error terms} \label{Section error terms}
Fix $\omega$ and $n$ and let $\eta$ be such that $\mathcal{A}^h (\omega') = A_\eta$, and let $s$ be as in Theorem \ref{Claim sufficient}. From this point forward, we can forget about $n,\omega$ and just work with the cylinder  $A_\eta$ and the frequency $qs$. We do recall that $s,s^{-1}=O(1)$ uniformly in $n$ and $\omega$, and this will be used implicitly throughout the proof. Let $k,h,h' \geq 0$ be any parameters, and let $q\in \mathbb{Z}$. For notational convenience, we assume $q\cdot s,q>0$ - otherwise, whenever they appear inside a bound, an absolute value should be applied. By Theorem \ref{Theorem lin equid},  there exists a subset $\overline A_{k,\eta}^{h,h'}\subseteq A_\eta$ such that 
$$\mathbb P(\overline A_{k,\eta}^{h,h'})\geq \mathbb P(A_\eta)\cdot (1-o_{\bfp}^{\min(h,k-h)\to\infty}(1))$$
and (ii)-(iii) of Theorem \ref{Theorem lin equid} hold for it. In particular,
\begin{eqnarray*}
\mathbb{E}_{\xi \in \mathcal{A}^h (\omega')}\big|\mathcal F_{qs}(f_{\xi|_{ \tau_k (\xi)+\ttau_{h'} ( \sigma^{\tau_k (\xi)} (\xi))}} \circ f_{\rho_\xi} \nu)\big|^2 & =& \mathbb{E}_{\xi \in A_\eta}\big|\mathcal F_{qs}(f_{\xi|_{ \tau_k (\xi)+\ttau_{h'} ( \sigma^{\tau_k (\xi)} (\xi))}} \circ f_{\rho_\xi} \nu)\big|^2 \\
\end{eqnarray*}
\begin{eqnarray} \label{First error}
&\leq & \int_{ \xi \in \overline A_{k,\eta}^{h,h'}}  \mathbb{E}_{\cA_k^{h,h'}(\xi)  }\big|\mathcal F_{qs}(f_{\omega|_{ \tau_k (\omega)+\ttau_{h'} ( \sigma^{\tau_k (\omega)} (\omega))}} \circ f_{\rho_\omega} \nu)\big|^2 \, d\mathbb{P}(\xi) +o_{\bfp}^{\min(h,k-h)\to\infty}(1).
\end{eqnarray}
Here we made use of the fact that $|\mathcal{F}_q (\cdot )| \leq 1$. So, our first error term is $o_{\bfp}^{\min(h,k-h)\to\infty}(1)$.

Our next step is to linearize once more, in order to set the stage for the application of the upgraded local limit Theorem \ref{Theorem lin equid}:
\begin{Claim} (Second linerization) \label{Claim linear2} We have
$$  \int_{ \xi \in \overline A_{k,\eta}^{h,h'}}  \mathbb{E}_{\cA_k^{h,h'}(\xi)  }\big|\mathcal F_{qs}(f_{\omega|_{ \tau_k (\omega)+\ttau_{h'} ( \sigma^{\tau_k (\omega)} (\omega))}} \circ f_{\rho_\omega} \nu)\big|^2 \, d\mathbb{P}(\xi) $$
$$\leq \int_{ \xi \in \overline A_{k,\eta}^{h,h'}} \mathbb{E}_{\cA_k^{h,h'}(\xi)  }  \left| \mathcal{F}_{qs} \left(   M_{e^{-S_{\tau_k(\omega)} (\omega)}} \circ M_{ \sign\left( f'_{\omega|_{\tau_k(x)}} (x_{\sigma^{\tau_k(\omega)}(\omega)})\right)}  \circ f_{\eta'} \circ f_{\rho_{\omega}}  \nu \right) \right|^2 d \mathbb{P}( \xi) $$
$$+ O(q\cdot e^{-(k+h') \chi} e^{-\gamma' \cdot h' \chi})$$
where for every $\xi \in \overline A_{k,\eta}^{h,h'}$, recalling that $\cA_k^{h,h'}(\xi) = A_{k,\eta,\eta'}$, $\eta'$ is defined as  $\eta'=\eta'(\xi)$. 
\end{Claim}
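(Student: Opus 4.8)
\textbf{Plan for the proof of Claim \ref{Claim linear2} (second linearization).}

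The proof follows the same template as the first linearization (Claim \ref{Claim linear}), but now applied to the inner map $f_{\omega|_{\tau_k(\omega)+\ttau_{h'}(\sigma^{\tau_k(\omega)}(\omega))}}\circ f_{\rho_\omega}$ rather than to $f_{\omega|_{\tilde\beta_k(\omega)}}$. First I would decompose this composition as $f_{\omega|_{\tau_k(\omega)}}\circ\big(f_{\omega|_{\tau_k(\omega)}^{\tau_k(\omega)+\ttau_{h'}(\cdots)}}\circ f_{\rho_\omega}\big)$. On the event $\cA_k^{h,h'}(\xi)=A_{k,\eta,\eta'}$, the word $\omega|_{\tau_k(\omega)}^{\tau_k(\omega)+\ttau_{h'}(\cdots)}$ agrees with $\eta'$ up to a uniformly bounded suffix, exactly as in the discussion preceding Claim \ref{Claim linear}; absorbing that suffix into $\rho_\omega$ (whose length stays $\le P$ by Claim \ref{Claim stopping}), we may write the inner map as $f_{\omega|_{\tau_k(\omega)}}\circ f_{\eta'}\circ f_{\rho_\omega}$. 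By bounded distortion (Theorem \ref{Bounded distortiion theorem}) and the definition of $\ttau_{h'}$, the map $f_{\eta'}\circ f_{\rho_\omega}$ has derivative $\Theta_{C'}(e^{-h'\chi})$ uniformly, so its image has diameter $O(e^{-h'\chi})$.

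Next I would invoke the $C^0$ linearization Lemma \ref{Lemma lin} with $g=f_{\omega|_{\tau_k(\omega)}}$, base point $y=f_{\eta'}\circ f_{\rho_\omega}(z)$ where $z\in I$ is chosen so that $f_{\eta'}\circ f_{\rho_\omega}(z)=x_{\sigma^{\tau_k(\omega)}(\omega)}$, and argument $f_{\eta'}\circ f_{\rho_\omega}(x)$ for $x\in I$. Lemma \ref{Lemma lin} with exponent $\beta=\gamma'$ gives
$$\big\|f_{\omega|_{\tau_k(\omega)}}\circ f_{\eta'}\circ f_{\rho_\omega}-S_{\omega,k,\eta'}\big\|_{C^0(I)}\le |f'_{\omega|_{\tau_k(\omega)}}(x_{\sigma^{\tau_k(\omega)}(\omega)})|\cdot O(e^{-(1+\gamma')h'\chi})=O(e^{-k\chi}e^{-(1+\gamma')h'\chi}),$$
where $S_{\omega,k,\eta'}$ is the affine-in-$f_{\eta'}\circ f_{\rho_\omega}$ map defined verbatim as in \eqref{Eq for linear} (with $\bar{\eta'}$ replaced by $\eta'$ and the appropriate $\rho_\omega$). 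Since $\mathcal{F}_{qs}(\cdot)$ is $2\pi|qs|$-Lipschitz and $|s|=O(1)$, pushing $\nu$ through these two maps changes $\mathcal{F}_{qs}$ by $O(q\,e^{-k\chi}e^{-(1+\gamma')h'\chi})=O(q\,e^{-(k+h')\chi}e^{-\gamma'h'\chi})$, and hence changes $|\mathcal{F}_{qs}|^2$ by the same order (using $|\mathcal{F}_{qs}|\le 1$). Finally I note that $S_{\omega,k,\eta'}$ differs from $M_{e^{-S_{\tau_k(\omega)}(\omega)}}\circ M_{\sign(f'_{\omega|_{\tau_k(\omega)}}(x_{\sigma^{\tau_k(\omega)}(\omega)}))}\circ f_{\eta'}\circ f_{\rho_\omega}$ only by a translation (recording $\log|f'_{\omega|_{\tau_k(\omega)}}(x_{\sigma^{\tau_k(\omega)}(\omega)})|=-S_{\tau_k(\omega)}(\omega)$ via Lemma \ref{Lemma relation between R.V and stopping time}), which does not affect $|\mathcal{F}_{qs}|$. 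Integrating the per-atom bound over $\xi\in\overline A_{k,\eta}^{h,h'}$, using that all implied constants are uniform in $\xi$, yields the claimed inequality.

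\textbf{Main obstacle.} The bookkeeping is essentially routine given the earlier lemmas; the one point requiring genuine care is the reduction of $\omega|_{\tau_k(\omega)}^{\tau_k(\omega)+\ttau_{h'}(\cdots)}$ to $\eta'$ up to a uniformly bounded suffix absorbed into $\rho_\omega$, i.e. making sure the errors introduced by that truncation are also of order $O(e^{-k\chi}e^{-(1+\gamma')h'\chi})$ or smaller, and that the constant $P$ (length of $\rho_\omega$) can be taken independent of $k,h,h',q$. This is handled by the same bounded-distortion comparison of derivatives as in the proof of Lemma \ref{Lemma big P}: both $f'_{\eta'}$ and $f'_{\omega|_{\tau_k(\omega)}^{\tau_k(\omega)+\ttau_{h'}(\cdots)}}$ are $\Theta(e^{-h'\chi})$, so one is a bounded-length extension of a common prefix of the other, with the length bound depending only on the distortion constant $L$ and $D,D'$. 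Once this is in place, everything else is a direct application of Lemma \ref{Lemma lin} and the Lipschitz property of the Fourier transform, exactly paralleling Claim \ref{Claim linear}.
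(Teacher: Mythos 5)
Your proposal is correct and matches the paper's approach exactly: the paper's own proof of Claim \ref{Claim linear2} is the single sentence ``This is, up to minor changes, Claim \ref{Claim linear},'' and your reconstruction is precisely what those ``minor changes'' amount to — apply Lemma \ref{Lemma lin} with $g=f_{\omega|_{\tau_k(\omega)}}$ and exponent $\gamma'$, use that $(f_{\eta'}\circ f_{\rho_\omega})'=\Theta(e^{-h'\chi})$ by bounded distortion, observe $|f'_{\omega|_{\tau_k(\omega)}}(x_{\sigma^{\tau_k(\omega)}(\omega)})|\le e^{-k\chi}$, obtain a $C^0$ error $O(e^{-(k+h')\chi-\gamma'h'\chi})$, and multiply by the Lipschitz constant $O(|qs|)=O(q)$ of $\mathcal F_{qs}$.

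One remark on the ``obstacle'' you flag. The reduction of $\omega|_{\tau_k(\omega)}^{\tau_k(\omega)+\ttau_{h'}(\sigma^{\tau_k(\omega)}(\omega))}$ to $\eta'$ is actually an exact identity on the event $\cA_k^{h,h'}(\xi)=A_{k,\eta,\eta'}$ under the index convention used implicitly throughout the paper (in the proof of Lemma \ref{Lemma big P} the authors write $f_{\omega|_{\tau_k(\omega)+|\eta'|}}=f_{\omega|_{\tau_k(\omega)}}\circ f_{\eta'}$, which presumes $\eta'=\iota^{h'}(\sigma^{\tau_k(\omega)}(\omega))$; the ``$-1$'' in Definition \ref{Def R.V. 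D}(3) is off by one relative to this usage). So there is no suffix to absorb, and no need to touch $\rho_\omega$ — which is just as well, since the same $\rho_\omega$ must appear on both sides of the stated inequality. If one instead reads Definition \ref{Def R.V. D}(3) literally, your bounded-distortion argument bounds the discrepancy by $O(1)$ symbols and the contribution is dominated by the same error term, so the claim survives either reading; but the cleaner route is to recognize the identity. Everything else in your argument — affinity of $S_{\omega,k,\eta'}$ in $f_{\eta'}\circ f_{\rho_\omega}$, dropping the translation because $|\mathcal F_{qs}|$ is translation invariant, passing from $|\mathcal F_{qs}|$ to $|\mathcal F_{qs}|^2$ via $|a^2-b^2|\le 2|a-b|$, and integrating over $\xi$ — is exactly right.
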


\begin{proof}
This is, up to minor changes, Claim \ref{Claim linear}.
\end{proof}

So, our next error term is $q\cdot e^{-(k+h') \chi} e^{-\gamma' \cdot h' \chi}$ (up to multiplying by a global constant that we omit from notation, and recalling our assumption that $q>0$). With the help of the following Corollary, we can remove the randomness of the word $\rho_\xi$. 

\begin{Corollary} \label{Corollary}
 Let $P>0$ be as in Claim \ref{Claim stopping}. Then
$$ \int_{ \xi \in \overline A_{k,\eta}^{h,h'}} \mathbb{E}_{\cA_k^{h,h'}(\xi)  }  \left| \mathcal{F}_{qs} \left(   M_{e^{-S_{\tau_k(\omega)} (\omega)}} \circ M_{ \sign\left( f'_{\omega|_{\tau_k(x)}} (x_{\sigma^{\tau_k(\omega)}(\omega)})\right)}  \circ f_{\eta'} \circ f_{\rho_{\omega}}  \nu \right) \right|^2 d \mathbb{P}( \xi) $$
$$ \leq \sum_{|\rho|\leq P} \int_{ \xi \in \overline A_{k,\eta}^{h,h'}} \mathbb{E}_{\cA_k^{h,h'}(\xi)  }  \left| \mathcal{F}_{qs} \left(   M_{e^{-S_{\tau_k(\omega)} (\omega)}}   \circ f_{\eta'} \circ f_{\rho}  \nu \right) \right|^2 d \mathbb{P}( \xi).$$
\end{Corollary}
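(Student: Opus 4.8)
The plan is to mimic, essentially verbatim, the deduction of Corollary~\ref{Coro finite sum} from Claim~\ref{Claim linear} carried out in Section~\ref{Section proof 1}. Fix $\xi\in\overline A_{k,\eta}^{h,h'}$ and write $\cA_k^{h,h'}(\xi)=A_{k,\eta,\eta'}$; by the definition of the partition $\cA_k^{h,h'}$ in Definition~\ref{Def R.V. D}, the word $\eta'$ is constant on this cell, so $\eta'$ is a well-defined function of $\cA_k^{h,h'}(\xi)$ and the inner expectation $\mathbb{E}_{\cA_k^{h,h'}(\xi)}$ is taken over a set on which $\eta'$ does not vary. By Claim~\ref{Claim stopping}, for every $\omega\in A_{k,\eta,\eta'}$ the word $\rho_\omega$ has length at most $P$, hence belongs to the finite index set $\{\rho:\ |\rho|\leq P\}$. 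Since each term $\big|\mathcal F_{qs}(\cdots)\big|^2$ is nonnegative, this gives the pointwise (in $\omega$) bound
$$\left|\mathcal{F}_{qs}\left(M_{e^{-S_{\tau_k(\omega)}(\omega)}}\circ f_{\eta'}\circ f_{\rho_\omega}\nu\right)\right|^2\leq\sum_{|\rho|\leq P}\left|\mathcal{F}_{qs}\left(M_{e^{-S_{\tau_k(\omega)}(\omega)}}\circ f_{\eta'}\circ f_{\rho}\nu\right)\right|^2.$$

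Next I would dispose of the intermediate sign factor $M_{\sign(\cdots)}$. Following the convention adopted in Remark~\ref{Remark}, this factor takes only the two values $\pm 1$, so one may simply enlarge the sum on the right-hand side to range over both sign choices; this at most doubles the number of summands, which remains uniformly finite, and for notational simplicity one assumes the sign is $+1$ throughout. After this reduction, taking $\mathbb{E}_{\cA_k^{h,h'}(\xi)}$ of both sides of the displayed inequality, then integrating $d\mathbb{P}(\xi)$ over $\overline A_{k,\eta}^{h,h'}$, and interchanging the finite sum with the integral and the expectation (both are linear), yields exactly the claimed inequality.

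I do not expect any genuine obstacle here: the argument uses only the uniform length bound $|\rho_\omega|\leq P$ supplied by Claim~\ref{Claim stopping}, the nonnegativity of $|\mathcal{F}_{qs}(\cdot)|^2$, the finiteness of $\{\rho:\ |\rho|\leq P\}$ and of the set of sign values, and linearity of expectation and integration. The only mild point of care, already noted above, is the measurability/constancy of $\eta'$ on the cell $\cA_k^{h,h'}(\xi)$, which is immediate from Definition~\ref{Def R.V. D}.
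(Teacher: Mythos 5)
Your proposal is correct and is essentially the paper's own argument: the authors simply remark that the corollary follows in the same way as Corollary~\ref{Coro finite sum} together with Remark~\ref{Remark}, using the uniform bound $|\rho_\omega|\leq P$ from Claim~\ref{Claim stopping} and the orientation-preserving convention for the sign factor, which is precisely the pointwise domination plus linearity argument you give.
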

Recalling that $|\rho_\omega|\leq P$ for all $\omega$, this is analogues to Corollary \ref{Coro finite sum} and Remark \ref{Remark} via  an assumption (without the loss of generality) that all maps are orientation preserving. We are now ready to apply  Theorem \ref{Theorem lin equid}:
\begin{Claim} \label{Claim LLT} (Application of local limit Theorem)
For every $\rho$ as in the sum in Corollary \ref{Corollary},
$$ \int_{ \xi \in \overline A_{k,\eta}^{h,h'}} \mathbb{E}_{\cA_k^{h,h'}(\xi)  }  \left| \mathcal{F}_{qs} \left(   M_{e^{-S_{\tau_k(\omega)} (\omega)}}   \circ f_{\eta'} \circ f_{\rho}  \nu \right) \right|^2 d \mathbb{P}( \omega)$$
$$\leq \int_{ \xi \in \overline A_{k,\eta}^{h,h'}}  \int_{k\chi} ^{k\chi +D'} \left| \mathcal{F}_{qs} \left(M_{e^{-x}} \circ f_{\eta'} \circ f_\rho    \nu \right) \right|^2 d \Gamma_{A_{k,\eta,\eta'}(\xi)} (x)   d \mathbb{P}( \xi)$$
$$ +O\left(\frac{2}{q e^{-(k+h')\chi}}+  (q e^{-(k+h')\chi})^2 o_{\bfp}^{\min(h,k-h)\to\infty}(1) \right). $$
\end{Claim}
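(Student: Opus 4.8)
\textbf{Proof plan for Claim \ref{Claim LLT}.} The strategy is to fix a word $\rho$ with $|\rho|\le P$ and $\xi\in\overline A_{k,\eta}^{h,h'}$, write $A_{k,\eta,\eta'}=\cA_k^{h,h'}(\xi)$, and apply the upgraded local limit Theorem \ref{Theorem lin equid} cell by cell to the random variable $S_{\tau_k}$. The function whose conditional expectation we must control is
$$g(t):=\Big|\mathcal F_{qs}\big(M_{e^{-t}}\circ f_{\eta'}\circ f_\rho\,\nu\big)\Big|^2,\qquad t\in[k\chi,k\chi+D'].$$
First I would record, exactly as in the proof of Theorem \ref{Theorem equid 2}, that $g$ is Lipschitz on $[k\chi,k\chi+D']$ with constant $\lesssim |q s|\cdot e^{-k\chi}\cdot\diam(\supp(f_{\eta'}\circ f_\rho\,\nu))$. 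Here $\diam(\supp(f_{\eta'}\circ f_\rho\,\nu))=O(e^{-h'\chi})$ by Corollary \ref{Coro finite sum} together with the bounded distortion estimate $|(f_{\eta'}\circ f_\rho)'(x)|=\Theta_{C'}(e^{-h'\chi})$; since $s,s^{-1}=O(1)$ this gives Lipschitz constant $\lesssim |q|e^{-(k+h')\chi}$. Because $g$ is also bounded by $1$, standard approximation lets me replace $g$ by a step function $\psi$ built from $O\big((|q|e^{-(k+h')\chi})\big)$ intervals with $\|\psi-g\|_\infty$ as small as one wishes (say, within a factor that will be absorbed by the error), or more cleanly, break $[k\chi,k\chi+D']$ into $N:=\lceil |q|e^{-(k+h')\chi}\rceil$ equal subintervals so that the oscillation of $g$ on each is $O(1/N)$.

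Next, on each of these $N$ subintervals $J_i$ I apply part (iii) of Theorem \ref{Theorem lin equid}: for $\xi\in\overline A_{k,\eta}^{h,h'}$,
$$\mathbb P_{\cA_k^{h,h'}(\xi)}(S_{\tau_k}\in J_i)=\Gamma_{\cA_k^{h,h'}(\xi)}(J_i)+o_{\bfp}^{\min(h,k-h)\to\infty}(1).$$
Writing $\mathbb E_{\cA_k^{h,h'}(\xi)}[g(S_{\tau_k})]$ as a sum over the $J_i$ of (a value of $g$ on $J_i$) times (the conditional probability of $J_i$), and comparing with $\int g\,d\Gamma_{\cA_k^{h,h'}(\xi)}$ decomposed the same way, the total discrepancy is at most: (a) the approximation error from replacing $g$ by its value on each $J_i$, which contributes $O(1/N)=O\big(1/(|q|e^{-(k+h')\chi})\big)$ — matching the $O\!\big(\tfrac{2}{q e^{-(k+h')\chi}}\big)$ term in the statement; plus (b) the local limit error, which is $N$ applications of $o_{\bfp}^{\min(h,k-h)\to\infty}(1)$, i.e. $O\big(N\cdot o_{\bfp}^{\min(h,k-h)\to\infty}(1)\big)$ — but since $g$ is bounded by $1$ and each $J_i$ carries mass bounded by that of the adjacent cells, the careful bookkeeping (cf. the proof of Theorem \ref{Theorem equid 2}, where the analogous count is the number of steps times the per-step error) yields $O\big((|q|e^{-(k+h')\chi})^2\, o_{\bfp}^{\min(h,k-h)\to\infty}(1)\big)$, matching the second term in the stated error. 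Integrating the resulting pointwise-in-$\xi$ inequality over $\xi\in\overline A_{k,\eta}^{h,h'}$ against $\mathbb P$ (using that the errors are uniform in $\xi$ and in $\eta'$, as guaranteed by Theorem \ref{Theorem lin equid}) gives the claimed bound.

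The one delicate point — and the main obstacle — is getting the exponent right on the local limit error: why it is $(|q|e^{-(k+h')\chi})^2$ rather than the naive $|q|e^{-(k+h')\chi}$. This comes from the fact that a step function approximating a $\Lambda$-Lipschitz function on an interval of bounded length to accuracy $\delta$ needs $\Theta(\Lambda/\delta)$ steps, and here we are forced to take $\delta$ comparable to $1/\Lambda$ (the finest meaningful scale), so the number of steps is $\Theta(\Lambda^2)$ with $\Lambda\asymp |q|e^{-(k+h')\chi}$; each step then contributes one factor of the local limit error $o_{\bfp}^{\min(h,k-h)\to\infty}(1)$. This is exactly the same bookkeeping as in Theorem \ref{Theorem equid 2} (where the ``$o_k^{-2/4}$ steps, each with error $o_k$'' count appears), so I would simply invoke that argument verbatim, with $o_k$ replaced by $o_{\bfp}^{\min(h,k-h)\to\infty}(1)$ and the Lipschitz constant $o_k^{-1/4}$ replaced by $|q|e^{-(k+h')\chi}$, and with $\Gamma_{\cA_k^{h,h'}(\xi)}$ furnished by Theorem \ref{Theorem lin equid} instead of Theorem \ref{Theorem equid}. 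No new idea beyond Theorems \ref{Theorem equid 2} and \ref{Theorem lin equid} is needed; the content is purely the translation of the former's proof into the ``$\min(h,k-h)\to\infty$'' regime.
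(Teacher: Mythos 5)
Your proposal is correct and follows exactly the route the paper takes: the paper's own proof of Claim \ref{Claim LLT} is a one-line reference to the derivation of Theorem \ref{Theorem equid 2} from Theorem \ref{Theorem equid}, and you have accurately unpacked that analogy, with the Lipschitz constant $\Lambda\asymp|q|e^{-(k+h')\chi}$ replacing $o_k^{-1/4}$, the step-function with $\Theta(\Lambda^2)$ steps replacing the $o_k^{-2/4}$ steps, the per-cell error $o_\bfp^{\min(h,k-h)\to\infty}(1)$ furnished by Theorem \ref{Theorem lin equid} replacing $o_k$, and the resulting error $O(1/\Lambda+\Lambda^2 o)$ matching the stated bound. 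One cosmetic note: the statement "we are forced to take $\delta$ comparable to $1/\Lambda$" is a choice rather than a necessity — other balances $\delta+(\Lambda/\delta)o$ are possible — but it is the choice that reproduces the two-term error in the claim and mirrors Theorem \ref{Theorem equid 2}'s proof, so the computation and conclusion are correct as written.
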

\begin{proof}
This follows from Theorem \ref{Theorem lin equid} in a similar manner to the derivation of Theorem \ref{Theorem equid 2} from Theorem \ref{Theorem equid} in Section \ref{Section proof 1}.
\end{proof}
So, the next error term added to our list is $\frac{2}{q e^{-(k+h')\chi}}+  (q e^{-(k+h')\chi})^2 o_{\bfp}^{\min(h,k-h)\to\infty}(1)$ (again, up to a universal multiplicative constant that we ignore).

Next, using the uniform norm on the density of the measure $\Gamma_{A_{k,\eta,\eta'}(\xi)}$ (Lemma \ref{Lemma Abs. contin of gamma}) we get
$$\sum_{|\rho|\leq 2P} \int_{ \xi \in \overline A_{k,\eta}^{h,h'}}  \int_{k\chi} ^{k\chi +D'} \left| \mathcal{F}_{qs} \left(M_{e^{-x}} \circ f_{\eta'} \circ f_\rho    \nu \right) \right|^2 d \Gamma_{A_{k,\eta,\eta'}(\xi)} (x)   d \mathbb{P}( \xi)$$
$$\leq \sum_{|\rho|\leq 2P} \int_{ \xi \in \overline A_{k,\eta}^{h,h'}} \left(   \int_{k\chi} ^{k\chi +D'} \left| \mathcal{F}_{qs} \left(M_{e^{-z}} \circ f_{\eta'} \circ f_\rho \nu \right) \right|^2 \cdot \frac{1}{D} dz   \right) d \mathbb{P}( \xi).  $$
So, we have reduced our problem to a  sum of oscillatory integrals, that has uniformly bounded many terms. This will give rise to the final error term:

\begin{Claim} \label{Claim integral} (Oscillatory integral)
 For every $\rho$ in the sum above, and for every $\delta>0$,
 $$ \int_{ \xi \in \overline A_{k,\eta}^{h,h'}} \left(   \int_{k\chi} ^{k\chi +D'} \left| \mathcal{F}_{qe^s} \left(M_{e^{-z}} \circ f_{\eta'} \circ f_\rho \nu \right) \right|^2 \cdot \frac{1}{D} dz   \right) d \mathbb{P}( \xi)  \leq O \left( \frac{1}{\delta q e^{-(k+h')}}+ \sup_{y} \nu \left(B_\delta (y) \right) \right).$$
\end{Claim}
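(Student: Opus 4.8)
The plan is to apply Lemma \ref{Lemma 3.2 } (Hochman's Lemma) with the measure taken to be $\theta = f_{\eta'} \circ f_\rho \nu$. Fix $\rho$ in the sum and fix $\xi \in \overline A_{k,\eta}^{h,h'}$, with the associated word $\eta'=\eta'(\xi)$. The inner integral is exactly of the form treated in Lemma \ref{Lemma 3.2 }: taking the frequency to be $qe^s$ (with $|qe^s| = \Theta(|q|)$ since $s,s^{-1}=O(1)$), the measure to be $f_{\eta'}\circ f_\rho\nu$, and the scale parameter $r = r(\delta)$ to be chosen so that $e^{\chi k}\cdot r = \delta/C'$ where $C'$ is the uniform constant from Corollary \ref{Coro finite sum}, Lemma \ref{Lemma 3.2 } yields
\begin{equation*}
\int_{k\chi}^{k\chi+D'} \big|\mathcal{F}_{qe^s}\big(M_{e^{-z}}\circ f_{\eta'}\circ f_\rho\nu\big)\big|^2\, dz \leq D'\cdot\Big(\frac{e^2}{r\cdot |qe^s|} + \int (f_{\eta'}\circ f_\rho\nu)\big(B_{e^{\chi k}r}(y)\big)\, d(f_{\eta'}\circ f_\rho\nu)(y)\Big).
\end{equation*}

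The next step is to undo the two push-forwards in the ball-counting term. By Corollary \ref{Coro finite sum} (and the bounded distortion Theorem \ref{Bounded distortiion theorem}), the map $f_{\eta'}\circ f_\rho$ has derivative of size $\Theta_{C'}(e^{-h'\chi})$ on $I$, so its inverse is $O(e^{h'\chi})$-Lipschitz; hence for a uniform constant $T>1$,
\begin{equation*}
(f_{\eta'}\circ f_\rho)^{-1}\big(B_{e^{\chi k}r}(f_{\eta'}\circ f_\rho(y))\big) \subseteq B_{T e^{\chi(k+h')}r}(y),
\end{equation*}
which gives $\int (f_{\eta'}\circ f_\rho\nu)(B_{e^{\chi k}r}(y))\,d(f_{\eta'}\circ f_\rho\nu)(y) \leq \int \nu(B_{T e^{\chi(k+h')}r}(y))\,d\nu(y) \leq \sup_y\nu(B_{T e^{\chi(k+h')}r}(y))$. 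Now choosing $r$ so that $T e^{\chi(k+h')}r = \delta$, i.e. $\frac{1}{r} = \frac{T e^{\chi(k+h')}}{\delta}$, the first term becomes $\frac{e^2}{r|qe^s|} = \frac{e^2 T e^{\chi(k+h')}}{\delta|qe^s|} = O\big(\frac{1}{\delta q e^{-(k+h')}}\big)$ since $|qe^s|=\Theta(q)$. Both bounds are uniform in $\xi$ (hence in $\eta'$), so integrating $d\mathbb{P}(\xi)$ over $\overline A_{k,\eta}^{h,h'}$ (a set of mass $\leq 1$) and absorbing the constant $\frac{D'}{D}$ yields
\begin{equation*}
\int_{\xi\in\overline A_{k,\eta}^{h,h'}}\Big(\int_{k\chi}^{k\chi+D'}\big|\mathcal{F}_{qe^s}\big(M_{e^{-z}}\circ f_{\eta'}\circ f_\rho\nu\big)\big|^2\cdot\tfrac{1}{D}\,dz\Big)d\mathbb{P}(\xi) \leq O\Big(\frac{1}{\delta q e^{-(k+h')}} + \sup_y\nu(B_\delta(y))\Big),
\end{equation*}
as claimed.

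There is essentially no obstacle here: this is a routine combination of Lemma \ref{Lemma 3.2 }, the uniform Lipschitz control on $f_{\eta'}\circ f_\rho$ from Corollary \ref{Coro finite sum}, and a change of variables in the ball-counting integral — it is the exact analogue of the final estimate in the proof of Theorem \ref{Theorem main tech} part (1). The only point requiring minor care is bookkeeping the constants so that the dependence on $\xi$ genuinely drops out (which it does, since $C'$, $T$, $D$, $D'$ are all global), and remembering that replacing $q$ by $qe^s$ (equivalently $qs$ after the earlier linearization) only costs a bounded factor because $s^{\pm1}=O(1)$.
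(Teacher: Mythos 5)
Your proof is correct and follows exactly the paper's intended approach: the paper itself only says that this claim ``is analogous to the application of Lemma \ref{Lemma 3.2 } at the end of the proof of Theorem \ref{Theorem main tech} part (1),'' and your argument reproduces precisely that chain — apply Hochman's Lemma \ref{Lemma 3.2 } to $\theta=f_{\eta'}\circ f_\rho\nu$, pull back the ball-measure integral using the uniform $\Theta_{C'}(e^{-h'\chi})$ derivative bound from Corollary \ref{Coro finite sum}, choose $r$ so that $Te^{\chi(k+h')}r=\delta$, and integrate over $\xi$ using uniformity of all constants. The only cosmetic point is that the frequency in the claim as printed ($qe^s$) versus elsewhere ($qs$) is an inconsistency in the paper, and your observation that it costs only a bounded factor because $s^{\pm1}=O(1)$ handles it correctly either way.
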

\begin{proof}
This is analogues to the application of Lemma \ref{Lemma 3.2 } at the end of the proof of Theorem \ref{Theorem main tech} part (1) in Section \ref{Section proof 1}.
\end{proof}
So, our last error term is $\frac{1}{\delta q e^{-(k+h')}}+ \sup_{y} \nu \left(B_\delta (y) \right) $, for any $\delta>0$ (to be chosen later at our convenience). Yet again, we ignore the underlying global multiplicative constant.
\subsubsection{Conclusion of the proof of Theorem \ref{Claim sufficient}}
Let us first recall the error terms we collected in the previous Section:

\noindent{ {\bf List of error terms :}}

Every bound below is in the sense of $\lesssim_\bfp$. Recall that $\gamma' \in (0,\gamma)$.

Local limit Theorem (Claim \ref{Claim LLT}), and \eqref{First error}
$$ \frac{2}{q e^{-(k+h')\chi}}+ (q e^{-(k+h')\chi})^2 o_{\bfp}^{\min(h,k-h)\to\infty}(1);$$

Second linearization (Claim \ref{Claim linear2}): 
$$   qe^{-(k+h')\chi}e^{-\gamma'h'\chi};$$

Oscillatory integral (Claim \ref{Claim integral}): For every $\delta>0$,
$$\frac 1  {\delta q e^{-(k+h')\chi}} + \sup_y\nu(B_\delta(y));$$

First linearization (Claim \ref{First linearization}):
$$ qe^{-(1+\gamma') h\chi}.$$

Our goal is to show that for every $\epsilon>0$ there is some $q^* (\epsilon) \in \mathbb{N}$ such that for all integer $|q| \geq q^*$,  there exists a choice of $h,k,h'$ based only on $\epsilon$ and $q$, so that every term in the list above is at most $\epsilon$. Recall that we are assuming, without the loss of generality, that $q>0$. We will also be minded to take care of the other constraints: $h$ needs to be large in a manner dependent on the prefixed parameter $\gamma'$ (Claim \ref{First linearization}), and $h<h'+k$ (Claim \ref{Claim stopping}).

{\bf Choices of parameters:}
\begin{enumerate}
\item Fix $\delta=\delta(\epsilon,\bfp)$ such that $\sup_y\nu(B_\delta(y))\ll_\bfp \epsilon$. Here we use Lemma \ref{Lemma nu is continuous}.
\item Fix $h'=h'(\epsilon,\bfp)$ such that $e^{-\gamma'h'\chi}=\delta\epsilon^2$.
\item Fix $h^*=h^*(\epsilon,\bfp)$ such that if $\min(h,k-h)\geq h^*$, then the $o_\bfp^{\min(h,k-h)\to\infty}(1)$ term in Theorem \ref{Theorem lin equid} is $\ll \delta^2\epsilon^3$.
\item Fix $h=h(\epsilon,\bfp,q)\geq \frac1{\gamma'}(h^*+h^\triangle)>h^*$  for some properly chosen $h^\triangle=h^\triangle(\epsilon,\bfp)>0$ (arising from Claim \ref{Claim k-h} below) such that $q e^{-(1+\gamma')h\chi}= \epsilon$. This can be done if $q \geq q^*$ for some $q^*=q^*(\epsilon,\bfp)$.
\item Fix $k=k(\epsilon,\bfp,q)$ such that $qe^{-(k+h')\chi}= \delta^{-1}\epsilon^{-1}$.
\end{enumerate}

\begin{Claim}\label{Claim k-h}If $q\geq q^*$ for a sufficiently large $q^*=(\epsilon,\bfp)$, then $k-h\geq h^*$. \end{Claim}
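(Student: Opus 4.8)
\textbf{Proof plan for Claim \ref{Claim k-h}.} The goal is to show that the parameters chosen in items (1)--(5) automatically satisfy the constraint $k - h \geq h^*$ once $q$ is large, so that Theorem \ref{Theorem lin equid} may be legitimately invoked with an $o_\bfp^{\min(h,k-h)\to\infty}(1)$ error that is genuinely small. The strategy is simply to extract $k$ and $h$ from the defining relations and take logarithms. From item (5), $k\chi = \log q - h'\chi + \log(\delta\epsilon)$, and from item (4), $h\chi = \frac{1}{1+\gamma'}\big(\log q - \log\epsilon\big)$. Subtracting,
$$
(k-h)\chi = \log q - h'\chi + \log(\delta\epsilon) - \frac{1}{1+\gamma'}\log q + \frac{1}{1+\gamma'}\log\epsilon
= \frac{\gamma'}{1+\gamma'}\log q \;-\; h'\chi + O_{\epsilon,\bfp}(1),
$$
where the $O_{\epsilon,\bfp}(1)$ absorbs $\log(\delta\epsilon)$, $\tfrac{1}{1+\gamma'}\log\epsilon$, and any bounded slack in the choices of $\delta$ and $h'$ (all of which depend only on $\epsilon$ and $\bfp$, never on $q$). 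Since $h'$ was fixed in item (2) as a function of $\epsilon$ and $\bfp$ alone, the term $h'\chi$ is also part of this bounded-in-$q$ quantity. Hence $(k-h)\chi = \frac{\gamma'}{1+\gamma'}\log q + O_{\epsilon,\bfp}(1)$, which tends to $+\infty$ as $q\to\infty$; choosing $q^* = q^*(\epsilon,\bfp)$ large enough forces $k - h \geq h^*$.

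The one genuine subtlety — and this is where the constant $h^\triangle$ in item (4) enters — is a mild circularity: the choice of $h$ in item (4) was padded by $h^\triangle=h^\triangle(\epsilon,\bfp)$ precisely so that the above inequality can be arranged robustly, i.e.\ so that $h$ is not so large relative to $k$ that $k-h$ fails to grow. Concretely, one should verify that the relation $h\chi = \frac{1}{1+\gamma'}(\log q - \log\epsilon)$ together with the ``$\geq \frac1{\gamma'}(h^*+h^\triangle)$'' lower bound on $h$ is consistent with $k-h\geq h^*$: the former pins down $h$ as a function of $q$, while the latter is a fixed lower threshold, and one checks that for $q \geq q^*$ the pinned value of $h$ already exceeds that threshold (so item (4) is not over-constrained), while simultaneously $k - h \geq h^*$. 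I would first record that $q e^{-(1+\gamma')h\chi} = \epsilon$ forces $h\chi = \frac{1}{1+\gamma'}(\log q + \log\tfrac1\epsilon)$, then note this grows like $\frac{1}{1+\gamma'}\log q$, which exceeds the fixed threshold $\frac1{\gamma'}(h^*+h^\triangle)\chi$ once $q\geq q^*(\epsilon,\bfp)$, and finally substitute into the displayed computation of $(k-h)\chi$ above.

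The main obstacle is purely bookkeeping: one must be scrupulous that \emph{every} implied constant appearing along the way ($\delta$, $h'$, $h^*$, $h^\triangle$, $\chi$, $D'$, and the absolute constants hidden in the various $\lesssim_\bfp$ and $O(\cdot)$ symbols from the error-term list) depends only on $\epsilon$ and $\bfp$ and never on $q$, so that the dominant term $\frac{\gamma'}{1+\gamma'}\log q$ in $(k-h)\chi$ is the only $q$-dependent contribution and can be made to dominate. There is no analytic content — no limit theorem, no distortion estimate — beyond the elementary algebra of logarithms; the point of isolating this as a separate Claim is to make the parameter juggling in the ``Choices of parameters'' list internally consistent. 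Once $k - h \geq h^*$ is secured, combined with $h \geq h^* \geq h^*$ (item (4) gives $h > h^*$ directly), we have $\min(h, k-h) \geq h^*$, which by item (3) makes the $o_\bfp^{\min(h,k-h)\to\infty}(1)$ terms in the error list at most $\ll \delta^2\epsilon^3$, and a final pass through the list of four error terms with the chosen values shows each is $\lesssim_\bfp \epsilon$, completing the proof of Theorem \ref{Claim sufficient}.
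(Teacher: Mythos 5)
Your computation is correct, and it is essentially the same logarithm bookkeeping as the paper, with a small difference of route worth noting. The paper combines items (2), (4), (5) into the identity $e^{k\chi - (1+\gamma')h\chi} = (\delta\epsilon^2)^{1+1/\gamma'}$ and reads off the exact relation $k - h = \gamma' h - h^\triangle$, identifying $h^\triangle$ as the explicit $O_{\epsilon,\bfp}(1)$ constant so that item (4)'s lower bound $h \geq \tfrac{1}{\gamma'}(h^* + h^\triangle)$ gives $k - h \geq h^*$ at once. You instead express $(k-h)\chi = \tfrac{\gamma'}{1+\gamma'}\log q + O_{\epsilon,\bfp}(1)$ and observe that the right side tends to $+\infty$, which proves the claim without identifying $h^\triangle$ explicitly. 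Both derivations are valid; yours shows that the precise value of $h^\triangle$ is not needed for the claim itself, only to make item (4) a clean self-contained definition. One caveat on the narrative: you describe $h^\triangle$ as guarding against ``$h$ not being so large relative to $k$ that $k-h$ fails to grow,'' which reverses the sign of the dependence. Since $k - h = \gamma' h - h^\triangle$ with $\gamma' > 0$, enlarging $h$ enlarges $k - h$; the $h^\triangle$ padding in item (4) is a lower bound on $h$ designed to push $\gamma' h$ above $h^* + h^\triangle$, i.e.\ it pushes $k - h$ \emph{up}. This does not affect the correctness of your computation, but the heuristic is backwards.
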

\begin{proof} We have
$$e^{k\chi-(1+\gamma')h\chi}=\delta\epsilon^2 e^{-h'\chi}= \left( \delta\epsilon^2  \right)^{ 1+ \frac{1}{\gamma'}}.$$ 
Thus $k-h= \gamma'h-O_{\epsilon,\bfp}(1)$. Denote this last $O_{\epsilon,\bfp}(1)$ by $h^\triangle$. Then $k-h\geq h^*$ following the choice of $h$ above. \end{proof}

\textbf{Note} To employ Claim \ref{Claim stopping}, we need that $k+h'>h$. But this  clearly follows from Claim \ref{Claim k-h}. Also, to use Claim \ref{First linearization} we need $h$ to be large enough in a manner dependent on $\gamma'$, but this can clearly be arranged in step (4) by potentially making $q$ larger.
$$ $$

With these parameters, all errors are simultaneously small:

Local limit Theorem:
$$\frac{2}{q e^{-(k+h')\chi}}+ ( qe^{-(k+h')\chi})^2o_\bfp^{\min(h,k-h)\to\infty}(1)\ll_\bfp \delta \cdot \epsilon+ \delta^{-2}\epsilon^{-2}\cdot\delta^2\epsilon^3\leq 2\epsilon;$$

Second linearization:
$$   qe^{-(k+h')\chi}e^{-\gamma'h'\chi}=\delta^{-1}\epsilon^{-1}\cdot\delta\epsilon^2=\epsilon;$$

Oscillatory integral:
$$\frac 1  {\delta q e^{-(k+h')\chi}} + \sup_y\nu(B_\delta(y))\ll_\bfp\frac1{\delta\cdot\delta^{-1}\epsilon^{-1}}+\epsilon\ll 2\epsilon;$$

First linearization:
$$ qe^{-(1+\gamma') h\chi}=\epsilon.$$

\noindent{ Thus, recalling that}
$$\mathbb{E}_{\xi \in \mathcal{A}^h (\omega')}\big|\mathcal F_{qe^s}(f_{\xi|_{ \tau_k (\xi)+\tau_{h'} ( \sigma^{\tau_k (\xi)} (\xi))}} \circ f_{\rho_\xi} \nu)\big|^2 $$
was shown to be bounded by the sum of the first three error terms mentioned above,
 $$ \mathbb{E}_{\xi \in \mathcal{A}^h (\omega')}\big|\mathcal F_{qe^s}(f_{\xi|_{ \tau_k (\xi)+\tau_{h'} ( \sigma^{\tau_k (\xi)} (\xi))}} \circ f_{\rho_\xi} \nu)\big|^2 + qe^{(1+\gamma')h\chi}\leq C_0 \epsilon$$ for uniform $C_0$, which is what we want.

Thus, Theorem \ref{Claim sufficient} is proved. We have shown that it implies Theorem \ref{Main theorem}, and since $p$ was arbitrary, this implies Theorem \ref{Theorem main tech} part (2).

\section{Proof of Corollary \ref{Main Corollary}} \label{Section coro}
\subsection{Proof of Corollary \ref{Main Corollary} part (1)}
Let $\Phi$ be a $C^{1+\gamma}$ IFS, and let $c(\cdot ,\cdot )$ be the derivative cocycle. Recall that $\mathcal{A}=\lbrace1,...,n\rbrace$  let  $H^{\kappa}$ denote the space of $\kappa$-H\"older continuous maps $\mathcal{A}^\mathbb{N} \rightarrow \mathbb{C}$. Recall that we define
\begin{eqnarray*} 
\Lambda_c &=& \lbrace \theta: \exists \phi_\theta \in H^{\kappa} \text{ with } |\phi_\theta|=1 \text{ and } u_\theta \in S^1 \text{ such that } \forall (a,\omega)\in \mathcal{A} \times \mathcal{A}^\mathbb{N}, \\
&& \quad \quad \quad \phi_\theta(\iota_a(\omega)) = u_\theta \exp(-i \theta \cdot c(a,\omega))\cdot \phi_\theta (\omega) \rbrace. \nonumber
\end{eqnarray*}
Following\footnote{In fact, aperiodic cocycles are defined in \cite[equation (15.8)]{Benoist2016Quint} in a different way, by a certain spectral gap property. However, It is a consequence of \cite[Lemma 15.3]{Benoist2016Quint} that the two definitions are equivalent.} Benoist and Quint \cite{Benoist2016Quint} we say that $c$ is an \textit{aperiodic} cocycle if 
$$\Lambda_c = \lbrace 0 \rbrace.$$

Next, writing $\Phi = \lbrace f_1,...,f_n \rbrace$, we define
$$ F_\Phi = \left\lbrace \log \left| f_i ' \left( y_i \right) \right| : \text{ where } f_i (y_i)=y_i,\quad i\in \mathcal{A} \right\rbrace.$$
Notice that $F_\Phi$ is precisely the set that appears in Corollary \ref{Main Corollary} part (1).

\begin{Lemma} \label{Lemma lattice}
If $c$ is not aperiodic (i.e. it is periodic) then $F_\Phi$ belongs to a translation of a lattice.
\end{Lemma}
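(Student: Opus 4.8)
The plan is to feed constant symbolic sequences into the functional equation defining $\Lambda_c$. Since $c$ is assumed periodic, $\Lambda_c\neq\lbrace 0\rbrace$, so we may fix $\theta\in\Lambda_c$ with $\theta\neq 0$, together with $\phi_\theta\in H^{\kappa}$ satisfying $|\phi_\theta|\equiv 1$ and $u_\theta\in S^1$ such that
$$\phi_\theta(\iota_a(\omega))=u_\theta\cdot e^{-i\theta\cdot c(a,\omega)}\cdot\phi_\theta(\omega)\qquad\text{for all }(a,\omega)\in\lbrace 1,\dots,n\rbrace\times\lbrace 1,\dots,n\rbrace^{\mathbb N}.$$

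First I would record two elementary facts about the constant sequence $\omega^{(i)}:=(i,i,i,\dots)$ attached to a symbol $i$. One: $\iota_i(\omega^{(i)})=\omega^{(i)}$, since prepending the symbol $i$ changes nothing. Two: $x_{\omega^{(i)}}=y_i$, the unique fixed point of $f_i$; indeed $f_{\omega^{(i)}|_m}=f_i^{\circ m}$ and $f_i^{\circ m}(x_0)\to y_i$ as $m\to\infty$ because $f_i$ is a contraction. Consequently $c(i,\omega^{(i)})=-\log|f_i'(x_{\omega^{(i)}})|=-\log|f_i'(y_i)|$. Now I would evaluate the functional equation at $a=i$ and $\omega=\omega^{(i)}$: using fact one on the left and the computation of $c$ on the right,
$$\phi_\theta(\omega^{(i)})=u_\theta\cdot e^{\,i\theta\log|f_i'(y_i)|}\cdot\phi_\theta(\omega^{(i)}).$$
Since $|\phi_\theta(\omega^{(i)})|=1\neq 0$, dividing through gives $u_\theta\cdot e^{\,i\theta\log|f_i'(y_i)|}=1$, hence $e^{\,i\theta\log|f_i'(y_i)|}=u_\theta^{-1}$, a quantity independent of $i$.

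It follows that for all $i,j\in\lbrace 1,\dots,n\rbrace$ one has $\theta\bigl(\log|f_i'(y_i)|-\log|f_j'(y_j)|\bigr)\in 2\pi\mathbb Z$. Setting $r:=2\pi/|\theta|>0$, this reads $\log|f_i'(y_i)|-\log|f_1'(y_1)|\in r\mathbb Z$ for every $i$, so that
$$F_\Phi\subseteq\log|f_1'(y_1)|+r\mathbb Z,$$
which is precisely a translate of the lattice $r\mathbb Z$, as claimed. There is no serious obstacle here; the only point requiring care is the choice of test points: the constant sequences $\omega^{(i)}$ are exactly the ones fixed by $\iota_i$ and coding the fixed points $y_i$, which makes the functional equation collapse to a condition purely about the finite set $\lbrace\log|f_i'(y_i)|\rbrace_i$. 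The non-aperiodicity hypothesis is used only to extract a genuinely nonzero $\theta$ from $\Lambda_c\neq\lbrace 0\rbrace$, so that $r=2\pi/|\theta|$ is a legitimate lattice generator; no regularity of $\phi_\theta$ beyond $|\phi_\theta|\equiv 1$ is needed.
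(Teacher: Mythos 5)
Your proof is correct and follows essentially the same route as the paper: you also plug the constant sequence $(a,a,a,\dots)$ into the cohomological equation, use that it is fixed by $\iota_a$ and codes the fixed point $y_a$ of $f_a$, cancel the unimodular factor $\phi_\theta$, and conclude that $\lbrace\log|f_a'(y_a)|\rbrace_a$ lies in a single coset of the lattice $\frac{2\pi}{\theta}\mathbb{Z}$. The only difference is that you spell out the intermediate identifications ($\iota_a$-fixedness, $x_{\omega^{(a)}}=y_a$, division by the nonzero $\phi_\theta(\omega^{(a)})$) that the paper leaves implicit.
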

\begin{proof}
The assumption that $c$ is not aperiodic means that there exists $0\neq \theta \in \Lambda_c$. So, there exists  $ \phi \in H^{\kappa}$  with  $|\phi|=1$  and  $u \in S^1$  such that  for all $(a,\omega)\in \mathcal{A} \times \mathcal{A}^\mathbb{N}$,
$$\phi(\iota_a(\omega)) = u\cdot \exp(-i \theta \cdot c(a,\omega))\cdot \phi (\omega). $$
Now, fix $1\leq a \leq n$ and let $\omega = (a,a,a....) \in \mathcal{A}^\mathbb{N}$. Plugging these into the equation above, we obtain
$$1= u \exp(-i \theta \cdot( -\log f_a ' (x_\omega))). $$
This equation  implies that $F_\Phi$ belongs to a translation (determined by $u$) of the lattice $\frac{2\pi}{\theta} \mathbb{Z}$.
\end{proof}

\noindent{\textbf{Proof of Corollary \ref{Main Corollary} part (1)}} This is immediate from  Lemma \ref{Lemma lattice} and Theorem \ref{Theorem main tech} part (1).
\subsection{Proof of Corollary \ref{Main Corollary} part (2)}
Let $\Phi = \lbrace f_i(x)=r_i \cdot x+t_i \rbrace_{i\in \mathcal{A}}$ be an aperiodic self-similar IFS. Recall that $\Phi$ is aperiodic if there are $i,j\in \mathcal{A}$ such that $\frac{\log |r_i|}{\log |r_j|}\notin \mathbb{Q}$. Without the loss of generality, we assume $i=1$ and $j=2$.  The following Lemma shows that from $\Phi$ we may construct an IFS $\Psi$ such that $F_\Psi$ does not belong to a translation of a lattice (and so its derivative cocycle is aperiodic), and such that every self similar measure with respect to $\Phi$ is also a self similar measure with respect to $\Psi$.
\begin{Lemma} \label{Lemma aperiodic}
Let $\Psi = \lbrace f_1 \circ f_i \rbrace_{i\in \mathcal{A}} \bigcup \lbrace f_i \rbrace_{i=2,..,n}$. Then:
\begin{enumerate}
\item The set $F_\Psi$ does not belong to a translation of a lattice.

\item Let $\nu$ be a self similar measure with respect to $\Phi$ and the probability vector $\mathbf{p}$. Then there exists a probability vector $\mathbf{q}$ such that $\nu$ is a self similar measure with respect to $\mathbf{q}$ and the IFS $\Psi$.
\end{enumerate}
\end{Lemma}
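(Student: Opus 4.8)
The plan is to verify the two assertions separately, both being essentially bookkeeping with the chain rule and the definition of self-similar measures.

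For part (1): I want to show that $F_\Psi$, the set of logarithmic derivatives of $\Psi$ at the fixed points of its maps, is not contained in any coset $t + r\mathbb{Z}$. First I would compute these derivatives. The maps $f_i$ with $i = 2, \dots, n$ contribute $\log|r_i|$ to $F_\Psi$ (the derivative is constant since $f_i$ is affine, so the value at the fixed point is just $\log|r_i|$). The composed maps $f_1 \circ f_i$ are affine with ratio $r_1 r_i$, so they contribute $\log|r_1| + \log|r_i|$ for each $i \in \mathcal{A}$. In particular $F_\Psi$ contains $\log|r_2|$ (from $f_2$) and $\log|r_1| + \log|r_2|$ (from $f_1 \circ f_2$); the difference of these is $\log|r_1|$. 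It also contains $\log|r_1| + \log|r_1| = 2\log|r_1|$ (from $f_1 \circ f_1$). Now suppose for contradiction $F_\Psi \subseteq t + r\mathbb{Z}$ for some $t, r$. Then differences of elements of $F_\Psi$ lie in $r\mathbb{Z}$, so $\log|r_1| \in r\mathbb{Z}$; also $\log|r_2| - \log|r_2| $ is trivial, but I can instead use that $\log|r_2| \in t + r\mathbb{Z}$ and $\log|r_1| + \log|r_2| \in t + r\mathbb{Z}$ give nothing new, so I need another pair. The point is that the \emph{only} way $F_\Phi$ (whose elements are $\log|r_i|$) fails to force aperiodicity is if all $\log|r_i|$ are commensurable; passing to $\Psi$ doubles $\log|r_1|$ in one slot while keeping $\log|r_2|$ unchanged, so if both $\log|r_1|$ and $\log|r_2|$ were in $t + r\mathbb{Z}$ then $\log|r_1| \in r\mathbb{Z}$ and $\log|r_2| \in r\mathbb{Z}$ would both hold only after taking appropriate differences — let me instead argue directly: from $\log|r_2| \in t + r\mathbb Z$ and $2\log|r_1| \in t + r\mathbb Z$ (is $2\log|r_1|$ in $F_\Psi$? yes, from $f_1\circ f_1$) and $\log|r_1|+\log|r_2| \in t + r\mathbb Z$, subtracting gives $\log|r_1| - \log|r_2| \in r\mathbb Z$ and $2\log|r_1| - \log|r_2| \in r\mathbb Z$, hence $\log|r_1| \in r\mathbb Z$ and then $\log|r_2| \in r\mathbb Z$, so $\frac{\log|r_1|}{\log|r_2|} = \frac{\text{integer multiple of }r}{\text{integer multiple of }r} \in \mathbb{Q}$, contradicting aperiodicity of $\Phi$. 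Combined with Lemma~\ref{Lemma lattice}, this gives that the derivative cocycle of $\Psi$ is aperiodic.

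For part (2): I would recall that $\nu = \sum_i p_i f_i \nu$. Substituting $f_1 \nu = \sum_j p_j f_1 f_j \nu$ into the term $p_1 f_1 \nu$ yields
$$\nu = \sum_{j=1}^n p_1 p_j \, (f_1 \circ f_j)\nu + \sum_{i=2}^n p_i \, f_i \nu,$$
which exhibits $\nu$ as a self-conformal (self-similar) measure for $\Psi$ with weight vector $\mathbf q$ whose entries are $p_1 p_j$ for the maps $f_1\circ f_j$ and $p_i$ for the maps $f_i$, $i \geq 2$. One checks $\mathbf q$ is a probability vector: $\sum_j p_1 p_j + \sum_{i\geq 2} p_i = p_1 \sum_j p_j + (1 - p_1) = p_1 + (1-p_1) = 1$, and all entries are strictly positive since $\mathbf p$ is. This requires the elements of $\Psi$, listed with multiplicity, to be treated as a genuine IFS with $2n-1$ maps (some of which could coincide as functions, but that is harmless — overlaps are allowed throughout the paper).

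I do not anticipate a serious obstacle here; the only mild subtlety is making sure the argument in part (1) uses a correct triple of elements of $F_\Psi$ to reach the contradiction, and confirming that self-similar measures for an IFS with repeated maps are still covered by the framework. The heavier lifting (deducing that $g\nu$ is Rajchman and pointwise absolutely normal, or establishing the logarithmic rate under the Diophantine hypothesis) then follows by invoking Theorem~\ref{Theorem main tech}, the conjugation-invariance remark following it, and the effective local limit theorem machinery, which is handled separately.
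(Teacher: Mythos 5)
Your proof is correct and takes essentially the same approach as the paper: for part (1) you use the same three elements of $F_\Psi$ (namely $\log|r_2|$, $\log|r_1 r_2|$, and $2\log|r_1|$ coming from $f_2$, $f_1\circ f_2$, $f_1\circ f_1$) to derive $\log|r_1|/\log|r_2|\in\mathbb{Q}$ and contradict aperiodicity, and for part (2) you perform the same one-step substitution of $f_1\nu$ into the self-similarity relation to obtain the weight vector $\mathbf{q}=(p_1p_j)_j\cup(p_i)_{i\geq 2}$. The only cosmetic difference is that you reach the rationality conclusion by a chain of subtractions rather than by the paper's single ratio $(e_1-e_2)/(e_3-e_2)$.
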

\begin{proof}
For part (1), since $\Phi$ and $\Psi$ are self similar, if $F_\Psi$ belongs to a translation of a lattice then for every $e_1,e_2,e_3\in \mathbb{R}, e_2 \neq e_3,$ that arise by taking the $\log$ of  contraction ratios of $\Psi$, we have
$$\frac{e_1 - e_2}{e_3-e_2} \in \mathbb{Q}.$$
So, taking $e_1=\log \left| r_1\cdot r_1 \right|, e_2 = \log \left|r_2\cdot r_1\right|, e_3 = \log \left|r_2\right|$ we obtain
$$\frac{\log \left| r_1\cdot r_1 \right| - \log \left|r_2\cdot r_1\right|}{ \log \left|r_2\right|-\log \left|r_2\cdot r_1\right|} \in \mathbb{Q} $$
which implies that
$$\frac{\log |r_2|}{\log |r_1|}-1  =\frac{\log |r_1| - \log |r_2|}{-\log |r_1|} \in \mathbb{Q}.$$
This contradicts our assumption that $r_1 \not \sim r_2$, and concludes the proof of part (1).

For part (2), one may verify that $\nu$ is a self similar measure with respect to $\Psi$ and the probability vector
$$\mathbf{q}:=\left( p_1 ^2, p_1 \cdot p_2,..., p_1\cdot p_n, p_2, p_3,...,p_n \right)$$
which is strictly positive since $\mathbf{p}$ is strictly positive. 
\end{proof}

We need one more standard Lemma:
\begin{Lemma} \label{Lemma conjugate}
Let $\Phi$ be an aperiodic self similar IFS on the interval $I$, and let $\Psi$ be the induced IFS as in Lemma \ref{Lemma aperiodic}. Let $g:I\rightarrow g(I)$ be a $C^{1+\gamma}(I)$ map with non vanishing derivative. Then every self conformal measure with respect to the conjugated IFS $g\circ \Phi \circ g^{-1} $ is also a self conformal measure with respect to the IFS $\Theta = g\circ \Psi \circ g^{-1} $. Furthermore, the derivative cocycle of the  IFS $\Theta $ is  aperiodic. 
\end{Lemma}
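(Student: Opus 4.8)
The plan is to establish Lemma \ref{Lemma conjugate} in two halves: first the measure-theoretic transfer statement, then the aperiodicity of the cocycle of $\Theta$.

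\textbf{Transfer of self conformal measures.} First I would recall from Lemma \ref{Lemma aperiodic} part (2) that any self similar measure $\nu$ for $\Phi$ and a strictly positive probability vector $\mathbf p$ is also a self similar measure for $\Psi$ and the strictly positive probability vector $\mathbf q$ obtained there. Now let $\tilde\nu$ be a self conformal measure for $g\circ\Phi\circ g^{-1}$, with probability vector $\mathbf p$; by definition $\tilde\nu = \sum_i p_i\, (g f_i g^{-1})\tilde\nu$. Pushing forward by $g^{-1}$ and using $(g^{-1})_* (g f_i g^{-1})_* = (f_i)_* (g^{-1})_*$, one sees $\nu := (g^{-1})_*\tilde\nu$ satisfies $\nu = \sum_i p_i (f_i)_*\nu$, i.e. $\nu$ is the self similar measure for $\Phi$ and $\mathbf p$. (This uses that a self conformal measure for a contracting IFS is the unique fixed point of the associated Markov operator, so uniqueness identifies $(g^{-1})_*\tilde\nu$ with $\nu$.) By Lemma \ref{Lemma aperiodic}(2), $\nu = \sum_i q_i (\psi_i)_*\nu$ where $\Psi=\{\psi_i\}$. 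Pushing forward by $g$ again and conjugating back, $\tilde\nu = g_*\nu = \sum_i q_i\, (g\psi_i g^{-1})_*\, g_*\nu = \sum_i q_i\, (g\psi_i g^{-1})_* \tilde\nu$, so $\tilde\nu$ is a self conformal measure for $\Theta = g\circ\Psi\circ g^{-1}$ with the strictly positive vector $\mathbf q$.

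\textbf{Aperiodicity of the cocycle of $\Theta$.} The key point is that the derivative cocycle's periodicity is a conjugacy invariant, as noted after Theorem \ref{Theorem main tech}. Write $\Theta=\{h_1,\dots,h_N\}$ with $h_j = g\circ\psi_j\circ g^{-1}$, and let $c_\Theta$, $c_\Psi$ be the respective derivative cocycles on $\mathcal A'^{\,\mathbb N}$ (same alphabet $\mathcal A' = \{1,\dots,N\}$ for $\Theta$ and $\Psi$). By the chain rule, for the coding map $\omega\mapsto x^\Theta_\omega$ of $\Theta$ one has $x^\Theta_\omega = g(x^\Psi_\omega)$, and
$$ c_\Theta(a,\omega) = -\log|h_a'(x^\Theta_{\sigma(\omega)})| = -\log|g'(\psi_a(x^\Psi_{\sigma(\omega)}))| - \log|\psi_a'(x^\Psi_{\sigma(\omega)})| + \log|g'(x^\Psi_{\sigma(\omega)})|. $$
Since $x^\Psi_{\iota_a(\omega)} = \psi_a(x^\Psi_\omega)$ and $\sigma\circ\iota_a = \mathrm{id}$, this reads $c_\Theta(a,\omega) = c_\Psi(a,\omega) + \beta(\iota_a(\omega)) - \beta(\omega)$ where $\beta(\omega) := \log|g'(x^\Psi_\omega)|$ is a $\kappa$-H\"older function on $\mathcal A'^{\,\mathbb N}$ (it is H\"older because $g\in C^{1+\gamma}$, $g'$ is bounded away from $0$, and $\omega\mapsto x^\Psi_\omega$ is H\"older by uniform contraction — here one uses Lemma \ref{Lemma C1 lin}-type estimates or simply the bounded distortion of $\Psi$). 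Thus $c_\Theta$ and $c_\Psi$ differ by a H\"older coboundary. If $\theta\in\Lambda_{c_\Theta}$ with witness $\phi_\theta\in H^\kappa$, $|\phi_\theta|=1$, $u_\theta\in S^1$, set $\tilde\phi_\theta := e^{i\theta\beta}\phi_\theta$; then $\tilde\phi_\theta\in H^\kappa$ (product of a bounded H\"older function with a unit-modulus H\"older function), $|\tilde\phi_\theta|=1$, and a direct substitution shows $\tilde\phi_\theta(\iota_a(\omega)) = u_\theta e^{-i\theta c_\Psi(a,\omega)}\tilde\phi_\theta(\omega)$, so $\theta\in\Lambda_{c_\Psi}$. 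Hence $\Lambda_{c_\Theta}\subseteq\Lambda_{c_\Psi}$ (the reverse inclusion holds by the symmetric argument, but is not needed), and by Lemma \ref{Lemma aperiodic}(1) together with Lemma \ref{Lemma lattice}, $c_\Psi$ is aperiodic, i.e. $\Lambda_{c_\Psi}=\{0\}$. Therefore $\Lambda_{c_\Theta}=\{0\}$, so the derivative cocycle of $\Theta$ is aperiodic.

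\textbf{Expected main obstacle.} The only genuinely nontrivial point is checking that $\beta(\omega)=\log|g'(x^\Psi_\omega)|$ is $\kappa$-H\"older on the symbolic space with the metric $d_\rho$, and more generally being careful that the exponent $\kappa$ used to define $H^\kappa$ and $\Lambda_c$ is compatible for $\Psi$ and $\Theta$; this is where $g\in C^{1+\gamma}$ (rather than merely $C^1$) is essential, and it is exactly the reason the statement is restricted to $C^{1+\gamma}$ conjugacies. Everything else is a routine cocycle-coboundary manipulation together with the already-established Lemmas \ref{Lemma aperiodic} and \ref{Lemma lattice}.
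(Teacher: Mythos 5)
Your proof is correct, but the aperiodicity half takes a genuinely different and heavier route than the paper's. The paper observes a one-line fact: conjugating by $g$ does not change the derivative of an IFS map at its own fixed point (the chain rule cancels $g'$ exactly there), so $F_\Theta = F_\Psi$; since $F_\Psi$ is not contained in a translation of a lattice (Lemma \ref{Lemma aperiodic}(1)), the contrapositive of Lemma \ref{Lemma lattice} gives aperiodicity of $c_\Theta$ directly, with no need to discuss the full cocycle. You instead show that $c_\Theta$ and $c_\Psi$ differ by the H\"older coboundary $\beta(\omega) = \log|g'(x^\Psi_\omega)|$, deduce $\Lambda_{c_\Theta}\subseteq\Lambda_{c_\Psi}$ by transporting the witness $\phi_\theta$ through $e^{\pm i\theta\beta}$, and only then invoke Lemma \ref{Lemma aperiodic}(1) plus Lemma \ref{Lemma lattice} (applied to $\Psi$ rather than $\Theta$). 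Your route has the small advantage of making explicit the conjugacy-invariance of $\Lambda_c$ in general, which the paper only asserts in passing after Theorem \ref{Theorem main tech}; the paper's route is shorter because it exploits the specific form of Lemma \ref{Lemma lattice}, which passes through fixed-point data where $g'$ cancels automatically. Two small slips to watch: the cocycle is evaluated at $x_\omega$, not $x_{\sigma(\omega)}$, in the paper's convention $c(a,\omega)=-\log|f_a'(x_\omega)|$; and carrying the chain rule through carefully gives $c_\Theta(a,\omega)=c_\Psi(a,\omega)-\beta(\iota_a\omega)+\beta(\omega)$ (opposite sign from what you wrote), which flips the sign in your corrector $e^{i\theta\beta}$ but otherwise leaves the argument intact. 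The measure-transfer half is the same as the paper's, just spelled out in more detail.
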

\begin{proof}
It is elementary that under these assumptions $F_\Psi = F_\Theta$. So, $F_\Theta$ equals $F_\Psi$ that does not lie on a translation of a lattice by Lemma \ref{Lemma aperiodic} part (1), and so $F_\Theta$ does not lie on a translation of a lattice. By Lemma \ref{Lemma lattice}  this means that the derivative cocycle of $\Theta$ is aperiodic. The Claim about the self conformal measures is an immediate consequence of Lemma \ref{Lemma aperiodic} part (2), since every self conformal measure $\nu$ with respect to $g\circ \Phi \circ g^{-1}$ is equal to $g \mu$, where $\mu$ is a self similar measure with respect to $\Phi$ with the same weights. 
\end{proof}

Finally, let $\Phi$ be an aperiodic self similar IFS and let $g:I\rightarrow g(I)$ be a $C^{1+\gamma}(I)$ map with non vanishing derivative. Then by Lemma \ref{Lemma conjugate} any self conformal measure with respect to the conjugated IFS $g\circ \Phi \circ g^{-1} $ is also a self conformal measure with respect to a uniformly contracting $C^{1+\gamma}$ IFS $\Theta$ such that its derivative cocycle is aperiodic. So, applying Theorem \ref{Theorem main tech},  this concludes the proof of the normality and the Rajchman assertions of Corollary \ref{Main Corollary} part (2).

\subsubsection{Proof of the quantitative assertion of Corollary \ref{Main Corollary} part (2)}
Here we assume that $\Phi$ is a Diophantine  self similar IFS (recall the definition from \eqref{Eq Dio condition}). Let $\nu$ be a self similar measure with respect to the probability vector $\mathbf{p}$. Our goal  is to show that there exists some $\alpha=\alpha(\nu)>0$ such that
$$ \left|\mathcal{F}_q (\nu)\right| \leq O\left( \frac{1}{ \left| \log |q| \right| ^\alpha} \right),\, \text{ as } |q|\rightarrow \infty.$$
Let $\lbrace r_1,..., r_n \rbrace$ denote the contraction ratios of $\Phi$, and
let $\mu$ be the distribution $\mathbf{p}$ induces on $\lbrace - \log |r_1|,..., -\log|r_n| \rbrace$. 
Our first observation is that in this case the random walk 
$$S_n(\omega):=-\log|f'_{\omega|_n}(x_{\sigma^n(\omega)})| = -\log|f'_{\omega|_n}(x_0)|,\quad \text{ for any prefixed } x_0\in I$$
as in Section \ref{Section LLT}, is in fact a classical random on $\mathbb{R}$ and its law is given by $\mu^{*n}$, the $n$-fold self convolution of $\mu$. Since for such random walks effective versions of the central and local limit Theorems are available (which is why  the Diophantine condition is imposed), we can strengthen Theorem \ref{Theorem equid} by specifying a rate:
\begin{theorem}  \label{Theorem equid quant}
There exists some $\delta=\delta(\mathbf{p})>0$ such that for every  $k, h'>0$,  $h=0$ and $\eta = $ the empty word: 

 There exists a subset $\mathcal{A}_{k,\eta}^{h,h'}\subseteq \mathcal{A}^\mathbb{N}$   such that, as $k$ tends to $\infty$:\begin{enumerate} [label=(\roman*)]
\item $\mathbb P(\mathcal{A}_{k,\eta} ^{h,h'})\geq 1-O_{\mathbf{p}}(\frac{1}{k^\delta})$.
\item for all $\xi\in \mathcal{A}_{k,\eta}^{h,h'}$, $\mathbb{P}(\mathcal{A}_k^{h,h'}(\xi))>0$.
\item  for all $\xi\in \mathcal{A}_{k,\eta}^{h,h'}$ and for any sub-interval $J\subseteq  [k\chi, k\chi+D']$, 
$$
 \mathbb{P}_{\mathcal{A}_{k}^{h,h'} (\xi)}(S_{\tau_k}\in J)= \Gamma_{\mathcal{A}_{k}^{h,h'}(\xi)} (J)  +O_{\mathbf{p}}(\frac{1}{k^\delta}).
$$
\end{enumerate}
\end{theorem}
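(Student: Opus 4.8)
The plan is to mimic the proof of Theorem \ref{Theorem equid} given in Section \ref{Section proof of LLT}, but replace the two qualitative inputs from Benoist-Quint by their effective counterparts available in the classical i.i.d.\ setting, keeping careful track of the rates. Recall that when $\Phi$ is self-similar the walk $S_n(\omega)=-\log|f'_{\omega|_n}(x_0)|$ is a genuine sum of i.i.d.\ real random variables distributed as $\mu$, supported in $[D,D']$, with mean $\chi$ and some variance $r^2=r^2(\mathbf p)>0$. In place of Theorem \ref{B-Q CLT} I would use the Berry--Esseen inequality \cite{Berry1942Esseen}, which gives
$$\sup_{R}\Big|\mathbb P\big(S_n\leq n\chi+R\sqrt n\big)-N(0,r^2)(-\infty,R]\Big|\lesssim_{\mathbf p}\frac1{\sqrt n};$$
and in place of the local limit Theorem \ref{B-Q LLT general} I would use Breuillard's effective local limit Theorem \cite[Th\'eor\`eme 4.2]{Breuillard2005llt}, whose hypotheses are exactly the Diophantine condition \eqref{Eq Dio condition} imposed here. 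Breuillard's theorem furnishes, uniformly over $w$ in the CLT-window $|w-m\chi|\leq 4\chi\sqrt{m\log m}$, an asymptotic of the form
$$\mathbb P\big(S_m\in[w,w+\epsilon\chi)\big)=G_{\sqrt m r}(w-m\chi)\cdot\epsilon\chi\cdot\big(1+O_{\mathbf p}(m^{-\delta_0})\big)$$
for some $\delta_0=\delta_0(\mathbf p)>0$ (with the implied constant and $\delta_0$ depending on the Diophantine exponent $l$), after a routine reduction from lattice-width $\epsilon\chi$ down to the natural scale; here there is no need for the conditioning on $A_\eta$ and on $\sigma^{-m}$-fibers since in the self-similar case $X_n=X_1\circ\sigma^{n-1}$ and everything is genuinely i.i.d., so one may take $h=0$, $\eta$ the empty word, and the conditional measures $\mathbb P_{\sigma^{-m}(\{\omega'\})}$ are just $\mathbb P$ itself.

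With these two effective inputs, I would run the proof of Theorem \ref{Theorem equid} verbatim, tracking rates. First, the CLT step (Proposition \ref{Prop equid central}): applying Berry--Esseen with $b=b(k)=\sqrt{k\log k}-1$ as before, the bound $\mathbb P(|\tau_k-k|>b)$ becomes $O_{\mathbf p}(k^{-1/2}\sqrt{\log k})+$ a Gaussian tail $\lesssim e^{-c\log k}=k^{-c}$; absorbing logarithms by slightly shrinking the exponent, this is $O_{\mathbf p}(k^{-\delta_1})$ for some $\delta_1>0$. The passage to the exceptional set $\widetilde A_\eta$ via the Cauchy--Schwarz trick $\mathbb P_{A_\eta}(A_\eta\setminus\widetilde A_\eta)\leq\sqrt{\mathbb P_{A_\eta}(\tau_k-1\notin I_k)}$ only halves the exponent, giving $O_{\mathbf p}(k^{-\delta_1/2})$. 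Next, the LLT step (Proposition \ref{Prop equid local}): the computation \eqref{EqStopLLT1} goes through with every $o^{k\to\infty}_{h_0,\mathbf p}(1)$ replaced by $O_{\mathbf p}(k^{-\delta_0})$, the $O(\epsilon\chi)$ term handled by choosing $\epsilon=k^{-\delta_0}$ (say), the Gaussian-comparison Lemma \ref{LemGaussian} already being quantitative ($\lesssim_{\mathbf p}k^{-1/2}(\log k)^{3/2}$), and the Gaussian-integral factor $\int_{-\sqrt{\log k}}^{\sqrt{\log k}+1/\sqrt k}G_r(t\chi)dt=\frac1\chi-O(k^{-c})$ for any $c$ (the tail of a Gaussian beyond $\sqrt{\log k}$ is $\lesssim k^{-c'}$). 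Combining, the ratio \eqref{Eq equid local 3} acquires an error $O_{\mathbf p}(k^{-\delta})$ with $\delta=\min(\delta_0,\delta_1/2)$ up to an arbitrarily small loss from logarithms, which is exactly the claimed conclusion; and the measure-of-exceptional-set bound (i) is $\geq 1-O_{\mathbf p}(k^{-\delta})$.

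The main obstacle, as I see it, is making the two effective theorems interface cleanly: Breuillard's local limit Theorem is stated for a fixed lattice-free measure on $\mathbb R$ and one must (a) confirm that the Diophantine condition \eqref{Eq Dio condition}, which concerns $\{\log|r_i|\}$, is precisely the aperiodicity/Diophantine hypothesis in \cite[Th\'eor\`eme 4.2]{Breuillard2005llt} for the measure $\mu$, and (b) extract a uniform-in-$w$, quantitatively-in-$m$ statement valid throughout the window $I_k$, rather than just a pointwise $m\to\infty$ asymptotic — this likely requires invoking the proof of Breuillard's result (via smoothing/Fourier truncation at scale controlled by the Diophantine exponent) rather than its statement, which is why the exponent $\delta$ will depend on the Diophantine exponent $l$ and cannot be taken explicit. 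A secondary bookkeeping point is that, unlike Theorem \ref{Theorem equid}, here we must also verify that no conditioning is lost: since in the self-similar case $S_{\tau_k}$ and the future $\sigma^{\tau_k-1}(\omega)$ genuinely decouple (the increments are i.i.d.\ and $\tau_k$ is a stopping time), the measure $\Gamma_{\mathcal A_k^{h,h'}(\xi)}$ still has the renewal-theoretic form of Definition \ref{Def Gammak} and the argument closes; this is where the identity $X_n=X_1\circ\sigma^{n-1}$ and the i.i.d.\ structure are used in an essential way, and it is what makes the stronger rate possible in the linear setting. Once Theorem \ref{Theorem equid quant} is in hand, it slots into the proof of Theorem \ref{Theorem main tech} part (1) in place of Theorem \ref{Theorem equid} and, propagating the polynomial-in-$k$ errors through Theorem \ref{Theorem equid 2} and the estimates of Section \ref{Section proof 1} (with $k\asymp\log|q|$), yields the logarithmic Fourier decay $|\mathcal F_q(\nu)|\lesssim(\log|q|)^{-\alpha}$.
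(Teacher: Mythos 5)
Your proposal matches the paper's proof essentially step for step: the paper also replaces the Benoist--Quint CLT by the Berry--Esseen inequality in Proposition \ref{Prop equid central} and the Benoist--Quint LLT by Breuillard's effective local limit theorem \cite[Th\'eor\`eme 4.2]{Breuillard2005llt} in Proposition \ref{Prop equid local}, then reruns the proof of Theorem \ref{Theorem equid} tracking polynomial rates, noting as you do that Lemma \ref{LemGaussian} is already quantitative and that the Cauchy--Schwarz step only halves the exponent. The only bookkeeping point you flag as an ``obstacle'' (extracting a uniform-in-$w$ version of Breuillard's theorem with the Gaussian density factor) is resolved in the paper by combining Breuillard's statement with \cite[Lemma 16.13]{Benoist2016Quint}, but this is a small remark rather than a genuinely different route.
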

The difference between Theorem \ref{Theorem equid quant} and Theorem \ref{Theorem equid} is that  the error term $o^{k\to\infty}_{h_0,\bfp}(1)=O_{\mathbf{p}}(\frac{1}{k^\delta})$ is explicit (here we always take $h_0=0$). We proceed to explain how Theorem \ref{Theorem equid quant} gives us the desired logarithmic decay rate for $\mathcal{F}_q (\nu)$. Afterwards, we explain how to obtain Theorem \ref{Theorem equid quant} by modifying the proof of Theorem \ref{Theorem equid}.

\noindent{\textbf{Proof that Theorem \ref{Theorem equid quant} implies logarithmic decay}} Let $\delta>0$ be as in Theorem \ref{Theorem equid quant}, fix $h=0$ and let $q$ be large. Find some $C>0$ such that for every $q$ large enough there exists  $k>0$ with, letting $h'=\sqrt{k}$,
$$|q| = \Theta_C \left( k^{\frac{\delta}{4}} \cdot e^{(k+h')\chi} \right).$$ 
Notice that asymptotically  $k \approx \log |q|$. Following the argument in Section \ref{Section proof 1}, we bound $\left| \mathcal{F}_q (\nu) \right|$ by the sum of the following terms. As usual, every bound below is in the sense of $\lesssim_\bfp$, $0<\gamma'<1$, $q$ is assumed to be positive, and we ignore global multiplicative constants.

Linearization - Claim \ref{Claim linear} (note that in the self-similar case this step can be easily bypassed, but for consistency we still take this term into account) :
$$   qe^{-(k+h')\chi}e^{-\gamma'h'\chi};$$

Local limit Theorem  - proof of Theorem \ref{Theorem equid 2} and the discussion following Corollary \ref{Coro finite sum}:
$$ \frac{2}{q e^{-(k+h')\chi}}+ (q e^{-(k+h')\chi})^2 \frac{1}{k^\delta};$$
Here in equation \eqref{Term 2} in the proof of Theorem \ref{Theorem equid 2} we use Theorem \ref{Theorem equid quant} instead of  Theorem \ref{Theorem equid}.

Oscillatory integral: For every $r>0$,
$$\frac 1  {r q e^{-(k+h')\chi}} + \sup_y\nu(B_r(y)).$$

\noindent{ {\bf Choice of parameters :}} Recall that $h'=\sqrt{k}$ and fix $r=k^{\frac{-\delta}{8}}$. Then we get:

linearization:
$$   qe^{-(k+h')\chi}e^{-\gamma'h'\chi}= k^{\frac{\delta}{4}}\cdot e^{-\gamma'\sqrt{k}\chi}, \quad  \text{ This decays exponentially fast in } k. $$

Local limit Theorem:
$$ \frac{2}{q e^{-(k+h')\chi}}+ (q e^{-(k+h')\chi})^2 \frac{1}{k^\delta} = \frac{2}{k^{\frac{\delta}{4}}} + k^{\frac{\delta}{2}} \cdot \frac{1}{k^\delta}, \quad  \text{ This decays polynomially fast in } k. $$

Oscillatory integral: There is some $d>0$ such that 
$$\frac 1  {r q e^{-(k+h')\chi}} + \sup_y\nu(B_r(y)) \leq  \frac{k^{\frac{\delta}{8}}}{k^{\frac{\delta}{4}}} +  k^{  \frac{-d \cdot \delta}{8}}, \quad  \text{ This decays polynomially fast in } k.$$
Here we made use of \cite[Proposition 2.2]{feng2009Lau}, where it is shown  that there is some $C>0$ such that for every $r>0$ small enough
$$\sup_y\nu(B_r(y)) \leq Cr^d.$$
Finally, by summing these error terms we see that for some $\alpha=\alpha(\nu)>0$ we have $|\mathcal{F}_q (\nu)|=O(\frac{1}{k^\alpha})$. Since $k\approx \log |q|$ our claim follows.

\noindent{\textbf{Proof of Theorem \ref{Theorem equid quant}}} Recall that here  $S_n \sim \mu^{*n}$. Thus, we  follow the proof of Theorem \ref{Theorem equid} essentailly verbatim, only in the proof Proposition \ref{Prop equid central}  we use the effective Berry-Esseen inequality \cite{Berry1942Esseen} instead of Theorem \ref{B-Q CLT}, and in the proof of Proposition \ref{Prop equid local} we use Breuillard's effective local limit Theorem \cite[Th\'{e}or\`eme  4.2]{Breuillard2005llt} instead of Theorem \ref{B-Q LLT general}. Indeed, recall that the Berry-Esseen inequality yields a rate of $O_\mathbf{p}( \frac{1}{\sqrt{n}})$ in the central limit Theorem  for $S_n$. So, it is straightforward to see that  applying the Berry-Esseen inequality instead of Theorem \ref{B-Q CLT} in the proof of Proposition \ref{Prop equid central} yields that the error term $o_{h_0,\textbf{p}}^{k\to\infty}(1)=o_{0,\textbf{p}}^{k\to\infty}(1)$ decays polynomially in $k$. As for Proposition \ref{Prop equid local}, we require the following Theorem of Breuillard. Recall that we are assuming $\Phi$ is Diophantine in the sense that \eqref{Eq Dio condition} holds true.
\begin{theorem} \cite[Th\'{e}or\`eme  4.2 and Remarque 4.1]{Breuillard2005llt} \label{Bre LLT}
There exists a sequence $\epsilon_n(\mu)$ and some $\delta>0$ such that $\epsilon_n = o(n^{-\delta})$ and the centred distribution $\theta$, where $\theta(A):=\mu(A-\chi)$, satisfies:

 Let $r>0$ denote the variance of the Gaussian on $\mathbb{R}$ associated with $\mu$ in the CLT, and fix $R>0$. For $x\in \mathbb{R}$, $s>0$ and the interval $I=I_s = [-s,s]$  we have, for $n\in \mathbb{N}$,
$$\sup \left\lbrace \left| \frac{1}{G_{\sqrt{n}r} (x)}  \theta^{*n} (I+x) - \lambda(I_s) \right|: \, |x|+s \leq \sqrt{R n \log n}, \, s > n^{-\delta} \right\rbrace  \leq \epsilon_n (\mu)$$
where we recall that $G_t(\cdot)$ stands for the density of the Gaussian law $N(0,t^2)$. 

\end{theorem}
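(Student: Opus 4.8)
This is Breuillard's effective local limit theorem, so I would not reprove it from scratch but only indicate the strategy, referring to \cite[Sections 3--4]{Breuillard2005llt} for the details; the plan is the classical characteristic-function proof of a local limit theorem, made quantitative by feeding in the Diophantine hypothesis \eqref{Eq Dio condition}. Write $\widehat\theta(\xi)=\int e^{i\xi t}\,d\theta(t)$, so that $\widehat{\theta^{*n}}=\widehat\theta^{\,n}$; since $\theta$ is the centred law of a bounded i.i.d.\ step (its atoms are the finitely many numbers $-\log|r_i|-\chi$) it has moments of all orders, and near the origin $\widehat\theta(\xi)=\exp\bigl(-\tfrac{r^2}2\xi^2+O(|\xi|^3)\bigr)$. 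Because $\widehat\theta$ does not decay at infinity, $\theta^{*n}(I_s+x)$ cannot be recovered directly by Fourier inversion, so the first step I would take is to sandwich $\mathbf 1_{I_s+x}$ between smooth majorant and minorant functions $\psi_\pm$ whose Fourier transforms are supported in $[-T,T]$ and with $\int(\psi_+-\psi_-)\,d\lambda=O(1/T)$ (Beurling--Selberg extremal functions), reducing everything to estimating $\tfrac1{2\pi}\int_{-T}^{T}\widehat\theta(\xi)^n\,\widehat{\psi_\pm}(\xi)\,e^{-i\xi x}\,d\xi$ with a smoothing error that becomes polynomially small once $T$ is taken to be a small power of $n$.

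Next I would split this integral at a small fixed frequency $\epsilon_0$. On $|\xi|\le\epsilon_0$ one has $\widehat\theta(\xi)^n=e^{-nr^2\xi^2/2}\bigl(1+O(n|\xi|^3)\bigr)$, and since the Gaussian weight confines the mass to $|\xi|\lesssim n^{-1/2}\sqrt{\log n}$, replacing $\widehat\theta^{\,n}$ by $e^{-nr^2\xi^2/2}$ and then extending the integral to all of $\mathbb{R}$ costs only an $O(n^{-1/2})$ relative error --- essentially the Berry--Esseen bound, which the paper already invokes. The surviving Gaussian integral equals $\int_{I_s+x}G_{\sqrt n r}(t)\,dt$, which is $G_{\sqrt n r}(x)\,\lambda(I_s)$ up to the asserted error by the near-constancy of $G_{\sqrt n r}$ on intervals of the admissible size, exactly as in Lemma \ref{LemGaussian}; and the lower bound $G_{\sqrt n r}(x)\gtrsim n^{-1/2}e^{-x^2/2nr^2}$, valid for $|x|\le\sqrt{Rn\log n}$, converts the absolute errors into the required \emph{relative} error.

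The third step, the range $\epsilon_0\le|\xi|\le T$, is where \eqref{Eq Dio condition} is indispensable and is what I expect to be the main obstacle. For a finitely supported non-lattice step $|\widehat\theta(\xi)|<1$ for all $\xi\neq0$, but $|\widehat\theta(\xi)|$ can approach $1$ along $\xi\to\infty$ whenever the numbers $\xi\log|r_i|/2\pi$ are simultaneously close to a common coset of $\mathbb{Z}$; writing $|\widehat\theta(\xi)|^2=1-2\sum_{i,j}p_ip_j\sin^2\!\bigl(\tfrac12\xi(\log|r_i|-\log|r_j|)\bigr)$ and comparing with \eqref{Eq Dio condition} (via $|\sin\pi t|\ge 2\,d(t,\mathbb{Z})$) yields the quantitative spectral gap $|\widehat\theta(\xi)|\le 1-c_{\mathbf{p}}|\xi|^{-2l}$ for all $|\xi|\ge\epsilon_0$, hence $|\widehat\theta(\xi)|^n\le\exp(-c_{\mathbf{p}}n|\xi|^{-2l})$. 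Choosing $T=n^{\frac1{2l}-\delta'}$ then makes the whole high-frequency contribution super-polynomially small while keeping the smoothing error $O(1/T)$ a fixed negative power of $n$. Collecting the smoothing error, the $O(n^{-1/2})$ Berry--Esseen error from the low frequencies, the oscillation error of $G_{\sqrt n r}$, and the super-polynomial high-frequency error, and optimising the exponents, produces a single $\epsilon_n=\epsilon_n(\mu)=o(n^{-\delta})$ with $\delta$ depending only on $l$, hence only on $\mathbf{p}$; and since every estimate above is uniform in $x$ and $s$ throughout $n^{-\delta}<s$, $|x|+s\le\sqrt{Rn\log n}$, this gives the claimed uniform bound.
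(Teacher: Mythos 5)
The paper does not prove this statement; it is a citation. The only textual contribution the paper makes is the remark immediately after the theorem: the version stated (with the Gaussian density $G_{\sqrt n r}$ in the denominator) is not literally what appears as \cite[Th\'eor\`eme~4.2]{Breuillard2005llt}, and the authors note that it is obtained by combining Breuillard's result with the arguments of \cite[Section 16.3, Lemma 16.13]{Benoist2016Quint}. Your proposal is therefore doing something the paper does not attempt: sketching the proof of Breuillard's theorem from scratch. As a reconstruction of Breuillard's method it is structurally on target --- Beurling--Selberg smoothing, Fourier inversion, Gaussian approximation near $\xi=0$, and a Diophantine spectral gap $|\widehat\theta(\xi)|\le 1-c_{\bfp}|\xi|^{-2l}$ on $|\xi|\ge\epsilon_0$ derived from \eqref{Eq Dio condition} by taking $y=-\log|r_j|\cdot x$ and using $|\sin\pi t|\ge 2\,d(t,\mathbb Z)$. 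One virtue of your route is that it produces the $G_{\sqrt n r}$-normalized form directly, whereas the paper has to patch it together from two references.

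There is, however, one step in your sketch that does not go through as written. You claim that the lower bound $G_{\sqrt n r}(x)\gtrsim n^{-1/2}e^{-x^2/(2nr^2)}$ ``converts the absolute errors into the required relative error.'' But on the range $|x|\le\sqrt{Rn\log n}$ one only has $G_{\sqrt n r}(x)\gtrsim n^{-1/2-R/(2r^2)}$, and the main term is of size $G_{\sqrt n r}(x)\cdot\lambda(I_s)\gtrsim n^{-1/2-R/(2r^2)-\delta}$. Dividing an absolute Berry--Esseen-type error $O(n^{-1})$ by this gives a relative error of order $n^{R/(2r^2)+\delta-1/2}$, which is not small once $R\ge r^2(1-2\delta)$ --- and the theorem allows arbitrary $R$. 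The correct mechanism, which is what Breuillard's argument actually implements and what \cite[Lemma 16.13]{Benoist2016Quint} packages, is that the low-frequency error term itself carries a Gaussian factor: writing $\widehat\theta(\xi)^n=e^{-nr^2\xi^2/2}\bigl(1+inc_3\xi^3/6+O(n\xi^4)\bigr)$ and inverting, the correction is a Hermite-polynomial multiple of $G_{\sqrt n r}(x)$, so that the \emph{relative} error is $O\bigl(n^{-1/2}|H_3(x/(\sqrt n r))|\bigr)=O\bigl(n^{-1/2}(\log n)^{3/2}\bigr)$ uniformly over the allowed $x$. In other words, one cannot prove the relative bound by first proving an absolute bound and then dividing; the Edgeworth-type structure of the error has to be tracked. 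This is exactly the kind of detail that your deferral to \cite[Sections 3--4]{Breuillard2005llt} is implicitly relying on, but the phrasing ``essentially the Berry--Esseen bound'' undersells it: Berry--Esseen alone, divided by $G_{\sqrt n r}(x)$, is not enough.
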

We remark that  the statement of \cite[Th\'{e}or\`eme 4.2]{Breuillard2005llt} does not include  $G_t$, but this version of the Theorem  follows easily by combining it with the arguments of Benoist-Quint as in \cite[Section 16.3] {Benoist2016Quint}, specifically with \cite[Lemma 16.13]{Benoist2016Quint}. Now, fix $\delta$ as in Theorem \ref{Bre LLT}. Instead of using Theorem \ref{B-Q LLT general} in equation  \eqref{EqLLT}, we use Theorem \ref{Bre LLT} and translates $W$ of diameter $n^{\frac{-\delta}{4}}$, so that the error term $o^{n\to\infty}_{\epsilon,h_0,\textbf{p}} (1)$ becomes the polynomially decaying $\epsilon_n (\mu)$. Noting that Lemma \ref{LemGaussian} also holds with a polynomial rate, it is now a straightforward versification that Proposition \ref{Prop equid local} holds with a polynomial rate. Theorem \ref{Theorem equid quant} is proved.

\subsection{Some examples of Diophantine IFS's} \label{Section Moser}
Let $\Phi$ be an orientation preserving self similar IFS  with  contraction ratios $\lbrace r_1,...,r_n \rbrace$. Li and Sahlsten \cite[Theorem 1.3]{li2019trigonometric}  proved that every self similar measure has logarithmic Fourier decay if the following condition holds: 
\begin{equation} \label{Eq Dio li-sa}
\text{ There exist } C>0,l>2 \text{ and } r_i,r_j \text{ such that } |\frac{\log r_i}{\log r_j} - \frac{p}{q}| \geq \frac{C}{q^l} \text{ for all } p\in \mathbb{Z} \text{ and } q\in \mathbb{N}.
\end{equation}
Recall our   Diophantine condition from \eqref{Eq Dio condition}. In this Section we will indicate a family of IFS's that are Diophantine in the sense of \eqref{Eq Dio condition} but not in the sense of \eqref{Eq Dio li-sa}.

To this end, notice that any rational number fails the condition \eqref{Eq Dio li-sa}, and recall that an irrational number is called Liouville if it fails \eqref{Eq Dio li-sa}. So, to produce the desired examples it is clearly sufficient to find sets $\lbrace v_1,...,v_n \rbrace$ of strictly positive real numbers such that:
\begin{enumerate} [label=(\roman*)]
\item For every $i,j$ we have that $\frac{v_i}{v_j}$ is either rational or Liouville.

\item There are $l, C>0$ such that 
$$\inf_{y\in \mathbb{R}} \max_{i\in \lbrace 1,...n\rbrace } d(\,v_i \cdot x+y,\, \mathbb{Z}) \geq \frac{C}{|x|^{l}}, \text{ for all } x\in \mathbb{R} \text{ large enough in absolute value}.$$
\end{enumerate}
We require the following Theorem of Moser \cite{Moser1990Dio}:
\begin{theorem} \cite[Theorem 2]{Moser1990Dio} \label{Theorem Moser}
For every $n\geq 2$ and $\tau > \frac{2}{n-1}$ there exists a set of the cardinality of the continuum of vectors $(\alpha_1,...,\alpha_n)\in \mathbb{R}^n$ such that:
\begin{enumerate} 
\item There exists some $D>0$ such that for every $q\in \mathbb{N}$ we have
$$\max_{i=1,...,n} d(q\cdot \alpha_i, \mathbb{Z}) \geq D\cdot q^{-\tau}.$$

\item For every linearly independent vectors $g,h\in \mathbb{Z}^{n+1}$ 
$$ \text{ The ratio } \frac{g_0 + g_1 \alpha_1 +...+g_n \alpha_n}{h_0 + h_1 \alpha_1 +...+h_n \alpha_n} \text{ is a Liouville number}.$$
\end{enumerate}
\end{theorem}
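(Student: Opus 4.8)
The final statement is a theorem of Moser, and the natural proof is a nested‑box (Cantor set) construction in $\mathbb{R}^n$ that simultaneously engineers the badly‑approximable inequality of part (1) and the Liouville behaviour of all the ratios in part (2). Here is the plan. Fix $n\geq 2$ and $\tau>\tfrac{2}{n-1}$, and note that $\tau>\tfrac1n$ automatically. Enumerate the countably many pairs $(g,h)$ of $\mathbb{Z}$‑linearly independent vectors in $\mathbb{Z}^{n+1}$ as $(g^{(1)},h^{(1)}),(g^{(2)},h^{(2)}),\dots$. I would build a decreasing sequence of closed boxes $B_0\supseteq B_1\supseteq\cdots$ in $\mathbb{R}^n$ with $\diam(B_k)\to 0$, interleaving two kinds of steps, and then take a point (or a Cantor set of points) in $\bigcap_k B_k$.

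At the \emph{approximation steps} I would secure part (1) up to a growing threshold $Q_{k+1}\in\mathbb{N}$: the set of $\alpha\in B_k$ for which $\max_i\|q\alpha_i\|<Dq^{-\tau}$ for some $q\leq Q_{k+1}$ is covered by at most $\sum_{q\leq Q_{k+1}}(q+1)^n$ balls of radius $Dq^{-\tau}/q$, whose total volume is $\lesssim D^n\sum_{q\geq 1}q^{-n\tau}$, which is finite and, since $\tau>\tfrac1n$, arbitrarily small once $D=D(n,\tau)$ is chosen small enough. Removing this set from $B_k$ leaves a sub‑box $B_{k+1}$ of positive volume on which $\max_i\|q\alpha_i\|\geq Dq^{-\tau}$ for all $q\leq Q_{k+1}$; letting $Q_k\to\infty$, any $\alpha$ in the final intersection satisfies part (1) for every $q$. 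At the \emph{Liouville steps} I would handle the next pair $(g^{(j)},h^{(j)})$: first shrink so that the affine form $L_h(\alpha):=h^{(j)}_0+\sum_i h^{(j)}_i\alpha_i$ does not vanish on the current box (possible, since it is a nonzero affine function), so that $R_j(\alpha):=L_g(\alpha)/L_h(\alpha)$ is a well‑defined continuous function there; then pick a large $q$ and $p\in\mathbb{Z}$ with $|R_j(\alpha_0)-p/q|<\tfrac12 q^{-k}$ at the centre $\alpha_0$, and pass to a sub‑box on which $|qL_g(\alpha)-pL_h(\alpha)|<q^{-k}$. This constraint cuts out a slab of codimension one, so it leaves a positive‑volume sub‑box, and it does not disturb the finitely many inequalities $\max_i\|q\alpha_i\|\geq Dq^{-\tau}$, $q\leq Q_j$, already locked in. Revisiting each pair $(g^{(j)},h^{(j)})$ infinitely often along the construction forces $R_j$ to be Liouville at the limit point, giving part (2); and since at every step one in fact retains a positive‑volume box, one can split off two disjoint sub‑boxes instead of one, branching the construction into a Cantor set and producing the continuum of solutions.

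The main obstacle — and essentially the only place the precise hypothesis $\tau>\tfrac{2}{n-1}$ is used — is verifying that the codimension‑one Liouville constraints do not eventually exhaust the volume needed for the badly‑approximable conditions, across infinitely many pairs handled with ever‑increasing precision. Each Liouville step costs roughly one of the $n$ available dimensions, and one must check this is compatible with staying $\gtrsim q^{-\tau}$ away from all rational points (a codimension‑$n$ condition) for all $q$. Quantifying the competition between the scales $q^{-k}$ imposed in the Liouville steps, the diameters $\diam(B_k)$, and the thresholds $Q_k$, and pinning down the exponent that keeps both the volume series $\sum q^{-n\tau}$ and the bookkeeping consistent, is the crux; the "$2$" there reflects the two forms $L_g,L_h$ that must be controlled simultaneously and "$n-1$" the dimension surviving after a relation $qL_g=pL_h$ is imposed. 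The remainder is a routine nested‑compactness argument, and of course one may also simply invoke \cite[Theorem 2]{Moser1990Dio} directly.
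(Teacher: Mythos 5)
The paper does not prove this statement: it cites Moser's Theorem~2 from \cite{Moser1990Dio} and uses it as a black box, so there is no in-paper argument to compare against. Your sketch outlines a standard nested-box (Cantor set) scheme, which is indeed the right general shape of argument for such simultaneous Diophantine constructions, and your closing remark that one may simply invoke \cite[Theorem 2]{Moser1990Dio} directly is exactly what the paper does.

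As a proof, though, the sketch has a genuine gap, and it is the one you yourself flag. The volume count you carry out in the approximation steps lives in full dimension $n$ and only needs $\tau>1/n$; it says nothing about what happens after you pass to a slab $\{\,|qL_g(\alpha)-pL_h(\alpha)|<q^{-k}\,\}$ in a Liouville step. What must actually be estimated is the measure of the Dirichlet-bad set for scale $q'$ restricted to a hyperplane whose coefficients have size roughly $q$, and the resonance between such a rational hyperplane and the bad cubes at commensurate scales $q'$ is precisely what forces the stronger hypothesis $\tau>\tfrac{2}{n-1}$. You never perform this restricted estimate, and the heuristic you offer for the exponent (``the $2$ reflects the two forms $L_g,L_h$, the $n-1$ the surviving dimension'') is not a derivation of it and should not be trusted as one. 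Until that bookkeeping is carried out, the construction as written does not establish that the nested intersection is nonempty, let alone of continuum cardinality; so the sketch does not replace the citation, and citing Moser, as the paper does, is the correct move.
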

We proceed to construct $\lbrace v_1,...,v_4 \rbrace$ with properties (i) and (ii) as above. Let $\tau >2$ and find $(\alpha_1,\alpha_2)$ as in Theorem \ref{Theorem Moser}. Assuming without the loss of generality that both $\alpha_1,\alpha_2>0$ we define 
$$v_1 =1,\quad v_2 = 2, \quad v_3 = \alpha_1 +1, \quad v_4 = \alpha_2 +1$$
(if $\alpha_i<0$ for $i=1,2$ we define $v_{i+2}=1-\alpha_i$ and proceed with the same proof). By Theorem \ref{Theorem Moser} part (2) for every $i,j$ the ratio $\frac{v_i}{v_j}$ is either rational or Liouville. Therefore, (i) holds. To verify (ii), let $|x|\gg 1$ and $y\in \mathbb{R}$. We may assume that
$$ d(xv_i +y, \mathbb{Z}) < \frac{1}{|x|^{2\tau}},\quad i=1,2.$$
So for $i=1,2$ there are $n_i \in  \mathbb{Z}$ and $\epsilon_i$ with $|\epsilon_i|<\frac{1}{|x|^{2\tau}}$ such that
$$xv_i + y = n_i + \epsilon_i.$$
Note that this implies $x=\frac{(n_2-n_1)+(\epsilon_2-\epsilon_1)}{v_2-v_1}=(n_2-n_1)+(\epsilon_2-\epsilon_1)$. So, for $i=3,4$ and any $k\in \mathbb{Z}$
\begin{eqnarray*}
|xv_i +y+k| & = & | x(v_i -v_1)+n_1+k+\epsilon_1|\\
&=& \left| (n_2-n_1) \frac{v_i-v_1}{v_2-v_1}+n_1+k + \frac{\epsilon_2-\epsilon_1}{v_2-v_1} \right| \\
&\geq & \left| (n_2-n_1) \frac{v_i-v_1}{v_2-v_1}+n_1+k \right| - \frac{2}{|x|^{2\tau}} \\
&\geq & d(\, (n_2-n_1) \frac{v_i-v_1}{v_2-v_1}, \, \mathbb{Z}) - \frac{2}{|x|^{2\tau}}.
\end{eqnarray*}
Finally since $\frac{v_i-v_1}{v_2-v_1}=\alpha_{i-2}$ for $i=3,4$, and $|x|\geq C_0|n_2-n_1|$ for some global $C_0>0$ as long as $|x|$ is large enough, by part (1) of Theorem \ref{Theorem Moser} we obtain:
\begin{eqnarray*}
\max_{i=3,4} d(\,v_i \cdot x+y,\, \mathbb{Z}) & \geq & \max_{i=3,4} d(\, (n_2-n_1) \frac{v_i-v_1}{v_2-v_1}, \, \mathbb{Z}) - \frac{2}{|x|^{2\tau}} \\
&=& \max_{i=1,2} d(\, (n_2-n_1) \alpha_i, \, \mathbb{Z}) - \frac{2}{|x|^{2\tau}} \\
&\geq & \frac{D}{|n_2-n_1|^{\tau}} - \frac{2}{|x|^{2\tau}} \geq \frac{C'}{|x|^\tau}.
\end{eqnarray*}
For some global constant $C'$, as long as $|x|$ is large enough. This proves (ii) for $\lbrace v_1,...,v_4 \rbrace$ as claimed. Finally, notice that essentially the same proof yields a wide class of further examples of Diophantine IFS's satisfying \eqref{Eq Dio condition} but not \eqref{Eq Dio li-sa} with  arbitrarily many contraction ratios (in particular, more than $4$).

\begin{Remark} \label{Remark Dio}
As pointed out to us by the anonymous referee, if $\Phi$ satisfies that there are $l , C>0$ such that 
\begin{equation} \label{Eq Dio condition 2}
\max_{i\in \lbrace 1,...n\rbrace } d(\,\log |r_i|\cdot x,\, \mathbb{Z}) \geq \frac{C}{|x|^{l}}, \text{ for all } x\in \mathbb{R} \text{ large enough in absolute value}
\end{equation}
then the induced IFS $\Psi$ as in Lemma \ref{Lemma aperiodic} is Diophantine in the sense of \eqref{Eq Dio condition}. In general, however, \eqref{Eq Dio condition 2} does not imply \eqref{Eq Dio condition} (but the converse is obviously true). To see this, let $x,y\in \mathbb{R}$ be such that $y$ is a Liouville number and $x$ is a Diophantine number in the sense of \cite[Equation (1.2)]{li2019trigonometric}. Then the vector $v=(1,x,1+(x-1)y)$ fails \eqref{Eq Dio condition}: Indeed, \eqref{Eq Dio condition} holds for $v=(x-1)\cdot (0,1,y)+(1,1,1)$ if and only if it holds for $(0,1,y)$, and clearly $(0,1,y)$ fails \eqref{Eq Dio condition}. On the other hand, it can be shown that $v$ satisfies \eqref{Eq Dio condition 2} since $x$ is Diophantine.

The main reason why  we work with \eqref{Eq Dio condition} rather than \eqref{Eq Dio condition 2} is that in an upcoming paper we will show that a certain analogue of it for $C^{1+\gamma}$ non-linear IFS's implies effective Fourier decay for self conformal measures. In this non-linear setting it is not clear that if we assume an analogue of \eqref{Eq Dio condition 2} then the induced IFS as in Lemma \ref{Lemma aperiodic} will satisfy the needed condition. We also note that \eqref{Eq Dio condition}  is consistent with the related works \cite{Breuillard2005llt, Dolgopyat1998rapid,  Weak2017Cramer} that we either make use of here, or plan to make use of in future works.
\end{Remark}  

\subsection{Proof of Corollary \ref{Main Corollary} part (3)}
Let $\Phi = \lbrace f_1,...,f_n \rbrace$ be a uniformly contracting $C^{r}$ smooth IFS on the interval $I$, where either $r=2$ or $r=\omega$. Suppose that there exists a self conformal measure $\nu$ that is  not pointwise absolutely normal or not Rajchman.  We can define a derivative cocycle $c'$ directly on $\mathcal{A} \times I$ by
$$c'(i,x)= -\log |f_i '(x)|.$$
Notice that, unlike the symbolic version we have been working with so far, here we need $\Phi$ to be at least $C^2$ so that the cocycle $c'$ has a finite moment in the sense of \cite[Equation (11.15)]{Benoist2016Quint}. This also implies for every fixed $i$ the cocycle $c'$ is $C^1$ and therefore Lipschitz continuous in $x\in I$. Let  $L$ denote the space of Lipschitz continuous maps $I \rightarrow \mathbb{C}$, $K$ be the attractor of $\Phi$,  and define
\begin{eqnarray*} 
\Lambda_{c'} &=& \lbrace \theta: \exists \phi_\theta \in L \text{ with } |\phi_\theta|=1 \text{ and } u_\theta \in S^1 \text{ such that } \forall (a,x)\in \mathcal{A} \times K, \\
&& \quad \quad \quad \phi_\theta(f_a(x)) = u_\theta \exp(-i \theta \cdot c'(a,x))\cdot \phi_\theta (x) \rbrace. \nonumber
\end{eqnarray*}
\begin{Claim}
The assumptions of Corollary \ref{Main Corollary} part (3) imply that   $\Lambda_{c'} \neq \lbrace 0 \rbrace$.
\end{Claim}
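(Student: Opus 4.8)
The plan is to prove the contrapositive of the desired implication by relating the "honest" cocycle $c'$ on $\mathcal A\times K$ to the symbolic cocycle $c$ on $\mathcal A\times\mathcal A^{\mathbb N}$ and then invoking Theorem \ref{Theorem main tech}. First I would observe that if $\nu$ is a self-conformal measure that fails to be pointwise absolutely normal or fails to be Rajchman, then by Theorem \ref{Theorem main tech} the symbolic derivative cocycle cannot be aperiodic, i.e. there is some $0\neq\theta\in\Lambda_c$, witnessed by $\phi_\theta\in H^\kappa$ with $|\phi_\theta|=1$ and $u_\theta\in S^1$ satisfying $\phi_\theta(\iota_a(\omega))=u_\theta\,e^{-i\theta c(a,\omega)}\phi_\theta(\omega)$ for all $(a,\omega)$. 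The goal is to manufacture from $\phi_\theta$ a function $\psi_\theta$ defined on $K$ (or at least on the closure of $\{x_\omega\}$) that is Lipschitz in $x$, has modulus $1$, and satisfies the analogous relation $\psi_\theta(f_a(x))=u_\theta\,e^{-i\theta c'(a,x)}\psi_\theta(x)$ for all $(a,x)\in\mathcal A\times K$, which would exhibit $\theta\in\Lambda_{c'}$ and finish the proof.

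The key step is the transfer of $\phi_\theta$ from symbolic space to $K$. I would use the coding map $\pi\colon\mathcal A^{\mathbb N}\to K$, $\omega\mapsto x_\omega$, which is surjective and, under the metric $d_\rho$ on $\mathcal A^{\mathbb N}$, Hölder (indeed bi-Hölder onto its image when the OSC holds, but here we have no separation, so I must be more careful). The natural candidate is to define $\psi_\theta(x_\omega)=\phi_\theta(\omega)$, but this is only well-defined if $\phi_\theta$ is constant on fibers $\pi^{-1}(x)$. To see this, note that $c'(a,x_{\sigma(\omega)})=c(a,\omega)$ by the very definition of $c$, so the two cochain relations are formally compatible; the content is that $\phi_\theta$ descends. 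Here is where I expect the real work: using the cocycle relation iteratively, $\phi_\theta(\omega)=u_\theta^{-n}\exp\big(i\theta\sum_{j=1}^n c(\omega_j,\sigma^j\omega)\big)\phi_\theta(\sigma^n\omega)=u_\theta^{-n}\exp(i\theta S_n(\omega))\phi_\theta(\sigma^n\omega)$ where $S_n(\omega)=-\log|f'_{\omega|_n}(x_{\sigma^n\omega})|$. If $x_\omega=x_{\omega'}$ then... this is \emph{not} automatically true, since different codings give different $S_n$. So the cleaner route: because $C^{1+\gamma}$ maps with non-vanishing derivative conjugate the problem, and because in the $C^\omega$ or "$K=$ interval" cases one can reduce (as the statement of Corollary \ref{Main Corollary}(3) anticipates) to a situation where the IFS is analytically or $C^2$ conjugate to a self-similar system, I would instead argue directly: set $\psi_\theta(x):=\lim_{n\to\infty}u_\theta^{-n}e^{i\theta \widetilde S_n(\omega)}\phi_\theta(\sigma^n\omega)$ along any $\omega$ with $x_\omega=x$, and prove convergence and independence of $\omega$ using the bounded distortion Theorem \ref{Bounded distortiion theorem} together with the Hölder continuity of $\phi_\theta$, exploiting that $\mathrm{diam}(f_{\omega|_n}(I))\to0$ exponentially.

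More concretely, the plan for the transfer is: (1) Show that for any two codings $\omega,\omega'$ of the same point $x$, and any $n$, the finite words $\omega|_n$ and $\omega'|_n$ satisfy $f_{\omega|_n}(I)\cap f_{\omega'|_n}(I)\ni x$, so these cylinders are close in $K$; combined with bounded distortion, $|S_n(\omega)-S_n(\omega')|$ is bounded (not small, but uniformly bounded), and $d(\sigma^n\omega,\sigma^n\omega')$ need not be small. This suggests the direct fiber argument is delicate. (2) The cleaner path, which I would ultimately take: define $\psi_\theta$ on the \emph{image} $\pi(\mathcal A^{\mathbb N})=K$ by $\psi_\theta(x)=\phi_\theta(\omega)$ for a \emph{chosen} coding, then prove Lipschitz continuity on $K$ by the estimate $|\psi_\theta(f_a(x))-u_\theta e^{-i\theta c'(a,x)}\psi_\theta(x)|$, which via the cocycle identity for $\phi_\theta$ and Hölder control reduces to $d_\rho(\text{codings})$; any two points $f_a(x),f_a(y)$ with $x,y\in K$ have codings agreeing on the first symbol, so continuity propagates. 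The main obstacle, and the place I would spend the most care, is precisely this well-definedness/continuity on $K$ in the absence of any separation hypothesis: I expect one must use that $\phi_\theta$, being $\kappa$-Hölder of modulus one and satisfying the multiplicative cocycle equation, is automatically "coboundary-rigid" enough that its value at $\omega$ depends on $\omega$ only through $x_\omega$ up to the phase ambiguity, and that this phase ambiguity is killed by the relation $1=u_\theta e^{-i\theta c(a,x_{a^\infty})}$ at fixed points (exactly as in Lemma \ref{Lemma lattice}), forcing $\theta\cdot\{\log|f_i'(y_i)|\}\subset 2\pi\mathbb Z+\text{const}$, after which $\psi_\theta$ can be built on the (dense) set of periodic points and extended by uniform continuity. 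Once $\psi_\theta\in L$ with $|\psi_\theta|=1$ and the cochain equation are in hand, $\theta\in\Lambda_{c'}\setminus\{0\}$, completing the Claim.
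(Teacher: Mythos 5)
Your proposal takes a genuinely different route from the paper, and it has a real gap at the crucial step. You try to prove $\Lambda_c\neq\{0\}\Rightarrow\Lambda_{c'}\neq\{0\}$ by descending the symbolic eigenfunction $\phi_\theta$ through the coding map $\pi:\mathcal A^{\mathbb N}\to K$; but, as you yourself flag, in the absence of any separation hypothesis the fibers $\pi^{-1}(x)$ can be large, and there is no reason $\phi_\theta$ should be constant on them. Your attempted repairs do not close the gap: the iterated cocycle identity gives $\phi_\theta(\omega)=u_\theta^{-n}e^{i\theta S_n(\omega)}\phi_\theta(\sigma^n\omega)$, but for two codings $\omega,\omega'$ of the same point the quantities $S_n(\omega)-S_n(\omega')$ are merely bounded (not small) by bounded distortion and the tails $\sigma^n\omega,\sigma^n\omega'$ need not converge to a common limit, so the Hölder control on $\phi_\theta$ does not give well-definedness. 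The ``coboundary-rigid'' and ``dense periodic points'' steps are assertions, not arguments, and the information extracted from fixed points (as in Lemma~\ref{Lemma lattice}) constrains $\theta$ but does not build $\psi_\theta$ on $K$. In fact it is not even clear that the implication $\Lambda_c\neq\{0\}\Rightarrow\Lambda_{c'}\neq\{0\}$ is true without separation.

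The paper sidesteps the transfer problem entirely, and this is worth internalizing. It argues the contrapositive directly at the level of the \emph{local limit theorem} rather than at the level of eigenfunctions: the aperiodicity assumption $\Lambda_c=\{0\}$ enters the proof of Theorem~\ref{Theorem main tech} only through the Benoist--Quint LLT (Theorem~\ref{B-Q LLT general}). If instead one assumes $\Lambda_{c'}=\{0\}$, then the BQ LLT applies to the cocycle $c'$ on $\mathcal A\times I$; since the $n$-step random walk driven by $c$ and the one driven by $c'$ have the same law (both are push-forwards of $\mathbb P$ on the first $n$ digits), the probabilities in the LLT for $c$ can be read off from those for $c'$. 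The only wrinkle — that the LLT for $c$ is stated with a cylinder target $1_{A_\eta}$ — is handled by conjugating the IFS by $f_\eta$, which converts the target into the full space. So $\Lambda_{c'}=\{0\}$ suffices to run the whole proof of Theorem~\ref{Theorem main tech}, hence $\nu$ is both Rajchman and pointwise absolutely normal, contradicting the hypotheses. This argument uses only the \emph{laws} of the walks, not any pointwise descent of eigenfunctions, which is why no separation is needed.
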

\begin{proof}
If $\Lambda_{c'} = \lbrace 0 \rbrace$ then the cocycle $c'$ is aperiodic. Therefore Benoist and Quint's  local limit Theorem for cocycles \cite[Theorem 16.15]{Benoist2016Quint} applies to $c'$. The crucial observation here is that, starting from a given point $x_0 \in I$, the $n$-step random walk driven by the cocycle $c$ and the $n$-step random walk driven by the cocycle $c'$ have exactly the same law, which is the corresponding push-forward of the distribution of $\mathbb{P}$ on the first $n$ digits. It follows that \cite[Theorem 16.15]{Benoist2016Quint} applies to $c$ as well, as long as the target $\varphi=1_{A_\eta}$ for some  cylinder  $A_\eta$ (to see this, just work with the the conjugated IFS $f_\eta \circ \Phi \circ f_\eta ^{-1}$ for which $f_\eta \nu$ is self conformal). Therefore, Theorem \ref{B-Q LLT general}  holds true as stated. Since this is the only use we make of the assumption $\Lambda_c = \lbrace 0 \rbrace$, it follows from the proof of Theorem \ref{Theorem main tech} that $\nu$ is both Rajchman and pointwise absolutely normal. This contradicts our assumptions.
\end{proof}

So, there exists $0 \neq \theta \in \Lambda_{c'}$.  The next Lemma now follows via a standard argument:
\begin{Lemma} \label{Lemma varphi}
If $\Phi$ is $C^2$  then there exists a 
$C^1$ function $\varphi:I \rightarrow \mathbb{R}$ and some $\alpha \in \mathbb{R}$  such that for every $(i,x)\in \mathcal{A}\times K$,
\begin{equation*}
\varphi( f_{i} (x)) = \alpha+ \theta\cdot (-\log |f_i '(x)|)+ \varphi (x) \mod 1.
\end{equation*}
Furthermore, if $\Phi$ is $C^\omega$ then we may assume $\varphi$ is also $C^\omega$.
\end{Lemma}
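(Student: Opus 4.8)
\textbf{Proof plan for Lemma \ref{Lemma varphi}.}

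The plan is to exploit the cocycle identity defining $\Lambda_{c'}$ and to upgrade the a priori merely Lipschitz solution $\phi_\theta$ to a $C^1$ (resp. $C^\omega$) phase function $\varphi$ by a bootstrap argument that uses the smoothness of the maps $f_i$. First I would fix $0\neq\theta\in\Lambda_{c'}$ together with the data $\phi_\theta\in L$, $|\phi_\theta|=1$, and $u_\theta\in S^1$ satisfying $\phi_\theta(f_a(x))=u_\theta e^{-i\theta c'(a,x)}\phi_\theta(x)$ for all $(a,x)\in\mathcal A\times K$. Writing $u_\theta=e^{2\pi i\alpha_0}$ and seeking $\varphi$ with $\phi_\theta=e^{2\pi i\varphi}$, the identity becomes exactly the claimed cohomological equation $\varphi(f_i(x))=\alpha+\theta\cdot(-\log|f_i'(x)|)+\varphi(x)\bmod 1$ with $\alpha=\alpha_0$ (up to the normalization $2\pi$, which one absorbs into $\theta$ and $\alpha$ as the statement is only asserted mod $1$). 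So the only real content is promoting the regularity of $\phi_\theta$ from Lipschitz to $C^1$, and then to $C^\omega$ when $\Phi$ is real-analytic.

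The regularity upgrade I would carry out as follows. Since each $f_i$ is a $C^{r}$ contraction of $I$ with $|f_i'|$ bounded away from $0$ and $1$, the right-hand side $x\mapsto \theta\cdot(-\log|f_i'(x)|)+\varphi(x)$ is, on the relevant pieces, as smooth as $\varphi$ plus a fixed $C^{r-1}$ term; composing with the diffeomorphism $f_i$ onto $f_i(K)$ and using that $K=\bigcup_i f_i(K)$ covers $K$ by the images, one sees that the value of $\varphi$ on each cylinder piece $f_i(K)$ is determined by its values on $K$ shifted by a $C^{r-1}$ function. Iterating this and using uniform contraction, one gets that $\varphi$ restricted to arbitrarily small cylinders is, up to a $C^{r-1}$ correction of exponentially small $C^0$-norm, an affine rescaling of $\varphi$ itself; standard telescoping of the cocycle identity then shows the Lipschitz modulus of $\varphi$ can be improved, and a Journ\'e-type or direct bootstrap argument (as in the classical regularity theory for solutions of cohomological equations over expanding/contracting dynamics) yields $\varphi\in C^{r-1}$ along the fractal, which one then extends to a genuine $C^1$ (resp.\ $C^\omega$) function on the interval $I$ by a Whitney-type extension — here it is important that in the $C^\omega$ case $K$ is either an interval (so extension is trivial) or one argues on each analytically parametrized piece. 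In the $C^\omega$ case one can alternatively note that $\varphi\circ f_i$ and $-\log|f_i'|$ are real-analytic, so the functional equation forces $\varphi$ to agree with a real-analytic function on a set with accumulation points, hence be real-analytic.

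The main obstacle I anticipate is the bootstrap from Lipschitz to $C^1$: one must be careful that the cohomological equation only holds on the attractor $K$, which need not be an interval, so "differentiating the equation" is not literally meaningful and one must instead argue through the self-similar structure and a Whitney/Journ\'e extension, controlling the $C^1$ seminorm uniformly across the (finitely many, but nested) cylinder pieces. A secondary point requiring care is the passage from the $S^1$-valued identity for $\phi_\theta$ to a genuine $\mathbb R$-valued $\varphi$: one must choose a continuous (indeed $C^1$) branch of the logarithm of $\phi_\theta$, which is possible because $K$ is compact and one works with small cylinders where $\phi_\theta$ has small oscillation, and then check the additive cocycle identity holds with a single constant $\alpha$ independent of $i$ — this follows by evaluating at the fixed points of the $f_i$ and using connectedness of the relevant pieces of symbolic space, exactly as in the proof of Lemma \ref{Lemma lattice}.
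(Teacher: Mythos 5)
The proposal correctly isolates the task (upgrading the Lipschitz eigenfunction $\phi_\theta$ to a $C^1$, resp.\ $C^\omega$, solution and then lifting it to an $\mathbb R$-valued phase) and correctly disposes of the $2\pi$ normalization. But the regularity upgrade, which is the only real content, has a genuine gap, and the mechanism you propose would not deliver it. Telescoping the cocycle identity expresses $\varphi$ on a small cylinder $f_\eta(K)$ as $\varphi\circ f_\eta^{-1}$ shifted by the smooth function $-\theta\log|f_\eta'\circ f_\eta^{-1}|$; but since $f_\eta^{-1}$ is \emph{expansive} with rate $|f_\eta'|^{-1}$, a difference quotient of $\varphi$ on $f_\eta(K)$ at scale $\delta$ is being compared to one at the much coarser scale $|f_\eta'|^{-1}\delta$, so this gives no control on the regularity of $\varphi$ as $\delta\to 0$; the assertion that "the Lipschitz modulus can be improved" is not justified and I do not believe it. The auxiliary smooth shift also does not have exponentially small $C^0$-norm -- by bounded distortion it is $\Theta(1)$ mod $1$. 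Journ\'e's lemma concerns a function smooth along two transverse foliations and does not apply to a one-sided cocycle over an IFS attractor. The $C^\omega$ shortcut via "agreeing with a real-analytic function on a set with accumulation points" is circular: uniqueness of analytic continuation presupposes that $\varphi$ is already known to be real-analytic, which is the claim. Lastly, the worry about obtaining a single $\alpha$ independent of $i$ is moot -- $\alpha$ comes from $u_\theta$, and the definition of $\Lambda_{c'}$ already requires the \emph{same} $u_\theta$ for all $a\in\mathcal{A}$.

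The standard argument does not bootstrap $\phi_\theta$ but builds the smooth solution directly as a forward telescoping \emph{towards} a fixed point. Let $z_1$ be the fixed point of $f_1$; evaluating the defining relation at $z_1$ gives $u_\theta\,e^{i\theta\log|f_1'(z_1)|}=1$. For $y\in I$ set
\[
\tilde\phi(y)\;:=\;\phi_\theta(z_1)\,\exp\!\Big(-i\theta\sum_{k\ge 0}\big[\log|f_1'(f_1^k(y))|-\log|f_1'(z_1)|\big]\Big).
\]
Each summand is $C^{r-1}$ when $\Phi$ is $C^r$, and by the chain rule its $C^1$-norm is $\lesssim \|f_1'\|_\infty^{\,k}$, so the series converges in $C^1(I)$; when $\Phi$ is $C^\omega$ one complexifies $f_1$ and $\log f_1'$ on a neighbourhood of $I$ and obtains uniform convergence of holomorphic functions, hence a $C^\omega$ limit. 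Using the relation $u_\theta e^{i\theta\log|f_1'(z_1)|}=1$ and the chain rule, $\tilde\phi$ is the uniform limit of $\tilde\phi_N(y)=u_\theta^{-N}e^{-i\theta\log|(f_1^N)'(y)|}\phi_\theta(f_1^N(y))$, and the $N$-fold iterate of the cocycle identity shows $\tilde\phi_N(y)=\phi_\theta(y)$ for every $y\in K$, so $\tilde\phi|_K=\phi_\theta|_K$. Since $|\tilde\phi|\equiv 1$ on the simply connected interval $I$ one may write $\tilde\phi=e^{2\pi i\varphi}$ with $\varphi$ of the same regularity, and the mod-$1$ cohomological equation on $K$ then follows from the defining identity of $\Lambda_{c'}$ with $\alpha$ determined by $u_\theta$, exactly as you noted. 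This is the missing core of the argument; the surrounding observations in your proposal are compatible with it, but without this construction the Whitney-extension step has nothing to extend.
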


The next Lemma is where the assumption that $K$ is an interval when $\Phi$ is $C^2$ comes into play:
\begin{Lemma} \label{Lemma cohom on J}
Let $\varphi$ and $\alpha$ be as in Lemma \ref{Lemma varphi}. If $\Phi$ is $C^{2}$ and $K$ is an interval then for every $1\leq i \leq n$ there is some $n_i\in \mathbb{Z}$ such that for every $x\in I$
\begin{equation*}
\varphi( f_{i} (x)) = \alpha+ \theta\cdot (-\log |f_i '(x)|)+ \varphi (x) +n_i.
\end{equation*}
If $\Phi$ is  $C^\omega$ then every $y\in K$ admits a neighbourhood $N_{y}$ in $I$ and $n_{y,i}\in \mathbb{Z}$ such that for every $x\in N_y$ and every $i$
$$\varphi( f_{i} (x)) = \alpha+\theta\cdot (-\log |f_i '(x)|)+ \varphi (x)+n_{y,i}.$$
\end{Lemma}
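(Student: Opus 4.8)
The plan is to exploit the fact that the functional equation in Lemma \ref{Lemma varphi} holds only modulo $1$ on $K$, and to upgrade it to an equality of real numbers (no mod $1$) near every point of $K$. The starting point is the observation that the right-hand side of the equation in Lemma \ref{Lemma varphi}, namely $x\mapsto \alpha+\theta\cdot(-\log|f_i'(x)|)+\varphi(x)$, is a continuous (indeed $C^1$) function on $I$, and so is the left-hand side $x\mapsto \varphi(f_i(x))$. Their difference is therefore a continuous function on $I$ taking values in $\mathbb{Z}$; on any connected subset of $I$ it must be constant. This is the mechanism behind both assertions.

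First I would treat the $C^2$ case. Here $K$ is an interval, so $K$ itself is connected. The difference $\varphi(f_i(x))-\alpha-\theta\cdot(-\log|f_i'(x)|)-\varphi(x)$ is a continuous $\mathbb{Z}$-valued function on $K$, hence equal to some constant $n_i\in\mathbb{Z}$ for all $x\in K$. This already gives the stated identity for $x\in K$. To extend it to all $x\in I$, note that both sides are $C^1$ functions on the full interval $I$ (since $\varphi\in C^1(I)$ and the $f_i$ are $C^2$, so $-\log|f_i'|\in C^1(I)$), and they agree on $K$, which is a non-degenerate interval — in particular a set with a non-empty interior. Two $C^1$ functions that agree on an interval of positive length: actually, to conclude equality on all of $I$ we need $K=I$, which is exactly the hypothesis "$K$ is an interval" (combined with $K\subseteq I$ and the standing convention that $K$ is the attractor inside $I$ — one may shrink $I$ to $K$ at the outset, or observe that the self-conformal structure forces $f_i(I)\subseteq I$ and one works on the convex hull). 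Thus the identity holds on $I$.

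For the $C^\omega$ case the global connectivity of $K$ is no longer available, but local connectivity around each $y\in K$ is. Fix $y\in K$. Since $\varphi$ and the maps $f_i$ and $-\log|f_i'|$ are all real-analytic on $I$, and $K$ has positive Hausdorff dimension hence contains points that are not isolated, I would choose an open interval $N_y\subseteq I$ containing $y$ that is small enough that $f_i(N_y)\subseteq I$ for all $i$ and that $N_y\cap K$ contains a sequence of distinct points accumulating at $y$ (possible because $K$ is perfect — it is infinite and closed and self-conformal). On $N_y\cap K$ the difference $\varphi(f_i(x))-\alpha-\theta\cdot(-\log|f_i'(x)|)-\varphi(x)$ is $\mathbb{Z}$-valued; being the restriction to $N_y$ of a real-analytic function that takes integer values on a set with an accumulation point, this analytic function is forced to be constant on the connected set $N_y$ (a non-constant real-analytic function has isolated level sets, contradicting integer values on an accumulating sequence). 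Call this constant $n_{y,i}$. That is precisely the claimed local identity.

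The main obstacle, and the only genuinely delicate point, is the passage from "equality mod $1$ on $K$" to "equality of real numbers", i.e. making rigorous that the $\mathbb{Z}$-valued difference is locally constant. In the $C^2$ case this is the clean statement that a continuous integer-valued function on a connected set is constant, and the hypothesis that $K$ is an interval is used exactly to make $K$ connected (and to propagate to $I$). In the $C^\omega$ case the subtlety is that $K$ is totally disconnected, so one cannot argue by connectedness of $K$; instead one uses analyticity plus the fact that $K$ is perfect, so that an analytic function constrained to be integer-valued on a subset of $K$ with accumulation points must be locally constant. I would state explicitly that $K$ being infinite and self-conformal is perfect (no isolated points), since that is what licenses the accumulation argument. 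Everything else — continuity/analyticity of the ingredients, the chain-rule form of $-\log|f_i'|$ — is routine.
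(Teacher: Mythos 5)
Your proposal takes essentially the same route as the paper: in the $C^2$ case use that a continuous $\mathbb{Z}$-valued function on the connected set $K$ is constant, and in the $C^\omega$ case use that a real-analytic function taking integer values on a set with accumulation points is locally constant, relying on $K$ being perfect. You are in fact slightly more careful than the paper's own (terse) proof in two respects — you explicitly flag that the argument a priori only gives the identity on $K$ and resolve the passage to $I$ by observing one may take $I=K$ when $K$ is an interval, and you explicitly invoke that $K$ has no isolated points rather than just "compact and infinite" — though your offered justification for perfectness via positive Hausdorff dimension is not the cleanest (what you really want is that a non-trivial self-conformal attractor is perfect, which follows directly from the contracting IFS structure and the fact that $K$ is infinite).
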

Notice that one difference between this Lemma and Lemma \ref{Lemma varphi} is the set on which the cohomological equation holds. 
\begin{proof}
First,  by Lemma \ref{Lemma varphi}, for every $1 \leq i \leq n$ and for all $x\in K$
$$ \varphi( f_{i} (x)) - \left( \alpha+ \theta\cdot (-\log |f_i '(x)|)+ \varphi (x) \right) \in \mathbb{Z}.$$
Assuming $K$ is an interval, the function on the left hand side is a continuous function taking values in $\mathbb{Z}$, so it must be constantly $n_i \in \mathbb{Z}$ on $K$.

 If $\Phi$ is $C^\omega$ then so is $\varphi$. So, the function on the left hand side in the last displayed equation is a $C^\omega$ function on $K$ that takes values in $\mathbb{Z}$. Since $K$ is compact and infinite, this Lemma follows.
\end{proof}

\textbf{Proof of Corollary \ref{Main Corollary} part (3)}
 Assume first that $\Phi$ is $C^{2}$ smooth and that $K$ is an interval. Let $\varphi$ be as in Lemma \ref{Lemma cohom on J}, and let $h:I\rightarrow \mathbb{R}$ be a $C^{2}$ smooth function  that is a primitive of $\exp(\frac{ \varphi (x)}{\theta})$ on $I$. Now, for every $i$ define
$$g_i (x) = h \circ f_i \circ h^{-1} : h(I) \rightarrow h(I)$$
and let $\Psi$ be the IFS consisting of the maps $g_i$. We claim that $\Psi$ is a periodic self similar IFS. Indeed,  by Lemma \ref{Lemma cohom on J}, for every $i$ and every $y\in h(I)$
\begin{eqnarray*}
g_i' (y)  &=& \left( h \circ f_i \circ h^{-1} \right)'(y) \\
&=& \frac{h' \left( f_i \circ h^{-1} (y) \right) \cdot  f_i' (h^{-1}(y)) }{h'(h^{-1}(y))}\\
&=& \exp\left( \frac{\varphi \circ f_i (h^{-1}(y))}{\theta} +\log |f_i '(h^{-1}(y))|)-\frac{\varphi(h^{-1}(y))}{\theta} \right) \cdot \sign \left( f_i ' (h^{-1}(y)) \right)  \\
&=& \exp\left( \frac{n_i}{\theta}+\frac{\alpha}{\theta} \right) \cdot \sign \left( f_i ' (h^{-1}(y)) \right). \\
\end{eqnarray*}
Since by uniform contraction $\sign \left( f_i ' (h^{-1}(y)) \right)$ is constant in $y$, $\Psi$ is a  self similar IFS. Finally, $\Psi$ must be periodic, since if it were aperiodic then by Corollary \ref{Main Corollary} part (2) the measure $\nu$ would be both pointwise absolutely normal and Rajchman, contradicting our assumptions.

If the IFS is $C^\omega$ smooth then the same proof shows that $h(K)$ can be covered by finitely many intervals on which every map in $\Psi$ acts as an affine map, with contraction ratios of the form $r^{n_i}$. We leave the verification to the reader.

\section{Proof of Theorem \ref{Theorem 1}} \label{Section proof thm 1}
\subsection{Proof of Theorem \ref{Prop key 1}}
In this Section we prove Theorem \ref{Prop key 1}, which is the key to the proof of Theorem \ref{Theorem 1}.  We follow the same notations as before:  Let  $\Phi = \lbrace f_1,...,f_n \rbrace$ be a self-similar IFS, and  let $\nu$ be a self similar measure. Let $p\geq 2$ be an integer, and define a stopping time 
$$ \beta_n (\omega):= \min \lbrace m: |f_{\omega|_m} ' (0)| < e^{-n\cdot \log p} \rbrace.$$
  The following  is a complete analogue of Theorem \ref{Theorem martingale}:
\begin{theorem} \label{Equi via martingale}
For $\mathbb{P}$ almost every $\omega$, for every integer $q$
\begin{equation*}
\lim_{N\rightarrow\infty} \mathcal{F}_q \left( \frac{1}{N}  \sum_{n=0} ^{N-1} \delta_{T_p ^n (x_\omega)} \right) - \mathcal{F}_q \left( \frac{1}{N} \sum_{n=0} ^{N-1} T_p ^n \circ f_{\omega|_{\beta_{n} (\omega)}} \nu \right)  =0.
\end{equation*}
\end{theorem}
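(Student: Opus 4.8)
The plan is to apply the abstract martingale statement, Theorem \ref{Theorem 2.2}, in essentially the same way it was used to prove Theorem \ref{Theorem martingale}, only now in the simpler self-similar setting where the stopping time $\beta_n$ plays the role of $\beta_{n,h}$. First I would fix the compact space $X=\mathcal{A}^{\mathbb N}\times\mathbb{T}$ equipped with the product metric (the metric $d_\rho$ on the symbolic factor, the usual metric on $\mathbb{T}$), the continuous map $T(\omega,x)=(\omega,\bar T_p(x))$, and the Borel probability measure $\mu$ on $X$ defined by $\mu(A)=\mathbb P(\{\omega:(\omega,\pi(x_\omega))\in A\})$, whose projection to $\mathbb{T}$ is $\pi\nu$. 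For each $n$ let $\mathcal{C}_n$ be the partition of $X$ whose atoms on the graph $\{(\omega,\pi(x_\omega))\}$ are the sets on which $(\omega_1,\dots,\omega_{\beta_n(\omega)})$ is constant, with everything off the graph collapsed into a single null atom $B$ (here $\mu(B)=0$). The sequence $\{\mathcal{C}_n\}$ is refining because $\beta_n$ is non-decreasing in $n$: indeed $|f_{\omega|_m}'(0)|$ is strictly decreasing in $m$, so the threshold $e^{-n\log p}$ is crossed no earlier as $n$ grows, whence $\beta_{n+1}\ge\beta_n$ pointwise and the cylinder determined by the first $\beta_{n+1}$ digits refines that determined by the first $\beta_n$ digits.

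The one genuine hypothesis to check is the diameter condition \eqref{Eq diameter condition}. Fix $n,k\in\mathbb N$ and $A\in\mathcal{C}_{n+k}$ with $\mu(A)>0$; pick $\omega$ with $(\omega,\pi(x_\omega))\in A$. Then $T^nA$ is contained in $A_{\omega|_{\beta_{n+k}(\omega)}}\times \bar T_p^{\,n}\circ\pi\circ f_{\omega|_{\beta_{n+k}(\omega)}}(K)$. On the symbolic factor the diameter is at most $\rho^{\beta_{n+k}(\omega)-\beta_n(\omega)}$; since $\beta_n$ increments the crossing level by $n\log p$, a bounded-distortion (here, exactly the chain rule for similarities plus $|r_i|\le e^{-D}$) argument gives $\beta_{n+k}(\omega)-\beta_n(\omega)\gtrsim k$, so this term is $\le \rho^{\Omega(k)}\to0$ uniformly in $n,\omega$. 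On the $\mathbb{T}$ factor, $f_{\omega|_{\beta_{n+k}(\omega)}}(K)$ has diameter $\lesssim |f_{\omega|_{\beta_{n+k}(\omega)}}'(0)|\le e^{-(n+k)\log p}=p^{-(n+k)}$, so after applying $\bar T_p^{\,n}$ (which expands lengths by $p^n$ on intervals short enough not to wrap, and these are that short once $k$ is large) the image has diameter $\lesssim p^{-k}\to 0$ uniformly in $n,\omega$. Hence \eqref{Eq diameter condition} holds.

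Applying Theorem \ref{Theorem 2.2} then yields, for $\mu$-a.e.\ $(\omega,\pi(x_\omega))$,
\begin{equation*}
\lim_{N\to\infty}\Big(\frac1N\sum_{n=0}^{N-1}\delta_{T^n(\omega,\pi(x_\omega))}-\frac1N\sum_{n=0}^{N-1}T^n\mu_{\mathcal{C}_n(\omega,\pi(x_\omega))}\Big)=0
\end{equation*}
in the weak-$*$ sense. Since the conditional measure $\mu_{\mathcal{C}_n(\omega,\pi(x_\omega))}$ projects on $\mathbb{T}$ to $\pi\circ f_{\omega|_{\beta_n(\omega)}}\nu$, projecting this limit to $\mathbb{T}$ and using $\bar T_p^{\,n}\circ\pi=\pi\circ T_p^{\,n}$ (equation \eqref{Eq relation times p}) gives, for $\mathbb P$-a.e.\ $\omega$,
\begin{equation*}
\lim_{N\to\infty}\Big(\frac1N\sum_{n=0}^{N-1}\delta_{\bar T_p^{\,n}\circ\pi(x_\omega)}-\frac1N\sum_{n=0}^{N-1}\bar T_p^{\,n}\circ\pi\circ f_{\omega|_{\beta_n(\omega)}}\nu\Big)=0.
\end{equation*}
Finally, for any integer $q$, $\mathcal{F}_q$ is unchanged by the projection $\pi$ on each of these measures, so applying $\mathcal{F}_q$ term-by-term and passing to the limit yields the claimed identity. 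The main obstacle, such as it is, is purely bookkeeping: verifying \eqref{Eq diameter condition} with the uniformity in $n$, which hinges on the two elementary estimates above for how $\beta_n$ grows and for the size of $f_{\omega|_{\beta_n(\omega)}}(K)$; no new ideas beyond those already used for Theorem \ref{Theorem martingale} are needed, and in fact the self-similar case is strictly simpler since $x_{\sigma^m(\omega)}$ can be replaced throughout by the fixed base point $0$.
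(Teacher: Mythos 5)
Your proposal is correct and takes exactly the route the paper intends: it reruns the proof of Theorem \ref{Theorem martingale} with the stopping time $\beta_n$ in place of $\beta_{n,h}$, verifying the diameter hypothesis of Theorem \ref{Theorem 2.2} and then projecting to $\mathbb{T}$ and applying $\mathcal{F}_q$. (One small slip: the symbolic-factor diameter of $T^nA$ is $\rho^{\beta_{n+k}(\omega)}$ rather than $\rho^{\beta_{n+k}(\omega)-\beta_n(\omega)}$, but your weaker bound also tends to $0$ uniformly since $\beta_{n+k}(\omega)\geq\beta_{n+k}(\omega)-\beta_n(\omega)\gtrsim k$, so the argument goes through unchanged.)
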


Recall that our aim is to prove the following Theorem:
\begin{theorem} \label{Theorem normality}
If  $\nu$ is a Rajchman measure then it is pointwise normal to base $p$.
\end{theorem}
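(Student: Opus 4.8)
The plan is to deduce Theorem~\ref{Theorem normality} (hence Theorem~\ref{Prop key 1}) from the Rajchman hypothesis via the martingale decomposition of Theorem~\ref{Equi via martingale}, following the same architecture as the proof of Theorem~\ref{Theorem main tech} part (2), but replacing the local limit machinery of Theorems~\ref{Theorem equid} and \ref{Theorem lin equid} with a direct appeal to the decay of $\mathcal F_q(\nu)$. By Weyl's criterion it suffices to show that for $\mathbb P$-a.e.\ $\omega$ and every integer $q\neq 0$, the Ces\`aro averages $\frac1N\sum_{n=1}^N\delta_{T_p^n(x_\omega)}$ have vanishing $q$-th Fourier coefficient in the limit; as in the proof that Theorem~\ref{Lemma sufficient} implies Theorem~\ref{Main theorem}, the invariance of any weak-$*$ limit $\pi\nu_\infty$ under $\bar T_p$ lets us replace $q$ by $qp^m$ for $m$ as large as we like, so it is enough to prove that $\limsup_N|\mathcal F_q(\frac1N\sum_{n=1}^N\delta_{T_p^n(x_\omega)})|$ is small once $|q|$ is large.

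By Theorem~\ref{Equi via martingale} and Jensen's inequality, this $\limsup$ is bounded by
$$\limsup_N\frac1N\sum_{n=1}^N\left|\mathcal F_q\big(T_p^n\circ f_{\omega|_{\beta_n(\omega)}}\nu\big)\right|\le \limsup_N\frac1N\sum_{n=1}^N\sqrt{\left|\mathcal F_q\big(T_p^n\circ f_{\omega|_{\beta_n(\omega)}}\nu\big)\right|^2}.$$
Now I would run the first linearization exactly as in Claim~\ref{First linearization}: since $f_{\omega|_{\beta_n(\omega)}}$ is a similarity (the IFS is self-similar, so \emph{no} $C^0$ linearization error is incurred at all — this is where the self-similar hypothesis makes life easy), we have $f_{\omega|_{\beta_n(\omega)}}(x)=r_{\omega,n}\cdot x+t_{\omega,n}$ with $|r_{\omega,n}|=\Theta(p^{-n})$ by the definition of $\beta_n$ and the uniform bounds \eqref{Eq C and C prime}. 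Using that $q$ is an integer and $T_p^n$ is multiplication by $p^n$ modulo $1$, $\mathcal F_q(T_p^n\circ f_{\omega|_{\beta_n(\omega)}}\nu)=\mathcal F_{qp^nr_{\omega,n}}(\nu)$ up to a unimodular factor coming from the translation $t_{\omega,n}$, and $s_{\omega,n}:=qp^nr_{\omega,n}$ satisfies $|s_{\omega,n}|\asymp|q|$ uniformly. Hence the averaged quantity is bounded by $\sup_{s:\,C_0|q|\le|s|\le C_0^{-1}|q|}|\mathcal F_s(\nu)|$, which tends to $0$ as $|q|\to\infty$ precisely because $\nu$ is Rajchman.

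The main thing to check carefully — and the only genuine obstacle — is that the diameter condition \eqref{Eq diameter condition} underlying Theorem~\ref{Equi via martingale} holds for the partitions associated to $\beta_n$, so that the martingale/scenery argument (Theorem~\ref{Theorem 2.2}) is legitimately applicable; but this is the same verification carried out in the proof of Theorem~\ref{Theorem martingale} (using $\diam(T_p^nA)\le\max\{\rho^{\,\cdots},p^{-k}\}\to0$ uniformly in $n$), so it goes through verbatim in the self-similar setting with $\beta_n$ in place of $\beta_{n,h}$. Once that is in place, the chain of inequalities above gives, for every $\epsilon>0$, a threshold $q^*(\epsilon)$ so that $\limsup_N|\mathcal F_q(\frac1N\sum_{n=1}^N\delta_{T_p^n(x_\omega)})|<\epsilon$ for all integers $|q|\ge q^*$ and $\mathbb P$-a.e.\ $\omega$; combined with the $\bar T_p$-invariance reduction this forces $\mathcal F_q(\nu_\infty)=0$ for every integer $q\neq0$, so $\nu_\infty=\lambda_{[0,1]}$, i.e.\ $\nu$-a.e.\ $x$ is $p$-normal. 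Since this holds for every integer $p\ge2$, $\nu$ is pointwise absolutely normal, proving Theorem~\ref{Prop key 1}. Note no decay \emph{rate} is used anywhere — only $\mathcal F_s(\nu)\to0$ — which is exactly the improvement over Davenport--Erd\H os--LeVeque being advertised.
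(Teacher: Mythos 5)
Your proposal is correct and is essentially the paper's own argument: the paper's Proposition~\ref{Prop key} performs exactly the reduction you describe (self-similarity makes $f_{\omega|_{\beta_n(\omega)}}$ exactly affine, the integer frequency $q$ eliminates the $T_p^n$ and translation contributions in absolute value, and the bounded ratio $r(\omega,n)=p^n f_{\omega|_{\beta_n(\omega)}}'(0)\in[C_0,1]$ reduces everything to $|\mathcal{F}_{qr(\omega,n)}(\nu)|$, on which the Rajchman hypothesis is invoked uniformly in $\omega$ and $n$). The martingale decomposition (Theorem~\ref{Equi via martingale}), the diameter-condition check, and the $\bar T_p$-invariance reduction to large $|q|$ are used exactly as in your account.
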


 We need the following key Proposition.
\begin{Proposition} \label{Prop key}
For every $\epsilon>0$ there is some $q_0=q_0(\epsilon)$ such that for all integer $|q|>q_0$, for $\mathbb{P}$ almost every $\omega$ and every $n$,
$$\left| F_q \left( T_p ^n \circ f_{\omega|_{\beta_n (\omega)}} \nu \right)  \right| < \epsilon.$$
\end{Proposition}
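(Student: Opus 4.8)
\textbf{Proof proposal for Proposition \ref{Prop key}.}

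The plan is to mimic the proof of Theorem \ref{Theorem main tech} part (1) given in Section \ref{Section proof 1}, but to keep careful track of the fact that the ambient measure whose Fourier transform we must control is not a fixed self-similar measure but a ``magnification'' $T_p^n\circ f_{\omega|_{\beta_n(\omega)}}\nu$. The first observation is that, since $\Phi$ is self-similar, the random walk $S_n(\omega)=-\log|f'_{\omega|_n}(x_{\sigma^n\omega})| = -\log|f'_{\omega|_n}(0)|$ is a genuine random walk on $\mathbb R$: its law on the first $n$ digits is $\mu^{*n}$, where $\mu$ is the distribution of $-\log|r_i|$ under $\mathbf p$. In particular $S_n$ has no dependence on the base point, so the quantities $f_{\omega|_{\beta_n(\omega)}}$ are determined purely by the symbolic data. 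As in Claim \ref{First linearization}, I would first use the initial $\ttau_{\frac{n\log p}{\chi}}(\omega)$ digits to absorb the $T_p^n$ factor: because every map in $\Phi$ is affine, this linearization is exact (no $C^0$-linearization error of the type from Lemma \ref{Lemma lin} is needed), and it reduces the problem to bounding $|\mathcal F_{qs}(f_{\omega'|_{\ttau_h(\omega')}}\nu)|$ for a uniformly bounded frequency defect $s$, $s^{-1}=O(1)$, with $\omega'=\sigma^{\ttau_{\frac{n\log p}{\chi}}(\omega)}(\omega)$. Since the defect $s$ ranges in a compact subset of $\mathbb R\setminus\{0\}$, it suffices to prove that $\sup_{|s|,|s^{-1}|=O(1)}|\mathcal F_{qs}(f_{\omega'|_{\ttau_h(\omega')}}\nu)|\to 0$ as $|q|\to\infty$, uniformly in $\omega'$, for a suitable choice of the stopping scale $h=h(q)$.

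From here I would run the Section \ref{Section proof 1} machine essentially verbatim. Write $f_{\omega'|_{\ttau_h(\omega')}}\nu$ using Lemma \ref{Claim 3'}-type self-conformality (here Lemma \ref{Claim 3'} applies since $\nu$ is self-similar hence self-conformal), apply Jensen to pass to an $L^2$ average, and then invoke Theorem \ref{Theorem equid} with $h_0=0$ to replace the expectation over cells $\cA_k^{h,h'}$ of $|\mathcal F_q(\cdot)|^2$ by an integral against the absolutely continuous measures $\Gamma_{\cA_k^{h,h'}(\xi)}$, whose densities are uniformly bounded by $1/D$ (Lemma \ref{Lemma Abs. contin of gamma}). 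This converts the problem into a sum of boundedly many oscillatory integrals of the form $\int_{k\chi}^{k\chi+D'}|\mathcal F_{qs}(M_{e^{-z}}\circ f_{\bar\eta'}\circ f_\rho\,\nu)|^2\,\frac1D\,dz$, to which Hochman's Lemma \ref{Lemma 3.2 } applies: for any $r>0$ the integral is bounded by a constant times $\frac{1}{r|q|} + \sup_y\nu(B_{Te^{\chi(k+h')}r}(y))$. As in Section \ref{Section proof 1} one tunes $r$ so that $Te^{\chi(k+h')}r=\delta$, whence the second term is $\sup_y\nu(B_\delta(y))$, small by the non-atomicity Lemma \ref{Lemma nu is continuous}, and the first term is small once $|q|$ is large relative to $e^{\chi(k+h')}$ and $\delta$ — which is arranged by the same implicit choice of $k=k(q)$ as in Lemma \ref{Lemma choice}, with $h'=\sqrt k$. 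The key conceptual point, exactly as in the two-paragraph sketch of part (2) in Section \ref{Section sketch}, is that all the error terms and implied constants are \emph{uniform in the word $\omega'$}, because Theorem \ref{Theorem equid}'s error $o_{0,\mathbf p}^{k\to\infty}(1)$ and the bounds from Lemmas \ref{Lemma Abs. contin of gamma}, \ref{Lemma 3.2 }, \ref{Lemma nu is continuous} do not see $\omega'$ at all.

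The one genuinely new feature — and the step I expect to be the main obstacle — is that, unlike Theorem \ref{Theorem main tech} part (1) where we only needed $\lim_{|q|\to\infty}\mathcal F_q(\nu)=0$ for the \emph{fixed} measure $\nu$, here the reduction via Theorem \ref{Equi via martingale} forces us to bound $|\mathcal F_q(T_p^n\circ f_{\omega|_{\beta_n(\omega)}}\nu)|$ \emph{simultaneously for all $n$ and for $\mathbb P$-a.e.\ $\omega$}, with a single threshold $q_0(\epsilon)$. I would handle this by choosing the stopping scale $h=h(q)$ to depend only on $q$ (and $\epsilon$, $\mathbf p$), so that after the exact linearization of Claim \ref{First linearization} the residual object is always a measure of the form $f_{\omega'|_{\ttau_h(\omega')}}\nu$ with $\diam(\supp)=O(e^{-h\chi})$, at the fixed analysis scale dictated by $q$; the uniform-in-$\omega'$ bounds from the previous paragraph then yield the same $\epsilon$-bound for every $n$ and every $\omega$. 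A subtle point to verify is that the exact-affine linearization in Claim \ref{First linearization} incurs \emph{no} error in the self-similar case (so that, unlike the $C^{1+\gamma}$ case, one does not need the $|q|e^{-(1+\gamma')h\chi}$ error term), which lets the argument close with only the oscillatory-integral and local-limit errors, both controllable by the choice of $k,h',\delta$ as functions of $q$. Once Proposition \ref{Prop key} is in hand, Theorem \ref{Theorem normality} (equivalently Theorem \ref{Prop key 1}) follows by combining it with Theorem \ref{Equi via martingale}, averaging over $n$ and applying Jensen exactly as in the deduction that Theorem \ref{Claim sufficient} implies Theorem \ref{Lemma sufficient}.
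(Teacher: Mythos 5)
The crucial issue with your proposal is that it invokes Theorem \ref{Theorem equid} (the conditional local limit theorem), which requires the aperiodicity hypothesis $\Lambda_c=\{0\}$ from Theorem \ref{Theorem main tech}. That hypothesis is \emph{not} assumed in Theorem \ref{Prop key 1} or Proposition \ref{Prop key}; the only assumption is that $\nu$ is Rajchman. Your sketch never actually uses the Rajchman hypothesis, which should be a warning sign, since the whole content of Theorem \ref{Prop key 1} is that the Rajchman property \emph{replaces} the aperiodicity machinery. Without $\Lambda_c=\{0\}$, the LLT step in your argument has no justification, so the approach as written fails.

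What you are missing is that the self-similarity makes the proposition essentially immediate given the Rajchman hypothesis, with no LLT or oscillatory-integral machinery at all. Since every map in $\Phi$ is affine, $T_p^n\circ f_{\omega|_{\beta_n(\omega)}}$ restricted to $K$ is, modulo an integer translation, itself an affine map $x\mapsto r(\omega,n)\,x+c(\omega,n)$ with slope $r(\omega,n)=p^n f'_{\omega|_{\beta_n(\omega)}}(0)$; by the definition of $\beta_n$ this slope satisfies $|r(\omega,n)|\in[C_0,1]$ for a global constant $C_0>0$, uniformly in $n$ and $\omega$. Because $q$ is an integer, the modulo-$1$ correction is invisible to $\mathcal F_q$, and one simply gets $\big|\mathcal F_q(T_p^n\circ f_{\omega|_{\beta_n(\omega)}}\nu)\big| = \big|\mathcal F_{q\cdot r(\omega,n)}(\nu)\big|$. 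Since $|q\cdot r(\omega,n)|\ge C_0|q|$, the Rajchman property of $\nu$ gives the uniform bound $<\epsilon$ for $|q|>q_0(\epsilon)$, with $q_0$ independent of $n$ and $\omega$. Your first-paragraph observation that the linearization is \emph{exact} in the self-similar case is the right idea; you should push it all the way — the composed map is already affine, so there is no residual object $f_{\omega'|_{\ttau_h(\omega')}}\nu$ to analyze and no Section \ref{Section proof 1} machine to run.
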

Proposition \ref{Prop key} implies Theorem \ref{Theorem normality}: By appealing first to Theorem \ref{Equi via martingale} and using the relation between $\mathbb{P}$ and $\nu$, this is completely analogous to the implication  Theorem \ref{Lemma sufficient} $\Rightarrow$ Theorem \ref{Main theorem}.

\noindent{\bf Proof of Proposition \ref{Prop key}} Let $\epsilon>0$ and let $q\in \mathbb{Z}$. Fix  $\omega$, a $\mathbb{P}$ typical point. For every $n$ and  $x \in K$, since $\Phi$ is a self similar IFS (and so we may assume all maps are defined on $\mathbb{R}$)
\begin{equation} \label{Eq linear}
T_p ^n \circ  f_{\omega|_{\beta_n(\omega)}} (x) = p^n \left(  f_{\omega|_{\beta_n(\omega)}} ' (0) \cdot x + f_{\omega|_{\beta_n(\omega)}} (0) \right)  - m_{x,n}, \quad \text{ where } m_{x,n}\in \mathbb{Z}.
\end{equation}
Note: There is some global $C_0>0$ such that $\left| p^n \cdot f_{\omega|_{\beta_n(\omega)}} ' (0) \right| \in [C_0,1]$ for all $n$ and $\omega$.
\newline Let $r(\omega,n) = p^n \cdot f_{\omega|_{\beta_n(\omega)}} ' (0)$. Then, by \eqref{Eq linear} and since $q\in \mathbb{Z}$ 
$$\left| F_q \left( T_p ^n \circ f_{\omega|_{\beta_n (\omega)}} \nu \right)  \right| = \left| F_{q\cdot r(\omega,n)}  \left( \nu \right)  \right|.$$
Since $r(\omega,n)$ is now fixed and its norm is in $[C_0,1]$, and since $\nu$ is assumed to be Rajchman, for all $|q|>q_0(\epsilon,C_0)=q_0(\epsilon)$ and every $\omega,n$
$$\left| F_{q\cdot r(\omega,n)}  \left( \nu \right)  \right| < \epsilon.$$
The proof is complete.

\subsection{Reduction to IFS's in integer form}
We begin the proof of Theorem \ref{Theorem 1}. Fix the self similar  IFS $\Phi = \lbrace f_i (x) = a_i x + b_i \rbrace$, where $a_i>0$ for all $i$. Recall the notation $\Phi_1 := \Phi$ and for every integer $m\geq 2$,
$$\Phi_m := \lbrace g: g= \phi_1 \circ ... \circ \phi_\ell, \text{ such that } \phi_i \in \Phi, g'(0)<\frac{1}{m} \text{ and }  \left( \phi_1 \circ ... \circ \phi_{\ell-1} \right)' (0) \geq \frac{1}{m} \rbrace$$
and that for every $m\in \mathbb{N}$, $K_{\Phi_m} = K_\Phi$, i.e. all these IFS's have the same attractor as $\Phi$. Let $K$ denote this common attractor. We require the following Claim:
\begin{Claim} \label{Claim var princ}
$\dim K = \sup \lbrace \dim \mu:\, \mu \text{ is a self similar measure w.r.t } \Phi_m, \, m\in \mathbb{N} \rbrace$.
\end{Claim}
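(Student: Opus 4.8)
The plan is to establish both inequalities. The inequality $\dim K \geq \sup\{\dim\mu\}$ is immediate: every self-similar measure $\mu$ with respect to $\Phi_m$ is supported on $K_{\Phi_m}=K$, so $\dim\mu\leq\dim K$ by definition of $\dim\mu$. The content is in the reverse inequality $\dim K\leq\sup\{\dim\mu : \mu \text{ self-similar w.r.t. }\Phi_m,\ m\in\mathbb N\}$, and here the point of introducing the family $\{\Phi_m\}$ is precisely that, even when $\Phi$ has overlaps so severe that no self-similar measure on $\Phi$ attains $\dim K$, passing to $\Phi_m$ with $m$ large makes the pieces $g(I)$, $g\in\Phi_m$, of comparable size $\asymp 1/m$, so that a suitable Bernoulli measure on $\Phi_m$ has dimension close to $\dim K$.

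First I would recall the relevant dimension theory for self-similar sets without separation: by Hochman's theorem (and its consequences for the dimension of self-similar measures), or more elementarily by the general lower bound coming from the variational principle / the fact that $\dim K=\lim_{m}\dim K$ under the natural subdivision, one has $\dim K=\sup_{\mathbf q}\dim\mu_{\mathbf q}$ where the supremum is over self-similar measures on \emph{all} the $\Phi_m$. Concretely I would argue as follows. Fix $\varepsilon>0$. The set $K$ carries a measure of dimension arbitrarily close to $\dim K$ in a weak sense: for each $N$, subdivide using all compositions $g=\phi_1\circ\cdots\circ\phi_\ell\in\Phi_m$; these are the ``$m$-adic generation'' maps, and $K=\bigcup_{g\in\Phi_m}g(K)$ with all $|g'(0)|\in[\frac1{mM},\frac1m)$ where $M=\max_i a_i^{-1}$. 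Choosing the probability vector $\mathbf q$ on $\Phi_m$ proportional to $|g'(0)|^{s}$ for the similarity dimension $s=s(m)$ of $\Phi_m$ (solving $\sum_{g\in\Phi_m}|g'(0)|^{s}=1$), one gets a self-similar measure $\mu_{\mathbf q}$ on $\Phi_m$ whose dimension, by the Ledrappier–Young / Feng–Hu formula $\dim\mu_{\mathbf q}=\frac{\sum q_g\log q_g}{\sum q_g\log|g'(0)|}$ (or rather its lower bound via Hochman when there are overlaps), tends to $\min(1,s(m))$ and hence to $\dim K$ as $m\to\infty$, since $s(m)\to\overline{\dim}_S\Phi=\dim K$ (the similarity dimension is unchanged by grouping, and equals $\dim K$ when there are no exact overlaps; in general $\dim K$ is the supremum of $\dim\mu$ over all such $\mathbf q$ by Hochman's theorem).

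The main obstacle I anticipate is handling overlaps correctly: when $\Phi$ (hence $\Phi_m$) has exact overlaps, the naive entropy formula overestimates $\dim\mu_{\mathbf q}$, and $\dim K$ may be strictly less than the similarity dimension, so I cannot simply let $s(m)\to\dim K$ and quote the entropy formula. The clean way around this is to invoke the result (already cited in the excerpt — Hochman's dimension theorem together with the construction of high-dimensional self-similar measures, or more directly Feng's or the general fact that for any self-similar set the Hausdorff dimension equals $\sup\{\dim\mu\}$ over self-similar measures on the iterated system) that guarantees, for every $\varepsilon>0$, the existence of \emph{some} $m$ and \emph{some} probability vector $\mathbf q$ on $\Phi_m$ with $\dim\mu_{\mathbf q}>\dim K-\varepsilon$; the role of the $\Phi_m$'s is exactly to provide the generation structure in which such measures can be found. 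I would therefore structure the proof as: (i) trivial $\geq$ direction; (ii) reduce the $\leq$ direction to finding, for each $\varepsilon$, a self-similar measure on some $\Phi_m$ of dimension $>\dim K-\varepsilon$; (iii) produce it by taking a Bernoulli measure on the IFS $\Phi^{(k)}$ of $k$-fold compositions with weights close to optimal and observing $\Phi^{(k)}$ refines into some $\Phi_m$ up to bounded distortion of scales, then passing $k\to\infty$ and quoting the standard lower dimension bound (Hochman in the overlapping case) to conclude $\dim\mu\to\dim K$. Since each $\mu_{\mathbf q}$ is in particular a self-similar measure with respect to $\Phi_m$, the displayed supremum is $\geq\dim K$, completing the proof.
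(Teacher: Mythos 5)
Your $\geq$ direction is correct and matches the paper (it is indeed immediate from $K_{\Phi_m}=K$). The $\leq$ direction, however, has a genuine gap: you do not actually produce a self-similar measure on some $\Phi_m$ of dimension close to $\dim K$; you instead appeal to the claim itself (``invoke the result\ldots that guarantees, for every $\varepsilon>0$, the existence of \emph{some} $m$ and \emph{some} $\mathbf q$ with $\dim\mu_{\mathbf q}>\dim K-\varepsilon$'') and cite Hochman's theorem for a statement it does not give. Hochman's dimension theorem computes $\dim\mu_{\mathbf p}$ under exponential separation; it does not assert that $\dim K$ is the supremum of $\dim\mu$ over self-similar measures on the iterated systems, and it is silent about IFS's with serious overlaps, which is exactly the regime the $\Phi_m$'s are introduced to handle. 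Your other route---pushing the similarity dimension $s(m)$ to $\dim K$---also fails in general, because the similarity dimension is invariant under passing to $\Phi_m$ but can exceed $\dim K$ strictly when overlaps are present, as you yourself note parenthetically.

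The paper's proof avoids Hochman entirely and uses two tools you do not mention. First, it invokes a concrete ``thinning'' construction (Shmerkin, \cite[Lemma 4.2]{shmerkin2015projections}, in the spirit of Peres--Shmerkin): for each $\varepsilon>0$ one can choose $m$ large and a sub-collection $\Lambda\subseteq\Phi_m$ satisfying the \emph{strong separation condition} with $K_\Lambda\subseteq K$ and $\dim K_\Lambda\geq\dim K-\varepsilon$. On $\Lambda$, because of strong separation, an explicit Bernoulli measure achieves $\dim K_\Lambda$ by elementary arguments; no Hochman-type input is needed. Second, because that measure corresponds to a probability vector on $\Phi_m$ that vanishes on $\Phi_m\setminus\Lambda$, while the paper's convention requires strictly positive weights, one needs the \emph{lower semi-continuity} of $\mathbf p\mapsto\dim\mu_{\mathbf p}$ (from Peres--Solomyak and Feng--Hu, or Hochman's local entropy averages) to perturb to a strictly positive $\mathbf p_k$ without losing dimension. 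This perturbation step is entirely absent from your sketch. In short: the existence of a high-dimensional measure on some $\Phi_m$ is precisely the crux and requires the sub-IFS-with-separation lemma plus lower semi-continuity, neither of which your argument supplies; as written it is circular at the key step.
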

Recall that in this paper self similar measures always correspond to a \textit{strictly} positive probability vector on the underlying IFS. While we have not been able to find the statement of Claim \ref{Claim var princ} in the literature, it can nonetheless be inferred from a combination of existing results. We thus postpone its proof to Section \ref{Section proof of var}.

It will be convenient to introduce the notation
$$ \mathbb{A} = \lbrace x:\, x \text{ is  absolutely normal } \rbrace.$$
So, if
$$\dim K\cap \mathbb{A}< \dim K $$
then by Claim \ref{Claim var princ} there is some IFS $\Phi_m$ and a self similar measure $\mu$ on it that is not pointwise absolutely normal. Without the loss of generality, we assume that $m=1$. That is, $\Phi$ already admits such a self similar measure. Notice that if there is a self similar measure $\mu$ with $\dim \mu = \dim K$ then $\mu$ cannot be pointwise absolutely normal, and we can work with this measure.

So, let $\mu$ be a non pointwise absolutely normal self similar measure. We first claim that it is impossible that there exist contraction ratios $a_i, a_j$ of maps in $\Phi$ such that $\frac{\log a_i}{\log a_j}\notin \mathbb{Q}$. Indeed, if this is  the case then it follows from \cite[Theorem 1.2]{li2019trigonometric} that $\mu$ is a Rajchman measure. So, by Theorem \ref{Prop key 1} $\mu$ is pointwise absolutely normal, a contradiction.

Therefore, we may assume that every $a_i$ is a rational power of some $r>0$. Since $\mu$ cannot be a Rajchman measure, following the work of Br\'{e}mont \cite[Theorem 2.3]{bremont2019rajchman}   the IFS $\Phi$ is affinely conjugated via a map $h$ to an IFS $\Psi = \lbrace g_i (x) = r_i x + t_i \rbrace $ that is in so-called Pisot form \cite[Definition 2.2]{bremont2019rajchman}. For the time being, we note that this means that for every $g_i\in \Psi$, $g_i(x)=r^{n_i} x + t_i$ where $r^{-1}$ is a Pisot number, and $(n_i)$ are relatively prime (the $t_i$ also have an explicit form, but we postpone discussion about this to the next section).

We next claim that $r^{-1}$ is an integer. Again, we argue via contradiction: Otherwise, $r^{-1}$ is a non-integer Pisot number, which in particular implies that $r^{-1}$ is independent of all integers $p\geq 2$. We require the following well known Lemma, which is adapted from e.g. the work of Shmerkin-Peres \cite[Proposition 6]{Peres2009Shmerkin} and Shmerkin \cite[Lemma 4.2]{shmerkin2015projections}:
\begin{Lemma} \label{Approx Lemma}
For every  $\epsilon>0$ there exists an IFS $\Lambda$ satisfying the following properties:
\begin{enumerate}
\item Every $f\in \Lambda$ is a composition of maps from $\Phi$. 

\item Every $f\in \Lambda$ has the same contraction ratio, which must be of the form $r^{-k}$ for some $k\in \mathbb{N}$ by the structure of $\Phi$ and (1).

\item $\Lambda$ satisfies the strong separation condition: The union
$$ K_\Lambda = \bigcup_{f\in \Lambda} f(K_\Lambda)$$
is disjoint. In particular, the IFS $\Lambda$ is regular.

\item $K_\Lambda \subseteq K$ and we have $\dim K \leq  \dim K_\Lambda + \epsilon$. 
\end{enumerate}
\end{Lemma}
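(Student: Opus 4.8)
\textbf{Proof plan for Lemma \ref{Approx Lemma}.}

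The plan is to build $\Lambda$ from a large-but-finite ``slice'' of the symbolic coding of $\Phi$, chosen so that all the selected words have (almost) the same contraction ratio and so that the images of the attractor under these words are pairwise disjoint at some small scale. First I would fix the structural input: since $\mu$ is not Rajchman, we are in the case where every $a_i$ is a rational power of a common $r$, so after passing to $\Phi^{*N}$ for a suitable $N$ we may assume every contraction ratio of $\Phi$ is an integer power of $r$, i.e. $a_i = r^{m_i}$ with $m_i \in \mathbb{N}$. Consequently, for any threshold $k$, the set of words $\eta$ with $|f_\eta'(0)| \le r^{-k} < |f_{\eta|_{|\eta|-1}}'(0)|$ (the ``stopping-time'' words at scale $r^{-k}$, exactly the $\Phi_{r^k}$ construction) consists of maps whose contraction ratios range over a bounded window $[r^{-k-M}, r^{-k}]$ where $M = \max m_i$. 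From each such stopping word one can append a few more generators to land exactly on the ratio $r^{-k-M}$ (there are only finitely many patterns needed, since the deficit is at most $M$ in the exponent); this produces a finite IFS $\Lambda_0$ all of whose maps have the common ratio $r^{-(k+M)}$ and whose union of images still covers $K$, because appending generators to stopping words only refines the covering $K = \bigcup f_\eta(K)$. This establishes (1) and (2) of the statement, with $\Lambda_0$ playing the role of $\Lambda$ before the separation surgery.

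The next step is to upgrade $\Lambda_0$ to an IFS satisfying strong separation while not losing much dimension — this is the content of \cite[Proposition 6]{Peres2009Shmerkin} and \cite[Lemma 4.2]{shmerkin2015projections}, and I would invoke it essentially verbatim. The idea there is: the attractor $K$ has Hausdorff dimension $\dim K =: s$; by the variational principle (our Claim \ref{Claim var princ}, or directly the fact that $\dim K$ equals the similarity dimension under the open set condition if $\Phi$ happens to be regular, and in general the Hochman dimension formula) there is a self-similar measure $\mu_0$ with $\dim \mu_0 > s - \tfrac{\epsilon}{2}$. One then selects, inside the $k$-th level cylinders for large $k$, a subfamily $\mathcal{G}_k$ of cylinders that are ``$\delta_k$-separated'' — i.e. the corresponding pieces $f_\eta(I)$ are at mutual distance $\gtrsim \delta_k \cdot r^{-k}$ — while retaining $\mu_0$-mass $\ge 1 - \epsilon'$; a pigeonhole/Besicovitch-covering argument shows such a subfamily exists once $k$ is large, because a self-similar measure of dimension close to $s$ cannot concentrate too much mass near the (lower-dimensional) set of ``bad'' overlapping configurations. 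Intersecting $\mathcal{G}_k$ with the constant-ratio refinement from the previous paragraph (again appending a bounded number of generators to equalize ratios, which only shrinks pieces and hence preserves separation) gives a finite IFS $\Lambda$ satisfying (1), (2), (3). For (4): $K_\Lambda \subseteq K$ is immediate since every generator of $\Lambda$ is a composition of generators of $\Phi$; and $\dim K_\Lambda \ge \dim(\mu_0|_{\text{restricted to } \mathcal{G}_k\text{-cylinders}}) \ge \dim \mu_0 - \epsilon/2 > s - \epsilon$ because restricting a self-similar measure to a positive-mass union of cylinders and pushing forward to the strongly separated model changes the dimension by an arbitrarily small amount (standard, using that dimension of a measure is unchanged by restriction to a positive-measure set and that the strongly separated IFS $\Lambda$ carries a self-similar measure of dimension $\ge$ the relative entropy rate).

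The main obstacle, and the place where one must be careful rather than hand-wavy, is the separation surgery: guaranteeing that one can extract a \emph{large} constant-ratio subfamily that is \emph{strongly} separated while keeping the dimension loss below $\epsilon$. Naively equalizing contraction ratios (step one) is harmless, and naively throwing away overlapping cylinders is harmless for \emph{mass}, but one needs that the discarded mass is small \emph{and} that it does not hurt dimension — this requires the quantitative input that a self-similar measure of dimension $> s - \epsilon/2$ assigns mass $o(1)$ to the $\delta$-neighbourhood of the overlap locus as $\delta \to 0$, which is exactly what the cited Peres--Shmerkin / Shmerkin lemmas supply (via a covering argument and the fact that overlaps live on a set of dimension $< s$, or via the subexponential growth of the number of $\delta$-close cylinder pairs). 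I would therefore structure the proof as: (a) reduce to integer powers of $r$ by passing to a power of the IFS; (b) quote Claim \ref{Claim var princ} to get a near-optimal self-similar measure; (c) quote the Peres--Shmerkin separation-extraction lemma to get a strongly separated constant-ratio sub-IFS of near-full dimension; (d) note $K_\Lambda \subseteq K$ and conclude. Steps (a) and (d) are routine; (b) is a black box already set up in the paper; (c) is the substantive citation and I would spell out precisely which statement of \cite{Peres2009Shmerkin, shmerkin2015projections} is being used and check the constant-ratio and strong-separation clauses match our (2)--(3).
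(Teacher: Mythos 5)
The paper offers no proof of Lemma~\ref{Approx Lemma}; it is stated as an adaptation of \cite[Proposition 6]{Peres2009Shmerkin} and \cite[Lemma 4.2]{shmerkin2015projections} and used without further argument. You correctly identify those references and the two moving parts (equalize contraction ratios, then extract a strongly separated sub-IFS), but the mechanism you describe for the extraction step is wrong in a way that would derail anyone trying to write it out.

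You assert that one can find a separated subfamily $\mathcal{G}_k$ of $k$-level cylinders retaining $\mu_0$-mass $\geq 1-\epsilon'$, and then argue the dimension is preserved because ``dimension of a measure is unchanged by restriction to a positive-measure set.'' Both halves fail. For the first: take $\Phi=\{x/2,\,x/2+1/4,\,x/2+1/2\}$ with equal weights. At level $k$ there are $3^k$ cylinders of length $2^{-k}$, each carrying $\mu_0$-mass exactly $3^{-k}$; any subfamily pairwise separated at scale $\sim 2^{-k}$ has at most $O(2^k)$ members, hence total mass $O((2/3)^k)\to 0$. The Peres--Shmerkin argument is a box-counting one: it keeps a subfamily of \emph{large cardinality}, comparable to the $2^{-k}$-covering number of $K$, and concludes via the similarity-dimension formula for a strongly separated homogeneous IFS---in the example $\dim K_\Lambda \geq \frac{\log(c\cdot 2^k)}{k\log 2}\to 1=\dim K$ even though the retained mass vanishes. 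For the second half: restricting $\mu_0$ to a union of $k$-level cylinders does not produce a self-similar measure for $\Lambda$, so you cannot read off $\dim K_\Lambda$ from it. Two smaller points: your ``appending generators to equalize ratios'' step replaces each $f_\eta(K)$ by a strictly smaller $f_{\eta\zeta_\eta}(K)$ with no control on the resulting dimension loss---the standard fix is to partition level-$n$ words by total exponent and keep the largest class by pigeonhole, again a counting argument; and invoking Claim~\ref{Claim var princ} to manufacture a near-optimal self-similar measure is both unnecessary (the counting proof uses no measure) and circular in spirit, since Claim~\ref{Claim var princ} is itself proved from the non-homogeneous variant of the present lemma.
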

Now, let $\epsilon>0$ and produce an IFS $\Lambda$ satisfying the conditions of Lemma \ref{Approx Lemma}. It is well known that for such homogeneous IFS's with separation  there exists a self similar measure $\nu$  on $K_\Lambda$ such that $\dim \nu = \dim K_\Lambda$. By \cite[Theorem 1.4]{hochmanshmerkin2015}, since the IFS $\Lambda$ is regular and $r^{-1}$ is independent of all integer $p\geq 2$, $\nu$ a.e. $x$ is normal to all integer bases, that is, $\nu(\mathbb{A})=1$. Since  $K_\Lambda \subseteq K$, we find that
$$\dim \mathbb{A}\cap K \geq \dim \mathbb{A}\cap K_\Lambda \geq \dim \nu = \dim K_\Lambda \geq \dim K -\epsilon.$$
Taking $\epsilon \rightarrow 0$ we find that $\dim \mathbb{A}\cap K  = \dim K$. This is a contradiction. We conclude that $r^{-1}$ has to be an integer.

\subsection{Structure of the conjugating map and conclusion of proof} \label{Section structure}
Let us recall what we have shown so far:  The IFS $\Phi$ is conjugated via an affine  map $h(x)=cx+d$ to an IFS $\Psi = \lbrace g_i (x) = r_i x + t_i \rbrace $ that is in so-called Pisot form \cite[Definition 2.2]{bremont2019rajchman} for some \textit{integer} Pisot number. This means that for every $g_i\in \Psi$, $g_i(x)=r^{n_i} x + t_i$ where $r^{-1}=n$ is an integer, the $(n_i)$ are relatively prime, and, since $r^{-1}$ is an integer, the translations $t_i$ have the form $t_i = \frac{z_i}{n ^{s_i}}$ where $z_i \in \mathbb{Z}$ and $s_i \in \mathbb{N}\cup \lbrace 0 \rbrace$.

In this section we complete the proof of Theorem \ref{Theorem 1} by showing that we must have $c\in \mathbb{Q}$ and that $d$ is not an $n$-normal number. As we will see, this implies that every translation in the original IFS is not $n$-normal. Afterwards, we show that under some extra assumptions on $\Phi$ each such translation must be rational. So,  let $X$ denote the attractor of $\Psi$. Since our IFS $\Phi$ is conjugated by the affine map $h(x)=cx+d$ to $\Psi$, we have
$$h(X)=K=K_\Phi.$$
Let $\epsilon>0$ and produce an IFS $\Lambda$ as in Lemma \ref{Approx Lemma} but for the IFS $\Psi$. In particular, 
$$\dim X_\Lambda \geq \dim X - \epsilon$$
 and all the maps in $\Lambda$ have the same contraction $n^{-k}$ for some $k\in \mathbb{N}$. Furthermore, since all the maps in $\Lambda$ are compositions of maps in $\Psi$,  the  translations of the maps in $\Lambda$ retain the structure of those in $\Psi$. We also note that $h(X_\Lambda)\subseteq K$. 
 
 Consider the conjugated IFS $\Theta = \lbrace h \circ l_i \circ h^{-1} \rbrace_{l_i \in \Lambda}$:  Every affine map $f_i \in \Theta$ has  the same contraction ratio $n^{-k}$, and its translation $f_i(0)=b_i$ satisfies, by the known structure of the maps in $\Lambda$,
$$b_i = \frac{c\cdot z_i}{n^{s_i}} + d-\frac{d}{n^k}, \quad z_i \in \mathbb{Z},s_i \in \mathbb{N}\cup \lbrace 0 \rbrace.$$
Now, since $\Phi$ is non-trivial we may assume that so is $\Psi$, and consequently $\Lambda$ is non-trivial (if $\epsilon$ is small enough). So, there exist maps $f_i,f_j\in \Theta$ with $b_i \neq b_j$. This implies that
$$0 \neq n^{k} \cdot b_i - n^{k} \cdot b_j = n^k \cdot c\cdot \left( \frac{z_i}{n^{s_i}} - \frac{z_j}{n^{s_j}} \right)  ,\quad z_i,z_j\in \mathbb{Z},s_i,s_j \in \mathbb{N}.$$

By the last displayed equation, if $c\notin \mathbb{Q}$ then 
\begin{equation} \label{Eq dayan}
\lbrace n^{k} b_i - n^{k} b_j:i,j\in \Phi \rbrace \text{ is not contained in a proper closed subgroup of } \mathbb{T}.
\end{equation}
Now, Let $\mu$ be a self similar measure on $h(X_\Lambda)$ with respect to the conjugated IFS $\Theta$, such that $\dim \mu = \dim X_\Lambda$. By \cite[Theorem 4]{dayan2020random}, which we may apply via \eqref{Eq dayan},  $\mu$  is pointwise $n$-normal. It is well known that this implies that $\mu$ is pointwise normal to all integer bases $p$ such that $p\sim n$ (see e.g. \cite{Schmidt1960normal}). We also note that $\Theta$ satisfies the open set condition: Since $\Lambda$ has the strong separation condition, $X_\Lambda$ has positive Hausdorff measure in its dimension. Therefore, the attractor of $\Theta$, $h(X_\Lambda)$, also has positive Hausdorff measure in its dimension. Therefore, by \cite[Theorem 1.1]{Peres2001Simon}, $\Theta$ satisfies the open set condition. So, since all maps in the regular IFS $\Theta$ have contraction $\frac{1}{n^k}$, we may appeal to \cite[Theorem 1.4]{hochmanshmerkin2015} to see that $\mu$ is pointwise normal to all  bases $p\geq 2$ such that $p\not \sim n$. Therefore, $\mu(\mathbb{A})=1$, and we conclude that
$$\dim K\cap \mathbb{A}= \dim h(X)\cap  \mathbb{A} \geq  \dim h(X_\Lambda)\cap \mathbb{A} \geq \dim \mu = \dim X_\Lambda \geq \dim X-\epsilon=\dim K - \epsilon.$$ 
Taking $\epsilon \rightarrow 0$, we obtain a contradiction.

So far, we have established the structure of the conjugated IFS $\Psi$, and found that the conjugating map $h(x)= cx+d$ must have $c\in \mathbb{Q}$. We next show  that $d$ cannot be $n$-normal. If $d$ is $n$-normal, then every translation $b_i$ in the homogeneous IFS $\Theta$ as above must be $n$-normal: Indeed, we have seen that
$$b_i = \frac{c\cdot z_i}{n^{s_i}} + d(1- \frac{1}{n^k}) , \quad z_i \in \mathbb{Z},s_i \in \mathbb{N}\cup \lbrace 0 \rbrace$$
and so $b_i= s\cdot d + t$ where $s,t\in \mathbb{Q}$ and $s\neq 0$. This implies that $b_i$ is $n$-normal as proved by Wall in his Ph.D. thesis \cite{Wall1950thesis}.  We can now run the same argument as above, with the only difference being that since  $b_i - b_j \in \mathbb{Q}$ and $b_i$ is $n$-normal for all $i,j$,  we can use \cite[Theorem 7]{dayan2020random} instead of \cite[Theorem 4]{dayan2020random}  to conclude that $\mu$ is pointwise absolutely normal. We have just shown that this leads to a contradiction. So, $d$ cannot be $n$-normal.

Finally, we have shown that $\Phi = \lbrace h \circ g_i \circ h^{-1} \rbrace_{g_i \in \Psi}$ where $\Psi = \lbrace g_i (x) = r_i x + t_i \rbrace $ is such that for every $g_i\in \Psi$, $g_i(x)=\frac{x}{n^{k_i}} + t_i$ where  $(k_i)$ are relatively prime, and  $t_i = \frac{z_i}{n ^{s_i}}$ where $z_i \in \mathbb{Z}$ and $s_i \in \mathbb{N}\cup \lbrace 0 \rbrace$. We have also shown that $h$ is an affine map such that $h'(0)=c\in \mathbb{Q}$ and $h(0)=d$ is not $n$-normal. So, every map $h \circ g_i \circ h^{-1}$ in $\Phi$ is of the form
$$x\mapsto \frac{x}{n^{k_i}} + \frac{c\cdot z_i}{n ^{s_i}} +d - \frac{d}{n^{k_i}}.$$
Appealing to Wall's thesis \cite{Wall1950thesis} once more, since $d$ is not $n$ normal and $c\in \mathbb{Q}$, $\frac{c\cdot z_i}{n ^{s_i}} +d - \frac{d}{n^{k_i}}$ is also not $n$-normal.

If furthermore there are two maps in $\Phi$ with different contraction ratios $\frac{1}{n^{k_i}} \neq \frac{1}{n^{k_j}}$ then their translations $t_i,t_j$ satisfy that, since $c\in \mathbb{Q}$
$$n^{k_i} t_i - n^{k_j} t_j = q_{i,j} + \left( n^{k_i} - n^{k_j} \right) \cdot d, \text{ where } q_{i,j}\in \mathbb{Q}.$$
So, if $d\notin \mathbb{Q}$ then $n^{k_i} t_i - n^{k_j} t_j \notin \mathbb{Q}$ and thus
\begin{equation} \label{Eq dayan 2}
\lbrace n^{k_i} t_i - n^{k_j} t_j:i,j\in \Phi \rbrace \text{ is not contained in a proper closed subgroup of } \mathbb{T}.
\end{equation}
Now, assuming  that $\Phi$ is regular, let $\mu$ be a self similar measure such that $\dim \mu = \dim K$. Then $\mu$ a.e. $x$ is $n$-normal by \cite[Theorem 4]{dayan2020random} which applies via \eqref{Eq dayan 2}. Also, $\mu$ a.e. $x$ is  $p$-normal for all $p\not \sim n$ via \cite[Theorem 1.4]{hochmanshmerkin2015}, since $\Phi$ is regular and all maps in $\Phi$ have contractions that are independent of $p$. It follows that $\mu(\mathbb{A})=1$ so $\dim K\cap \mathbb{A}=\dim K$, a contradiction. We conclude that in this situation $d\in \mathbb{Q}$, and with this the proof of Theorem \ref{Theorem 1} is done.

\subsection{Proof of Claim \ref{Claim var princ}} \label{Section proof of var}
We now prove Claim \ref{Claim var princ}. We work with the same notations introduced before Claim \ref{Claim var princ}. Recall that for $\mathbf{p}$, a probability vector on the IFS $\Phi$ (which is a finite set), $\mu_{\mathbf{p}}$ is the corresponding self similar measure. By combining the results of Peres-Solomyak \cite{Solomyak2000Peres} and of Feng-Hu \cite{feng2009dimension} (or by \cite{hochman2009local}), we see that the map $\mathbf{p} \mapsto \dim \mu_{\mathbf{p}}$ is lower semi-continuous. Notice that this  holds true even if some of the entries in $\mathbf{p}$ are zero. Therefore, if $\mathbf{p}_k \rightarrow \mathbf{p}$ then 
$$\liminf  \dim \mu_{\mathbf{p}_k} \geq \dim \mu_{\mathbf{p}}.$$

Now, let $\epsilon>0$. It follows from  \cite[Lemma 4.2]{shmerkin2015projections} that there exists an IFS $\Lambda$ with strong separation such that every $f\in \Lambda$ is a composition of maps from $\Phi$, and $K_\Lambda \subseteq K$ with $\dim K_\Lambda \geq \dim K -\epsilon$. Notice that unlike Lemma \ref{Approx Lemma}, here we do not require all the maps in $\Lambda$ to have the same contraction ratio.   It is a  consequence of the proof of  \cite[Lemma 4.2]{shmerkin2015projections}  that we can choose $\Lambda$ by taking $m$ large enough and choosing a subset of the maps that make up the IFS $\Phi_m$. That is, $\Lambda \subseteq \Phi_m$ for some $m$.

Since $\Lambda$ has strong separation, we can always find a self similar measure $\mu_{\mathbf{p}}$ such that $\dim K_\Lambda = \dim \mu_{\mathbf{p}}$. Since $\Lambda \subseteq \Phi_m$, we can find a sequence $\mathbf{p}_k \rightarrow \mathbf{p}$ such that each $\mathbf{p}_k$ is a strictly positive probability vector on $\Phi_m$. By the lower semi-continuity alluded to in the first paragraph, for every $k$ large enough we thus have
$$\dim \mu_{\mathbf{p}_k}  \geq\dim \mu_{\mathbf{p}} - \epsilon = \dim K_\Lambda - \epsilon \geq \dim K - 2 \epsilon$$
which implies Claim \ref{Claim var princ}.

\bibliography{bib}{}

\begin{thebibliography}{10}

\bibitem{Weak2017Cramer}
J\"{u}rgen Angst and Guillaume Poly.
\newblock A weak {C}ram\'{e}r condition and application to {E}dgeworth
  expansions.
\newblock {\em Electron. J. Probab.}, 22:Paper No. 59, 24, 2017.

\bibitem{Avila2010jairo}
Artur Avila, Jairo Bochi, and Jean-Christophe Yoccoz.
\newblock Uniformly hyperbolic finite-valued
  {$\text{SL}(2,\mathbb{R})$}-cocycles.
\newblock {\em Comment. Math. Helv.}, 85(4):813--884, 2010.

\bibitem{Benoist2016Quint}
Yves Benoist and Jean-Fran\c{c}ois Quint.
\newblock {\em Random walks on reductive groups}, volume~62 of {\em Ergebnisse
  der Mathematik und ihrer Grenzgebiete. 3. Folge. A Series of Modern Surveys
  in Mathematics [Results in Mathematics and Related Areas. 3rd Series. A
  Series of Modern Surveys in Mathematics]}.
\newblock Springer, Cham, 2016.

\bibitem{bishop2013fractal}
Christopher~J. Bishop and Yuval Peres.
\newblock {\em Fractals in probability and analysis}, volume 162.
\newblock Cambridge University Press, 2016.

\bibitem{Bour2017dya}
Jean Bourgain and Semyon Dyatlov.
\newblock Fourier dimension and spectral gaps for hyperbolic surfaces.
\newblock {\em Geom. Funct. Anal.}, 27(4):744--771, 2017.

\bibitem{bremont2019rajchman}
Julien Br{\'e}mont.
\newblock On the rajchman property for self-similar measures.
\newblock {\em arXiv preprint arXiv:1910.03463}, 2019.

\bibitem{Breuillard2005llt}
E.~Breuillard.
\newblock Distributions diophantiennes et th\'{e}or\`eme limite local sur
  {$\Bbb R^d$}.
\newblock {\em Probab. Theory Related Fields}, 132(1):39--73, 2005.

\bibitem{Bre2019Varju}
Emmanuel Breuillard and P\'{e}ter~P. Varj\'{u}.
\newblock On the dimension of {B}ernoulli convolutions.
\newblock {\em Ann. Probab.}, 47(4):2582--2617, 2019.

\bibitem{Broderick2010Weiss}
Ryan Broderick, Yann Bugeaud, Lior Fishman, Dmitry Kleinbock, and Barak Weiss.
\newblock Schmidt's game, fractals, and numbers normal to no base.
\newblock {\em Math. Res. Lett.}, 17(2):307--321, 2010.

\bibitem{Brown1985moran}
Gavin Brown, William Moran, and Charles E.~M. Pearce.
\newblock Riesz products and normal numbers.
\newblock {\em J. London Math. Soc. (2)}, 32(1):12--18, 1985.

\bibitem{Brown1987moran}
Gavin Brown, William Moran, and Charles E.~M. Pearce.
\newblock Riesz products, {H}ausdorff dimension and normal numbers.
\newblock {\em Math. Proc. Cambridge Philos. Soc.}, 101(3):529--540, 1987.

\bibitem{Buf2014Sol}
Alexander~I. Bufetov and Boris Solomyak.
\newblock On the modulus of continuity for spectral measures in substitution
  dynamics.
\newblock {\em Adv. Math.}, 260:84--129, 2014.

\bibitem{Bugeaud2012book}
Yann Bugeaud.
\newblock {\em Distribution modulo one and {D}iophantine approximation}, volume
  193 of {\em Cambridge Tracts in Mathematics}.
\newblock Cambridge University Press, Cambridge, 2012.

\bibitem{Cassels1960normal}
J.~W.~S. Cassels.
\newblock On a problem of {S}teinhaus about normal numbers.
\newblock {\em Colloq. Math.}, 7:95--101, 1959.

\bibitem{chang2017fourier}
Yuanyang Chang and Xiang Gao.
\newblock Fourier decay bound and differential images of self-similar measures.
\newblock {\em arXiv preprint arXiv:1710.07131}, 2017.

\bibitem{Dai2012ber}
Xin-Rong Dai.
\newblock When does a {B}ernoulli convolution admit a spectrum?
\newblock {\em Adv. Math.}, 231(3-4):1681--1693, 2012.

\bibitem{Dai2007Feng}
Xin-Rong Dai, De-Jun Feng, and Yang Wang.
\newblock Refinable functions with non-integer dilations.
\newblock {\em J. Funct. Anal.}, 250(1):1--20, 2007.

\bibitem{Davenport1964Erdos}
H.~Davenport, P.~Erd\H{o}s, and W.~J. LeVeque.
\newblock On {W}eyl's criterion for uniform distribution.
\newblock {\em Michigan Math. J.}, 10:311--314, 1963.

\bibitem{dayan2020random}
Yiftach Dayan, Arijit Ganguly, and Barak Weiss.
\newblock Random walks on tori and normal numbers in self similar sets.
\newblock {\em arXiv preprint arXiv:2002.00455}, 2020.

\bibitem{Dolgopyat1998rapid}
Dmitry Dolgopyat.
\newblock Prevalence of rapid mixing in hyperbolic flows.
\newblock {\em Ergodic Theory Dynam. Systems}, 18(5):1097--1114, 1998.

\bibitem{Erdos1939ber}
Paul Erd\"{o}s.
\newblock On a family of symmetric {B}ernoulli convolutions.
\newblock {\em Amer. J. Math.}, 61:974--976, 1939.

\bibitem{Erdos1940ber}
Paul Erd\"{o}s.
\newblock On the smoothness properties of a family of {B}ernoulli convolutions.
\newblock {\em Amer. J. Math.}, 62:180--186, 1940.

\bibitem{Berry1942Esseen}
Carl-Gustav Esseen.
\newblock On the {L}iapounoff limit of error in the theory of probability.
\newblock {\em Ark. Mat. Astr. Fys.}, 28A(9):19, 1942.

\bibitem{feng2009dimension}
De-Jun Feng and Huyi Hu.
\newblock Dimension theory of iterated function systems.
\newblock {\em Comm. Pure Appl. Math.}, 62(11):1435--1500, 2009.

\bibitem{feng2009Lau}
De-Jun Feng and Ka-Sing Lau.
\newblock Multifractal formalism for self-similar measures with weak separation
  condition.
\newblock {\em J. Math. Pures Appl. (9)}, 92(4):407--428, 2009.

\bibitem{hochman2014self}
Michael Hochman.
\newblock On self-similar sets with overlaps and inverse theorems for entropy.
\newblock {\em Ann. of Math. (2)}, 180(2):773--822, 2014.

\bibitem{Hochman2020Host}
Michael Hochman.
\newblock A short proof of {H}ost's equidistribution theorem.
\newblock {\em arXiv preprint arXiv:2103.08938}, 2021.

\bibitem{hochman2009local}
Michael Hochman and Pablo Shmerkin.
\newblock Local entropy averages and projections of fractal measures.
\newblock {\em Ann. of Math. (2)}, 175(3):1001--1059, 2012.

\bibitem{hochmanshmerkin2015}
Michael Hochman and Pablo Shmerkin.
\newblock Equidistribution from fractal measures.
\newblock {\em Inventiones mathematicae}, 202(1):427--479, 2015.

\bibitem{Host1995normal}
Bernard Host.
\newblock Nombres normaux, entropie, translations.
\newblock {\em Israel J. Math.}, 91(1-3):419--428, 1995.

\bibitem{Sahl2016Jor}
Thomas Jordan and Tuomas Sahlsten.
\newblock Fourier transforms of {G}ibbs measures for the {G}auss map.
\newblock {\em Math. Ann.}, 364(3-4):983--1023, 2016.

\bibitem{Kahane1971Ber}
J.-P. Kahane.
\newblock Sur la distribution de certaines s\'{e}ries al\'{e}atoires.
\newblock In {\em Colloque de {T}h\'{e}orie des {N}ombres ({U}niv. {B}ordeaux,
  {B}ordeaux, 1969)}, pages 119--122. 1971.

\bibitem{Kaufman1980normal}
R.~Kaufman.
\newblock Continued fractions and {F}ourier transforms.
\newblock {\em Mathematika}, 27(2):262--267 (1981), 1980.

\bibitem{Kaufman1984ber}
Robert Kaufman.
\newblock On {B}ernoulli convolutions.
\newblock In {\em Conference in modern analysis and probability ({N}ew {H}aven,
  {C}onn., 1982)}, volume~26 of {\em Contemp. Math.}, pages 217--222. Amer.
  Math. Soc., Providence, RI, 1984.

\bibitem{Kesten1974renewal}
Harry Kesten.
\newblock Renewal theory for functionals of a {M}arkov chain with general state
  space.
\newblock {\em Ann. Probability}, 2:355--386, 1974.

\bibitem{Li2018decay}
Jialun Li.
\newblock Decrease of {F}ourier coefficients of stationary measures.
\newblock {\em Math. Ann.}, 372(3-4):1189--1238, 2018.

\bibitem{li2018fourier}
Jialun Li.
\newblock Fourier decay, renewal theorem and spectral gaps for random walks on
  split semisimple lie groups.
\newblock {\em To appear in Ann. Sci. \'{E}c. Norm. Sup\'{e}r.}, 2018.

\bibitem{li2019kleinian}
Jialun Li, Frederic Naud, and Wenyu Pan.
\newblock Kleinian schottky groups, patterson-sullivan measures and fourier
  decay.
\newblock {\em To appear in \textit{Duke math. J.}}

\bibitem{Li2020Sahl}
Jialun Li and Tuomas Sahlsten.
\newblock Fourier transform of self-affine measures.
\newblock {\em Adv. Math.}, 374:107349, 35, 2020.

\bibitem{li2019trigonometric}
Jialun Li and Tuomas Sahlsten.
\newblock Trigonometric series and self-similar sets.
\newblock {\em To appear in Journal of the European Mathematical Society},
  2021.

\bibitem{Lyons1995survey}
Russell Lyons.
\newblock Seventy years of {R}ajchman measures.
\newblock In {\em Proceedings of the {C}onference in {H}onor of {J}ean-{P}ierre
  {K}ahane ({O}rsay, 1993)}, number Special Issue, pages 363--377, 1995.

\bibitem{Moser1990Dio}
J\"{u}rgen Moser.
\newblock On commuting circle mappings and simultaneous {D}iophantine
  approximations.
\newblock {\em Math. Z.}, 205(1):105--121, 1990.

\bibitem{Shmerkin2018mos}
Carolina~A. Mosquera and Pablo~S. Shmerkin.
\newblock Self-similar measures: asymptotic bounds for the dimension and
  {F}ourier decay of smooth images.
\newblock {\em Ann. Acad. Sci. Fenn. Math.}, 43(2):823--834, 2018.

\bibitem{Peres2001Simon}
Yuval Peres, Micha\l Rams, K\'{a}roly Simon, and Boris Solomyak.
\newblock Equivalence of positive {H}ausdorff measure and the open set
  condition for self-conformal sets.
\newblock {\em Proc. Amer. Math. Soc.}, 129(9):2689--2699, 2001.

\bibitem{Peres2009Shmerkin}
Yuval Peres and Pablo Shmerkin.
\newblock Resonance between {C}antor sets.
\newblock {\em Ergodic Theory Dynam. Systems}, 29(1):201--221, 2009.

\bibitem{Solomyak2000Peres}
Yuval Peres and Boris Solomyak.
\newblock Existence of {$L^q$} dimensions and entropy dimension for
  self-conformal measures.
\newblock {\em Indiana Univ. Math. J.}, 49(4):1603--1621, 2000.

\bibitem{Pollington1988normal}
A.~D. Pollington.
\newblock The {H}ausdorff dimension of a set of normal numbers. {II}.
\newblock {\em J. Austral. Math. Soc. Ser. A}, 44(2):259--264, 1988.

\bibitem{Queff2003Ramar}
Martine Queff\'{e}lec and Olivier Ramar\'{e}.
\newblock Analyse de {F}ourier des fractions continues \`a quotients
  restreints.
\newblock {\em Enseign. Math. (2)}, 49(3-4):335--356, 2003.

\bibitem{rapaport2020proof}
Ariel Rapaport.
\newblock Proof of the exact overlaps conjecture for systems with algebraic
  contractions.
\newblock {\em to appear in Annales Scientifique de l'ENS}.

\bibitem{rapaport2020self}
Ariel Rapaport and P{\'e}ter~P Varj{\'u}.
\newblock Self-similar measures associated to a homogeneous system of three
  maps.
\newblock {\em arXiv preprint arXiv:2010.01022}, 2020.

\bibitem{sahlsten2018fourier}
Tuomas Sahlsten and Connor Stevens.
\newblock Fourier decay in nonlinear dynamics.
\newblock {\em arXiv preprint arXiv:1810.01378}, 2018.

\bibitem{sahlsten2020fourier}
Tuomas Sahlsten and Connor Stevens.
\newblock Fourier transform and expanding maps on cantor sets.
\newblock {\em arXiv preprint arXiv:2009.01703}, 2020.

\bibitem{Salem1943uniq}
R.~Salem.
\newblock Sets of uniqueness and sets of multiplicity.
\newblock {\em Trans. Amer. Math. Soc.}, 54:218--228, 1943.

\bibitem{Salem1955zyg}
Rapha\"{e}l Salem and Antoni Zygmund.
\newblock Sur un th\'{e}or\`eme de {P}iatet\c{c}ki-{S}hapiro.
\newblock {\em C. R. Acad. Sci. Paris}, 240:2040--2042, 1955.

\bibitem{Schmidt1960normal}
Wolfgang~M. Schmidt.
\newblock On normal numbers.
\newblock {\em Pacific J. Math.}, 10:661--672, 1960.

\bibitem{Schmidt1966normal}
Wolfgang~M. Schmidt.
\newblock On badly approximable numbers and certain games.
\newblock {\em Trans. Amer. Math. Soc.}, 123:178--199, 1966.

\bibitem{shmerkin2015projections}
Pablo Shmerkin.
\newblock {\em Projections of Self-Similar and Related Fractals: A Survey of
  Recent Developments}, pages 53--74.
\newblock Springer International Publishing, Cham, 2015.

\bibitem{shmerkin2016furstenberg}
Pablo Shmerkin.
\newblock On {F}urstenberg's intersection conjecture, self-similar measures,
  and the {$L^q$} norms of convolutions.
\newblock {\em Ann. of Math. (2)}, 189(2):319--391, 2019.

\bibitem{Simon1993overlaps}
K\'{a}roly Simon.
\newblock Overlapping cylinders: the size of a dynamically defined
  {C}antor-set.
\newblock In {\em Ergodic theory of {${\bf Z}^d$} actions ({W}arwick,
  1993--1994)}, volume 228 of {\em London Math. Soc. Lecture Note Ser.}, pages
  259--272. Cambridge Univ. Press, Cambridge, 1996.

\bibitem{solomyak2019fourier}
Boris Solomyak.
\newblock Fourier decay for self-similar measures.
\newblock {\em Proc. Amer. Math. Soc.}, 149(8):3277--3291, 2021.

\bibitem{Stri1990self}
Robert~S. Strichartz.
\newblock Self-similar measures and their {F}ourier transforms. {I}.
\newblock {\em Indiana Univ. Math. J.}, 39(3):797--817, 1990.

\bibitem{Stri1993self}
Robert~S. Strichartz.
\newblock Self-similar measures and their {F}ourier transforms. {II}.
\newblock {\em Trans. Amer. Math. Soc.}, 336(1):335--361, 1993.

\bibitem{Stri1993bself}
Robert~S. Strichartz.
\newblock Self-similar measures and their {F}ourier transforms. {III}.
\newblock {\em Indiana Univ. Math. J.}, 42(2):367--411, 1993.

\bibitem{Tsujii2015self}
Masato Tsujii.
\newblock On the {F}ourier transforms of self-similar measures.
\newblock {\em Dyn. Syst.}, 30(4):468--484, 2015.

\bibitem{Varju2010dim}
P\'{e}ter~P. Varj\'{u}.
\newblock On the dimension of {B}ernoulli convolutions for all transcendental
  parameters.
\newblock {\em Ann. of Math. (2)}, 189(3):1001--1011, 2019.

\bibitem{varju2020fourier}
P{\'e}ter~P Varj{\'u} and Han Yu.
\newblock Fourier decay of self-similar measures and self-similar sets of
  uniqueness.
\newblock {\em To appear in Analysis and PDE}.

\bibitem{Shapir1952uniq}
I.~I. \v{S}apiro Pyatecki\u{\i}.
\newblock On uniqueness of expansion of a function in trigonometric series.
\newblock {\em Doklady Akad. Nauk SSSR (N.S.)}, 85:497--500, 1952.

\bibitem{Wall1950thesis}
Donald~D. Wall.
\newblock {\em {N}ormal {N}umbers}.
\newblock ProQuest LLC, Ann Arbor, MI, 1950.
\newblock Thesis (Ph.D.)--University of California, Berkeley.

\bibitem{Yoccoz2004some}
Jean-Christophe Yoccoz.
\newblock Some questions and remarks about {$\text{SL}(2,\mathbb{R})$}
  cocycles.
\newblock In {\em Modern dynamical systems and applications}, pages 447--458.
  Cambridge Univ. Press, Cambridge, 2004.

\end{thebibliography}
\bibliographystyle{plain}

\end{document}